\newtheorem{thm}{Theorem}[section]
\newtheorem{lem}{Lemma}[section]
\newtheorem{assumption}{Assumption}[section] 
\theoremstyle{definition}
\theoremstyle{remark}
\newtheorem{rem}{Remark}[section]
\numberwithin{equation}{section}
\newcommand{\rmd}{\mathrm{d}}
\newcommand{\bu}{\mathbf{u}}
\newcommand{\bv}{\mathbf{v}}
\newcommand{\bff}{\mathbf{f}}
\newcommand{\bh}{\mathbf{h}}
\newcommand{\bg}{\mathbf{g}}
\newcommand{\bq}{\mathbf{q}}
\newcommand{\bS}{\mathbf{S}}
\newcommand{\bQ}{\mathbf{Q}}
\newcommand{\bx}{\mathbf{x}}
\newcommand{\by}{\mathbf{y}}
\newcommand{\bz}{\mathbf{z}}
\newcommand{\bF}{\mathbf{F}}
\newcommand{\bphi}{\bm{\phi}}
\newcommand{\dist}{\mathrm{dist}}      
\newcommand{\diam}{\mathrm{diam}} 
\newcommand{\calK}{\mathcal{K}}
\newcommand{\calP}{\mathcal{P}}
\newcommand{\calN}{\mathcal{N}}
\newcommand{\calW}{\mathcal{W}}
\newcommand{\bbH}{\mathbb{H}}
\newcommand{\bbL}{\mathbb{L}}
\newcommand{\bbW}{\mathbb{W}}
\newcommand\rmi{{\mathrm{i}}}
\title[ Elastic Cald\'eron Problem via  Resonant Hard Inclusions ]{ 
Elastic Calderón Problem via Resonant Hard Inclusions: Linearisation of the N-D Map and Density Reconstruction}
\author{Huaian Diao}
 \address{School of Mathematics, Jilin University and Key Laboratory of Symbolic Computation and Knowledge Engineering of Ministry of Education, Changchun, Jilin, China.}
 \email{diao@jlu.edu.cn, hadiao@gmail.com}
\author{Mourad Sini}
 \address{Radon Institute, Austrian Acedmy of Sciences, Linz, Austria.}
 \email{mourad.sini@oeaw.ac.at}
 \author{Ruixiang Tang}
 \address{School of Mathematics, Jilin University, Changchun 130012, China.}
 \email{tangrx97@gmail.com; tangrx23@mails.jlu.edu.cn}
\date{} 
\begin{document}
\maketitle

\begin{abstract}

We study an elastic version of the Calderón problem: determine the internal mass density $\rho(\mathbf{x})$ from the Neumann-to-Dirichlet (N-D) map associated with the isotropic Lam\'e system
\[
\mathcal{L}_{\lambda,\mu} \mathbf{u} + \omega^2 \rho(\mathbf{x}) \mathbf{u} = \mathbf{0}
\]
in a bounded elastic body $\Omega \subset \mathbb{R}^3$. To the best of our knowledge, this work provides the first constructive strategy, based on embedding resonant hard inclusions, for the Calderón-type inverse problem in the isotropic Lam\'e system to reconstruct the density $\rho$. The key to our strategy is to induce a uniform negative shift in the effective density (i.e., a negative effective density) by embedding a subwavelength periodic array of resonant high-density inclusions.

We insert a periodic cluster of high-density inclusions of size $a$ and density $\rho_1 \simeq a^{-2}$ into $\Omega$, away from $\partial \Omega$. For excitation frequencies $\omega$ tuned to a suitable eigenvalue of the elastic Newton operator (i.e., Kelvin operator) associated with a single inclusion, we show that the N-D map $\Lambda_D$ of the composite medium converges, as $a \to 0$ and the number $M$ of inclusions tends to infinity, to an effective map $\Lambda_{\mathcal{P}}$ corresponding to an elastic medium with a uniform negative density shift $-\mathcal{P}^2$. We prove an operator norm estimate
\[
\|\Lambda_D - \Lambda_{\mathcal{P}}\| \leq C a^{\alpha} \mathcal{P}^6,
\]
with $\alpha > 0$ depending on the geometric scaling. We then derive a first-order linearization formula for $\Lambda_{\mathcal{P}}$ around this negative background, expressed in terms of $\rho$ and the Newton volume potential for the shifted Lam\'e operator. By testing this linearized relation with suitable complex geometric optics solutions for the Lam\'e system, we obtain a reconstruction formula for the Fourier transform of $\rho$, and hence a global density recovery scheme.

The method proposed in this paper demonstrates how metamaterial-inspired effective media can be exploited as an analytic tool for inverse coefficient problems in linear elasticity, enabling a tractable linearization around a negative background and an explicit global reconstruction procedure. This provides a novel strategy and paradigm for using nanoscale metamaterials to solve inverse problems.

	\medskip

		\noindent{\bf Keywords:}~~Elastic Cald\'eron Problem, Neumann-to-Dirichlet map, Newtonian operator, Asymptotic expansions, Spectral theory, Lippmann–Schwinger equation, Hard inclusions.
		
		\noindent{\bf 2020 Mathematics Subject Classification:}~~35R30; 35P25; 35C20.
 
\end{abstract}


\section{Introduction and Statement of the Main Results}

\subsection{Introduction}


The Calder\'on problem asks whether one can determine the electrical conductivity $\sigma(\bx)$ inside a bounded domain $\Omega\subset\mathbb{R}^3$ from boundary measurements of voltages and currents. More precisely, given the conductivity equation for the electric potential $u$ with prescribed boundary voltage $u|_{\partial\Omega}=f$,
\begin{align}
\nabla\cdot(\sigma\nabla u)=0\quad\text{in }\Omega,
\end{align}
one considers the Dirichlet-to-Neumann (D--N) map, which maps boundary voltages to the induced boundary currents, and asks whether this map uniquely determines $\sigma$ (in the isotropic scalar setting). This question, posed by Calder\'on in his seminal 1980 lecture notes \cite{C80}, is directly motivated by electrical impedance tomography (EIT), whose goal is to reconstruct the interior conductivity of a body from boundary voltage-to-current measurements.

In 1984, Kohn and Vogelius~\cite{KV84} showed that the D--N map determines the boundary values of the conductivity, and indeed its full boundary jet under appropriate smoothness assumptions. For $\sigma\in C^{2}$, global uniqueness in dimensions $n\ge 3$ was established by Sylvester and Uhlmann via the construction of complex geometrical optics (CGO) solutions in \cite{SU87}. Subsequent contributions progressively weakened the regularity assumptions on $\sigma$: first to $C^{\frac{3}{2}+\varepsilon}$, $\varepsilon>0$, in \cite{B96}, then to $\sigma \in \bbW^{\frac{3}{2},\infty}(\Omega)$ in \cite{P03} and $\sigma\in \mathbb{W}^{\frac{3}{2},p}(\Omega)$ with $p>2n$ (in particular $p>6$ when $n=3$) in \cite{B03}. Haberman and Tataru~\cite{HT13} proved uniqueness for Lipschitz conductivities under a suitable closeness assumption (and for arbitrary $C^{1}$ conductivities), and Caro and Rogers~\cite{C16} later established global uniqueness for Lipschitz conductivities, i.e.\ $\sigma\in \bbW^{1,\infty}(\Omega)$. Further low-regularity uniqueness results include conductivities in $\bbW^{1,n}(\Omega)$ (in particular $\bbW^{1,3}$ in three dimensions) in \cite{H15}. In two dimensions, Nachman developed a reconstruction scheme for $C^2$ conductivities \cite{N88,N96}, and later Astala and P\"aiv\"arinta solved the problem for essentially bounded conductivities in the plane using quasiconformal mappings and nonlinear Fourier analysis \cite{AP06}. For further results on uniqueness and stability in the Calder\'on problem, see, e.g., \cite{A88,A90,B97,S08} and the references therein. More recent developments include reconstruction at low regularity, notably the work \cite{CGR25}.

An important direction in this area is to extend inverse boundary value methods to systems of equations modeling more complex physical phenomena. The problem of recovering the constitutive parameters---typically the Lam\'e moduli $(\lambda,\mu)$, and in the time-harmonic setting also the mass density $\rho$---from a D--N or N--D map associated with the time-harmonic elasticity system is commonly referred to as the elastic inverse boundary value problem. Early progress includes work on the linearized inverse boundary value problem in isotropic elasticity, where explicit inversion formulas for perturbations of the Lam\'e parameters were derived in \cite{I90}. Local uniqueness in determining the Lam\'e parameters from boundary measurements of the D--N map for the two-dimensional elasticity system was established by Nakamura and Uhlmann~\cite{NU93}. In the context of isotropic linear elasticity, global uniqueness in recovering the Lam\'e parameters of an inhomogeneous medium from the associated D--N map was established by Nakamura and Uhlmann~\cite{NU94,NU03}. For elastic media, Nakamura and Uhlmann~\cite{NU95} proved that the full Taylor series at the boundary of the elasticity tensor can be determined from the D--N map. Uniqueness, stability, and boundary-determination results for the Lam\'e moduli and their derivatives, including results based on localized and corrupted boundary measurements, were obtained in \cite{ER02,BCY08,LN17}. Recently, inverse boundary value problems for elasticity and related partial differential equation models have been extensively studied in various geometric, fractional, and viscoelastic settings; see, for instance, \cite{YLL21,FLL23,UZ21,UZ24,TL23,Li22} and the references therein. These works provide uniqueness, stability, and/or reconstruction results for material parameters or nonlinear terms from boundary measurements or D--N data.

\medskip

Alongside these developments, there has been growing interest in
\emph{imaging with contrast agents}, where one deliberately modifies the medium
by injecting small-scale inclusions to enhance measurements. On the engineering
side, contrast agents ranging from gas microbubbles to a variety of nanoparticle
systems have been developed to improve contrast in ultrasound, photoacoustic,
magnetic resonance, and X-ray imaging \cite{BCU01,QFF09,VSJ20,SL23,AMA22,HC20}.
On the mathematical side, this has motivated a programme in which resonant
subwavelength inclusions (bubbles, droplets, plasmonic nanoparticles, highly
dense inclusions, etc.) are used as \emph{active probes} for inverse problems;
see, for example, \cite{ABG15,DGS-21,GS-23,CGS24,CGS25,IP_Mourad4,Soumen-M,GS25} and the
references therein.

In this work, we exploit the interacting property induced by negative effective density when inclusions resonate. This represents the first approach to invert the density $\rho$ of the elastic medium by injecting hard inclusions and using the N--D map. Compared to acoustic waves, elastic wave systems are vector-valued, involving coupling between compressional waves and shear waves, which makes their analysis more complex and challenging. Meanwhile, the well-posedness of the N--D map usually requires that the frequency is chosen away from the eigen-frequencies or resonant frequencies of the system. Here, we avoid the above-mentioned problems by exploiting the resonant frequencies of the injected hard inclusions, thereby creating an interacting effect similar to that produced by materials with effective negative density. Additionally, by combining this with a tractable linearisation around a negative background, we can effectively overcome the above difficulties. To this end, we assume that elastic boundary measurements can be performed both before and after injecting a large number of small high-density inclusions into a prescribed subregion. The inclusions are arranged periodically, densely populating this subregion, as detailed in Assumption~\ref{assumption1}, and their density contrast is scaled as $\rho_1 \sim a^{-2}$, where $a$ denotes their characteristic size. For frequencies $\omega$ chosen near a resonance determined by an eigenvalue of the elastic Newtonian (Kelvin) operator associated with a single inclusion, this array exhibits a collective resonance. Our analysis shows that, in a suitable multiple-interacting regime, the inclusions behave as an \emph{effective elastic medium} with negative effective density $-\mathcal{P}^2$, where $\mathcal{P}^2$ is defined in \eqref{eq:P2 def} and enters the effective density in \eqref{eq:system qg}. In particular, the injected inclusions act as an elastic contrast-agent ``cloud'' that realizes a tunable negative background for the density.

\medskip

 We now summarize the main contributions of this paper and introduce a novel process of injecting a cluster of hard inclusions to invert the density $\rho$.

\begin{itemize}
\item[(i)] \textbf{Resonant homogenization for the elastic N--D map.}
To the best of our knowledge, this is the first work to pursue the elastic inverse boundary value problem via a controlled negative effective density induced by a periodic array of small high-density inclusions. We prove that, as $a\to0$ and the number of inclusions $M\sim a^{h-1}$ with $h\in(0,1)$ tends to infinity under the geometric assumptions of Assumption~\ref{assumption1}, the N--D operator $\Lambda_D$ of the inclusion-perturbed medium converges to the N--D operator $\Lambda_\calP$ of a homogenized elastic medium with effective density $\rho(x)-\calP^2$:
\[
\|\Lambda_D-\Lambda_{\calP}\|_{\mathcal{L}\!\big(\bbH^{-1/2}(\partial\Omega)^3,\,\bbH^{1/2}(\partial\Omega)^3\big)}
\;\lesssim\; a^\alpha \calP^6
\]
for some $\alpha>0$ depending on $h$; see Theorem~\ref{thm:N--D} and
\eqref{eq:LambdaD-LambdaP}. This step relies on a detailed spectral and mapping
analysis of the elastic Newtonian operator associated with the Kelvin
fundamental solution.

\item[(ii)] \textbf{Linearisation around a negative elastic background.}
The second step is the analysis of the inverse boundary value problem for the
effective medium with density $\rho(x)-\calP^2$. In Theorem~\ref{thm:linearisation}
(see \eqref{eq:Lambda P f-Qf in thm}), we derive a first-order linearisation
formula for the difference $\Lambda_D-\Lambda_\calP$ between the N--D maps of
the composite and effective media. For a suitable family of boundary tractions,
we construct interior fields solving the unperturbed Lam\'e system with constant
density $-\calP^2$ and show that their interaction with the density perturbation
$\rho(x)$ on top of this background can be recovered from boundary measurements
up to an error term of order $O(\calP^{-4})$. This reduces the original nonlinear
problem of determining $\rho(\bx)$ from the elastic N--D map to a \emph{linear}
inverse problem for the density on top of a negative background.

\item[(iii)] \textbf{Density reconstruction by elastic CGO solutions.}
Finally, in Theorem~\ref{thm:CGO} we combine the linearized relation from (ii)
with standard CGO solutions for the Lam\'e system to derive an explicit
reconstruction formula for the Fourier transform of $\rho(\bx)$; see, in
particular, \eqref{eq: rho x in thm}.
\end{itemize}


\medskip


In what follows, we provide an overview of the main methodological approaches developed in this paper. For the first part (i), we consider the elastic system \eqref{eq:system} before injecting a cluster of nano-hard inclusions and the elastic system \eqref{eq:system vg} after the injection. We then present the difference between the N--D operators in these two cases, as shown in \eqref{eq:Lambda_D f g}. We present the equivalent elastic system after injecting this cluster of nano-hard inclusions, as shown in \eqref{eq:system qg}, where $\mathcal{P}$ is defined in \eqref{eq:P2 def}. From this, we derive the difference between the N--D operator of the equivalent elastic system and that of the system before injecting the inclusions, as shown in \eqref{eq:LambdaD-LambdaP}. Employing detailed and intricate analyses, such as the Lippmann--Schwinger formula, Taylor expansion, and singularity analysis of the fundamental solution, we prove the validity of \eqref{eq:Lambda D-Lambda P in thm} throughout Section \ref{sec:Theorem 1}, which constitutes the main part of this article. Owing to the precision, technicality, and complexity of the proof process, we provide a flowchart of the theorem's proof in Figure \ref{fig:J_derivation_flowchart}.

For the second part (ii), we prove Theorem \ref{thm:linearisation} in Section \ref{sec:Theorem 2}, establishing that \eqref{eq:Lambda P f-Qf in thm} holds. This constitutes the linearisation of the N--D operator after injecting a cluster of inclusions. Specifically, in \eqref{eq:Lambda P f-Qf in thm}, within a certain error range, we can approximately obtain $\mathcal{W}^{\mathbf{Q}^\bff}$, which satisfies \eqref{eq:WQf def}. For the third part (iii), we employ standard CGO solutions, as shown in \eqref{eq:Qf def in Section3} and \eqref{eq:Qg CGO def}, in Section \ref{sec:Theorem 3 CGO}; this allows us to explicitly calculate the density $\rho(\mathbf{x})$ of the elastic medium. From the perspective of imaging with contrast agents, our results demonstrate that a cloud of resonant high-density inclusions can be designed such that, at a suitable excitation frequency, the resulting effective medium exhibits a uniform negative density shift. In this regime, the boundary response of the elastic body encodes the unknown density in a nearly linear fashion that is amenable to CGO-based reconstruction.

The remainder of the paper is organized as follows. In Subsection \ref{subsec:1.2}, we present the mathematical setting and the main results. We state the corresponding theorems for parts (i), (ii), and (iii) mentioned above in Subsections \ref{subsec:1.2.1}, \ref{subsec:1.2.2}, and \ref{subsec:1.2.3}, respectively. In Section \ref{sec:Theorem 2}, we derive and rigorously justify the linearisation procedure underlying Theorem \ref{thm:linearisation}, whereas Section \ref{sec:Theorem 3 CGO} is devoted to the reconstruction of $\rho(\mathbf{x})$ from the resulting linearized N--D map stated in Theorem \ref{thm:CGO}. Section \ref{sec:Theorem 1} is devoted to establishing the validity of the effective N--D map described in Theorem \ref{thm:N--D}; this constitutes the main part of the paper and contains a detailed and technical analysis. Finally, in Section \ref{sec:appendix}, we provide detailed proofs of the lemmas and estimates used in the previous sections.

\bigskip

\subsection{Mathematical setting and the main results}\label{subsec:1.2}

 Let $\Omega\subset\mathbb{R}^3$ be a bounded elastic body with density $\rho(\mathbf{x})$, embedded in an unbounded background $\mathbb{R}^3\setminus\overline{\Omega}$ of density $\rho_0$. We write $(\lambda,\mu)$ for the Lam\'e parameters (interior/exterior, as appropriate). In the time-harmonic regime with angular frequency $\omega>0$, the displacement $\mathbf{u}^{\mathbf{f}}$ produced by the traction $\mathbf{f}$ is governed by
\begin{align}\label{eq:system}
\begin{cases}
\mathcal{L}_{\lambda,\mu}\,\mathbf{u}^{\mathbf{f}}(\mathbf{x})+\omega^2\rho(\mathbf{x})\,\mathbf{u}^{\mathbf{f}}(\mathbf{x})=\mathbf{0}, & \text{in }\Omega,\\[2mm]
\partial_{\boldsymbol{\nu}}\mathbf{u}^{\mathbf{f}} = \mathbf{f}, & \text{on }\partial\Omega,
\end{cases}
\end{align}
where the Lam\'e operator is defined by
\begin{align}\label{eq:mathcal L def}
    \mathcal{L}_{\lambda,\mu}\mathbf{u}
    := \mu\,\Delta\mathbf{u}+(\lambda+\mu)\,\nabla(\nabla\!\cdot\!\mathbf{u})
    = \nabla\cdot \sigma(\mathbf{u}),
\end{align}
and the stress tensor is given by
\begin{align}\notag
    \sigma(\mathbf{u}) = \lambda (\nabla\cdot \mathbf{u})\, \mathcal{I} + 2\mu\, \nabla^{s}\mathbf{u}, 
    \qquad 
    \nabla^{s}\mathbf{u}:=\tfrac12\left(\nabla\mathbf{u}+(\nabla\mathbf{u})^{\top}\right).
\end{align}

\noindent
The traction (co-normal) operator associated with $(\lambda,\mu)$ is
\begin{align}\label{eq:partial nu def}
    \partial_{\boldsymbol{\nu}}\mathbf{u} 
    = \lambda\,(\nabla\!\cdot\!\mathbf{u})\,\boldsymbol{\nu} 
      + 2\mu\,(\nabla^{s}\mathbf{u})\,\boldsymbol{\nu},
\end{align}
with $\boldsymbol{\nu}$ the outward unit normal on $\partial\Omega$ and $\nabla\mathbf{u}=(\partial_j u_i)_{i,j=1}^{3}$. The N--D map 
\[
\Lambda_e:\mathbb{H}^{-1/2}(\partial\Omega)\to\mathbb{H}^{1/2}(\partial\Omega)
\]
sends a traction $\mathbf{f}$ to the boundary trace of the corresponding displacement,
\begin{align}\label{eq:Lambda_e def}
    \Lambda_e(\mathbf{f}):=\mathbf{u}^{\mathbf{f}}\big|_{\partial\Omega}.
\end{align}

Let $\Gamma^\omega(\mathbf{x},\mathbf{y})$ denote the Kupradze fundamental matrix of the Navier system at frequency $\omega$ in the homogeneous background of density $\rho_0$; it solves
\begin{align}
\mathcal{L}_{\lambda,\mu}\Gamma^\omega(\mathbf{x},\mathbf{y})
+\omega^2\rho_0\,\Gamma^\omega(\mathbf{x},\mathbf{y})
= -\delta(\mathbf{x}-\mathbf{y})\,\mathcal{I} \quad \text{in }\mathbb{R}^3.
\end{align}
Let the compressional and shear wave speeds and wavenumbers be
\begin{align}
    c_p:=\sqrt{\frac{\lambda+2\mu}{\rho_0}},\quad 
c_s:=\sqrt{\frac{\mu}{\rho_0}},\qquad
k_p:=\frac{\omega}{c_p},\quad
k_s:=\frac{\omega}{c_s}.
\end{align}
With $\phi_k(\mathbf{x},\mathbf{y}):=\dfrac{e^{\rmi k|\mathbf{x}-\mathbf{y}|}}{4\pi|\mathbf{x}-\mathbf{y}|}$ the Helmholtz fundamental solution \cite{C19}, one has \cite{ABG15} 
\begin{align}\notag
\Gamma^\omega(\mathbf{x},\mathbf{y})
= \frac{1}{\mu}\,\phi_{k_s}(\mathbf{x},\mathbf{y})\,\mathcal{I}
+ \frac{1}{\mu\,k_s^2}\,\nabla_{\mathbf{x}}\nabla_{\mathbf{x}}^{\top}\left[\phi_{k_s}(\mathbf{x},\mathbf{y})-\phi_{k_p}(\mathbf{x},\mathbf{y})\right], 
\end{align}
and, for $k,l\in\{1,2,3\}$,
\begin{align}\notag
\Gamma^\omega_{kl}(\mathbf{x},\mathbf{y})
= \frac{1}{4\pi\mu}\frac{e^{\rmi k_s|\mathbf{x}-\mathbf{y}|}}{|\mathbf{x}-\mathbf{y}|}\,\delta_{kl}
+ \frac{1}{4\pi\omega^2\rho_0}\,
\partial_k\partial_l\frac{e^{\rmi k_s|\mathbf{x}-\mathbf{y}|}-e^{\rmi k_p|\mathbf{x}-\mathbf{y}|}}{|\mathbf{x}-\mathbf{y}|},  
\end{align}
which admits the series form (see \cite{ABG15})
\begin{align}
\Gamma_{kl}^\omega(\mathbf{x},\mathbf{y})
&= \frac{1}{4\pi\rho_0}\sum_{n=0}^{\infty}\frac{\rmi^n}{(n+2)\,n!}
\!\left(\frac{n+1}{c_s^{n+2}}+\frac{1}{c_p^{n+2}}\right)\!
\omega^n\,\delta_{kl}\,|\mathbf{x}-\mathbf{y}|^{\,n-1} \notag\\
&\quad - \frac{1}{4\pi\rho_0}\sum_{n=0}^{\infty}\frac{\rmi^n(n-1)}{(n+2)\,n!}
\!\left(\frac{1}{c_s^{n+2}}-\frac{1}{c_p^{n+2}}\right)\!
\omega^n\,|\mathbf{x}-\mathbf{y}|^{\,n-3}\,(\mathbf{x}-\mathbf{y})_k(\mathbf{x}-\mathbf{y})_l.  
\end{align}
At $\omega=0$, the Kelvin matrix $\Gamma^0$ is symmetric and its entries simplify to
\begin{align}\label{eq:Gamma0 def}
\Gamma_{kl}^0(\mathbf{x},\mathbf{y})
&= \dfrac{\gamma_1}{4\pi}\,\dfrac{\delta_{kl}}{|\mathbf{x}-\mathbf{y}|}
+ \dfrac{\gamma_2}{4\pi}\,\dfrac{(\mathbf{x}-\mathbf{y})_k(\mathbf{x}-\mathbf{y})_l}{|\mathbf{x}-\mathbf{y}|^{3}},\\[1mm]
\gamma_1&=\tfrac12\!\left(\tfrac{1}{\mu}+\tfrac{1}{2\mu+\lambda}\right),\qquad
\gamma_2=\tfrac12\!\left(\tfrac{1}{\mu}-\tfrac{1}{2\mu+\lambda}\right).\notag
\end{align}
For $|\mathbf{x}|\to\infty$, with $\hat{\mathbf{x}}:=\mathbf{x}/|\mathbf{x}|$, the far-field expansion reads
\begin{align}\notag
\Gamma^\omega(\mathbf{x},\mathbf{y})
= \frac{1}{4\pi(\lambda+2\mu)}\,\hat{\mathbf{x}}\hat{\mathbf{x}}^{\top}
\frac{e^{\rmi k_p|\mathbf{x}|}}{|\mathbf{x}|}\,e^{- \rmi k_p\hat{\mathbf{x}}\cdot\mathbf{y}}
+ \frac{1}{4\pi\mu}\,(\mathcal{I}-\hat{\mathbf{x}}\hat{\mathbf{x}}^{\top})
\frac{e^{\rmi k_s|\mathbf{x}|}}{|\mathbf{x}|}\,e^{-\rmi k_s\hat{\mathbf{x}}\cdot\mathbf{y}}
+ \mathcal{O}(|\mathbf{x}|^{-2}).  
\end{align}
Accordingly, the far-field matrix is decomposed as $\Gamma^\infty=\Gamma_p^\infty+\Gamma_s^\infty$, where
\begin{align}\notag
\Gamma_p^\infty(\hat{\mathbf{x}},\mathbf{y})
:= \frac{1}{4\pi(\lambda+2\mu)}\,\hat{\mathbf{x}}\hat{\mathbf{x}}^{\top}\,e^{-\rmi k_p\hat{\mathbf{x}}\cdot\mathbf{y}},
\qquad
\Gamma_s^\infty(\hat{\mathbf{x}},\mathbf{y})
:= \frac{1}{4\pi\mu}\,(\mathcal{I}-\hat{\mathbf{x}}\hat{\mathbf{x}}^{\top})\,e^{-\rmi k_s\hat{\mathbf{x}}\cdot\mathbf{y}}.  
\end{align}

We next consider high-density hard inclusions used as contrast agents. Each inclusion is of the form
\[
D_j = \mathbf{z}_j + a\,B,\qquad j=1,\ldots,M,
\]
where $B\subset\mathbb{R}^3$ is a smooth reference domain containing the origin with unit-size diameter, $\mathbf{z}_j$ is the center, and $a>0$ is the maximal radius; in particular $a\ll 1$. The inclusion density scales as
\begin{equation}\label{eq:rho1 def}
\rho_1=\tilde{\rho}_1\, a^{-2},
\end{equation}
with $\tilde{\rho}_1$ independent of $a$, and we write
\[
D:=\bigcup_{j=1}^{M}D_j.
\]
%
\noindent We recall that the static elastic fundamental solution $\Gamma^0(\mathbf{x},\mathbf{y})$ in $\mathbb{R}^3$ satisfies
\begin{align}\label{eq:Gamma0 satisfy}
\mathcal{L}_{\lambda,\mu}\,\Gamma^0(\mathbf{x},\mathbf{y}) = -\,\delta_{\mathbf{y}}(\mathbf{x})\,\mathcal{I}.
\end{align}

\noindent For $D_j$, we introduce the Newtonian volume potential
\begin{align}\label{eq:Newtonian def}
N_{D_j}[\mathbf{f}](\mathbf{x})
:= \int_{D_j} \Gamma^0(\mathbf{x},\mathbf{y})\,\mathbf{f}(\mathbf{y})\,\rmd \mathbf{y},
\qquad \mathbf{x}\in D_j .
\end{align}

\noindent For later use, we also introduce the boundary layer potentials associated with $\Gamma^0$. For a surface density $\varphi$ on $\partial\Omega$, the single-layer potential is
\begin{align}\label{eq:SL def}
SL_{\partial\Omega}[\varphi](\mathbf{x})
:= \int_{\partial\Omega} \Gamma^0(\mathbf{x},\mathbf{y})\,\varphi(\mathbf{y})\,\rmd S(\mathbf{y}),
\qquad \mathbf{x}\in \mathbb{R}^3\setminus\partial\Omega .
\end{align}
For a surface density $\psi$ on $\partial\Omega$, the double-layer potential is
\begin{align}\label{eq:DL def}
DL_{\partial\Omega}[\psi](\mathbf{x})
:= \int_{\partial\Omega} \partial_{\nu_{\mathbf{y}}}\Gamma^0(\mathbf{x},\mathbf{y})\,\psi(\mathbf{y})\,\rmd S(\mathbf{y}),
\qquad \mathbf{x}\in \mathbb{R}^3\setminus\partial\Omega .
\end{align}

The operator $N_{D_j}$ defined in \eqref{eq:Newtonian def} is self-adjoint and compact  \cite{CGS24,CGS25}, and hence it admits a strictly positive discrete spectrum 
$\{\lambda_{n}^{D_j}\}_{n\in\mathbb{N}}$. Under the dilation $D_j = \mathbf{z}_j + a B$, the eigenvalues satisfy the scaling law
\begin{align}\label{eq:lambda Dj=a2 lambda B}
    \lambda_{n}^{D_j}=a^{2}\lambda_{n}^{B},
\end{align}
where $\{\lambda_{n}^{B}\}_{n\in\mathbb{N}}$ are the eigenvalues of the corresponding operator
$N_{B}$ on the reference set $B$. 
Fix any $n_{0}\in\mathbb{N}$ and $j\in\{1,\dots,M\}$ and focus on the eigenvalue $\lambda_{n_{0}}^{D_j}$. 
In our model, we choose the driving angular frequency $\omega$ in the form
\begin{align}\label{eq:the angular frequency}
\frac{\omega^{2}}{\omega_{0}^{2}} \;=\; 1- c_{n_{0}}\,a^{h}.
\end{align}
Here $h>0$ and $c_{n_{0}}\in\mathbb{R}$ is a negative constant independent of $a$. 
The reference frequency $\omega_{0}$ is defined by
\begin{align}
\omega_{0}\;:=\;\left(\frac{\rho_1}{\lambda_{n_{0}}^{D_j}}\right)^{1/2}
\;=\;\left(\frac{\tilde{\rho}_1}{\lambda_{n_{0}}^{B}}\right)^{1/2},
\end{align}
where the second equality follows from the eigenvalue scaling above (cf. \eqref{eq:rho1 def} and \eqref{eq:lambda Dj=a2 lambda B}).

We are interested in the asymptotic regime $a\ll 1$, where the number of inclusions satisfies
\begin{equation}\label{eq:M def}
M \sim a^{\,h-1},\qquad 0<h<1,
\end{equation}
and the minimal distance between distinct hard inclusions satisfies
\begin{equation}\label{eq:dmin}
d := \min_{1\le i\neq j\le M}\,|\mathbf{z}_i-\mathbf{z}_j|
\sim a^{\frac{1-h}{3}}.
\end{equation}
These relations are consistent with the volumetric scaling law $M\sim d^{-3}$.

\vspace{0.2cm}

 Throughout the paper, we shall work under the following geometric assumption on the distribution of the hard inclusions inside $\Omega$.

\begin{assumption}\label{assumption1}
We examine a periodic configuration of hard inclusions within a bounded domain $\Omega$ of unit volume. Specifically, $\Omega$ encompasses a collection of subdomains $D_j$, for $j = 1, \ldots, M$. The domain $\Omega$ is divided as follows:
\begin{equation}\label{Decoupage-Omega-new}
\Omega = \Omega_{\mathrm{cube}} \cup \Omega_{r}, \qquad 
\Omega_{\mathrm{cube}} = \bigcup_{j=1}^{M} \Omega_j, \qquad 
\Omega_{r} = \bigcup_{j=1}^{\aleph} \Omega_j^{\star},
\end{equation}
where the integer-valued functions $M=M(a)$ and $\aleph=\aleph(a)$ depend on a small parameter $a>0$. Each cubic subdomain $\Omega_j$ is fully contained within the interior of $\Omega$, ensuring that $\Omega_{\mathrm{cube}} \subsetneq \Omega$ and none of the $\Omega_j$ touch the boundary $\partial \Omega$.

Within each cubic subdomain $\Omega_j$, there exists precisely one hard inclusion $D_j$, centered at $\mathbf{z}_j \in D_j \subset \Omega_j$, and the volume of each $\Omega_j$ is given by
\[
\lvert \Omega_j \rvert = a^{1-h}, \qquad j = 1,\ldots,M, \qquad 0 < h < 1,
\]
while the complementary subdomains $\Omega_j^{\star}$ are devoid of hard inclusions. A diagrammatic representation of this setup is shown in Figure~\ref{Fig1}.  
By utilizing a reference cell $\Omega_0$, all subdomains $\Omega_j$ are obtained through translations of $\Omega_0$. Thus, the periodic hard-inclusion arrangement can be described as
\begin{equation}\notag
D = \Omega_{\mathrm{cube}} \cap d\left( \mathbb{Z}^3 + \left( z + \frac{a}{d} B \right) \right),
\end{equation}
where $d$ is the minimal distance between hard inclusions as defined in \eqref{eq:dmin}, $z$ is a point within a unit cell, and $B \subset \mathbb{R}^3$ is a Lipschitz domain with characteristic diameter $\diam(B) \sim 1$. Additionally, we assume that the region $\Omega_{\mathrm{cube}}$ is kept at a distance from the outer boundary $\partial \Omega$ on the order of
\begin{equation}
\dist\!\left( \partial \Omega_{\mathrm{cube}}, \, \partial \Omega \right) \sim \kappa(a) \sim a^{\frac{1-h}{3}}, 
\qquad \text{with } a \ll 1,\; 0 < h < 1.
\end{equation}
This separation ensures that the collection of hard inclusions $D$ does not contact $\partial \Omega$, i.e.,
\begin{equation}\label{eq:dist D partial Omega}
\dist\!\left( D, \, \partial \Omega \right) \sim \kappa(a) \sim a^{\frac{1-h}{3}}, 
\qquad \text{for } a \ll 1,\; 0 < h < 1.
\end{equation}
Thus, the hard inclusions are arranged periodically within $\Omega$ while maintaining a finite distance from the boundary, which guarantees that no inclusion is positioned adjacent to $\partial \Omega$.
\end{assumption}

 \begin{figure}[H]
\begin{center}
\begin{tikzpicture}[scale=0.75]
  \draw[black, ultra thick] (0,0) ellipse (7 and 3.5);

  \begin{scope}
    \clip (0,0) ellipse (7 and 3.5);
    \foreach \x in {-7,-6.5,...,7} {
      \draw[gray, thick] (\x,-3.5) -- (\x,3.5);
    }
    \foreach \y in {-3.5,-3,...,3.5} {
      \draw[gray, thick] (-7,\y) -- (7,\y);
    }
  \end{scope}

  \foreach \x in {-7,-6.5,...,6.5} {
    \foreach \y in {-3.5,-3,...,3} {
      \pgfmathparse{
        ( (\x/7)^2 + (\y/3.5)^2 <= 1 )
        &&
        ( ((\x+0.5)/7)^2 + (\y/3.5)^2 <= 1 )
        &&
        ( (\x/7)^2 + ((\y+0.5)/3.5)^2 <= 1 )
        &&
        ( ((\x+0.5)/7)^2 + ((\y+0.5)/3.5)^2 <= 1 )
      }
      \ifnum\pgfmathresult>0
        \fill[blue] (\x+0.25,\y+0.25) circle (0.12);
      \fi
    }
  }

  \draw[red, dashed, very thick, ->] (-7,-2.75) -- (-3.75,-2.75);
  \node[black, anchor=east] at (-7.1,-2.75) {$\boldsymbol{\Omega_{n}^{\star}}$};

  \draw[red, ultra thick] (6,0) -- (6.5,0);
  \draw[red, ultra thick] (6,-0.5) -- (6.5,-0.5);
  \draw[red, ultra thick] (6,0) -- (6,-0.5);
  \draw[red, ultra thick] (6.5,0) -- (6.5,-0.5);

  \draw[red, very thick, ->] (6.8,-0.25) -- (6.5,-0.25);
  \draw[red, dashed, very thick, ->] (8.5,-0.25) -- (6.5,-0.25);
  \node[black, anchor=west] at (8.55,-0.25) {$\boldsymbol{\Omega_{j}}$};

  \draw[red, dashed, very thick, ->] (6.25,-2.25) -- (6.25,-0.25);
  \node[black, anchor=north] at (6.25,-2.35) {$\boldsymbol{D_{j}}$};

  \coordinate (P) at (3.25,2.75);
  \coordinate (Q) at (3.3385261155,3.0762901105);
  \coordinate (R) at (3.2814214000,2.8658131928);

  \draw[red, ultra thick] (R) -- (Q);

  \coordinate (M) at ($(R)!0.5!(Q)$);
  \draw[red, dashed, very thick, ->] (M) -- ++(2,0);
  \node[black, anchor=west] at ($(M)+(2,0)$) {$\boldsymbol{\kappa(a)}$};

  \node at (0,4.25) {$\boldsymbol{\Omega}$};
\end{tikzpicture}
\end{center}
\caption{An illustration of how the hard inclusion are distributed in $\Omega$.}
\label{Fig1}
\end{figure}
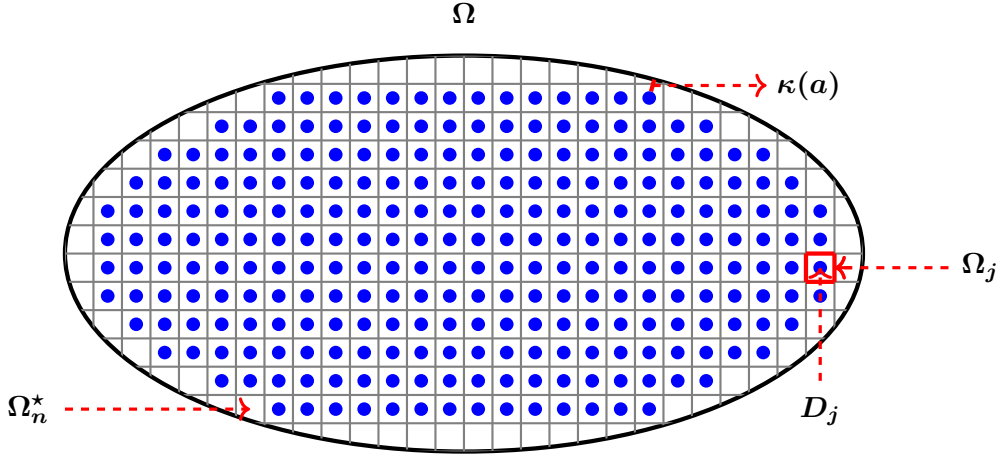

\noindent With the notation established above, we formulate the perturbed boundary value problem as
\begin{align}\label{eq:system vg}
  \begin{cases}
    \big( \mathcal{L}_{\lambda,\mu} + \omega^{2} \rho(\mathbf{x})(1 - \chi_{D}) + \omega^{2} \rho_{1} \chi_{D} \big)\, \mathbf{v}^{\mathbf{g}} = \mathbf{0} & \text{in } \Omega, \\[1mm]
    \partial_{\boldsymbol{\nu}} \mathbf{v}^{\mathbf{g}} = \mathbf{g} & \text{on } \partial\Omega .
  \end{cases}
\end{align}

Let $\mathbf{v}^{\mathbf{g}}(\cdot)$ denote the solution of \eqref{eq:system vg}. By multiplying \eqref{eq:system vg} by $\mathbf{u}^{\mathbf{f}}(\cdot)$, the solution of \eqref{eq:system}, and integrating over $\Omega$, we obtain
\begin{align}\label{eq:Lambda_e f g}
\langle \Lambda_{e}(\mathbf{f}) ; \mathbf{g} \rangle_{\mathbb{H}^{\frac{1}{2}}(\partial \Omega) \times \mathbb{H}^{-\frac{1}{2}}(\partial \Omega)} 
&= \lambda \langle \nabla \cdot \mathbf{u}^{\mathbf{f}}, \nabla \cdot \mathbf{v}^{\mathbf{g}} \rangle_{\mathbb{L}^{2}(\Omega)}  
+ \mu \int_{\Omega} \big( \nabla \mathbf{u}^{\mathbf{f}} : \nabla \mathbf{v}^{\mathbf{g}} + \nabla \mathbf{u}^{\mathbf{f}} : (\nabla \mathbf{v}^{\mathbf{g}})^{\top} \big)\, \rmd \mathbf{x} \notag\\
&\quad - \omega^{2} \langle \mathbf{u}^{\mathbf{f}}, \rho(\mathbf{x}) \mathbf{v}^{\mathbf{g}} \rangle_{\mathbb{L}^{2}(\Omega)} 
      - \omega^{2} \langle \mathbf{u}^{\mathbf{f}}, (\rho_{1} - \rho(\mathbf{x})) \mathbf{v}^{\mathbf{g}} \rangle_{\mathbb{L}^{2}(D)} .
\end{align}

\noindent Here $\Lambda_{e}(\cdot)$ is the N--D operator introduced in \eqref{eq:Lambda_e def}, viewed as a map from $\mathbb{H}^{-\frac{1}{2}}\!\left( \partial \Omega \right)$ to $\mathbb{H}^{\frac{1}{2}}\!\left( \partial \Omega \right)$ and defined via
\begin{align}
\langle \Lambda_{e}(\mathbf{f}) ; \mathbf{g} \rangle_{\mathbb{H}^{\frac{1}{2}}(\partial \Omega) \times \mathbb{H}^{-\frac{1}{2}}(\partial \Omega)} 
:= \int_{\partial \Omega} \mathbf{u}^{\mathbf{f}}(\mathbf{x}) \cdot \mathbf{g}(\mathbf{x}) \, \rmd \sigma(\mathbf{x}) .
\end{align}

For brevity, we henceforth employ integral notation. We denote by $\Lambda_{D}(\cdot)$ the  N--D map corresponding to the background medium with a cluster of hard inclusions, that is, the map associated with \eqref{eq:system vg}. Multiplying \eqref{eq:system} by $\mathbf{v}^{\mathbf{g}}(\cdot)$, integrating over $\Omega$, and using the self-adjointness of $\Lambda_{D}$ together with \eqref{eq:Lambda_e f g}, we obtain
\begin{equation}\label{eq:Lambda_D f g}
\langle \Lambda_{D}(\mathbf{f}) ; \mathbf{g} \rangle_{\mathbb{H}^{\frac{1}{2}}(\partial \Omega) \times \mathbb{H}^{-\frac{1}{2}}(\partial \Omega)} 
- \langle \Lambda_{e}(\mathbf{f}) ; \mathbf{g} \rangle_{\mathbb{H}^{\frac{1}{2}}(\partial \Omega) \times \mathbb{H}^{-\frac{1}{2}}(\partial \Omega)}   
= \omega^{2} \left\langle \big( \rho_{1} - \rho(\mathbf{x}) \big) \mathbf{v}^{\mathbf{g}} ; \mathbf{u}^{\mathbf{f}} \right\rangle_{\mathbb{L}^{2}(D)} .
\end{equation}

In an analogous fashion, define $\mathbf{q}^{\mathbf{g}}(\cdot)$ to be the solution of
\begin{equation}\label{eq:system qg}
\left\{
\begin{aligned}
(\mathcal{L}_{\lambda,\mu} + \omega^{2} \rho(\mathbf{x}) - \mathcal{P}^{2})\, \mathbf{q}^{\mathbf{g}} &= \mathbf{0}, && \text{in } \Omega,\\
\partial_{\boldsymbol{\nu}} \mathbf{q}^{\mathbf{g}} &= \mathbf{g}, && \text{on } \partial\Omega .
\end{aligned}
\right.
\end{equation}

\noindent Here
\begin{equation}\label{eq:P2 def}
\mathcal{P}^{2} := -\frac{\left(\langle \mathcal{I} ; \tilde{\mathbf{e}}_{n_{0}} \rangle_{\mathbb{L}^{2}(B)}\right)^{2}}
              {\lambda^{B}_{n_{0}}\,c_{n_{0}}}  ,
\end{equation}
and we write $\Lambda_{\mathcal{P}}(\cdot)$ for the N--D map corresponding to the effective (equivalent) background. Then
\begin{equation}\label{eq:LambdaP-Lambdae}
\langle \Lambda_{\mathcal{P}}(\mathbf{f}) ; \mathbf{g} \rangle_{\mathbb{H}^{\frac{1}{2}}(\partial \Omega) \times \mathbb{H}^{-\frac{1}{2}}(\partial \Omega)} 
- \langle \Lambda_{e}(\mathbf{f}) ; \mathbf{g} \rangle_{\mathbb{H}^{\frac{1}{2}}(\partial \Omega) \times \mathbb{H}^{-\frac{1}{2}}(\partial \Omega)} 
= - \mathcal{P}^{2} \langle \mathbf{q}^{\mathbf{g}} ; \mathbf{u}^{\mathbf{f}} \rangle_{\mathbb{L}^{2}(\Omega)} .
\end{equation}

Combining \eqref{eq:Lambda_D f g} with \eqref{eq:LambdaP-Lambdae} gives
\begin{equation}\label{eq:LambdaD-LambdaP}
\langle ( \Lambda_{D} - \Lambda_{\mathcal{P}} )(\mathbf{f}) ; \mathbf{g} \rangle_{\mathbb{H}^{\frac{1}{2}}(\partial \Omega) \times \mathbb{H}^{-\frac{1}{2}}(\partial \Omega)}  
= \omega^{2} \left\langle \big( \rho_{1} - \rho(\mathbf{x}) \big) \mathbf{v}^{\mathbf{g}} ; \mathbf{u}^{\mathbf{f}} \right\rangle_{\mathbb{L}^{2}(D)}  
+ \mathcal{P}^{2} \langle \mathbf{q}^{\mathbf{g}} ; \mathbf{u}^{\mathbf{f}} \rangle_{\mathbb{L}^{2}(\Omega)} .
\end{equation}

In the sequel, we establish that, when $M$ is taken sufficiently large or the size parameter $a$ is made sufficiently small, a medium modified by a cluster of $M$ hard inclusions becomes indistinguishable from the effective background; equivalently, the N--D operator $\Lambda_{D}(\cdot)$ converges to $\Lambda_{\mathcal{P}}(\cdot)$.

\subsubsection{Theorem \ref{thm:N--D}: linking the original N--D map $\Lambda_{D}$ with the effective N--D map $\Lambda_{\mathcal{P}}$}\label{subsec:1.2.1}

\begin{thm}\label{thm:N--D}
Assume that the domain $\Omega$ is $C^{2}$-smooth. Let the density $\rho(\bx)$ satisfy
$\rho(\cdot)\in \mathbb{W}^{1,\infty}(\Omega)$, and suppose the operating frequency $\omega$
meets the condition \eqref{eq:the angular frequency}. Choose a parameter $h$ with $\frac{1}{3}<h<1$, and assume that the hard inclusions $D_m$ are arranged as in Assumption \ref{assumption1}. Then the following holds:
\begin{align}
   \left\langle \Lambda_{D}(\mathbf{f}) ; \mathbf{g} \right\rangle_{\mathbb{H}^{1/2}(\partial\Omega)\times \mathbb{H}^{-1/2}(\partial\Omega)}
&\xrightarrow{a\to 0}  
\left\langle \Lambda_{\mathcal{P}}(\mathbf{f}) ; \mathbf{g} \right\rangle_{\mathbb{H}^{1/2}(\partial\Omega)\times \mathbb{H}^{-1/2}(\partial\Omega)} 
\end{align}
uniformly for $(\mathbf{f},\mathbf{g})\in \mathbb{H}^{-1/2}(\partial\Omega)\times \mathbb{H}^{-1/2}(\partial\Omega)$. Moreover, we have the quantitative estimate
\begin{align}\label{eq:Lambda D-Lambda P in thm}
    \left\|\Lambda_{D}-\Lambda_{\mathcal{P}}\right\|_{\mathcal{L}\!\left(\mathbb{H}^{-1/2}(\partial\Omega), \mathbb{H}^{1/2}(\partial\Omega)\right)}
&\lesssim a^{\frac{(1-h)\,(9-5\epsilon)}{18\,(3-\epsilon)}}\,\mathcal{P}^{6},
\qquad a\ll 1.
\end{align}
Here $\epsilon>0$ denotes an arbitrarily small positive constant.
\end{thm}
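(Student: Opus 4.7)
The proof rests on the exact identity \eqref{eq:LambdaD-LambdaP}, which reduces the theorem to bounding the bilinear remainder
$$\mathcal{R}(\bff,\bg) := \omega^{2}\int_{D}\bigl(\rho_{1}-\rho(\bx)\bigr)\bv^{\bg}\!\cdot \bu^{\bff}\,\rmd\bx \;+\; \calP^{2}\int_{\Omega}\bq^{\bg}\!\cdot \bu^{\bff}\,\rmd\bx$$
by $C\,a^{\alpha}\calP^{6}\|\bff\|_{\bbH^{-1/2}(\partial\Omega)}\|\bg\|_{\bbH^{-1/2}(\partial\Omega)}$, with $\alpha=\frac{(1-h)(9-5\epsilon)}{18(3-\epsilon)}$. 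The physical intuition is that, under the frequency tuning \eqref{eq:the angular frequency}, the resonant cluster $D$ behaves in the homogenisation limit as a uniform negative density shift $-\calP^{2}$, so the two integrals above should cancel up to controllable discretisation and multi-scattering errors.

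The first step is an interior Lippmann-Schwinger representation on each $D_{j}$. Because $\bv^{\bg}$ satisfies the shifted Lam\'e equation with density $\rho_{1}$ on $D_{j}$, one obtains $(I-\omega^{2}\rho_{1}N_{D_{j}})\bv^{\bg}|_{D_{j}}=\bV_{j}$, where $\bV_{j}$ encodes the contribution from the boundary traction $\bg$ together with the scattered fields produced by the other $M-1$ inclusions. By the frequency choice \eqref{eq:the angular frequency} and the eigenvalue rescaling \eqref{eq:lambda Dj=a2 lambda B}, the operator $I-\omega^{2}\rho_{1}N_{D_{j}}$ exhibits a spectral gap of order $c_{n_{0}}a^{h}$ on the $n_{0}$-th eigenspace; projecting $\bv^{\bg}|_{D_{j}}$ onto the eigenbasis of $N_{D_{j}}$ then isolates the resonant mode $\tilde{\mathbf{e}}_{n_{0}}$, whose amplitude is inflated by $1/a^{h}$. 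Using the dilation scaling of $\tilde{\mathbf{e}}_{n_{0}}^{D_{j}}$, a Taylor expansion of $\bu^{\bff}$ at $\bz_{j}$, and the definition \eqref{eq:P2 def} of $\calP^{2}$, the leading resonant contribution from the $j$-th inclusion evaluates to $-|\Omega_{j}|\,\calP^{2}\,\bq^{\bg}(\bz_{j})\cdot \bu^{\bff}(\bz_{j})$.

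The second step is to close the coupling between distinct inclusions via a Foldy-Lax-type system. The pseudo-incident fields $\{\bV_{j}\}$ are written through the static kernel $\Gamma^{0}(\bz_{j},\bz_{i})$ acting on the resonant amplitudes, and inverting this matrix in a norm adapted to the geometry of Assumption \ref{assumption1} shows that the discrete amplitudes are quantitatively close to the nodal values of the continuous effective field $\bq^{\bg}(\bz_{j})$. Recognising the resulting sum
$$\sum_{j=1}^{M}|\Omega_{j}|\,\calP^{2}\,\bq^{\bg}(\bz_{j})\cdot \bu^{\bff}(\bz_{j}) \;=\; \calP^{2}\int_{\Omega_{\mathrm{cube}}}\!\!\bq^{\bg}\!\cdot \bu^{\bff}\,\rmd\bx \;+\; O(d)$$
as a Riemann sum over cubes of side $d\sim a^{(1-h)/3}$, and absorbing the boundary-layer mismatch between $\Omega_{\mathrm{cube}}$ and $\Omega$ (of thickness $\kappa(a)\sim a^{(1-h)/3}$ by \eqref{eq:dist D partial Omega}), produces the cancellation against the second term of $\mathcal{R}$.

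The final step is the error bookkeeping. The net remainder splits into (a) a spectral projection error from the non-resonant modes, of order $a^{h}$; (b) a Taylor-truncation error on each $D_{j}$, of order $a$ weighted by $L^{\infty}$-gradient bounds that blow up as $\bz_{j}$ approaches $\partial\Omega$; (c) a Riemann-sum error of order $d=a^{(1-h)/3}$; (d) a boundary-layer error of order $\kappa(a)$; and (e) a Foldy-Lax inversion error, which is the source of the factor $\calP^{6}$, arising from the operator-norm bound on $(I-\calP^{2}N_{\Omega})^{-1}$ iterated against two further applications of the Newtonian operator needed to close the loop. The main obstacle is precisely the joint control of (b) and (e): bounding the Foldy-Lax inverse in a weighted $L^{p}$-norm that is compatible with the gradient blow-up of $\bu^{\bff}$ near $\partial\Omega$. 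Optimising the trade-off between these errors, with the $\epsilon$-loss entering through a Calder\'on-Zygmund $L^{p}$ exponent on $\nabla\bu^{\bff}$, yields the quoted exponent $\alpha=\frac{(1-h)(9-5\epsilon)}{18(3-\epsilon)}$.
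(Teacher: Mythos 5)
Your proposal follows essentially the same architecture as the paper's proof: start from the exact identity \eqref{eq:LambdaD-LambdaP}, set up a per-inclusion spectral projection onto the resonant eigenfunction $\tilde{\mathbf{e}}_{n_0}$ of the Newtonian operator $N_{D_j}$ to extract the $1/a^h$ amplification, close the multiple scattering through a Foldy--Lax system, and then compare the discrete amplitudes with the solution of the homogenized Lippmann--Schwinger equation. That is exactly the strategy in Section~\ref{sec:Theorem 1}.

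However, two of your attributions misdescribe where the difficulty (and the powers in the final exponent) actually come from, and they are worth correcting. First, you state that the leading resonant contribution from $D_j$ ``evaluates to $-|\Omega_j|\calP^2\,\bq^{\bg}(\bz_j)\cdot\bu^{\bff}(\bz_j)$.'' It does not; the leading contribution is $\boldsymbol{\alpha}\tfrac{1}{\boldsymbol{\beta}_j}\mathbf{Y}_j\cdot\bu^{\bff}(\bz_j)$, where $\mathbf{Y}_j$ solves the discrete system \eqref{eq:The algebraic system}. The $-|\Omega_j|\calP^2\,\bq^{\bg}(\bz_j)$ you wrote is what one gets after replacing $\mathbf{Y}_j$ by the nodal value $\mathbf{Y}(\bz_j)$ of the continuous LSE solution. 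The gap $\mathbf{Y}_j-\mathbf{Y}(\bz_j)$ is not a side error to a cancellation that happens ``at the continuous level''; it \emph{is} the main quantity to estimate, and the whole of Subsections~\ref{subsec:4.2}--\ref{subsec:4.3} is devoted to bounding it via the invertibility of the discrete system (Lemma~\ref{lem:The algebraic system is invertible}) and the estimate \eqref{eq:sum Ym-Yzm}. Second, the factor $\calP^6$ does not come from iterated applications of $(I-\calP^2\mathcal{N}_\Omega)^{-1}$ against Newtonian potentials; it factors as $\calP^2$ from the scattering weight $\boldsymbol{\alpha}\sim\calP^2a^{1-h}$ times $\calP^4$ from the discrete-to-continuous comparison \eqref{eq:sum Ym-Yzm}, the latter in turn split as $\calP^2$ (the effective $a^{1-h}\calP^2$ in the coupling coefficient \eqref{eq:Yzm+p2=Szm-P2Jzm}) times $\calP^2$ (the stability estimate $\|\mathbf{Y}\|_{\mathbb{H}^1(\Omega)}\lesssim\calP^2\|\bg\|_{\mathbb{H}^{-1/2}}$ of Lemma~\ref{lem:Y H1 norm}). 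Finally, the $\epsilon$-loss does not enter through a Calder\'on--Zygmund bound on $\nabla\bu^{\bff}$; it enters through the integrability $\Gamma(\bz_m,\cdot)\in\mathbb{L}^{3-\epsilon}(\Omega_r)$ when estimating the boundary-strip contribution \eqref{eq:Gamma Y int in Omegar}. None of these amount to a fatal flaw in your outline, but as stated the exponent $\alpha$ and the power $\calP^6$ would not materialize from the argument as you have described it.
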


Regarding the parameter $\epsilon$ appearing in \eqref{eq:Lambda D-Lambda P in thm} and the parameter $h$ appearing in \eqref{eq:the angular frequency} and \eqref{eq:M def}, we now provide some additional comments. We also comment on the conclusion of Theorem~\ref{thm:N--D}.

\begin{rem}
Four remarks are in order.
    \begin{enumerate}
        \item The parameter $\epsilon$ appearing in \eqref{eq:Lambda D-Lambda P in thm} stems from the regularity properties of the fundamental solution $\Gamma_0(\cdot, \cdot)$, specifically its integrability in the space $\mathbb{L}^{3 - \epsilon}(\Omega)$ as defined in \eqref{eq:Gamma0 def}.

        \item The parameter $h$, found in \eqref{eq:the angular frequency} and \eqref{eq:M def}, characterizes the density of the hard inclusion distribution within $\Omega$. A value of $h$ approaching $0$ corresponds to a dense regime, whereas $h$ approaching $1$ indicates a dilute (sparse) distribution.

        \item Under the hypotheses of Theorem \ref{thm:N--D}, the N--D map $\Lambda_\mathcal{P}(\cdot)$ can be computed effectively.

        \item To obtain \eqref{eq:Err-final} from \eqref{eq:thm h from equation}, we require the inequality $\frac{7h-1}{6}>\frac{1-h}{3}$, which leads to the condition $h>\frac{1}{3}$ stated in the theorem.

        \item Given the scaling law $M \sim a^{h-1}$ and assuming $\epsilon$ is sufficiently small, the estimate \eqref{eq:Lambda D-Lambda P in thm} can be reformulated as
        \begin{align}
            \|\Lambda_D - \Lambda_\mathcal{P}\|_{\mathcal{L}(\mathbb{H}^{-1/2}(\partial\Omega), \mathbb{H}^{1/2}(\partial\Omega))} \lesssim M^{\frac{5\epsilon - 9}{18(3 - \epsilon)}} \mathcal{P}^6, \quad \text{for } M \gg 1.
        \end{align}
        Consequently, it is possible to tune $M$ (or equivalently, the radius $a$) such that
        \begin{align}
            M^{\frac{5\epsilon - 9}{18(3 - \epsilon)}} \mathcal{P}^6 \ll 1.
        \end{align}
    \end{enumerate}
\end{rem}

The proof of Theorem \ref{thm:N--D} relies on the point-interaction method, classically referred to as the Foldy–Lax approximation. Our strategy begins by approximating the left-hand side of \eqref{eq:Lambda D-Lambda P in thm} using a linear combination of components from a vector that satisfies a specific algebraic system. This system encapsulates the multiple interacting effects arising from the injected hard inclusions; notably, the associated interaction  matrix consists entirely of positive coefficients, a property ensured by the sign convention chosen for $c_{n_0}$ in \eqref{eq:the angular frequency}. To establish the invertibility of this algebraic system uniformly with respect to the large parameter $M$, we first prove the invertibility of the corresponding limiting continuous integral equation. Subsequently, we demonstrate that the algebraic system functions as a discrete approximation of this continuous integral equation.

\subsubsection{Theorem \ref{thm:linearisation}: characterization of the linearized effective N--D map $\Lambda_{\mathcal{P}}(\cdot)$}\label{subsec:1.2.2}

\begin{thm}\label{thm:linearisation}
Let $\bff\in \bbH^{-1/2}(\partial\Omega)$ and let $\bQ^{\bff}$ be the solution to
\begin{align}\label{eq:Qf def}
\begin{cases}
(\mathcal{L}_{\lambda,\mu} - \mathcal{P}^{2})\, \bQ^{\bff} = \mathbf{0} & \text{in } \Omega,\\ 
\partial_{\nu} \bQ^{\bff} = \bff & \text{on } \partial\Omega.
\end{cases}
\end{align}
Denote by $\gamma:\bbH^{s}(\Omega)\to \bbH^{s-1/2}(\partial\Omega)$, $s\ge \tfrac12$, the trace operator. Then the N--D map of the effective background, written as $\Lambda_{\mathcal{P}}$, satisfies the first–order linearisation
\begin{align}\label{eq:Lambda P f-Qf in thm}
\Lambda_{\calP}(\bff) - \gamma \left(\bQ^{\bff}\right) 
= \omega^{2} \gamma\big( \calW^{\bQ^{\bff}} \big)
  + \mathcal{O}\!\left( \|\bff\|_{\bbH^{-1/2}(\partial\Omega)} \,\calP^{-4} \right),
\end{align}
where $\calW^{\bQ^{\bff}}=\mathcal{N}^{\calP}(\rho(\bx)\bQ^{\bff})$ is uniquely determined by
\begin{align}\label{eq:WQf def}
\begin{cases}
(\mathcal{L}_{\lambda,\mu} - \calP^{2}) \calW^{\bQ^{\bff}} = - \rho(\bx) \bQ^{\bff} & \text{in } \Omega,\\[2mm]
\partial_{\nu} \calW^{\bQ^{\bff}} = \mathbf{0} & \text{on } \partial\Omega.
\end{cases}
\end{align}
\end{thm}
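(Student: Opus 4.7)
My plan is to treat the $\omega^{2}\rho(\bx)$ term in the effective equation \eqref{eq:system qg} as a perturbation of the coercive negative-background problem \eqref{eq:Qf def}, and to extract successive factors of $\calP^{-2}$ by iterating the resolvent of $\mathcal{L}_{\lambda,\mu}-\calP^{2}$. Let $\bq^{\bff}$ denote the solution of \eqref{eq:system qg}, so that $\Lambda_{\calP}(\bff)=\gamma(\bq^{\bff})$, and write
\begin{equation*}
\bq^{\bff}=\bQ^{\bff}+\omega^{2}\,\calW^{\bQ^{\bff}}+\bS.
\end{equation*}
Subtracting \eqref{eq:Qf def} and $\omega^{2}\times$\eqref{eq:WQf def} from \eqref{eq:system qg}, and using $(\mathcal{L}_{\lambda,\mu}-\calP^{2})\bq^{\bff}=-\omega^{2}\rho\bq^{\bff}$, the remainder $\bS$ satisfies the homogeneous-Neumann problem
\begin{equation*}
(\mathcal{L}_{\lambda,\mu}-\calP^{2})\bS=-\omega^{2}\rho(\bx)\bigl(\bq^{\bff}-\bQ^{\bff}\bigr)\ \text{in}\ \Omega,\qquad \partial_{\boldsymbol{\nu}}\bS=\mathbf{0}\ \text{on}\ \partial\Omega.
\end{equation*}
Taking traces reduces \eqref{eq:Lambda P f-Qf in thm} to the quantitative bound $\|\gamma(\bS)\|_{\bbH^{1/2}(\partial\Omega)}\lesssim \calP^{-4}\|\bff\|_{\bbH^{-1/2}(\partial\Omega)}$.

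The quantitative engine is the Betti energy identity. For any Neumann solution $\bu$ of $(\mathcal{L}_{\lambda,\mu}-\calP^{2})\bu=\bF$ with $\partial_{\boldsymbol{\nu}}\bu=\bh$, integration by parts gives
\begin{equation*}
\int_{\Omega}\bigl(\lambda|\nabla\!\cdot\!\bu|^{2}+2\mu|\nabla^{s}\bu|^{2}\bigr)\,\rmd\bx+\calP^{2}\|\bu\|_{\bbL^{2}(\Omega)}^{2}=\langle \bh,\bu\rangle_{\partial\Omega}-\langle \bF,\bu\rangle_{\bbL^{2}(\Omega)}.
\end{equation*}
Combined with Korn's inequality (whose constant depends only on $\Omega$ and $(\lambda,\mu)$), this yields the coercive estimate $\|\bu\|_{\bbH^{1}}^{2}+\calP^{2}\|\bu\|_{\bbL^{2}}^{2}\lesssim |\mathrm{RHS}|$. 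Consequently, one application of the resolvent with interior source $\bF\in\bbL^{2}$ and zero Neumann trace delivers $\|\bu\|_{\bbL^{2}}\lesssim \calP^{-2}\|\bF\|_{\bbL^{2}}$ and $\|\bu\|_{\bbH^{1}}\lesssim \calP^{-1}\|\bF\|_{\bbL^{2}}$; for boundary data $\bh$ with zero source, $\|\bu\|_{\bbH^{1}}\lesssim \|\bh\|_{\bbH^{-1/2}}$ and $\|\bu\|_{\bbL^{2}}\lesssim \calP^{-1}\|\bh\|_{\bbH^{-1/2}}$.

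Chaining three such applications closes the estimate. Applied to $\bQ^{\bff}$, it gives $\|\bQ^{\bff}\|_{\bbL^{2}}\lesssim\calP^{-1}\|\bff\|_{\bbH^{-1/2}}$. Next, applied to $\mathbf{R}:=\bq^{\bff}-\bQ^{\bff}$, which solves $(\mathcal{L}_{\lambda,\mu}-\calP^{2})\mathbf{R}=-\omega^{2}\rho\bq^{\bff}$ with vanishing Neumann trace, a Neumann-series absorption valid for $\calP$ large enough that $\omega^{2}\|\rho\|_{\infty}\calP^{-2}<\tfrac12$ controls $\|\bq^{\bff}\|_{\bbL^{2}}$ by $\|\bQ^{\bff}\|_{\bbL^{2}}$, yielding $\|\mathbf{R}\|_{\bbL^{2}}\lesssim\omega^{2}\calP^{-3}\|\bff\|_{\bbH^{-1/2}}$. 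Finally, applied to $\bS$ with source $-\omega^{2}\rho\mathbf{R}$, one obtains $\|\bS\|_{\bbH^{1}(\Omega)}\lesssim\omega^{2}\calP^{-1}\|\mathbf{R}\|_{\bbL^{2}}\lesssim\omega^{4}\calP^{-4}\|\bff\|_{\bbH^{-1/2}}$, and the continuous trace $\gamma:\bbH^{1}(\Omega)\to\bbH^{1/2}(\partial\Omega)$ yields \eqref{eq:Lambda P f-Qf in thm}.

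The principal technical point will be to ensure that the constants in Korn's inequality, the trace theorem, and the bilinear coercivity are uniform in $\calP$, and that the Neumann problem for $\mathcal{L}_{\lambda,\mu}-\calP^{2}$ is uniquely solvable. The former is geometric/elastic and hence automatic; the latter is also automatic since $\calP^{2}>0$ by \eqref{eq:P2 def} together with the negativity of $c_{n_{0}}$, so that the zeroth-order term $-\calP^{2}\mathcal{I}$ strictly removes the rigid-motion kernel otherwise present in the pure Neumann Lam\'e operator and the associated bilinear form is coercive on $\bbH^{1}(\Omega)^{3}$. Once these points are verified, the entire proof reduces to the three invocations of the single Betti energy identity outlined above.
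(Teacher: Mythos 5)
Your proposal is correct and uses a genuinely different route from the paper. The paper proceeds by writing the Lippmann--Schwinger integral equation $\bq^{\bff}-\omega^{2}\mathcal{N}^{\calP}(\rho\,\bq^{\bff})=\bQ^{\bff}$ and summing the full Neumann series, controlling the tail with two separate ingredients: the $\bbL^{2}\!\to\!\bbL^{2}$ norm of $\mathcal{N}^{\calP}$ obtained from the spectral resolvent bound $\mathrm{dist}(\calP^{2},\sigma(\mathcal{L}_{\lambda,\mu}))=\calP^{2}$, and Lemma~\ref{lem:Qf L2 bound elastic} ($\|\bQ^{\bff}\|_{\bbL^{2}}\lesssim\calP^{-1}\|\bff\|_{\bbH^{-1/2}}$), which is deduced from the single-layer representation $\bQ^{\bff}=\mathbf{SL}^{\calP}(\bff)$ together with the Neumann-trace identity $\partial_{\nu}\mathbf{SL}^{\calP}(\bff)=\bff$ proved in Subsection~\ref{subsec:partial SL=f} via Neumann--Poincar\'e operator theory. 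Your proof instead truncates the expansion explicitly as $\bq^{\bff}=\bQ^{\bff}+\omega^{2}\calW^{\bQ^{\bff}}+\bS$ and derives all the needed estimates from a single Betti/Korn energy identity applied three times: with boundary data it gives $\|\bQ^{\bff}\|_{\bbL^{2}}\lesssim\calP^{-1}\|\bff\|_{\bbH^{-1/2}}$ (replacing Lemma~\ref{lem:Qf L2 bound elastic} and the whole layer-potential machinery), and with interior source and homogeneous traction it gives $\|\bu\|_{\bbL^{2}}\lesssim\calP^{-2}\|\bF\|_{\bbL^{2}}$ and $\|\bu\|_{\bbH^{1}}\lesssim\calP^{-1}\|\bF\|_{\bbL^{2}}$ (replacing the spectral-distance argument for $\mathcal{N}^{\calP}$). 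Chained in the order $\bQ^{\bff}\to\mathbf{R}\to\bS$ these reproduce exactly the $\calP^{-1}\cdot\calP^{-2}\cdot\calP^{-1}=\calP^{-4}$ bookkeeping of the paper's Neumann-series tail. One minor caution: the Korn constant is uniform in $\calP$ only after absorbing the lower-order term into $\calP^{2}\|\bu\|_{\bbL^{2}}^{2}$, which is harmless for $\calP\ge 1$; and your absorption step requires $\omega^{2}\|\rho\|_{\bbL^\infty}\calP^{-2}<\tfrac12$, which is exactly the paper's smallness condition \eqref{eq:eta-small}. The net effect is a shorter, more self-contained argument that does not invoke spectral theory of $\mathcal{L}_{\lambda,\mu}$ or the single-layer duality from Subsection~\ref{subsec:partial SL=f}, at the modest cost of writing one extra intermediate PDE (for $\mathbf{R}$) rather than one master integral equation.
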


{

Therefore, knowledge of $\Lambda_{\calP}(\bff)$ determines $\bQ^{\bff}$ and hence allows one to construct $\calW^{\bQ^{\bff}}$ for every $\bff\in\bbH^{-1/2}(\partial\Omega)$. Moreover, the remainder term defines a bounded operator from $\bbH^{-1/2}(\partial\Omega)$ to $\bbH^{1/2}(\partial\Omega)$ with norm $O(\calP^{-4})$, i.e.,
\[
\|R(\bff)\|_{\bbH^{1/2}(\partial\Omega)} \le C\,\|\bff\|_{\bbH^{-1/2}(\partial\Omega)}\,\calP^{-4}.
\]
The proof is based on a Lippmann--Schwinger representation, which decomposes the field into the solution of the homogeneous problem and a perturbative correction. The perturbative smallness is quantified via spectral and scaling estimates for the Newtonian volume potential associated with the homogeneous operator, combined with Calder\'on--Zygmund-type bounds.
}

\subsubsection{Reconstruction of $\rho(\bx)$ from the linearisation of $\Lambda_\calP (\cdot)$: Theorem \ref{thm:CGO}}\label{subsec:1.2.3}

The following result provides an explicit reconstruction procedure for the density $\rho(\bx)$ from the linearized operator $\Lambda_{\mathcal{P}}(\cdot)$.

\begin{thm}\label{thm:CGO}
Let $\boldsymbol{\xi}\in\mathbb{Z}^3$ be any nonzero vector and choose an orthonormal basis $\{\mathbf{e}_{1},\mathbf{e}_{2},\mathbf{e}_{3}\}$ of $\mathbb{R}^{3}$ such that
\[
\mathbf{e}_{1} := \frac{\boldsymbol{\xi}}{|\boldsymbol{\xi}|}.
\]
With respect to this basis, define
\begin{align}\label{eq:zeta1 def in thm}
\boldsymbol{\zeta}_{1}
= -\frac{|\boldsymbol{\xi}|}{2} \mathbf{e}_{1}
  + \rmi \sqrt{ t^2 - k_s^2 + \frac{|\boldsymbol{\xi}|^{2}}{4}} \,\mathbf{e}_{2}
  + t \mathbf{e}_3,
\qquad
\boldsymbol{\eta}_1 = \mathbf{e}_1 + \frac{|\boldsymbol{\xi}|}{2 t} \mathbf{e}_2,
\end{align}
We now define $\bQ^\bff(\cdot)$ by
\begin{align}\label{eq:Qf def in th3}
\bQ^\bff(\bx)
:= e^{\rmi \boldsymbol{\zeta}_1 \cdot \bx} \big( \boldsymbol{\eta}_1 + \bF_1(\bx) \big),
\quad \bx \in \mathbb{R}^3,
\end{align}
where $\mathbf{F}_1(\bx)$ is the solution of
\begin{align}
& (\lambda+\mu)\Big[
     \nabla(\nabla\cdot\bF_1)
     + \rmi \boldsymbol{\zeta}_1 (\nabla\cdot\bF_1)
     + \rmi \nabla( \boldsymbol{\zeta}_1\cdot\bF_1)
     - \boldsymbol{\zeta}_1( \boldsymbol{\zeta}_1\cdot\bF_1)
   \Big] \notag\\
&\quad + \mu\Big(
     \Delta\bF_1
     + 2\rmi\,(\boldsymbol{\zeta}_1\cdot\nabla)\bF_1
     - k_s^2 \bF_1\Big) - \mathcal{P}^2\bF_1
  = \big(\mu k_s^2 + \mathcal{P}^2\big) \boldsymbol{\eta}_1,
\qquad \bx\in\mathbb{R}^3.
\end{align}
In an analogous way, introduce $\bQ^\bg(\cdot)$ via
\begin{align}\label{eq:Qg def in th3}
\bQ^\bg(\bx)
:= e^{\rmi \boldsymbol{\zeta}_2 \cdot \bx} \big(\boldsymbol{\eta}_2 + \bF_2(\bx)\big),
\quad \bx \in \mathbb{R}^3,
\end{align}
where $\boldsymbol{\zeta}_2$ and $\boldsymbol{\eta}_2$ are defined by
\begin{align}\label{eq:zeta2 def}
\boldsymbol{\zeta}_{2}
= -\frac{|\boldsymbol{\xi}|}{2} \mathbf{e}_{1}
  - \rmi \sqrt{ t^2 - k_s^2 + \frac{|\boldsymbol{\xi}|^{2}}{4}} \,\mathbf{e}_{2}
  - t \mathbf{e}_3,
\qquad
\boldsymbol{\eta}_2 = \mathbf{e}_1 - \frac{|\boldsymbol{\xi}|}{2 t} \mathbf{e}_2.
\end{align}
Here $\bF_2(\cdot)$ solves
\begin{align}\label{eq:F2 def1}
& (\lambda+\mu)\Big[
     \nabla(\nabla\cdot\bF_2)
     + \rmi \boldsymbol{\zeta}_2(\nabla\cdot\bF_2)
     + \rmi \nabla( \boldsymbol{\zeta}_2\cdot\bF_2)
     - \boldsymbol{\zeta}_2( \boldsymbol{\zeta}_2\cdot\bF_2)
   \Big] \notag\\
&\quad + \mu\Big(
     \Delta\bF_2
     + 2\rmi\,(\boldsymbol{\zeta}_2 \cdot\nabla)\bF_2
     - k_s^2 \bF_2 \Big) - \mathcal{P}^2\bF_2
  = \big(\mu k_s^2 + \mathcal{P}^2\big) \boldsymbol{\eta}_2,
\qquad \bx\in\mathbb{R}^3.
\end{align}
If we choose
\begin{align}
t = \sqrt{\mathcal{P}^{4+2\iota}+k_s^2}, \qquad \iota > 0,
\end{align}
in the definitions \eqref{eq:zeta1 def in thm} and \eqref{eq:zeta2 def} of $\boldsymbol{\zeta}_1$ and $\boldsymbol{\zeta}_2$, respectively, then the density $\rho(\bx)$ admits the approximate reconstruction
\begin{align}\label{eq: rho x in thm}
\rho(\bx)
= (2\pi)^{-3} \sum_{\boldsymbol{\xi}\in\mathbb{Z}^3}
\big\langle \mathcal{W}^{\bQ^\bff}_{\boldsymbol{\xi}}, \bg \big\rangle_{\bbH^{1/2}(\partial\Omega) \times \bbH^{-1/2}(\partial\Omega)}\,
e^{\rmi \boldsymbol{\xi} \cdot \bx}  
+ \mathcal{O}(\mathcal{P}^{-\iota}),
\end{align}
where the convergence holds in the $\bbL^2(\Omega)$ sense as $\mathcal{P}\to\infty$.
\end{thm}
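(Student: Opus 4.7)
The plan is to pair the linearisation \eqref{eq:Lambda P f-Qf in thm} of Theorem~\ref{thm:linearisation} against the companion CGO solution $\bQ^{\bg}$ from \eqref{eq:Qg def in th3}, recognise the resulting volume integral as a Fourier coefficient of $\rho$ (thanks to the identity $\boldsymbol{\zeta}_{1}+\boldsymbol{\zeta}_{2}=-\boldsymbol{\xi}$), and then invert a Fourier series on a cube containing $\Omega$. Throughout, $\bff:=\partial_{\boldsymbol{\nu}}\bQ^{\bff}|_{\partial\Omega}$ and $\bg:=\partial_{\boldsymbol{\nu}}\bQ^{\bg}|_{\partial\Omega}$ play the role of Neumann traces of two entire solutions of $(\mathcal{L}_{\lambda,\mu}-\calP^{2})\bQ=0$ in $\mathbb{R}^{3}$; this property is precisely what the equations for $\bF_{1}$ and $\bF_{2}$ together with the identity $\boldsymbol{\zeta}_{j}\!\cdot\!\boldsymbol{\zeta}_{j}=k_{s}^{2}$ enforce.

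First, I would identify the boundary pairing $\langle\gamma(\calW^{\bQ^{\bff}}),\bg\rangle$ as a volume integral against $\rho$. Since $\calW^{\bQ^{\bff}}$ satisfies \eqref{eq:WQf def} with vanishing Neumann trace and $\bQ^{\bg}$ satisfies $(\mathcal{L}_{\lambda,\mu}-\calP^{2})\bQ^{\bg}=0$ in $\Omega$, Green's second identity for the shifted Lam\'e operator collapses to
\begin{equation*}
\langle\gamma(\calW^{\bQ^{\bff}}),\bg\rangle_{\bbH^{1/2}(\partial\Omega)\times\bbH^{-1/2}(\partial\Omega)} = \int_{\Omega}\rho(\bx)\,\bQ^{\bff}(\bx)\!\cdot\!\bQ^{\bg}(\bx)\,d\bx .
\end{equation*}
Combined with \eqref{eq:Lambda P f-Qf in thm}, this makes $\langle\calW^{\bQ^{\bff}}_{\boldsymbol{\xi}},\bg\rangle$ retrievable from the boundary operator $\Lambda_{\calP}(\bff)$ modulo the explicitly known free trace $\gamma(\bQ^{\bff})$ and a remainder of order $O(\calP^{-4})$.

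Next, inserting the CGO ans\"atze \eqref{eq:Qf def in th3} and \eqref{eq:Qg def in th3}, and using $\boldsymbol{\zeta}_{1}+\boldsymbol{\zeta}_{2}=-\boldsymbol{\xi}$ together with $\boldsymbol{\eta}_{1}\!\cdot\!\boldsymbol{\eta}_{2}=1-|\boldsymbol{\xi}|^{2}/(4t^{2})$, a direct computation yields
\begin{equation*}
\bQ^{\bff}\!\cdot\!\bQ^{\bg}=e^{-\rmi\boldsymbol{\xi}\cdot\bx}\Big(1-\tfrac{|\boldsymbol{\xi}|^{2}}{4t^{2}}+\boldsymbol{\eta}_{1}\!\cdot\!\bF_{2}+\bF_{1}\!\cdot\!\boldsymbol{\eta}_{2}+\bF_{1}\!\cdot\!\bF_{2}\Big).
\end{equation*}
With the tuning $t=\sqrt{\calP^{4+2\iota}+k_{s}^{2}}$, the deterministic correction $|\boldsymbol{\xi}|^{2}/(4t^{2})$ is $O(\calP^{-4-2\iota})$. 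The residual fields $\bF_{j}$ solve elliptic Lam\'e-type CGO systems with leading transport symbol $2\rmi\boldsymbol{\zeta}_{j}\!\cdot\!\nabla$ (with $|\boldsymbol{\zeta}_{j}|\sim t$) and constant forcing $(\mu k_{s}^{2}+\calP^{2})\boldsymbol{\eta}_{j}=O(\calP^{2})$; the Sylvester--Uhlmann/Eskin-type estimate for CGO solutions, adapted to the coupled Lam\'e operator with a large complex parameter, yields $\|\bF_{j}\|_{\bbL^{2}(\Omega)}\lesssim \calP^{2}/t\sim \calP^{-\iota}$. Writing $\widehat{\rho}(\boldsymbol{\xi}):=\int_{\Omega}\rho(\bx)e^{-\rmi\boldsymbol{\xi}\cdot\bx}\,d\bx$ for the Fourier coefficient of the zero-extension of $\rho$ to a suitable cube, substitution gives $\int_{\Omega}\rho\,\bQ^{\bff}\!\cdot\!\bQ^{\bg}\,d\bx=\widehat{\rho}(\boldsymbol{\xi})+O(\calP^{-\iota})$. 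Absorbing the linearisation remainder (so that effectively $\iota\le 4$), one has $\langle\calW^{\bQ^{\bff}}_{\boldsymbol{\xi}},\bg\rangle=\widehat{\rho}(\boldsymbol{\xi})+O(\calP^{-\iota})$, and Fourier inversion on the cube delivers \eqref{eq: rho x in thm} with $\bbL^{2}(\Omega)$ convergence as $\calP\to\infty$ via Plancherel and $\rho\in\bbW^{1,\infty}(\Omega)$.

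The main obstacle I anticipate is the CGO remainder estimate for the vector-valued Lam\'e system shifted by $-\calP^{2}$: the coupling between compressional and shear modes through the $(\lambda+\mu)\nabla(\nabla\!\cdot\!)$ term precludes a direct scalar Sylvester--Uhlmann construction, so the pseudodifferential operator with large complex parameter $\boldsymbol{\zeta}_{j}$ must be inverted in a way uniform in $t$. A natural route is to split $\bF_{j}$ into its divergence and curl parts and bound each in weighted $L^{2}$ (or equivalently through Faddeev-type Green's function estimates), and then verify that the gain $\calP^{2}/t$ survives the mode coupling. A secondary bookkeeping issue is that the $O(\calP^{-\iota})$ coefficient-wise error must aggregate into an $\bbL^{2}(\Omega)$ error of the same order after summation over $\boldsymbol{\xi}\in\mathbb{Z}^{3}$; this is handled by a standard high-frequency truncation that exploits the decay of $\widehat{\rho}(\boldsymbol{\xi})$ under $\rho\in\bbW^{1,\infty}(\Omega)$, combined with a Plancherel-based tail bound.
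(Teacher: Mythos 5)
Your proposal follows essentially the same route as the paper: Green's identity for the shifted Lam\'e operator to convert $\langle\calW^{\bQ^{\bff}},\bg\rangle$ into the volume integral $\int_\Omega\rho\,\bQ^{\bff}\!\cdot\!\bQ^{\bg}$, the CGO ansatz with $\boldsymbol{\zeta}_1+\boldsymbol{\zeta}_2=-\boldsymbol{\xi}$ and $\boldsymbol{\eta}_1\cdot\boldsymbol{\eta}_2=1-|\boldsymbol{\xi}|^2/(4t^2)$, the remainder bound $\|\bF_j\|_{\bbL^2(\Omega)}\lesssim\calP^2/t$ (which the paper imports from \cite[Theorem 3.1]{H02} rather than re-derives), the tuning $t=\sqrt{\calP^{4+2\iota}+k_s^2}$ to make this $O(\calP^{-\iota})$, and Fourier inversion on $\mathbb{Z}^3$. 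Two of your side remarks are worth noting. First, your observation that the $O(\calP^{-4})$ linearisation error from Theorem~\ref{thm:linearisation} caps the useful range to $\iota\le 4$ concerns only the step of replacing $\calW^{\bQ^{\bff}}$ by the measured quantity $\Lambda_{\calP}(\bff)-\gamma(\bQ^{\bff})$; the theorem as stated is phrased in terms of $\calW^{\bQ^{\bff}}_{\boldsymbol{\xi}}$ itself, so that constraint is not imposed in the paper--but it is a genuine caveat once the formula is applied to boundary data. Second, your proposed Plancherel plus high-frequency-truncation argument for the $\bbL^2(\Omega)$ aggregation is actually more careful than the paper's: the paper bounds $\|\mathcal{R}\|_{\bbL^2(\Omega)}^2$ by $\sum_{\boldsymbol{\xi}}\calP^4/(t^2-k_s^2+|\boldsymbol{\xi}|^2/4)$, whose summand decays only like $|\boldsymbol{\xi}|^{-2}$ and therefore does not converge over $\mathbb{Z}^3$; one must instead exploit the $\boldsymbol{\xi}$-decay of the Fourier coefficients of $\rho\chi_\Omega$ (or a truncation/Plancherel argument of the kind you sketch). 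So your route is the same, but you have correctly identified, and proposed a repair for, the weak spot in the paper's summability step.
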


The conclusion of Theorem \ref{thm:CGO} provides a reconstruction procedure for $\rho(\cdot)$. Here, $\mathcal{W}^{\bQ^\bff}_{\boldsymbol{\xi}}$ can be obtained from \eqref{eq:Lambda P f-Qf in thm}. In this remark, we present an alternative choice of CGO solutions in order to reconstruct the Fourier transform of $\rho(\bx)$ and thereby invert the density $\rho(\bx)$. The core idea is to test the equation satisfied by $\mathcal{W}^{\bQ^\bff}$ against a suitably chosen solution of the adjoint Lam\'e-type operator. 

{
\begin{rem}
In this paper we work with CGO solutions of the form \eqref{eq:Qf def in th3} and
\eqref{eq:Qg def in th3}. Their construction follows standard arguments; see,
for instance, \cite[Theorem~2.3]{BFP18} or our earlier work \cite[Lemma~2.1]{DGT25}.
Accordingly, we omit the details of the CGO construction. Moreover, in the proof of the theorem in Section~\ref{sec:Theorem 3 CGO}, the
estimates for the terms involving $\bF_1(\bx)$, such as \eqref{eq:F1 estimation}
and \eqref{eq:nabla F1 estimation}, can be obtained by the same method as in
\cite[Theorem~2.3]{BFP18}, and we do not repeat the argument here.
\end{rem}
}

\begin{rem} \label{rem:CGO_inversion}
Let $\bv$ be any solution to
\[
(\mathcal{L}_{\lambda,\mu}- \mathcal{P}^{2})\bv = \mathbf{0} \quad \text{in } \Omega.
\]
Applying Green's formula (integration by parts) to the pair $(\mathcal{W}^{\bQ^{\bff}},\bv)$ over $\Omega$ yields
\begin{align}\label{eq:rho Qf vx}
\int_{\Omega} \rho(\bx)\, \bQ^{\bff}(\bx) \cdot \bv(\bx)\, \rmd \bx
= \int_{\partial \Omega} \mathcal{W}^{\bQ^{\bff}}(\bx) \, \partial_{\nu} \bv(\bx) \, \rmd \mathcal{S}(\bx).
\end{align}

\noindent
Fix any nonzero vector $\boldsymbol{\xi} \in \mathbb{R}^{3}$ and choose an orthonormal basis $\{\mathbf{e}_{1},\mathbf{e}_{2},\mathbf{e}_{3}\}$ of $\mathbb{R}^{3}$ such that
\[
\mathbf{e}_{1} := \frac{\boldsymbol{\xi}}{|\boldsymbol{\xi}|}.
\]
With respect to this basis, we define the complex vectors
\begin{subequations}
\begin{align}
\boldsymbol{\zeta}_1 &=- \frac{|\boldsymbol{\xi}|}{2} \mathbf{e}_1 + \rmi \mathbf{e}_2 \sqrt{\frac{\mathcal{P}^2}{\mu} + \frac{|\boldsymbol{\xi}|^2}{4}} ,\qquad
\boldsymbol{\eta}_1 = \rmi \sqrt{ 1+\frac{4\mathcal{P}^2}{\mu |\boldsymbol{\xi}|^2} }\, \mathbf{e}_1 + \mathbf{e}_2 , \\
\boldsymbol{\zeta}_2 &=- \frac{|\boldsymbol{\xi}|}{2} \mathbf{e}_1 - \rmi \mathbf{e}_2 \sqrt{\frac{\mathcal{P}^2}{\mu} + \frac{|\boldsymbol{\xi}|^2}{4}} ,\qquad
\boldsymbol{\eta}_2 = \rmi \sqrt{ 1+\frac{4\mathcal{P}^2}{\mu |\boldsymbol{\xi}|^2} }\, \mathbf{e}_1 - \mathbf{e}_2. \label{eq:zeta2 eta2 def}
\end{align}
\end{subequations}
We explicitly define the test functions by
\begin{align}
\bQ^{\bff}(\bx) := \boldsymbol{\eta}_1 \, e^{ \rmi \boldsymbol{\zeta}_1 \cdot \bx}, 
\qquad
\bv(\bx) := \boldsymbol{\eta}_2 \, e^{\rmi \boldsymbol{\zeta}_2 \cdot \bx}.
\end{align}
A direct computation shows that
\begin{align}
\boldsymbol{\zeta}_1 \cdot \boldsymbol{\zeta}_1 = \boldsymbol{\zeta}_2 \cdot \boldsymbol{\zeta}_2 = -\frac{\mathcal{P}^2}{\mu}, \quad  
\boldsymbol{\zeta}_1 \cdot \boldsymbol{\eta}_1 = \boldsymbol{\zeta}_2 \cdot \boldsymbol{\eta}_2 = 0,
\end{align}
which implies that
\[
(\mathcal{L}_{\lambda,\mu}- \mathcal{P}^{2}) \bQ^\bff = \mathbf{0} ,
\qquad
(\mathcal{L}_{\lambda,\mu}- \mathcal{P}^{2})\bv = \mathbf{0} 
\quad \text{in } \mathbb{R}^{3}.
\]
Furthermore,
\begin{align}
\boldsymbol{\zeta}_{1} + \boldsymbol{\zeta}_{2} = -|\boldsymbol{\xi}| \mathbf{e}_{1} = -\boldsymbol{\xi}.
\end{align}
With this specific choice of CGO solutions, the dot product in the volume integral \eqref{eq:rho Qf vx} simplifies to
\begin{align}
\bQ^{\bff}(\bx) \cdot \bv(\bx) 
= (\boldsymbol{\eta}_1 \cdot \boldsymbol{\eta}_2) \, e^{ \rmi(\boldsymbol{\zeta}_{1}+\boldsymbol{\zeta}_{2}) \cdot \bx} 
= \left(-2-\frac{4\mathcal{P}^2}{\mu|\boldsymbol{\xi}|^2}\right) e^{ -\rmi \boldsymbol{\xi} \cdot \bx}.
\end{align}
Substituting this identity into \eqref{eq:rho Qf vx}, we obtain
\begin{align}\label{eq:Fourier-rho-relation}
\left(-2-\frac{4\mathcal{P}^2}{\mu|\boldsymbol{\xi}|^2}\right) \int_{\Omega}\rho(\bx) e^{-\rmi \boldsymbol{\xi} \cdot \bx}\, \rmd \bx
= \int_{\partial \Omega} \mathcal{W}^{\bQ^{\bff}}(\bx) \, \partial_{\nu} \big(  \boldsymbol{\eta}_2 e^{\rmi \boldsymbol{\zeta}_2 \cdot \bx} \big)\, \rmd \mathrm{S}(\bx).
\end{align}

Combining the definitions of $\boldsymbol{\eta}_2$ and $\boldsymbol{\xi}$ in \eqref{eq:zeta2 eta2 def} with \eqref{eq:partial nu def}, we obtain
\begin{align}
    \partial_{\boldsymbol{\nu}}\big( \boldsymbol{\eta}_2 e^{\rmi \boldsymbol{\zeta}_2\cdot\bx}\big)
= \rmi\mu\, e^{\rmi\boldsymbol{\zeta}_2\cdot\bx}
\Big[ (\boldsymbol{\zeta}_2\!\cdot\!\boldsymbol{\nu})\,\boldsymbol{\eta}_2
      + (\boldsymbol{\eta}_2\!\cdot\!\boldsymbol{\nu})\,\boldsymbol{\zeta}_2\Big].
\end{align}
Substituting this expression into \eqref{eq:Fourier-rho-relation} yields
\begin{align}
\left(-2-\frac{4\mathcal{P}^2}{\mu|\boldsymbol{\xi}|^2}\right) \int_{\Omega}\rho(\bx) e^{-\rmi \boldsymbol{\xi} \cdot \bx}\, \rmd \bx
= \rmi \mu \int_{\partial \Omega} \mathcal{W}^{\bQ^{\bff}}(\bx)\,
e^{\rmi\boldsymbol{\zeta}_2\cdot\bx}
\Big[ (\boldsymbol{\zeta}_2\!\cdot\!\boldsymbol{\nu})\,\boldsymbol{\eta}_2
      + (\boldsymbol{\eta}_2\!\cdot\!\boldsymbol{\nu})\,\boldsymbol{\zeta}_2\Big] \rmd \mathrm{S}(\bx).
\end{align}
 Since the $-2-\frac{4\mathcal{P}^2}{\mu|\boldsymbol{\xi}|^2}$ is non-vanishing for $\boldsymbol{\xi} \neq \mathbf{0}$, the boundary data on the right-hand side determine the Fourier transform of $\rho(\cdot)$ for all $\boldsymbol{\xi} \in \mathbb{R}^3 \setminus \{\mathbf{0}\}$. By the inversion formula for the Fourier transform, we obtain
\begin{align}\label{eq:rho x in remark}
    \rho(\bx)
    = - \frac{\rmi \mu^2}{(2\pi)^3}
      \int_{\mathbb{R}^3}
      \frac{|\boldsymbol{\xi}|^2}{2\mu|\boldsymbol{\xi}|^2+4\mathcal{P}^2}
      \left(
      \int_{\partial\Omega} \mathcal{W}^{\bQ^{\bff}}(\by) \,
      e^{\rmi\boldsymbol{\zeta}_2\cdot\by}
      \Big[ (\boldsymbol{\zeta}_2\!\cdot\!\boldsymbol{\nu}(\by))\,\boldsymbol{\eta}_2
            + (\boldsymbol{\eta}_2\!\cdot\!\boldsymbol{\nu}(\by))\,\boldsymbol{\zeta}_2\Big]
      \rmd \mathrm{S}(\by)
      \right) e^{\rmi\boldsymbol{\xi}\cdot\bx}  \rmd \boldsymbol{\xi}.
\end{align}

\end{rem}

\section{Proof of Theorem \ref{thm:linearisation} }\label{sec:Theorem 2}

The purpose of this section is to prove Theorem \ref{thm:linearisation}, that is, to obtain a first-order linearisation of the N--D map $\Lambda_{\mathcal{P}}(\cdot)$ associated with the equivalent background medium. Let $\bq^{\bff}$ solve the Lippmann–Schwinger-type integral equation
\begin{align}
\bq^{\bff}(\bx)-\omega^{2}\,\mathcal{N}^{\calP}\!\big(\rho(\cdot)\,\bq^{\bff}\big)(\bx)=\bQ^{\bff}(\bx), \qquad \bx\in\Omega,
\end{align}
where the Newtonian potential $\mathcal{N}^{\calP}:\bbL^{2}(\Omega)\to\bbH^{2}(\Omega)$ is
\begin{align}\label{eq:Newtonian potential NP in section2}
\mathcal{N}^{\calP}(\bg)(\bx)=\int_{\Omega} \Gamma_{\calP}(\bx,\by)\,\bg(\by)\,\rmd \by,\qquad \bx\in\Omega,
\end{align}
and $\Gamma_{\calP}$ is determined by
\begin{align}\label{eq:Gamma P def}
\begin{cases}
(\mathcal{L}_{\lambda,\mu}- \calP^{2})\,\Gamma_{\calP}(\bx,\by)
= -\delta_{\by}(\bx)\,\mathcal{I} & \text{in }\Omega,\\[2mm]
\partial_{\nu_{\bx}} \Gamma_{\calP}(\bx,\by)=\mathbf{0} & \text{on }\partial\Omega.
\end{cases}
\end{align}

\noindent
{ By Lemma~\ref{lem:NP-spectral}, we conclude that $\mathcal{N}^{\calP}$ is a bounded, self-adjoint, positive, compact operator.} Iterating the integral equation yields, on the boundary $\partial\Omega$,
\begin{align}\label{eq:qf-Qf=}
\bq^{\bff}-\bQ^{\bff}
=\omega^{2}\,\gamma \calN^{\calP}\!\big( \rho(\cdot)\,\bQ^{\bff} \big)
+\sum_{j\ge 2}\big(\calK_{j}\otimes(\rho(\cdot))\big),
\end{align}
where, for instance,
\begin{align}
\calK_{2}\otimes(\rho(\cdot))
&=(\omega^{2})^{2}\,\gamma \calN^{\calP}\!\big(\rho(\cdot)\,\calN^{\calP}(\rho(\cdot)\bQ^{\bff})\big),\\
\calK_{3}\otimes(\rho(\cdot))
&=(\omega^{2})^{3}\,\gamma \calN^{\calP}\!\big(\rho(\cdot)\,\calN^{\calP}\big(\rho(\cdot)\,\calN^{\calP}(\rho(\cdot)\bQ^{\bff})\big)\big),
\end{align}
and, in general,
\begin{align}
\calK_{j}\otimes(\rho(\cdot))
&=(\omega^{2})^{j}\,\gamma
\int_{\Omega}\!\cdots\!\int_{\Omega}
\Gamma_{\calP}(\cdot,\by_{1}) \rho(\by_{1})\cdots 
\Gamma_{\calP}(\by_{j-1},\by_{j}) \rho(\by_{j})\,\bQ^{\bff}(\by_{j})\,\rmd \by_{1}\cdots \rmd \by_{j}.
\end{align}

Throughout, we denote by \(\calN^{\calP}\) the Newtonian operator introduced in \eqref{eq:Newtonian potential NP in section2}. We use \(\gamma\) for the trace operator \(\bbH^{s}(\Omega)\to \bbH^{s-1/2}(\partial\Omega)\) valid for all \(s\ge \tfrac{1}{2}\), where \(\Omega\) is a smooth domain. The lemma below will be employed to analyze the convergence of the preceding series in the \(\bbH^{1/2}(\partial\Omega)\) norm.

\begin{lem}\label{lem:NP norm lem} 
The operator $\calN^{\calP}$ satisfies
\begin{align}
\|\calN^{\calP}\|_{\mathcal{L}(\bbL^{2}(\Omega);\bbL^{2}(\Omega))}&=\mathcal{O}(\calP^{-2}),    \label{eq:NP norm}  \\
\|\gamma \calN^{\calP}\|_{\mathcal{L}(\bbL^{2}(\Omega);\bbH^{1/2}(\partial\Omega))}&=\mathcal{O}(\calP^{-1}). \label{eq:gamma NP norm}
\end{align}
\end{lem}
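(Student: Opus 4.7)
My plan is to prove both estimates from a single energy identity for the solution operator that defines $\calN^{\calP}$. By \eqref{eq:Gamma P def} and \eqref{eq:Newtonian potential NP in section2}, for every $\bg\in\bbL^{2}(\Omega)$ the field $\bw:=\calN^{\calP}(\bg)$ is the unique weak solution of
\begin{equation*}
(\mathcal{L}_{\lambda,\mu}-\calP^{2})\bw=-\bg\ \text{ in }\Omega,\qquad \partial_{\nu}\bw=\mathbf{0}\ \text{ on }\partial\Omega.
\end{equation*}
Well-posedness in $\bbH^{1}(\Omega)^{3}$ for $\calP\ge 1$ (the relevant regime, since we will let $\calP\to\infty$) follows from Lax--Milgram applied to the bilinear form $a(\bw,\bv)=\int_{\Omega}\sigma(\bw){:}\nabla\bv+\calP^{2}\int_{\Omega}\bw\cdot\bv$, whose coercivity on $\bbH^{1}(\Omega)^{3}$ is immediate from Korn's second inequality on the $C^{2}$-domain $\Omega$ once the term $\calP^{2}\|\bw\|_{\bbL^{2}}^{2}$ is present.

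I would next take $\bv=\bw$ as a test function and integrate by parts. The Neumann datum kills the boundary contribution in $\int_{\Omega}(\mathcal{L}_{\lambda,\mu}\bw)\cdot\bw=-\int_{\Omega}\sigma(\bw){:}\nabla\bw$, and since $\sigma(\bw){:}\nabla\bw=\lambda|\nabla\!\cdot\!\bw|^{2}+2\mu|\nabla^{s}\bw|^{2}\ge 0$, one obtains the energy identity
\begin{equation*}
\int_{\Omega}\sigma(\bw){:}\nabla\bw\ +\ \calP^{2}\|\bw\|_{\bbL^{2}(\Omega)}^{2}\ =\ \int_{\Omega}\bg\cdot\bw.
\end{equation*}
Discarding the (nonnegative) strain term and applying Cauchy--Schwarz yields $\calP^{2}\|\bw\|_{\bbL^{2}}^{2}\le\|\bg\|_{\bbL^{2}}\|\bw\|_{\bbL^{2}}$, hence $\|\bw\|_{\bbL^{2}}\le \calP^{-2}\|\bg\|_{\bbL^{2}}$, which is \eqref{eq:NP norm}.

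For the boundary estimate \eqref{eq:gamma NP norm} I would instead apply Young's inequality to the right-hand side of the energy identity in the sharp form $\int_{\Omega}\bg\cdot\bw\le\tfrac{1}{2}\calP^{-2}\|\bg\|_{\bbL^{2}}^{2}+\tfrac{1}{2}\calP^{2}\|\bw\|_{\bbL^{2}}^{2}$ and absorb the last term into the left-hand side; this keeps the strain piece and delivers $2\mu\|\nabla^{s}\bw\|_{\bbL^{2}}^{2}\lesssim \calP^{-2}\|\bg\|_{\bbL^{2}}^{2}$ alongside the already proven $\|\bw\|_{\bbL^{2}}^{2}\lesssim \calP^{-4}\|\bg\|_{\bbL^{2}}^{2}$. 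Korn's second inequality $\|\nabla\bw\|_{\bbL^{2}}^{2}\le C\bigl(\|\nabla^{s}\bw\|_{\bbL^{2}}^{2}+\|\bw\|_{\bbL^{2}}^{2}\bigr)$ then upgrades these bounds to $\|\bw\|_{\bbH^{1}(\Omega)}\lesssim \calP^{-1}\|\bg\|_{\bbL^{2}}$ for $\calP\ge 1$, and continuity of the trace operator $\gamma\colon \bbH^{1}(\Omega)\to\bbH^{1/2}(\partial\Omega)$ finishes the proof.

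I do not expect serious technical obstacles. The only delicate points are (i) verifying that the integration by parts absorbs the Neumann datum with the correct signs (which is automatic from $\partial_{\nu}\bw=\mathbf{0}$ and the symmetry of $\sigma(\bw)$) and (ii) invoking Korn's inequality in the right form (available thanks to the $C^{2}$-regularity of $\Omega$ assumed in Theorem~\ref{thm:N--D}); no spectral information on $\calN^{\calP}$ beyond what is recorded in Lemma~\ref{lem:NP-spectral} is used, and the decay rates $\calP^{-2}$ and $\calP^{-1}$ are sharp in view of the explicit dependence of $(\mathcal{L}_{\lambda,\mu}-\calP^{2})^{-1}$ on $\calP$.
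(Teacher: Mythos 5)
Your proof is correct, and it differs from the paper's in one place worth noting. For the bound $\|\calN^{\calP}\|_{\mathcal{L}(\bbL^{2};\bbL^{2})}=\mathcal{O}(\calP^{-2})$, the paper invokes the spectral theorem for the self-adjoint Neumann Lam\'e operator, identifying $\calN^{\calP}$ with the resolvent at $\calP^{2}$ and using $\|\mathcal{R}(\calP^{2};\mathcal{L}_{\lambda,\mu})\|\le\dist(\calP^{2},\sigma(\mathcal{L}_{\lambda,\mu}))^{-1}=\calP^{-2}$; you instead extract the same bound directly from the energy identity plus Cauchy--Schwarz, which is more elementary and avoids any discussion of $\sigma(\mathcal{L}_{\lambda,\mu})$. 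For the trace bound both routes reduce to the same energy identity; the paper substitutes the already-proven $\bbL^{2}$ estimate into the right-hand side, whereas you use Young's inequality and absorb the $\tfrac12\calP^{2}\|\bw\|^{2}$ term, which is an equally valid split. Your explicit appeal to Korn's second inequality $\|\nabla\bw\|^{2}\lesssim\|\nabla^{s}\bw\|^{2}+\|\bw\|^{2}$ is the right form for a pure Neumann problem (the Lam\'e form alone vanishes on rigid motions, so it cannot control $\|\nabla\bw\|^{2}$ by itself), and it cleanly yields $\|\bw\|_{\bbH^{1}}\lesssim\calP^{-1}\|\bg\|_{\bbL^{2}}$ for $\calP\ge 1$; the paper states a coercivity bound $a(\widetilde{\bu},\widetilde{\bu})\ge c_{0}\|\nabla\widetilde{\bu}\|^{2}$ which, read literally, fails on rigid motions, so your version is actually the more careful one, even if the paper's intent (to combine the two pieces of the energy identity) is the same.
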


\begin{proof}
    See Subsection \ref{subsec:NP norm proof}.
\end{proof}

\noindent
Using these bounds one finds, for $j=2,3$,
\begin{align}
\|\calK_{2}\otimes(\rho(\cdot))\|_{\bbH^{1/2}(\partial\Omega)}
&\le (\omega^{2})^{2}\,\|\gamma \calN^{\calP}\|_{\mathcal{L}(\bbL^{2}(\Omega);\bbH^{1/2}(\partial\Omega))} \|\rho(\cdot)\|_{\bbL^{\infty}(\Omega)}^{2}\,\|\calN^{\calP}\|_{\mathcal{L}(\bbL^{2}(\Omega);\bbL^{2}(\Omega))} \|\bQ^{\bff}\|_{\bbL^{2}(\Omega)},  \notag \\
\|\calK_{3}\otimes(\rho(\cdot))\|_{\bbH^{1/2}(\partial\Omega)}
&\le (\omega^{2})^{3}\,\|\gamma \calN^{\calP}\|_{\mathcal{L}(\bbL^{2}(\Omega);\bbH^{1/2}(\partial\Omega))}\|\rho(\cdot)\|_{\bbL^{\infty}(\Omega)}^{3}\,\|\calN^{\calP}\|_{\mathcal{L}(\bbL^{2}(\Omega);\bbL^{2}(\Omega))}^{2} \|\bQ^{\bff}\|_{\bbL^{2}(\Omega)}. \notag
\end{align}
For any $j\ge 2$,
\begin{align}\notag
    \|\calK_{j}\otimes(\rho(\cdot))\|_{\bbH^{1/2}(\partial\Omega)}
\le \mathcal{C}\,\|\rho(\cdot)\|_{\bbL^{\infty}(\Omega)}
\big(\omega^{2}\|\calN^{\calP}\|_{\mathcal{L}(\bbL^{2}(\Omega);\bbL^{2}(\Omega))}\,\|\rho(\cdot)\|_{\bbL^{\infty}(\Omega)}\big)^{j-1},
\end{align}
where
\begin{align}\label{eq:calC def}
    \mathcal{C}:=\omega^{2}\,\|\bQ^{\bff}\|_{\bbL^{2}(\Omega)}\,\|\gamma \calN^{\calP}\|_{\mathcal{L}(\bbL^{2}(\Omega);\bbH^{1/2}(\partial\Omega))}.
\end{align}

\noindent
Therefore,
\begin{align}\label{eq:sum calKj}
    \Big\|\sum_{j\ge 2}\calK_{j}\otimes(\rho(\cdot))\Big\|_{\bbH^{1/2}(\partial\Omega)}
    &\le \sum_{j\ge 2}\mathcal{C}\,\|\rho(\cdot)\|_{\bbL^{\infty}(\Omega)}
    \big(\omega^{2}\|\calN^{\calP}\|_{\mathcal{L}(\bbL^{2}(\Omega);\bbL^{2}(\Omega))}\|\rho(\cdot)\|_{\bbL^{\infty}(\Omega)}\big)^{j-1} \notag\\
    &= \mathcal{C}\,\|\rho(\cdot)\|_{\bbL^{\infty}(\Omega)}^{2}\,\omega^{2}\,
       \|\calN^{\calP}\|_{\mathcal{L}(\bbL^{2}(\Omega);\bbL^{2}(\Omega))}
       \sum_{j\ge 0}\eta^{\,j},
\end{align}
where
\begin{align}\notag
    \eta:=\omega^{2}\|\rho(\cdot)\|_{\bbL^{\infty}(\Omega)}\|\calN^{\calP}\|_{\mathcal{L}(\bbL^{2}(\Omega);\bbL^{2}(\Omega))}.
\end{align}

\noindent
In particular, if
\begin{equation}\label{eq:eta-small}
    \eta<1, 
\end{equation}
then the geometric series on the right-hand side converges and we obtain
\begin{align}
    \Big\|\sum_{j\ge 2}\calK_{j}\otimes(\rho(\cdot))\Big\|_{\bbH^{1/2}(\partial\Omega)}
    \le \mathcal{C}\,\|\rho(\cdot)\|_{\bbL^{\infty}(\Omega)}^{2}\,\omega^{2}\,\|\calN^{\calP}\|_{\mathcal{L}(\bbL^{2}(\Omega);\bbL^{2}(\Omega))}
    \sum_{j\ge 0}\eta^{\,j}.
\end{align}
By \eqref{eq:NP norm} we have the operator bound, so for $\calP$ sufficiently large—and recalling that $\omega^{2}\|\rho(\cdot)\|_{\bbL^{\infty}(\Omega)}$ is uniformly bounded—the smallness condition \eqref{eq:eta-small} is indeed fulfilled. Moreover, from \eqref{eq:sum calKj} we infer
\begin{align}\label{eq:sum kj rho=O(Qf)}
    \Big\|&\sum_{j\ge 2}\calK_{j}\otimes(\rho(\cdot))\Big\|_{\bbH^{1/2}(\partial\Omega)}
    = \mathcal{O}\!\big(\mathcal{C}\|\calN^{\calP}\|_{\mathcal{L}(\bbL^{2}(\Omega);\bbL^{2}(\Omega))}\big)   \\
    &\stackrel{\eqref{eq:calC def}}{=} \mathcal{O}\!\Big(\|\calN^{\calP}\|_{\mathcal{L}(\bbL^{2}(\Omega);\bbL^{2}(\Omega))} \|\bQ^{\bff}\|_{\bbL^{2}(\Omega)} 
        \|\gamma\calN^{\calP}\|_{\mathcal{L}(\bbL^{2}(\Omega);\bbH^{1/2}(\partial\Omega))}\Big)  
      \stackrel{\text{Lemma \ref{lem:NP norm lem}}}{=}  
     \mathcal{O}\!\Big(\|\bQ^{\bff}\|_{\bbL^{2}(\Omega)} \calP^{-3}\Big). \notag
\end{align}
The next lemma will translate this bound into an estimate that depends explicitly on the datum $\bff(\cdot)$ and the parameter $\calP$.

\begin{lem}\label{lem:Qf L2 bound elastic} 
Let $\bQ^{\bff}$ be the solution to the Neumann problem \eqref{eq:Qf def}. Then
\begin{align}\label{eq:Qf L2 norm in lemma}
\|\bQ^{\bff}\|_{\bbL^{2}(\Omega)}=\mathcal{O}\!\left(\|\bff\|_{\bbH^{-1/2}(\partial\Omega)}\,\calP^{-1}\right).
\end{align}
\end{lem}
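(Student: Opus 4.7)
My plan is to derive the bound by a direct energy estimate exploiting the coercivity that the positive shift $\mathcal{P}^{2}$ adds to the Lam\'e bilinear form. Multiplying $(\mathcal{L}_{\lambda,\mu}-\mathcal{P}^{2})\bQ^{\bff}=\mathbf{0}$ by $\overline{\bQ^{\bff}}$, integrating over $\Omega$, and integrating by parts using $\mathcal{L}_{\lambda,\mu}=\nabla\!\cdot\!\sigma$ together with the Neumann condition $\partial_{\nu}\bQ^{\bff}=\bff$, I obtain the energy identity
\[
a_{\lambda,\mu}\bigl(\bQ^{\bff},\bQ^{\bff}\bigr)
+\mathcal{P}^{2}\,\|\bQ^{\bff}\|_{\bbL^{2}(\Omega)}^{2}
=\bigl\langle \bff,\gamma\bQ^{\bff}\bigr\rangle_{\bbH^{-1/2}(\partial\Omega)\times \bbH^{1/2}(\partial\Omega)},
\]
where $a_{\lambda,\mu}(\bu,\bu):=\int_{\Omega}\bigl(\lambda|\nabla\!\cdot\!\bu|^{2}+2\mu|\nabla^{s}\bu|^{2}\bigr)\rmd\bx\ge 0$ under the standard positivity $\mu>0$, $\lambda+\tfrac{2\mu}{3}>0$. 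Unique solvability of \eqref{eq:Qf def} in $\bbH^{1}(\Omega)$, which justifies the integration by parts, follows from Lax--Milgram applied to this coercive formulation.

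Since both summands on the left-hand side are non-negative, the right-hand side controls each of them separately. Duality between $\bbH^{\pm 1/2}(\partial\Omega)$ and the trace theorem give
\[
\bigl|\langle \bff,\gamma\bQ^{\bff}\rangle\bigr|
\le \|\bff\|_{\bbH^{-1/2}(\partial\Omega)}\,\|\gamma\bQ^{\bff}\|_{\bbH^{1/2}(\partial\Omega)}
\le C_{\mathrm{tr}}\,\|\bff\|_{\bbH^{-1/2}(\partial\Omega)}\,\|\bQ^{\bff}\|_{\bbH^{1}(\Omega)}.
\]
Combining Korn's second inequality $\|\bu\|_{\bbH^{1}(\Omega)}^{2}\lesssim \|\nabla^{s}\bu\|_{\bbL^{2}(\Omega)}^{2}+\|\bu\|_{\bbL^{2}(\Omega)}^{2}$ with the trivial bound $\|\bQ^{\bff}\|_{\bbL^{2}(\Omega)}^{2}\le \mathcal{P}^{2}\|\bQ^{\bff}\|_{\bbL^{2}(\Omega)}^{2}$, valid in the regime $\mathcal{P}\ge 1$, the two inequalities close on $\|\bQ^{\bff}\|_{\bbH^{1}(\Omega)}$ and produce the $\mathcal{P}$-independent estimate
\[
\|\bQ^{\bff}\|_{\bbH^{1}(\Omega)}\;\lesssim\;\|\bff\|_{\bbH^{-1/2}(\partial\Omega)}.
\]

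Reinserting this bound into the energy identity and again using duality with the trace theorem yields
\[
\mathcal{P}^{2}\,\|\bQ^{\bff}\|_{\bbL^{2}(\Omega)}^{2}
\le \|\bff\|_{\bbH^{-1/2}(\partial\Omega)}\,\|\gamma\bQ^{\bff}\|_{\bbH^{1/2}(\partial\Omega)}
\;\lesssim\;\|\bff\|_{\bbH^{-1/2}(\partial\Omega)}^{2},
\]
from which the claim \eqref{eq:Qf L2 norm in lemma} follows after dividing by $\mathcal{P}^{2}$. The only delicate point is to keep the Korn and trace constants independent of $\mathcal{P}$; the device of controlling the lower-order $\bbL^{2}$ term in Korn's inequality by the shifted energy term $\mathcal{P}^{2}\|\bQ^{\bff}\|_{\bbL^{2}(\Omega)}^{2}$, rather than by $a_{\lambda,\mu}(\bQ^{\bff},\bQ^{\bff})$ alone, is exactly what makes the estimate robust in the regime $\mathcal{P}\gg 1$ of interest here.
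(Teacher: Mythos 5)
Your argument is correct, but it proceeds by a genuinely different route than the paper. The paper proves Lemma~\ref{lem:Qf L2 bound elastic} by first identifying $\bQ^{\bff}$ with the single-layer potential $\mathbf{SL}^{\calP}(\bff)$ (justified in Subsection~\ref{subsec:partial SL=f}), then invoking the duality identity \eqref{eq:duality identity} between $\mathbf{SL}^{\calP}$ and $\gamma\calN^{\calP}$, and finally plugging in the spectral bound $\|\gamma\calN^{\calP}\|_{\mathcal{L}(\bbL^{2}(\Omega);\bbH^{1/2}(\partial\Omega))}=\mathcal{O}(\calP^{-1})$ from Lemma~\ref{lem:NP norm lem}. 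Your approach bypasses all of the potential-theoretic machinery: a single integration by parts gives the energy identity $a_{\lambda,\mu}(\bQ^{\bff},\bQ^{\bff})+\calP^{2}\|\bQ^{\bff}\|_{\bbL^{2}(\Omega)}^{2}=\langle\bff,\gamma\bQ^{\bff}\rangle$, nonnegativity of both terms (using $\mu>0$, $\lambda+\tfrac{2\mu}{3}>0$, so $a_{\lambda,\mu}(\bu,\bu)=(\lambda+\tfrac{2\mu}{3})\|\nabla\!\cdot\!\bu\|^{2}+2\mu\|\mathrm{dev}\,\nabla^{s}\bu\|^{2}\ge0$) lets you control each separately, Korn's second inequality closes the $\bbH^{1}$ estimate with a $\calP$-independent constant, and reinserting yields the $\calP^{-1}$ gain on $\bbL^{2}$. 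Your observation that the lower-order $\bbL^{2}$ term in Korn should be absorbed by the $\calP^{2}$-coercivity rather than by $a_{\lambda,\mu}$ is exactly the point that keeps the constants $\calP$-uniform, and the write-up handles it cleanly. The trade-off is conceptual rather than technical: the paper's route is longer but deliberately keeps $\calN^{\calP}$ and $\mathbf{SL}^{\calP}$ as the recurring objects of the whole Section~\ref{sec:Theorem 2} argument (the same duality identity underlies the iterated Lippmann--Schwinger bounds), so the two proofs share the quantitative input $\mathcal{O}(\calP^{-1})$ coming from the trace/Newtonian estimate but package it differently. Your version is more elementary, self-contained, and arguably preferable as a stand-alone proof of this lemma; it just does not by itself supply the operator $\gamma\calN^{\calP}$ bounds that the surrounding series estimates also need.
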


\begin{proof}
We recall that $\Gamma_{\calP}$ is the fundamental solution of \eqref{eq:Gamma P def}. We introduce the single-layer potential at the parameter $\calP$:
\begin{equation}\label{eq:SLP def elastic}
\mathbf{SL}^{\calP}(\bff)(\bx)
:= \int_{\partial\Omega} \Gamma_{\calP}(\bx,\by)\,\bff(\by)\, \mathrm{d}\sigma(\by),
\qquad \bx\in\Omega.
\end{equation}
By the very choice of $\Gamma_{\calP}$, the field $\mathbf{SL}^{\calP}(\bff)$ satisfies
\[
(\mathcal{L}_{\lambda,\mu}-\calP^{2})\,\mathbf{SL}^{\calP}(\bff)=\mathbf{0}\quad\text{in }\Omega.
\]
{ 
As shown in \cite[(1.81)--(1.83)]{ABG15} and Subsection~\ref{subsec:partial SL=f}, we have 
}
\begin{align}\label{eq:partial SL=f}
    \partial_{\nu}\mathbf{SL}^{\calP}(\bff)=\bff\quad\text{on }\partial\Omega.
\end{align}
Hence $\mathbf{SL}^{\calP}(\bff)$ solves \eqref{eq:Qf def}; by uniqueness we have the representation
\begin{equation}\label{eq:Qf as SLP}
\bQ^{\bff}(\bx)=\mathbf{SL}^{\calP}(\bff)(\bx), \qquad \bx\in\Omega.
\end{equation}

\noindent
Next, recall that the Newtonian operator $\calN^{\calP}$ was defined in \eqref{eq:Newton potential in Omega} by
\[
\calN^{\calP}(\bg)(\bx)=\int_{\Omega} \Gamma_{\calP}(\bx,\by)\,\bg(\by)\,\mathrm{d}\by,
\qquad \bg\in\bbL^{2}(\Omega), \quad  \bx\in\Omega.
\]
The symmetry property of the Green tensor,
\[
\Gamma_{\calP}(\bx,\by) = \Gamma_{\calP}(\by,\bx)^{\top}, \qquad \bx,\by\in\Omega,
\]
implies the usual duality identity: for every $\bphi\in\bbL^{2}(\Omega)$ and every boundary density $\bff\in\bbH^{-1/2}(\partial\Omega)$,
\begin{equation}\label{eq:duality identity}
\int_{\Omega} \bphi(\bx)\cdot \mathbf{SL}^{\calP}(\bff)(\bx)\,\mathrm{d}\bx
= \int_{\partial\Omega} \bff(\by)\cdot \gamma\calN^{\calP}(\bphi)(\by)\,\mathrm{d}\sigma(\by).
\end{equation}
We now apply \eqref{eq:duality identity} with $\bphi=\bQ^{\bff}$. Using \eqref{eq:Qf as SLP}, the left-hand side becomes
\[
\int_{\Omega} \bQ^{\bff}(\bx)\cdot \bQ^{\bff}(\bx)\,\mathrm{d}\bx
= \|\bQ^{\bff}\|_{\bbL^{2}(\Omega)}^{2}.
\]
Therefore,
\begin{equation}\label{eq:L2 identity Qf}
\|\bQ^{\bff}\|_{\bbL^{2}(\Omega)}^{2}
= \int_{\partial\Omega} \bff(\by)\cdot \gamma\calN^{\calP}(\bQ^{\bff})(\by)\,\mathrm{d}\sigma(\by)
= \big\langle \bff,\ \gamma\calN^{\calP}(\bQ^{\bff}) \big\rangle_{\bbH^{-1/2}(\partial\Omega)\times\bbH^{1/2}(\partial\Omega)}.
\end{equation}
By Cauchy–Schwarz in the boundary duality,
\[
\|\bQ^{\bff}\|_{\bbL^{2}(\Omega)}^{2}
\le \|\bff\|_{\bbH^{-1/2}(\partial\Omega)}\,
\|\gamma\calN^{\calP}(\bQ^{\bff})\|_{\bbH^{1/2}(\partial\Omega)}.
\]
Using the operator norm of the trace of the Newtonian potential, we obtain
\begin{align}\label{eq:Qf L2 norm}
    \|\bQ^{\bff}\|_{\bbL^{2}(\Omega)}^{2}
\le \|\bff\|_{\bbH^{-1/2}(\partial\Omega)}\,
\|\gamma\calN^{\calP}\|_{\mathcal{L}(\bbL^{2}(\Omega);\bbH^{1/2}(\partial\Omega))}\,
\|\bQ^{\bff}\|_{\bbL^{2}(\Omega)}.
\end{align}
Combining \eqref{eq:gamma NP norm} and \eqref{eq:Qf L2 norm}, we conclude that
\[
\|\bQ^{\bff}\|_{\bbL^{2}(\Omega)} = \mathcal{O}\!\left(\|\bff\|_{\bbH^{-1/2}(\partial\Omega)}\,\calP^{-1}\right),
\]
which completes the proof of Lemma \ref{lem:Qf L2 bound elastic}.
\end{proof}

By making use of \eqref{eq:Qf L2 norm in lemma}, the estimate \eqref{eq:sum kj rho=O(Qf)} takes the form
\begin{align}
\left\|\sum_{j\geq 2} \calK_j \otimes \rho(\cdot)\right\|_{\bbH^{1/2}(\partial\Omega)}
= \mathcal{O}\left(\|\bff\|_{\bbH^{-1/2}(\partial\Omega)}\,\calP^{-4} \right).
\end{align}
Thus, from \eqref{eq:qf-Qf=}, we obtain
\begin{align}\label{eq:qf-Qf}
\bq^{\bff}-\bQ^{\bff}
= \omega^2\,\gamma \left(\mathcal{W}^{\bQ^\bff} \right)(\bx)
  + \mathcal{O}\!\left(\|\bff\|_{\bbH^{-1/2}(\partial\Omega)}\,\calP^{-4}\right), 
\quad \bx \in \partial\Omega,  
\end{align}
where $\mathcal{W}^{\bQ^\bff}:= \calN^\calP(\rho(\cdot)\,\bQ^\bff)$ is the function satisfying \eqref{eq:WQf def}. On the boundary $\partial\Omega$, we have 
\[
\bq^{\bff} = \Lambda_\calP(\partial_\nu \bq^{\bff})
\stackrel{ \eqref{eq:system qg}}{=} \Lambda_\calP(\bff),
\]
since $\partial_\nu \bq^{\bff}=\bff$ on $\partial\Omega$. Substituting this into \eqref{eq:qf-Qf} yields \eqref{eq:Lambda P f-Qf in thm}, which completes the proof of Theorem \ref{thm:linearisation}.

\section{Proof of Theorem \ref{thm:CGO}  }\label{sec:Theorem 3 CGO}

The aim of this section is to show how to combine the linearized N--D map, available on the boundary $\partial\Omega$, with  CGO  solutions in order to extract the Fourier modes of the unknown density $\rho(\cdot)$. In particular, once these Fourier coefficients have been recovered, the function $\rho(\cdot)$ inside $\Omega$ can be expanded in a Fourier series. From the analysis in the previous section, we recall that measuring $\bq^\bff(\cdot) - \bQ^\bff(\cdot)$ on $\partial\Omega$ is equivalent, up to a small error, to measuring $\mathcal{W}^{\bQ^\bff}(\cdot)$ on $\partial\Omega$.

We now introduce $\bQ^\bg(\cdot)$ as the solution of
\begin{align}\label{eq:Qg def}
\begin{cases}
(\mathcal{L}_{\lambda,\mu} - \mathcal{P}^2  )\,\bQ^\bg = \mathbf{0} & \text{in } \Omega,\\[0.2em]
\partial_\nu \bQ^\bg = \bg & \text{on } \partial\Omega.
\end{cases}
\end{align}

\noindent
Multiplying the first equation in \eqref{eq:WQf def} by $\bQ^\bg(\cdot)$, the solution of \eqref{eq:Qg def}, and integrating over $\Omega$, we obtain
\begin{align}
    \int_{\Omega} \sigma(\calW^{\bQ^\bff}) : \nabla^{s}(\bQ^{\bg}) \, \rmd \bx 
    + \calP^{2} \int_{\Omega} \calW^{\bQ^\bff} \cdot\bQ^{\bg}  \, \rmd \bx 
    = \int_{\Omega} \rho(\bx)\,\bQ^{\bff} \cdot \bQ^{\bg} \, \rmd \bx.
\end{align}

\noindent
On the other hand, multiplying \eqref{eq:Qg def} by $\calW^{\bQ^\bff}$, the solution of \eqref{eq:WQf def}, and integrating over $\Omega$ yields
\begin{align}
  \int_{\Omega} \sigma(\calW^{\bQ^\bff}) : \nabla^{s}(\bQ^{\bg}) \, \rmd \bx 
  + \calP^{2} \int_{\Omega} \calW^{\bQ^\bff} \cdot\bQ^{\bg}  \, \rmd \bx
  = \langle \mathcal{W}^{\bQ^\bff} , \bg \rangle_{\bbH^{1/2}(\partial\Omega) \times \bbH^{-1/2}(\partial\Omega)}.
\end{align}
Subtracting the two identities above, we arrive at
\begin{align}\label{eq:Wqf g=rho}
\langle  \mathcal{W}^{\bQ^\bff} , \bg \rangle_{\bbH^{1/2}(\partial\Omega) \times \bbH^{-1/2}(\partial\Omega)}
= \langle \rho(\cdot)\,\bQ^\bff , \bQ^\bg \rangle_{\bbL^2(\Omega)},
\end{align}
for all $(\bff,\bg) \in \bbH^{-1/2}(\partial\Omega) \times \bbH^{-1/2}(\partial\Omega)$. Since $\mathcal{W}^{\bQ^\bff}$ is accessible from boundary measurements on $\partial\Omega$ and $\bg$ is prescribed boundary data, the left-hand side is determined by the available boundary measurements. Our objective is to recover $\rho(\cdot)$ in $\Omega$ from this identity.

Throughout this section, we fix any nonzero vector $\boldsymbol{\xi} \in \mathbb{R}^{3}$ and choose an orthonormal basis $\{e_{1},e_{2},e_{3}\}$ of $\mathbb{R}^{3}$ such that
\[
e_{1} := \frac{\boldsymbol{\xi}}{|\boldsymbol{\xi}|}.
\]
We define $\bQ^\bff(\cdot)$ by
\begin{align}\label{eq:Qf def in Section3}
\bQ^\bff(\bx) := e^{\rmi \boldsymbol{\zeta}_1 \cdot \bx} \big( \boldsymbol{\eta}_1 + \mathbf{F}_1(\bx) \big), 
\qquad \bx \in \mathbb{R}^3,
\end{align}
where $\boldsymbol{\zeta}_1$ and $\boldsymbol{\eta}_1$ are given by  
\begin{align}
    \boldsymbol{\zeta}_{1} 
    = -\frac{|\boldsymbol{\xi}|}{2}\, e_{1} 
      + \rmi \sqrt{ t^2 - k_s^2 + \frac{|\boldsymbol{\xi}|^{2}}{4}}\, e_{2}
      + t\, e_3,  
    \qquad  
    \boldsymbol{\eta}_1   =   e_1 + \frac{|\boldsymbol{\xi}|}{2 t}\, e_2,
\end{align}
and where $\mathbf{F}_1(\cdot)$ satisfies
\begin{align}\label{eq:F1 satisfies equation}
&(\lambda+\mu)\Big[
     \nabla(\nabla\cdot\mathbf{F}_1)
     + \mathrm{i}\,\boldsymbol{\zeta}_1(\nabla\cdot\mathbf{F}_1)
     + \mathrm{i}\,\nabla( \boldsymbol{\zeta}_1\cdot\mathbf{F}_1)
     -  \boldsymbol{\zeta}_1( \boldsymbol{\zeta}_1\cdot\mathbf{F}_1)
   \Big] \notag\\
&\quad
+ \mu\Big(
     \Delta\mathbf{F}_1
     + 2\mathrm{i}\,(\boldsymbol{\zeta}_1\cdot\nabla)\mathbf{F}_1
     - k_s^2 \mathbf{F}_1\Big)   - \mathcal{P}^2\mathbf{F}_1 
  = \big(\mu k_s^2  + \mathcal{P}^2\big)\,\boldsymbol{\eta}_1,
\qquad \bx\in\mathbb{R}^3.
\end{align}

\noindent 
Note that the right-hand side depends on $\boldsymbol{\eta}_1$ and $\calP$, and therefore the correction term $\mathbf{F}_1(\cdot)$ also depends on $\boldsymbol{\eta}_1$ and $\calP$.

By \cite[Theorem 3.1]{H02}, the parameter $t$ must satisfy
\begin{align}\label{eq:t satisfies condition}
t>t_0=k_s.
\end{align}

\noindent 
There exists a constant $C_0$, depending only on $\Omega$, such that under the assumption $\calP \gg 1$, equation \eqref{eq:F1 satisfies equation} admits a solution $\mathbf{F}_{1,\boldsymbol{\zeta}_1,\boldsymbol{\eta}_1}(\cdot) \in \bbH^1(\Omega)$ satisfying
\begin{subequations}
\label{eq:F1_estimate}
\begin{align}
\| \mathbf{F}_{1,\boldsymbol{\zeta}_1,\boldsymbol{\eta}_1}  \|_{\bbL^2(\Omega)} 
&\le \frac{C_0\, \mathcal{P}^2\, |\boldsymbol{\eta}_1|}{|\Im(\boldsymbol{\zeta}_1)|}\,|\Omega|^{1/2}, \label{eq:F1 estimation}  \\   
\| \nabla \mathbf{F}_{1,\boldsymbol{\zeta}_1,\boldsymbol{\eta}_1}  \|_{\bbL^2(\Omega)} 
&\le C_0\, \mathcal{P}^2\, |\boldsymbol{\eta}_1|\,|\Omega|^{1/2}. \label{eq:nabla F1 estimation}  
\end{align}
\end{subequations}

In a similar fashion, we prescribe $\bQ^\bg(\cdot)$ by
\begin{align}\label{eq:Qg CGO def}
\bQ^\bg(\bx) :=  e^{\rmi \boldsymbol{\zeta}_2 \cdot \bx} \big(\boldsymbol{\eta}_2 + \mathbf{F}_2(\bx)\big), 
\qquad \bx \in \mathbb{R}^3,
\end{align}
where
\begin{align}
 \boldsymbol{\zeta}_{2} 
 &= -\frac{|\boldsymbol{\xi}|}{2}\, e_{1} 
    - \rmi  \sqrt{ t^2 - k_s^2 + \frac{|\boldsymbol{\xi}|^{2}}{4}}\, e_{2} 
    - t\, e_3,  \\
 \boldsymbol{\eta}_2   &= e_1 - \frac{|\boldsymbol{\xi}|}{2 t}\, e_2,
\end{align}
and $\mathbf{F}_2(\cdot)$ solves
\begin{align}\label{eq:F2 def}
&(\lambda+\mu)\Big[
     \nabla(\nabla\cdot\mathbf{F}_2)
     + \mathrm{i}\,\boldsymbol{\zeta}_2(\nabla\cdot\mathbf{F}_2)
     + \mathrm{i}\,\nabla( \boldsymbol{\zeta}_2\cdot\mathbf{F}_2)
     -  \boldsymbol{\zeta}_2( \boldsymbol{\zeta}_2\cdot\mathbf{F}_2)
   \Big] \notag\\
&\quad
+ \mu\Big(
     \Delta\mathbf{F}_2
     + 2\mathrm{i}\,(\boldsymbol{\zeta}_2 \cdot\nabla)\mathbf{F}_2
     - k_s^2 \mathbf{F}_2 \Big)   - \mathcal{P}^2\mathbf{F}_2
  = \big(\mu k_s^2  + \mathcal{P}^2\big)\,\boldsymbol{\eta}_2,
\qquad \bx\in\mathbb{R}^3.
\end{align}

\noindent
Since the right-hand side depends on $\calP$ and $\boldsymbol{\eta}_2$, the function $\mathbf{F}_2(\cdot)$ also depends on $\calP$ and $\boldsymbol{\eta}_2$. Again, by \cite[Theorem 3.1]{H02}, equation \eqref{eq:F2 def} admits a solution $\mathbf{F}_{2,\boldsymbol{\zeta}_2,\boldsymbol{\eta}_2}  \in \bbH^1(\Omega)$ satisfying
\begin{align}\label{eq:F2 estimate}
\| \mathbf{F}_{2,\boldsymbol{\zeta}_2,\boldsymbol{\eta}_2} \|_{\bbL^2(\Omega)} 
\le \frac{C_0\, \mathcal{P}^2\, |\boldsymbol{\eta}_2|}{|\Im(\boldsymbol{\zeta}_2)|}\,|\Omega|^{1/2},
\quad 
\| \nabla \mathbf{F}_{2,\boldsymbol{\zeta}_2,\boldsymbol{\eta}_2} \|_{\bbL^2(\Omega)} 
\le C_0\, \mathcal{P}^2\, |\boldsymbol{\eta}_2|\,|\Omega|^{1/2}.
\end{align}

\noindent
Using that $\mathcal{P} \gg 1$ and taking $t$ large enough so that \eqref{eq:t satisfies condition} is satisfied, we can simplify the estimates on the $\bbL^2(\Omega)$-norms of $\mathbf{F}_{1,\boldsymbol{\zeta}_1,\boldsymbol{\eta}_1}$ and $\mathbf{F}_{2,\boldsymbol{\zeta}_2,\boldsymbol{\eta}_2}$ to
\begin{align}\label{eq:F1 and F2 L2 norm estimate}
\| \mathbf{F}_{1,\boldsymbol{\zeta}_1,\boldsymbol{\eta}_1} \|_{\bbL^2(\Omega)} = \mathcal{O}\!\left( \frac{\mathcal{P}^2}{t} \right),
\qquad
\|\mathbf{F}_{2,\boldsymbol{\zeta}_2,\boldsymbol{\eta}_2}  \|_{\bbL^2(\Omega)} = \mathcal{O}\!\left( \frac{\mathcal{P}^2}{t} \right).
\end{align}

\noindent
Taking the product of $\bQ^\bff(\cdot)$ from \eqref{eq:Qf def in Section3} and $\bQ^\bg(\cdot)$ from \eqref{eq:Qg CGO def}, we obtain
\begin{align}
(\bQ^\bff \cdot \bQ^\bg)(\bx)
= e^{-\rmi \boldsymbol{\xi}\cdot\bx} \Big( \boldsymbol{\eta}_1\cdot\boldsymbol{\eta}_2 
+ \boldsymbol{\eta}_1\cdot \mathbf{F}_2(\bx) 
+ \mathbf{F}_1(\bx)\cdot \boldsymbol{\eta}_2 
+ \mathbf{F}_1(\bx)\cdot \mathbf{F}_2(\bx) \Big).
\end{align}

\noindent
We choose the solutions in such a way that $(\bQ^\bff \cdot \bQ^\bg)(\cdot)$ approximates $e^{-\rmi \boldsymbol{\xi}\cdot\bx}$, since the family $\{ e^{-\rmi \boldsymbol{\xi}\cdot\bx}  \}$ forms a dense set in $\bbL^1(\Omega)$ (see \cite[Theorem 1.1]{I91}). Returning to \eqref{eq:Wqf g=rho}, we have
\begin{align}
\langle \mathcal{W}^{\bQ^\bff} , \bg \rangle_{\bbH^{1/2}(\partial\Omega) \times \bbH^{-1/2}(\partial\Omega)}
&= \int_\Omega \rho(\bx)\, \bQ^\bff(\bx)\cdot\bQ^\bg(\bx)\, \rmd \bx  \notag\\
&= \int_\Omega \rho(\bx)\, e^{-\rmi \boldsymbol{\xi} \cdot  \bx } \,\rmd \bx + R(\boldsymbol{\xi},t,\mathcal{P}), \notag
\end{align}
where
\begin{align}
\boldsymbol{Rest}(\boldsymbol{\xi},t,\mathcal{P})
&:=-\int_\Omega e^{-\rmi \boldsymbol{\xi} \cdot  \bx }\, \rho(\bx)\,\frac{|\boldsymbol{\xi}|^{2}}{4 t^2} \,\rmd \bx
 + \int_\Omega e^{-\rmi \boldsymbol{\xi} \cdot  \bx }\, \rho(\bx)\,\boldsymbol{\eta}_1\cdot \mathbf{F}_{2,\boldsymbol{\zeta}_2,\boldsymbol{\eta}_2}(\bx)\, \rmd \bx \notag  \\
&\quad + \int_\Omega e^{-\rmi \boldsymbol{\xi} \cdot  \bx }\,\rho(\bx)\,\mathbf{F}_{1,\boldsymbol{\zeta}_1,\boldsymbol{\eta}_1} (\bx)\cdot \boldsymbol{\eta}_2\, \rmd \bx         
 + \int_\Omega e^{-\rmi \boldsymbol{\xi} \cdot  \bx }\, \rho(\bx)\,\mathbf{F}_{1,\boldsymbol{\zeta}_1,\boldsymbol{\eta}_1} (\bx)\cdot \mathbf{F}_{2,\boldsymbol{\zeta}_2,\boldsymbol{\eta}_2}(\bx)\,\rmd \bx, \notag
\end{align}
and this remainder term can be estimated by
\begin{align}
|\boldsymbol{Rest}(\boldsymbol{\xi},t,\mathcal{P})|
&\le \|\rho(\cdot)\|_{\bbL^\infty(\Omega)}\, \frac{|\boldsymbol{\xi}|^{2}}{4 t^2}\,|\Omega|^{1/2}
+ \|\rho(\cdot)\|_{\bbL^\infty(\Omega)}\,|\boldsymbol{\eta}_1|\,\|\mathbf{F}_{2,\boldsymbol{\zeta}_2,\boldsymbol{\eta}_2}\|_{\bbL^2(\Omega)} \,|\Omega|^{1/2} \notag \\ 
&\quad + \|\rho(\cdot)\|_{\bbL^\infty(\Omega)}\,|\boldsymbol{\eta}_2|\,
\| \mathbf{F}_{1,\boldsymbol{\zeta}_1,\boldsymbol{\eta}_1} \|_{\bbL^2(\Omega)}
+ \|\rho(\cdot)\|_{\bbL^\infty(\Omega)}\,\| \mathbf{F}_{1,\boldsymbol{\zeta}_1,\boldsymbol{\eta}_1} \|_{\bbL^2(\Omega)}\,\| \mathbf{F}_{2,\boldsymbol{\zeta}_2,\boldsymbol{\eta}_2}\|_{\bbL^2(\Omega)}. \notag
\end{align}

\noindent
Using \eqref{eq:F1_estimate}, \eqref{eq:F2 estimate} and \eqref{eq:F1 and F2 L2 norm estimate}, together with the fact that $|\boldsymbol{\eta}_1|$ and $|\boldsymbol{\eta}_2|$ are uniformly bounded, this reduces to
\begin{align}\label{eq:Rest xi=}
|\boldsymbol{Rest}(\boldsymbol{\xi},t,\mathcal{P})|
\lesssim  \frac{\mathcal{P}^2}{\sqrt{t^2-k_s^2+\frac{|\boldsymbol{\xi}|^2}{4}}}
=\mathcal{O}\!\left( \frac{\mathcal{P}^2}{t} \right).
\end{align}

By construction (see \eqref{eq:Qf def in Section3}), $\bQ^\bff(\cdot)$ depends on $\boldsymbol{\xi}$, and hence $\mathcal{W}^{\bQ^\bff}(\cdot)$ inherits this dependence. We therefore write 
\begin{align}\label{eq:Wqf g-Rest}
\langle \mathcal{W}^{\bQ^\bff}_{\boldsymbol{\xi}}, \bg \rangle_{\bbH^{1/2}(\partial\Omega) \times \bbH^{-1/2}(\partial\Omega)}
- \boldsymbol{Rest}(\boldsymbol{\xi},t,\mathcal{P}) 
= \int_\Omega \rho(\bx)\,e^{-\rmi \boldsymbol{\xi}\cdot\bx}\,\rmd \bx
=(2\pi)^3 \mathcal{F}(\rho\, \chi_\Omega)(\boldsymbol{\xi}),
\end{align}
where $\mathcal{F}(\cdot)$ denotes the Fourier transform. So we have
\begin{align}
\rho(\bx) = \sum_{\boldsymbol{\xi}\in \mathbb{Z}^3} \mathcal{F}(\rho\,\chi_\Omega)(\boldsymbol{\xi})\, e^{\rmi \boldsymbol{\xi} \cdot \bx}, \quad \bx \in \Omega,
\end{align}
in $\bbL^2(\Omega)$. Combining this representation with \eqref{eq:Wqf g-Rest}, we obtain
\begin{align}\label{eq:rho x=}
\rho(\bx)
= (2\pi)^{-3} \sum_{\boldsymbol{\xi}\in \mathbb{Z}^3}
\langle \mathcal{W}^{\bQ^\bff}_{\boldsymbol{\xi}}, \bg \rangle_{\bbH^{1/2}(\partial\Omega) \times \bbH^{-1/2}(\partial\Omega)}
\, e^{\rmi \boldsymbol{\xi} \cdot \bx}  
+ \mathcal{R}(\bx;t,\calP) ,
\end{align}
in the $\bbL^2(\Omega)$ sense, where the error function $\mathcal{R}(\cdot;t,\calP)$ is defined by
\begin{align}
\mathcal{R}(\bx;t,\calP)
:= - (2\pi)^{-3}  \sum_{\boldsymbol{\xi}\in \mathbb{Z}^3}  \boldsymbol{Rest}(\boldsymbol{\xi},t,\mathcal{P})\, e^{\rmi \boldsymbol{\xi}\cdot \bx} ,
\quad \bx \in \Omega.
\end{align}

\noindent
We next estimate the $\bbL^2(\Omega)$-norm of $\mathcal{R}(\cdot;t,\calP)$. We have
\begin{align}
\|\mathcal{R}(\cdot;t,\calP)\|_{\bbL^2(\Omega)}
\lesssim  \left(  \sum_{\boldsymbol{\xi}\in \mathbb{Z}^3}    |\boldsymbol{Rest}(\boldsymbol{\xi},t,\mathcal{P})|^2\right)^{\frac{1}{2}}
\stackrel{\eqref{eq:Rest xi=} }{\lesssim}  \left( \sum_{\boldsymbol{\xi}\in \mathbb{Z}^3}  \frac{\mathcal{P}^4}{{t^2-k_s^2+\frac{|\boldsymbol{\xi}|^2}{4}}}      \right)^{\frac{1}{2}}.
\end{align}
 
\noindent
We now choose $t$ such that
\[
t = \sqrt{\mathcal{P}^{4+2\iota}+k_s^2}
\]
for some $\iota \in \mathbb{R}^+$. Consequently,
\begin{align}
\|\mathcal{R}(\cdot;t,\calP)\|_{\bbL^2(\Omega)}
&\lesssim  \left( \sum_{\boldsymbol{\xi}\in \mathbb{Z}^3}  \frac{\mathcal{P}^4}{{t^2-k_s^2+\frac{|\boldsymbol{\xi}|^2}{4}}}      \right)^{\frac{1}{2}}   
 =  \mathcal{P}^{-\iota} \sum_{\boldsymbol{\xi}\in \mathbb{Z}^3}   \frac{1}{{1+\frac{|\boldsymbol{\xi}|^2}{4\mathcal{P}^{4+2\iota}}}}.
\end{align}
Using the convergence of this series, we finally obtain 
\begin{align}
   \|\mathcal{R}(\cdot;t,\calP)\|_{\bbL^2(\Omega)} =\mathcal{O}(\mathcal{P}^{-\iota}).
\end{align}

\noindent
Therefore, \eqref{eq:rho x=} can be rewritten as
\begin{align}
\rho(\bx)
= (2\pi)^{-3} \sum_{\boldsymbol{\xi}\in \mathbb{Z}^3} 
\langle \mathcal{W}^{\bQ^\bff}_{\boldsymbol{\xi}}, \bg \rangle_{\bbH^{1/2}(\partial\Omega) \times \bbH^{-1/2}(\partial\Omega)}\,
e^{\rmi \boldsymbol{\xi} \cdot \bx}  
+ \mathcal{O}(\mathcal{P}^{-\iota}),
\end{align}
with convergence in $\bbL^2(\Omega)$. This completes the proof of Theorem \ref{thm:CGO}.

\section{Proof of Theorem \ref{thm:N--D}} \label{sec:Theorem 1}

The aim of this section is to prove Theorem \ref{thm:N--D}, that is, to justify the effective N--D map $\Lambda_{\mathcal{P}}$ associated with the homogenized background and to obtain a quantitative estimate of the difference $\Lambda_{D}-\Lambda_{\mathcal{P}}$. Our starting point is an energy identity for
\[
\mathsf{J}
  =\big\langle (\Lambda_{D}-\Lambda_{\mathcal{P}})\bff,\bg\big\rangle_{\mathbb{H}^{1/2}(\partial \Omega) \times \mathbb{H}^{-1/2}(\partial \Omega)}.
\]

We first rewrite this difference in terms of volume integrals over the hard inclusions and derive a suitable representation formula in Subsection \ref{subsec:4.1}. In Subsection \ref{subsec:4.2} we introduce the limiting Lippmann--Schwinger equation satisfied by the effective field and prove existence, uniqueness, and a priori bounds for its solution. Subsections \ref{subsec:4.3} and \ref{subsec:4.4} are then devoted to comparing the discrete algebraic system with its continuous counterpart and to estimating the associated remainder terms; this analysis yields the operator-norm bound for $\Lambda_{D}-\Lambda_{\mathcal{P}}$ stated in Theorem \ref{thm:N--D}. Since the proof of Theorem \ref{thm:N--D} is lengthy and technically involved, we summarize its main steps in the flowchart shown in Figure \ref{fig:J_derivation_flowchart}.

Recalling \eqref{eq:Lambda D-Lambda P in thm}, we begin by analyzing the term $\mathbb{I}_1$. We have
\begin{align} 
\big\langle ( \Lambda_{D} - \Lambda_{\mathcal{P}} )\mathbf{f}, \mathbf{g} \big\rangle_{\mathbb{H}^{1/2}(\partial \Omega) \times \mathbb{H}^{-1/2}(\partial \Omega)}  
&= \omega^{2} \big\langle \big( \rho_{1} - \rho(\mathbf{x}) \big)\,\mathbf{v}^{\mathbf{g}}, \mathbf{u}^{\mathbf{f}} \big\rangle_{\mathbb{L}^{2}(D)}  
+ \mathcal{P}^{2} \big\langle \mathbf{q}^{\mathbf{g}}, \mathbf{u}^{\mathbf{f}} \big\rangle_{\mathbb{L}^{2}(\Omega)} \notag \\
&=: \mathbb{I}_1 + \mathcal{P}^{2} \big\langle \mathbf{q}^{\mathbf{g}}, \mathbf{u}^{\mathbf{f}} \big\rangle_{\mathbb{L}^{2}(\Omega)}. \notag
\end{align}

\begin{figure}[htbp]
    \centering
    \scalebox{0.9}{
    \begin{tikzpicture}[
        box/.style={rectangle, draw, align=center, minimum width=1.2cm, minimum height=0.8cm}, 
        bigbox/.style={rectangle, draw, align=center, minimum width=6cm, minimum height=1.5cm}, 
        arrow/.style={->, thick, >=stealth} 
    ]
        \node[box] (A) at (-2, 5) 
        {\textbf{Subsection \ref{subsec:4.1}:} \
        $\mathsf{J}
      =\mathbb{I}_1+\mathcal{P}^{2} \big\langle \mathbf{q}^{\mathbf{g}}, \mathbf{u}^{\mathbf{f}} \big\rangle_{\mathbb{L}^{2}(\Omega)}$, \quad $\mathbb{I}_1$ is defined in \eqref{eq:I1 def in subsection 4.1}.\\
      Expanding \(\mathbf{u}^{\bf f}\) by Taylor’s theorem around the centers \(\mathbf{z}_j\), we obtain \\
     $
    \mathbb{I}_1
    = \omega^2 \rho_1 \sum_{j=1}^M \sum_{i=1}^3 u_i^{\bf f}(\mathbf{z}_j)\, \int_{D_j} v_{i,j}^{\bf g}(\mathbf{x})\,\rmd \mathbf{x}
    + \mathbb{J}_1 
    $, \quad  $\mathbb{J}_1 $ is defined  \eqref{eq:J1 def}.
      };

        \node[box] (F) at (-5,0) 
        { \textbf{Subsection \ref{subsec:4.2}:} \\
        $\mathbf{v}^{\bf g}(\bx)
        -\omega^2 \int_{D} \Gamma(\bx,\by)\,\big(\rho_1-\rho(\by)\big)\,\mathbf{v}^{\bf g}(\by)\,\rmd\by$\\
        $=\mathbf{S}(\bx), \qquad \bx\in D.$\\
        Lemma \ref{lem:R(x,y)}$\Rightarrow$ $\Gamma(\mathbf{x},\mathbf{y})=\Gamma^0(\mathbf{x},\mathbf{y})+\mathcal{R}(\mathbf{x},\mathbf{y})$,\\
        and $ \big|\mathcal{R}(\mathbf{x},\mathbf{y})\big| \lesssim \mathcal{O}\big(a^{\frac{h-1}{3}}\big).$
        };
        
        \node[box] (F1) at (-5,-3.5) 
        {
          $\boldsymbol{\beta}_m\int_{D_m}\mathbf{v}^{\mathbf{g}}(\mathbf{x})\,\rmd\mathbf{x}
          -\boldsymbol{\alpha}_m\sum_{\substack{j=1\\ j\neq m}}^{M}\Gamma(\mathbf{z}_m,\mathbf{z}_j)\int_{D_j}\mathbf{v}^{\mathbf{g}}(\mathbf{x})\,\rmd\mathbf{x}$\\
          $=\frac{1}{\omega^{2}\rho_{1}}\;\boldsymbol{\alpha}_m\,\mathbf{S}(\mathbf{z}_m)+\mathrm{Error}_m ,$
        };

        \node[box] (F2) at (-7.5,-6.5) 
        { $\boldsymbol{\alpha}_m$:\eqref{eq:alpha def},  $\boldsymbol{\beta}_m$:\eqref{eq:beta=1-int};\\  $\mathrm{Error}_m$: \eqref{eq:Errorm def}.
        };

        \node[box] (F3) at (-2.5,-6.5) 
        { That the algebraic \\
        system is invertible \\
        was established in\\
        Lemma \ref{lem:The algebraic system is invertible}.
        };

        \node[box] (B) at (-5,-10) 
        {
        $\mathbf{Y}_m+\sum_{\substack{j=1\\ j\neq m}}^{M}\Gamma(\mathbf{z}_m,\mathbf{z}_j)\,\mathcal{P}^{2}\,a^{\,1-h}\,\frac{1}{\boldsymbol{\beta}_j}\,\mathbf{Y}_j $ \\
        $
        = \mathbf{S}(\mathbf{z}_m)+\frac{\omega^{2}\rho_{1}}{\boldsymbol{\alpha}}\,\mathrm{Error}_m$. 
        };

        \node[box] (B1) at (-7,-12.5) 
        {
        $\mathbf{Y}_m:=\frac{\boldsymbol{\beta}_m\,\omega^{2}\rho_{1}}{\boldsymbol{\alpha}}\int_{D_m}\mathbf{v}^{\mathbf{g}}(\mathbf{x})\,\rmd\mathbf{x}$, \\
        Lemma \ref{lem:alpha=-P2} $\Rightarrow$
        $ 
        \boldsymbol{\alpha}:=-\,\mathcal{P}^{2}\,a^{\,1-h},
        $ 
        };

        \node[box] (C) at (3.8, 0) 
        { \textbf{Subsection \ref{subsec:4.3}:} \\
        $\widetilde{\mathbf{Y}}_m
        + \sum_{\substack{j=1\\ j\neq m}}^{M} \Gamma(\mathbf{z}_m,\mathbf{z}_j)\,$\\ $\mathcal{P}^{2}a^{\,1-h}\,\frac{1}{\boldsymbol{\beta}_j}\,\widetilde{\mathbf{Y}}_j
        = \mathbf{S}(\mathbf{z}_m),$  
        };

        \node[box] (C0) at (3.8, -3) 
        {
        $\mathbf{Y}(\mathbf{z}_m) + \mathcal{P}^{2} \sum_{\substack{j=1\\ j\neq m}}^{M}
        \Gamma(\mathbf{z}_m,\mathbf{z}_j)a^{1-h}$\\
        $  \frac{1}{\boldsymbol{\beta}_j}\,\mathbf{Y}(\mathbf{z}_j)
        = \mathbf{S}(\mathbf{z}_m) - \mathcal{P}^{2} \,\mathcal{J}(\mathbf{z}_m). $  
        };

        \node[box] (CC) at (2.1, -5) 
        { \eqref{eq:Jzm def}-\eqref{eq:Jzm=O P2 g Hnorm} for\\
        estimating $\mathcal{J}(\mathbf{z}_m)$
        };

        \node[box] (CC1) at (5, -5) 
        { Lemma \ref{lem:4.5}
        };

        \node[box] (C11) at (2.1, -8) 
        { Lemma  \ref{lem:Y H1 norm}
        };

        \node[box] (C12) at (5.2, -8) 
        { Convergent};

        \node[box] (C1) at (3.8, -10) 
        { $\mathbf{Y}(\mathbf{z})+\mathcal{P}^{2}\int_{\Omega} \Gamma(\mathbf{z},\mathbf{y})$\\
        $\mathbf{Y}(\mathbf{y})\,\rmd\mathbf{y}
        = \mathbf{S}(\mathbf{z}). $   
        };

        \node[box] (D) at (3.5, 3) {$\bigl|\mathbb{J}_1\bigr|
        = \mathcal{O}\!\left(a^{\frac{3h-2}{2}}\,
        \|\mathbf{v}^{\bf g}\|_{\mathbb{L}^2(D)}\,\right.$\\
        $\left.
        \|\mathbf{f}\|_{\mathbb{H}^{-1/2}(\partial\Omega)}\right),$}; 

        \node[box] (E) at (-5, 3) {Lemma \ref{lem:vg estimate}: $\|\mathbf{v}^{\mathbf{g}}\|_{\mathbb{L}^2(D)}
            \;\lesssim\; a^{\frac{5-2h}{6}} \,\|\mathbf{g}\|_{\mathbb{H}^{-1/2}(\partial\Omega)}.$}; 

        \node[bigbox] (Main) at (-2, -16) 
        { \textbf{Subsection \ref{subsec:4.4}:}
        \\
        $ \mathsf{J}
      = \sum_{j=1}^M \bu^{\bff}(\bz_j)
           \Bigg( \omega^2 \rho_1 \int_{D_j} \bv^{\bg}_j(\bx)\,\rmd \bx
                   \;+\; \mathcal{P}^2 \int_{\Omega_j} \mathbf{q}^{\bg}_j(\bx)\,\rmd \bx \Bigg)
           \;+\; \mathsf{Error}$, \
           $\mathsf{Error}$   in \eqref{eq:Err-def}. \\
           $\mathsf{J}\lesssim a^{\frac{(1-h)(9-5\epsilon)}{18(3-\epsilon)}}\,\mathcal{P}^6\,
                \|\bff\|_{\bbH^{-\frac{1}{2}}(\partial\Omega)}\,\|\bg\|_{\bbH^{-\frac{1}{2}}(\partial\Omega)}
           $
        }; 

        \draw[arrow] (B) -- (Main); 
        \draw[arrow] (B1) -- (B); 
        \draw[arrow] (F) -- (F1); 
        \draw[arrow] (F1) -- (B); 
        \draw[arrow] (F2) -- (F1); 
        \draw[arrow] (F3) -- (B); 
        \draw[arrow] (C) -- (C0); 
        \draw[arrow] (CC) -- (C0); 
        \draw[arrow] (CC1) -- (C0); 
        \draw[arrow] (C0) -- (C1); 
        \draw[arrow] (C11) -- (C1); 
        \draw[arrow] (0,4) -- (0,-14.7); 
        \draw[arrow] (C1) -- (1,-14.7); 
        \draw[arrow] (C12) -- (3.8,-8); 
        \draw[arrow] (D) -- (A); 
        \draw[arrow] (E) -- (A); 
    \end{tikzpicture}
    }
    \caption{Flow Chart of the proof strategy  of Theorem \ref{thm:N--D}}
    \label{fig:J_derivation_flowchart}
\end{figure}
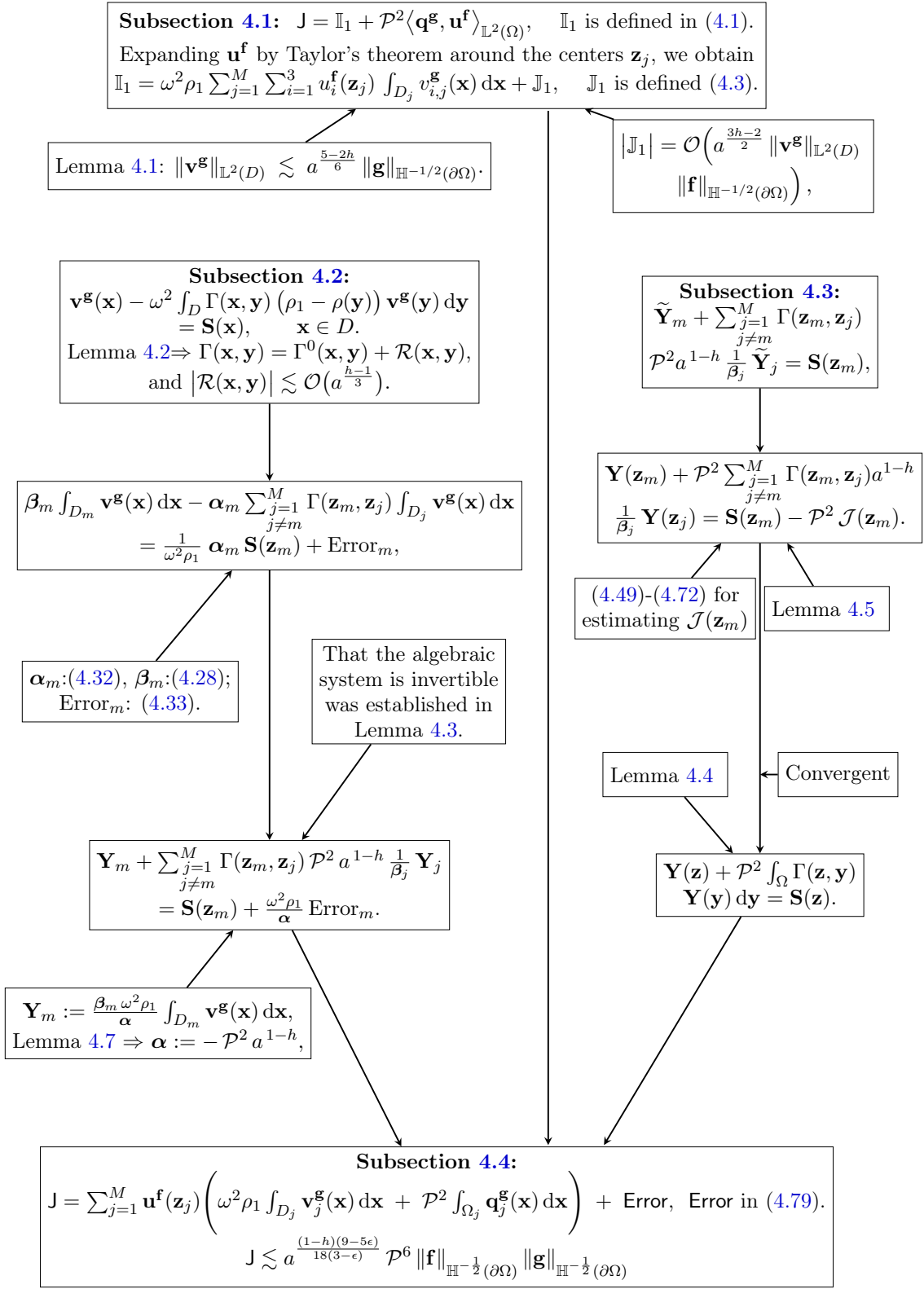
















\subsection{Calculation of the dominant term of $\mathbb{I}_1$}\label{subsec:4.1}

Let \(\mathbf{v}^{\bf g}(\cdot)\) be any solution to \eqref{eq:system vg}, and let \(\mathbf{u}^{\bf f}(\cdot)\) denote the solution to \eqref{eq:system}. For \(j=1,\dots,M\), set \(\mathbf{v}^{\bf g}_j(\cdot):=\mathbf{v}^{\bf g}|_{D_j}(\cdot)\). For each \(i\in\{1,2,3\}\), write \(v_{i,j}\) for the \(i\)-th component of \(\mathbf{v}^{\bf g}_j\). Then
\begin{align}\label{eq:I1 def in subsection 4.1}
\mathbb{I}_1
&= \omega^2 \big\langle \big(\rho_1-\rho(\mathbf{x})\big)\,\mathbf{v}^{\bf g}\,;\,\mathbf{u}^{\bf f}\big\rangle_{\mathbb{L}^2(D)}
= \omega^2 \rho_1 \sum_{j=1}^M \int_{D_j} \mathbf{v}^{\bf g}_j(\mathbf{x})\cdot\mathbf{u}^{\bf f}(\mathbf{x})\,\rmd \mathbf{x}
- \omega^2 \big\langle \rho(\mathbf{x})\,\mathbf{v}^{\bf g}\,;\,\mathbf{u}^{\bf f}\big\rangle_{\mathbb{L}^2(D)} .
\end{align}

\noindent
Expanding \(\mathbf{u}^{\bf f}\) by Taylor’s theorem around the centers \(\mathbf{z}_j\), we obtain
\begin{align}\label{eq:I1 def}
\mathbb{I}_1
= \omega^2 \rho_1 \sum_{j=1}^M \sum_{i=1}^3 u_i^{\bf f}(\mathbf{z}_j)\, \int_{D_j} v_{i,j}^{\bf g}(\mathbf{x})\,\rmd \mathbf{x}
+ \mathbb{J}_1 ,
\end{align}
where
\begin{align}\label{eq:J1 def}
\mathbb{J}_1
&= \omega^2 \rho_1 \sum_{j=1}^M \sum_{i=1}^3 \int_{D_j} v_{i,j}^{\bf g}(\mathbf{x}) \int_0^1 \nabla u_i^{\bf f}\!\left(\mathbf{z}_j+t(\mathbf{x}-\mathbf{z}_j)\right)\cdot(\mathbf{x}-\mathbf{z}_j)\,\rmd t\,\rmd \mathbf{x} \notag\\
&\quad - \omega^2 \sum_{i=1}^3 \int_D \rho(\mathbf{x})\, v_i^{\bf g}(\mathbf{x})\,u_i^{\bf f}(\mathbf{x})\,\rmd \mathbf{x}.
\end{align}

\noindent
Since \(\rho\in \mathbb{W}^{1,\infty}(\Omega)\), the solution \(\mathbf{u}^{\bf f}\in \mathbb{H}^1(\Omega)\) enjoys interior \(\mathbb{W}^{2,\infty}\)-regularity. Using the definition of \(\rho_1\) in \eqref{eq:rho1 def}, we estimate \(\mathbb{J}_1\) as
\begin{align}\label{eq:J1 estimate}
\bigl|\mathbb{J}_1\bigr|
&\lesssim a^{-2}\sum_{j=1}^M \sum_{i=1}^3
\big\| v_{i,j}^{\bf g} \big\|_{\mathbb{L}^2(D_j)}
\left\| \int_0^1 \nabla u_i^{\bf f}\!\left(\mathbf{z}_j+t(\mathbf{x}-\mathbf{z}_j)\right)\cdot(\mathbf{x}-\mathbf{z}_j)\,\rmd t \right\|_{\mathbb{L}^2(D_j)} \notag\\
&\quad + \sum_{i=1}^3 \big\| v_i^{\bf g} \big\|_{\mathbb{L}^2(D)}\, \big\| u_i^{\bf f} \big\|_{\mathbb{L}^2(D)} \notag\\
&\le a^{-1}
\left(\sum_{j=1}^M \sum_{i=1}^3 \big\| v_{i,j}^{\bf g} \big\|_{\mathbb{L}^2(D_j)}^2 \right)^{\!\frac12}
\left(\sum_{j=1}^M \sum_{i=1}^3 \big\| \nabla u_i^{\bf f} \big\|_{\mathbb{L}^2(D_j)}^2 \right)^{\!\frac12} \notag\\
&\quad + \left(\sum_{i=1}^3 \big\| v_i^{\bf g} \big\|_{\mathbb{L}^2(D)}^2\right)^{\!\frac12}
\left(\sum_{i=1}^3 \big\| u_i^{\bf f} \big\|_{\mathbb{L}^2(D)}^2\right)^{\!\frac12} \notag\\
&= \mathcal{O}\!\left(
\big\|\mathbf{v}^{\bf g}\big\|_{\mathbb{L}^2(D)}\,
\left(a^{-1}\big\|\nabla \mathbf{u}^{\bf f}\big\|_{\mathbb{L}^2(D)}+\big\|\mathbf{u}^{\bf f}\big\|_{\mathbb{L}^2(D)}\right)
\right).
\end{align}

\medskip

\noindent
Moreover, from \eqref{eq:system} one has the single-layer potential representation of \(\mathbf{u}^{\bf f}\) with surface density \(\mathbf{f}\):
\begin{align}\label{eq:uf def}
\mathbf{u}^{\bf f}(\mathbf{x})
= \int_{\partial\Omega} \Gamma(\mathbf{x},\mathbf{y})\,\mathbf{f}(\mathbf{y})\,\rmd \sigma(\mathbf{y})
= \big\langle \Gamma(\mathbf{x},\mathbf{y}) , \mathbf{f}(\mathbf{y}) \big\rangle_{\mathbb{H}^{\frac12}(\partial\Omega)\times \mathbb{H}^{-\frac12}(\partial\Omega)},
\qquad \mathbf{x}\in\Omega.
\end{align}

\noindent
Here \(\Gamma(\cdot,\cdot)\) is characterized as the Neumann Green tensor solving
\begin{align}\label{eq:Gamma def}
\left\{
\begin{aligned}
\mathcal{L}_{\lambda,\mu}\,\Gamma(\mathbf{x},\mathbf{y})+\omega^2 \rho(\mathbf{x})\,\Gamma(\mathbf{x},\mathbf{y})
&= -\,\delta_{\mathbf{y}}(\mathbf{x})\,\mathcal{I} && \text{in } \Omega, \\[2mm]
\partial_{\nu}\Gamma(\mathbf{x},\mathbf{y})
&= \mathbf{0} && \text{on } \partial\Omega .
\end{aligned}
\right.
\end{align}

\noindent
The existence and uniqueness of $\Gamma(\cdot,\cdot)$, together with a detailed singularity analysis and pointwise bounds, are established in \cite[Section 5.2]{KS24}. Using \eqref{eq:uf def}, we therefore obtain
\begin{align}\label{eq:uif def}
    u_i^{\bf f}(\mathbf{x})=\sum_{\ell=1}^3\big\langle \Gamma_{i\ell}(\mathbf{x},\cdot), f_\ell(\cdot)\big\rangle_{\mathbb{H}^{\frac12}(\partial\Omega)\times \mathbb{H}^{-\frac12}(\partial\Omega)}
    \quad \mbox{for } \ i=1,2,3.
\end{align}
By the minimizing characterization of the trace operator, we have
\begin{align}\label{eq:Gamma H1/2 H1}
    \|\Gamma(\mathbf{x},\cdot)\|_{\mathbb{H}^{1/2}(\partial\Omega)}
\ \le\
\|\Gamma(\mathbf{x},\cdot)\|_{\mathbb{H}^{1}(\Omega^\diamond)},
\qquad \Omega^\diamond:=\Omega\setminus\overline{D}.
\end{align}
Therefore, using \eqref{eq:uif def} and the Cauchy–Schwarz inequality in the boundary duality, it follows that
\begin{align}\label{eq:uf L2 norm}
\|\mathbf{u}^{\bf f}\|_{\mathbb{L}^2(D)}^2
&= \int_D \Big|\big\langle \Gamma(\mathbf{x},\cdot), \mathbf{f}\big\rangle_{\mathbb{H}^{\frac12}\times \mathbb{H}^{-\frac12}}\Big|^2 \,\rmd\mathbf{x}  
\le \int_D \|\Gamma(\mathbf{x},\cdot)\|_{\mathbb{H}^{1/2}(\partial\Omega)}^2\,\rmd\mathbf{x}\cdot\|\mathbf{f}\|_{\mathbb{H}^{-1/2}(\partial\Omega)}^2 \notag\\
&\le \int_D \|\Gamma(\mathbf{x},\cdot)\|_{\mathbb{H}^{1}(\Omega^\diamond)}^2\,\rmd\mathbf{x}\cdot\|\mathbf{f}\|_{\mathbb{H}^{-1/2}(\partial\Omega)}^2 .
\end{align}
Taking square roots on both sides, we obtain
\begin{align}\label{eq:uf L2norm of Gamma}
\|\mathbf{u}^{\bf f}\|_{\mathbb{L}^2(D)}
&\le
\|\mathbf{f}\|_{\mathbb{H}^{-1/2}(\partial\Omega)}\,
\left(\int_D \|\Gamma(\mathbf{x},\cdot)\|_{\mathbb{H}^{1}(\Omega^\diamond)}^2\,\rmd\mathbf{x}\right)^{1/2}.
\end{align}

\noindent
Using the decomposition $\Gamma=\Gamma^0+\mathcal{R}$ (cf.\ \eqref{eq:Gamma-decomposition}), and recalling the pointwise behaviours
\[
|\Gamma^0(\mathbf{x},\mathbf{y})|\simeq|\mathbf{x}-\mathbf{y}|^{-1},
\qquad
|\nabla_{\mathbf{y}}\Gamma^0(\mathbf{x},\mathbf{y})|\simeq|\mathbf{x}-\mathbf{y}|^{-2},
\]
while $\mathcal{R}$ is controlled in the boundary layer (hence lower order in the present scaling), and recalling that 
\begin{align}
    \| \Gamma(\mathbf{x}, \cdot) \|_{\mathbb{H}^1(\Omega^\diamond)}^2 := \| \nabla \Gamma(\mathbf{x}, \cdot) \|_{\mathbb{L}^2(\Omega^\diamond)}^2 + \| \Gamma(\mathbf{x}, \cdot) \|_{\mathbb{L}2(\Omega^\diamond)}^2,
\end{align}
we obtain, by expanding the $\mathbb{H}^{1}(\Omega^\diamond)$-norm into bulk and gradient parts and keeping the dominant piece,
\begin{align}\label{eq:Gamma H1 norm}
\int_D \|\Gamma(\mathbf{x},\cdot)\|_{\mathbb{H}^{1}(\Omega^\diamond)}^2\,\rmd\mathbf{x}
& {\lesssim}   
\int_D \int_{\Omega^\diamond} \frac{1}{|\mathbf{x}-\mathbf{y}|^{4}}\,\rmd\mathbf{y}\,\rmd\mathbf{x} 
\stackrel{\footnotemark}{\le}   \int_D \int_{\Omega^\diamond} \frac{1}{\kappa(a)^{4}}\,\rmd\mathbf{y}\,\rmd\mathbf{x} \notag\\
&= |D|\,|\Omega^\diamond|\,\kappa(a)^{-4}
\ \lesssim\ |D|\,\kappa(a)^{-4}.   
\end{align}
\footnotetext{For \( \mathbf{x} \in D \) and \( \mathbf{y} \in \Omega^\diamond \), Assumption \ref{assumption1} gives
\begin{align}\label{eq:footnote}
    |\mathbf{x} - \mathbf{y}| \geq \operatorname{dist}(D, \partial\Omega^\diamond) = \operatorname{dist}(D, \partial\Omega) \geq \kappa(a).
\end{align}
}   

\noindent
Substituting \eqref{eq:Gamma H1 norm} into \eqref{eq:uf L2norm of Gamma}, and recalling the scalings $|D|\sim a^{h+2}$ and $\kappa(a)\sim a^{\frac{1-h}{3}}$, we conclude that
\begin{align}\label{eq:uf L2 norm finally}
\|\mathbf{u}^{\bf f}\|_{\mathbb{L}^2(D)}
&\le C\,\|\mathbf{f}\|_{\mathbb{H}^{-1/2}(\partial\Omega)}\,
\left(\,|D|\,\kappa(a)^{-4}\,\right)^{\frac12}= \mathcal{O}\!\left(a^{\frac{2+7h}{6}}  \|\mathbf{f}\|_{\mathbb{H}^{-1/2}(\partial\Omega)}\right).
\end{align}
 
\noindent
Differentiating \eqref{eq:uf def} with respect to $\mathbf{x}$ and repeating the above argument, we obtain
\begin{align}\label{eq:nabla uf L2 norm}
\|\nabla \mathbf{u}^{\bf f}\|_{\mathbb{L}^2(D)}^2
&\le \int_D \|\nabla_{\mathbf{x}}\Gamma(\mathbf{x},\cdot)\|_{\mathbb{H}^{1}(\Omega^\diamond)}^2\,\rmd\mathbf{x}\cdot\|\mathbf{f}\|_{\mathbb{H}^{-1/2}(\partial\Omega)}^2  \notag\\
&\lesssim \int_D \int_{\Omega^\diamond} \frac{1}{|\mathbf{x}-\mathbf{y}|^{6}}\,\rmd\mathbf{y}\,\rmd\mathbf{x}\cdot\|\mathbf{f}\|_{\mathbb{H}^{-1/2}(\partial\Omega)}^2 \notag\\
&\le \int_D \int_{\Omega^\diamond} \frac{1}{\kappa(a)^{6}}\,\rmd\mathbf{y}\,\rmd\mathbf{x}\cdot\|\mathbf{f}\|_{\mathbb{H}^{-1/2}(\partial\Omega)}^2  
\lesssim\ |D|\,\kappa(a)^{-6} \,\|\mathbf{f}\|_{\mathbb{H}^{-1/2}(\partial\Omega)}^2 \notag\\
&=\mathcal{O}\!\left(a^{3h}\,\|\mathbf{f}\|_{\mathbb{H}^{-1/2}(\partial\Omega)}^2\right).
\end{align}

\noindent
Hence, by substituting \eqref{eq:uf L2 norm finally} and \eqref{eq:nabla uf L2 norm} into \eqref{eq:J1 estimate}, we have
\begin{align}\label{eq:E412-final}
\bigl|\mathbb{J}_1\bigr|
= \mathcal{O}\!\left(a^{\frac{3h-2}{2}}\,
\|\mathbf{v}^{\bf g}\|_{\mathbb{L}^2(D)}\,
\|\mathbf{f}\|_{\mathbb{H}^{-1/2}(\partial\Omega)}\right).
\end{align}

\noindent
The subsequent lemma furnishes an a priori estimate for $\|\mathbf{v}^{\bf g}\|_{\mathbb{L}^2(D)}$.

\begin{lem}\label{lem:vg estimate}
Let $\mathbf{v}^{\mathbf{g}}$ be the solution of \eqref{eq:system vg}.  
Then the following a priori estimate holds:
\begin{align}\label{eq:vg L2 norm lesssim g}
    \|\mathbf{v}^{\mathbf{g}}\|_{\mathbb{L}^2(D)}
    \;\lesssim\; a^{\frac{5-2h}{6}} \,\|\mathbf{g}\|_{\mathbb{H}^{-1/2}(\partial\Omega)}.
\end{align}
\end{lem}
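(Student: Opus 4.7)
The plan is to use a Lippmann--Schwinger representation of $\mathbf{v}^{\mathbf{g}}$ built from the Neumann Green's tensor $\Gamma$ of the unperturbed problem \eqref{eq:Gamma def}. Since $\partial_{\nu}\Gamma=\mathbf{0}$ on $\partial\Omega$, the solution of \eqref{eq:system vg} satisfies
\begin{align*}
\mathbf{v}^{\mathbf{g}}(\mathbf{x}) - \omega^{2}\int_{D}\Gamma(\mathbf{x},\mathbf{y})\big(\rho_{1}-\rho(\mathbf{y})\big)\mathbf{v}^{\mathbf{g}}(\mathbf{y})\,\rmd\mathbf{y}
= \mathbf{S}(\mathbf{x}),\qquad \mathbf{x}\in\Omega,
\end{align*}
with incident field $\mathbf{S}(\mathbf{x}):=\int_{\partial\Omega}\Gamma(\mathbf{x},\mathbf{y})\mathbf{g}(\mathbf{y})\,\rmd\sigma(\mathbf{y})$. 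Applying to $\mathbf{S}$ the same duality argument that produced \eqref{eq:uf L2 norm finally} for $\mathbf{u}^{\mathbf{f}}$---controlling $\|\Gamma(\mathbf{x},\cdot)\|_{\bbH^{1/2}(\partial\Omega)}$ by $\|\Gamma(\mathbf{x},\cdot)\|_{\bbH^{1}(\Omega^{\diamond})}$ and exploiting the boundary-layer separation $\kappa(a)\sim a^{(1-h)/3}$---yields the incident-field estimate $\|\mathbf{S}\|_{\bbL^{2}(D)}\lesssim a^{(2+7h)/6}\|\mathbf{g}\|_{\bbH^{-1/2}(\partial\Omega)}$.

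The next step is to convert this bound on the data $\mathbf{S}$ into a bound on $\mathbf{v}^{\mathbf{g}}$ itself, by isolating the resonant self-interaction on each hard inclusion. Decomposing $\Gamma = \Gamma^{0}+\mathcal{R}$ via Lemma \ref{lem:R(x,y)} and restricting the Lippmann--Schwinger identity above to $D_{m}$ gives, up to lower-order terms,
\begin{align*}
(I-\omega^{2}\rho_{1}\,N_{D_{m}})\,\mathbf{v}^{\mathbf{g}}\big|_{D_{m}}
= \mathbf{S}\big|_{D_{m}} + \mathbf{M}_{m},
\end{align*}
where $\mathbf{M}_{m}$ collects the low-order $\rho(\mathbf{y})$ piece, the smooth remainder $\mathcal{R}$, and the multiple-scattering contributions from $D_{j}$, $j\neq m$. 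The frequency prescription \eqref{eq:the angular frequency} forces $1-\omega^{2}\rho_{1}\lambda_{n_{0}}^{D_{m}}=c_{n_{0}}a^{h}$, so that in the eigenbasis $\{\tilde{\mathbf{e}}_{n}^{D_{m}}\}$ of the self-adjoint compact Newtonian operator $N_{D_{m}}$ the operator $I-\omega^{2}\rho_{1}N_{D_{m}}$ has a single direction $\tilde{\mathbf{e}}_{n_{0}}^{D_{m}}$ with amplification $\mathcal{O}(a^{-h})$ while all other modes stay uniformly bounded away from zero. A spectral expansion therefore yields
\begin{align*}
\|\mathbf{v}^{\mathbf{g}}\|_{\bbL^{2}(D_{m})}^{2}
\lesssim a^{-2h}\,\big|\big\langle \mathbf{S}+\mathbf{M}_{m},\tilde{\mathbf{e}}_{n_{0}}^{D_{m}} \big\rangle\big|^{2}
+ \|\mathbf{S}+\mathbf{M}_{m}\|_{\bbL^{2}(D_{m})}^{2}.
\end{align*}

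The third step is to control the resonant projection via the Foldy--Lax algebraic system developed in Subsection \ref{subsec:4.2}. Since $\tilde{\mathbf{e}}_{n_{0}}^{D_{m}}$ has nonzero average---reflected by the condition $\langle\mathcal{I},\tilde{\mathbf{e}}_{n_{0}}\rangle_{\bbL^{2}(B)}\neq 0$ underlying \eqref{eq:P2 def}---this projection is proportional, up to higher-order errors, to $\int_{D_{m}}\mathbf{v}^{\mathbf{g}}$, which is precisely the quantity $\mathbf{Y}_{m}$ governed by the discrete system. The uniform-in-$M$ invertibility of that system (Lemma \ref{lem:The algebraic system is invertible}), together with a Riemann-sum comparison with its continuous limit on $\Omega_{\mathrm{cube}}$, produces an $\ell^{2}$-estimate for $(\mathbf{Y}_{m})_{m}$ driven by the values $\mathbf{S}(\mathbf{z}_{m})$. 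Summing the resonant and non-resonant contributions using $M\sim a^{h-1}$ and $|D_{m}|\sim a^{3}$ then balances the amplification factor $a^{-h}$ against the smallness of $\mathbf{S}$ and produces the claimed rate $a^{(5-2h)/6}$.

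The principal obstacle is the uniform invertibility of the Foldy--Lax system as $a\to 0$ and $M\to\infty$ jointly. This is handled by exploiting the positivity of the interaction matrix---guaranteed by the sign choice for $c_{n_{0}}$ in \eqref{eq:the angular frequency}---together with the convergence of the discrete system to an invertible continuous Lippmann--Schwinger-type equation on $\Omega_{\mathrm{cube}}$; the auxiliary remainders arising from $\mathcal{R}$ and from approximating $\mathbf{v}^{\mathbf{g}}$ by its average on each $D_{m}$ remain subdominant in the regime $h>\tfrac{1}{3}$ prescribed in Theorem \ref{thm:N--D}.
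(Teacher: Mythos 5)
Your high-level route is the right one and matches the paper's: write the Lippmann--Schwinger equation for $\mathbf{v}^{\mathbf{g}}$ with the Neumann Green tensor, split $\Gamma=\Gamma^0+\mathcal{R}$, project onto the Newtonian eigenbasis and exploit the $a^{-h}$ amplification of $(1-\omega^2\rho_1\lambda_n)^{-1}$, and control the inter-inclusion coupling via the Foldy--Lax system of Lemma~\ref{lem:The algebraic system is invertible}. However, there is a genuine quantitative gap in your bound on the incident field. You propose to bound $\|\mathbf{S}\|_{\mathbb{L}^2(D)}$ by the same $\mathbb{H}^{1/2}$-duality argument that produced \eqref{eq:uf L2 norm finally} for $\mathbf{u}^{\mathbf{f}}$, which gives $\|\mathbf{S}\|_{\mathbb{L}^2(D)}\lesssim a^{(2+7h)/6}\|\mathbf{g}\|_{\mathbb{H}^{-1/2}(\partial\Omega)}$. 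When this is fed into the $a^{-h}$ spectral amplification in the combined estimate (cf.\ \eqref{eq:vg L2 norm of single potential}), the exponent balances to
\[
a^{-h}\cdot a^{(2+7h)/6}=a^{(h+2)/6},
\]
which is strictly weaker than the claimed $a^{(5-2h)/6}$ for all $0<h<1$. Your assertion that this ``produces the claimed rate $a^{(5-2h)/6}$'' is therefore arithmetically false. The shortfall is not cosmetic: with only $a^{(h+2)/6}$ in hand, the condition $\tfrac{7h-1}{6}>\tfrac{1-h}{3}$ in Remark~1.2(4) degrades to $h>\tfrac12$, which narrows the range stated in Theorem~\ref{thm:N--D}.

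What is missing is the sharper estimate of $\mathbf{S}$ that the paper obtains via the $\bbH^1(\Omega)\hookrightarrow \bbL^6(\Omega)$ embedding and H\"older on the small set $D$: since $\mathbf{S}=\mathbf{S}(\mathbf{g})$ is the single-layer potential of $\mathbf{g}$ and lies in $\bbH^1(\Omega)$ uniformly, one writes, cell by cell,
\[
\|\mathbf{S}\|_{\bbL^2(D_m)}^2\le |D_m|^{2/3}\,\|\mathbf{S}\|_{\bbL^6(D_m)}^2,
\]
and then controls $\|\mathbf{S}\|_{\bbL^6(D_m)}$ via $\bigl(\int_{D_m}\|\Gamma(\mathbf{x},\cdot)\|_{\bbH^{1/2}(\partial\Omega)}^6\,\rmd\mathbf{x}\bigr)^{1/3}$ together with the boundary-distance sums of Lemma~\ref{lem:4.5} (cf.\ \eqref{eq:S-L6}--\eqref{eq:S-final-scale} and its multi-inclusion analogue). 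This yields $\|\mathbf{S}\|_{\mathbb{L}^2(D)}\lesssim a^{(5+4h)/6}\|\mathbf{g}\|_{\mathbb{H}^{-1/2}(\partial\Omega)}$, and only then does $a^{-h}\cdot a^{(5+4h)/6}=a^{(5-2h)/6}$ reproduce the stated rate. You should replace your duality bound for $\|\mathbf{S}\|_{\bbL^2(D)}$ by this $\bbL^6$-based estimate; the rest of your outline then carries through.
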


\begin{proof}
     See Subsection \ref{subsec:vg estimate}.
\end{proof}

By virtue of Lemma \ref{lem:vg estimate} and the estimate \eqref{eq:E412-final} for \( \mathbb{J}_1 \), we can reduce the task of estimating \( \mathbb{I}_1 \), as specified in \eqref{eq:I1 def}, to
\begin{align} \label{eq:elastic-414}
\mathbb{I}_1
=\omega^2 \rho_1
\sum_{j=1}^{M}\sum_{i=1}^{3}
u^{\bf f}_i({\bf z}_j)\,
\int_{D_j} v^{\bf g}_{i,j}(\bx)\,\rmd\bx
+ \mathcal{O}\!\left(
a^{\frac{7h-1}{6}}\,
\|\mathbf{f}\|_{\mathbb{H}^{-1/2}(\partial\Omega)}
\|\mathbf{g}\|_{\mathbb{H}^{-1/2}(\partial\Omega)}
\right).
\end{align}

\noindent
The objective of the subsequent subsection is to derive the algebraic system satisfied by the scalar family
\[
\left( \int_{D_j} v^{\bf g}_{i,j}(\bx)\,\rmd\bx \right)_{\substack{1\le i\le 3, \ 1\le j\le M, }}
\]
and to establish its invertibility.

\subsection{Invertibility of the algebraic system}\label{subsec:4.2}

Let $\mathbf{v}^{\bf g}(\cdot)$ denote the unique solution to the perturbed boundary–value problem \eqref{eq:system vg}. By standard potential theoretic arguments, $\mathbf{v}^{\bf g}$ satisfies the Lippmann–Schwinger representation
\begin{equation}\label{eq:vg LSE in Omega}
\mathbf{v}^{\bf g}(\bx)
-
\omega^2 \int_{D} \Gamma(\bx,\by)\,\big(\rho_1-\rho(\by)\big)\,\mathbf{v}^{\bf g}(\by)\,\rmd\by
=
\mathbf{S}(\bx),
\qquad \bx\in\Omega,
\end{equation}
where $\Gamma(\cdot,\cdot)$ is the Neumann Green tensor introduced in \eqref{eq:Gamma def}, and $\mathbf{S}(\cdot)$ is the background response field induced by the boundary traction $\mathbf{g}$, namely the solution of
\begin{equation}\label{eq:S(x) satisfies equation}
\begin{cases}
\mathcal{L}_{\lambda,\mu}\,\mathbf{S}(\bx) + \omega^2 \rho(\bx)\,\mathbf{S}(\bx) = \mathbf{0}, & \bx\in\Omega,\\[2mm]
\partial_\nu \mathbf{S}(\bx) = \mathbf{g}(\bx), & \bx\in\partial\Omega.
\end{cases}
\end{equation}

\noindent
Taking the restriction of \eqref{eq:Gamma def} to \(D\), we obtain 
\begin{equation}\label{eq:vg LSE in D}
\mathbf{v}^{\bf g}(\bx)
-
\omega^2 \int_{D} \Gamma(\bx,\by)\,\big(\rho_1-\rho(\by)\big)\,\mathbf{v}^{\bf g}(\by)\,\rmd\by
=
\mathbf{S}(\bx),
\qquad \bx\in D.
\end{equation}

\begin{lem}\label{lem:R(x,y)}
Let \(\Gamma(\mathbf{x},\mathbf{y})\) be the solution of \eqref{eq:Gamma def}. Then \(\Gamma\) admits the decomposition
\begin{align}\label{eq:Gamma-decomposition}
\Gamma(\mathbf{x},\mathbf{y})=\Gamma^0(\mathbf{x},\mathbf{y})+\mathcal{R}(\mathbf{x},\mathbf{y}),
\end{align}
where \(\Gamma^0(\mathbf{x},\mathbf{y})\) is given by \eqref{eq:Gamma0 def} and satisfies \eqref{eq:Gamma0 satisfy}. The remainder \(\mathcal{R}\) solves
\begin{align}\label{eq:R(x,y) satisfiy}
\left\{
\begin{aligned}
\big(\mathcal{L}_{\lambda,\mu}+\omega^2\rho(\mathbf{x})\big)\,\mathcal{R}(\mathbf{x},\mathbf{y})
&= -\,\omega^2 \rho(\mathbf{x})\,\Gamma^0(\mathbf{x},\mathbf{y}) && \text{in } \Omega, \\[2mm]
\partial_{\nu_{\mathbf{x}}}\mathcal{R}(\mathbf{x},\mathbf{y})
&= -\,\partial_{\nu_{\mathbf{x}}}\Gamma^0(\mathbf{x},\mathbf{y}) && \text{on } \partial\Omega .
\end{aligned}
\right.
\end{align}
For all \(\mathbf{x},\mathbf{y}\in\Omega\), the remainder \(\mathcal{R}\) satisfies the pointwise bound
\begin{align}\label{eq:Rxy lesssim dist}
    \big|\mathcal{R}(\mathbf{x},\mathbf{y})\big|
\lesssim \operatorname{dist}(\mathbf{x},\partial\Omega)^{-\frac13}\,
\operatorname{dist}(\mathbf{y},\partial\Omega)^{-\frac23}=\mathcal{O}\big(a^{\frac{h-1}{3}}\big).
\end{align}
\end{lem}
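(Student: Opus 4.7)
The plan is to establish \eqref{eq:Gamma-decomposition}--\eqref{eq:R(x,y) satisfiy} by direct subtraction, and then to derive the pointwise bound \eqref{eq:Rxy lesssim dist} from a Green representation of $\mathcal{R}$. For the PDE part, I would set $\mathcal{R}(\mathbf{x},\mathbf{y}) := \Gamma(\mathbf{x},\mathbf{y}) - \Gamma^0(\mathbf{x},\mathbf{y})$ and subtract \eqref{eq:Gamma0 satisfy} from the first line of \eqref{eq:Gamma def}. The Dirac source cancels exactly, since both fundamental solutions share the same principal operator $\mathcal{L}_{\lambda,\mu}$, while the zeroth-order term reorganizes via $\omega^2\rho(\mathbf{x})\Gamma = \omega^2\rho(\mathbf{x})\mathcal{R} + \omega^2\rho(\mathbf{x})\Gamma^0$, producing the right-hand side in \eqref{eq:R(x,y) satisfiy}. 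The Neumann boundary condition $\partial_{\nu_\mathbf{x}}\mathcal{R} = -\partial_{\nu_\mathbf{x}}\Gamma^0$ then follows from $\partial_{\nu_\mathbf{x}}\Gamma = \mathbf{0}$ and linearity.

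For the pointwise bound, relying on the existence and singularity structure of $\Gamma$ from \cite[Section 5.2]{KS24} (with $\omega$ outside the Neumann spectrum of $\mathcal{L}_{\lambda,\mu}+\omega^2\rho$), I would use $\Gamma$ itself as the Neumann Green kernel for the problem satisfied by $\mathcal{R}(\cdot,\mathbf{y})$, yielding a volume-plus-boundary representation of the form
\begin{align}\notag
\mathcal{R}(\mathbf{x},\mathbf{y})
= -\omega^2 \int_\Omega \Gamma(\mathbf{x},\mathbf{z})\,\rho(\mathbf{z})\,\Gamma^0(\mathbf{z},\mathbf{y})\,\rmd\mathbf{z}
 \;+\; \int_{\partial\Omega}\Gamma(\mathbf{x},\mathbf{z})\,\partial_{\nu_\mathbf{z}}\Gamma^0(\mathbf{z},\mathbf{y})\,\rmd\sigma(\mathbf{z}).
\end{align}
The bound $|\Gamma(\mathbf{x},\mathbf{z})|\lesssim |\mathbf{x}-\mathbf{z}|^{-1}$ from \cite{KS24}, combined with $|\Gamma^0(\mathbf{z},\mathbf{y})|\lesssim |\mathbf{z}-\mathbf{y}|^{-1}$, renders the volume integral uniformly bounded on $\Omega\times\Omega$, since the convolution of two $|\cdot|^{-1}$ singularities is bounded in $\mathbb{R}^3$.

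The main obstacle is controlling the boundary integral, whose kernel has size $|\mathbf{x}-\mathbf{z}|^{-1}|\mathbf{y}-\mathbf{z}|^{-2}$ and must be estimated by the anisotropic product of distance-to-boundary weights displayed in \eqref{eq:Rxy lesssim dist}. Here I would apply H\"older's inequality on $\partial\Omega$ with carefully tuned conjugate exponents $(p,p')$, using the elementary surface estimate
\[
\int_{\partial\Omega}|\mathbf{x}-\mathbf{z}|^{-\alpha}\,\rmd\sigma(\mathbf{z})\lesssim \operatorname{dist}(\mathbf{x},\partial\Omega)^{2-\alpha}\quad \text{for }\alpha>2,
\]
together with its analogue for $|\mathbf{y}-\mathbf{z}|^{-\beta}$. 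Balancing the two factors so that the $|\cdot|^{-1}$ piece in $\mathbf{x}$ absorbs only a third of the weight and the $|\cdot|^{-2}$ piece in $\mathbf{y}$ absorbs the remaining two thirds (matching the relative orders of the two near-boundary singularities) yields the asymmetric exponents $-1/3$ and $-2/3$ of \eqref{eq:Rxy lesssim dist}. I expect this H\"older balancing to be the most delicate step: the choice of $(p,p')$ is constrained both by the integrability thresholds of the two surface singularities and by the requirement $1/p+1/p'=1$, and one must keep the bookkeeping of boundary-layer cones around the nearest-point projections of $\mathbf{x}$ and $\mathbf{y}$ on $\partial\Omega$.

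Finally, for the quantitative conclusion, I would insert the geometric lower bound $\operatorname{dist}(\mathbf{x},\partial\Omega),\operatorname{dist}(\mathbf{y},\partial\Omega)\gtrsim a^{(1-h)/3}$ supplied by Assumption \ref{assumption1} (valid for $\mathbf{x},\mathbf{y}\in D$) into the weighted pointwise bound, which directly gives $|\mathcal{R}(\mathbf{x},\mathbf{y})|\lesssim a^{-(1-h)/3\cdot(1/3+2/3)} = a^{(h-1)/3}$, matching the stated asymptotic.
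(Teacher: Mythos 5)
Your proposal is correct, and it departs from the paper on one substantive point: the choice of Green kernel in the representation formula. The paper tests the boundary value problem for $\mathcal{R}$ against the free-space Kelvin tensor $\Gamma^0$, which produces an \emph{implicit} relation containing $\mathcal{R}$ on both sides (a double-layer term $DL_{\partial\Omega}[\mathcal{R}(\cdot,\mathbf{y})]$ and a Newtonian term $N_\Omega[\rho\,\mathcal{R}(\cdot,\mathbf{y})]$ on the right), and then informally discards those as "strictly less singular," retaining only the single-layer term $SL_{\partial\Omega}[\partial_\nu\Gamma^0(\cdot,\mathbf{y})](\mathbf{x})$. You instead use the full Neumann Green tensor $\Gamma$ as the representation kernel, which yields an \emph{explicit} formula with no $\mathcal{R}$ on the right-hand side—cleaner, and it removes the need to argue away the self-referential terms, at the mild cost of leaning on the $|\Gamma(\mathbf{x},\mathbf{z})|\lesssim|\mathbf{x}-\mathbf{z}|^{-1}$ bound from \cite{KS24} (which the paper invokes at the same point). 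From there both proofs coincide: the surface kernel $|\mathbf{x}-\mathbf{t}|^{-1}|\mathbf{y}-\mathbf{t}|^{-2}$ is treated by H\"older with the paper's choice $p=3$, $p'=3/2$, and the surface estimate you quote gives $\operatorname{dist}(\mathbf{x},\partial\Omega)^{-1/3}\operatorname{dist}(\mathbf{y},\partial\Omega)^{-2/3}$. Two small clarifications: (i) the phrase "convolution of two $|\cdot|^{-1}$ singularities is bounded in $\mathbb{R}^3$" should read "on a bounded domain"—over all of $\mathbb{R}^3$ that convolution diverges at infinity, but over $\Omega$ it is $O(1)$ since the combined local singularity order $1+1=2$ is below the dimension $3$; (ii) the H\"older balancing is less delicate than you suggest, because any $p\in(2,\infty)$ gives a total exponent of $2(1/p+1/p')-3=-1$ in $\kappa(a)$, so the final bound $\kappa(a)^{-1}=\mathcal{O}(a^{(h-1)/3})$ is insensitive to $p$; only the anisotropic split between $\mathbf{x}$ and $\mathbf{y}$ (irrelevant here since both distances are $\sim\kappa(a)$) depends on the choice.
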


\begin{proof}
See Subsection~\ref{subsec:lemR(x,y) proof}.
\end{proof}

{
\begin{rem}
We note that the singular behavior of the $\mathcal{R}(\mathbf{x},\mathbf{y})$
depends on the distances of $\mathbf{x}$ and $\mathbf{y}$ to the boundary
$\partial\Omega$. In particular, the lower bound on $\dist(\mathbf{x},\partial\Omega)$
and $\dist(\mathbf{y},\partial\Omega)$ implied by the geometry allows us to control
the $\bbH^{1}$-norm of $\mathcal{R}(\mathbf{x},\mathbf{y})$ in the regime considered
here. In the subsequent analysis, when $\mathbf{x},\mathbf{y}\in D$, the most singular
configuration for $|\mathcal{R}(\mathbf{x},\mathbf{y})|$ occurs when both points are
closest to $\partial\Omega$; see Figure~\ref{Fig1}. We quantify this configuration
using $\kappa(a)$ defined in \eqref{eq:dist D partial Omega}. Accordingly, we estimate
the contributions of $\Gamma^{0}(\mathbf{x},\mathbf{y})$ and $\mathcal{R}(\mathbf{x},\mathbf{y})$
separately. In general, $\mathcal{R}(\mathbf{x},\mathbf{y})$ contributes at the same
order as $\Gamma^{0}(\mathbf{x},\mathbf{y})$ and therefore cannot be neglected.
\end{rem}

}

For $\mathbf{x}\in D_m$, \eqref{eq:vg LSE in D} can be rewritten as
\begin{align}\label{eq:vmg satisfies equation}
\big(\mathcal{I}-\omega^{2}\rho_{1}\,N_{D_m}\big)\big[\mathbf{v}^{\mathbf{g}}\big](\mathbf{x})
-\omega^{2}\rho_{1}\sum_{\substack{j=1\\ j\neq m}}^{M}\int_{D_j}\Gamma(\mathbf{x},\mathbf{y})\,\mathbf{v}^{\mathbf{g}}(\mathbf{y})\,\rmd\mathbf{y} \notag\\
= \mathbf{S}_m(\mathbf{x})
+\omega^{2}\rho_{1}\int_{D_m}\mathcal{R}(\mathbf{x},\mathbf{y})\,\mathbf{v}^{\mathbf{g}}(\mathbf{y})\,\rmd\mathbf{y}
-\omega^{2}\int_{D}\Gamma(\mathbf{x},\mathbf{y})\,\rho(\mathbf{y})\,\mathbf{v}^{\mathbf{g}}(\mathbf{y})\,\rmd\mathbf{y},
\end{align}
where $\mathbf{S}_m(\cdot):=\mathbf{S}(\cdot)\big|_{D_m}$, the remainder $\mathcal{R}(\cdot,\cdot)$ solves \eqref{eq:R(x,y) satisfiy}, and $N_{D_m}$ denotes the Newtonian operator on $D_m$ as in \eqref{eq:Newtonian def}. Multiplying \eqref{eq:vmg satisfies equation} by $(\omega^{2}\rho_{1})^{-1}$ and introducing the operator $\mathbf{W}_m$ as the inverse of $\big((\omega^{2}\rho_{1})^{-1}\mathcal{I}-N_{D_m}\big)$, i.e.
\begin{align}\label{eq:Wx satisfies equation}
\frac{1}{\omega^{2}\rho_{1}}\;\mathbf{W}_m(\mathbf{x})-N_{D_m}\!\big[\mathbf{W}_m\big](\mathbf{x})=\mathcal{I},
\qquad \mathbf{x}\in D_m,
\end{align}
and then applying $\mathbf{W}_m$ to both sides of \eqref{eq:vmg satisfies equation} and integrating over $D_m$ yield
\begin{align}\label{eq:vmg-LSE-M} 
&\boldsymbol{\beta}_m\int_{D_m}\mathbf{v}^{\mathbf{g}}(\mathbf{x})\,\rmd\mathbf{x}
-\sum_{\substack{j=1\\ j\neq m}}^{M}\int_{D_m}\mathbf{W}_m(\mathbf{x})\!\int_{D_j}\Gamma(\mathbf{x},\mathbf{y})\,\mathbf{v}^{\mathbf{g}}(\mathbf{y})\,\rmd\mathbf{y}\,\rmd\mathbf{x} \notag\\
=& \frac{1}{\omega^{2}\rho_{1}}\int_{D_m}\mathbf{W}_m(\mathbf{x})\,\mathbf{S}_m(\mathbf{x})\,\rmd\mathbf{x}
 -\frac{1}{\rho_{1}}\int_{D_m}\mathbf{W}_m(\mathbf{x})\!\int_{D}\Gamma(\mathbf{x},\mathbf{y})\,\rho(\mathbf{y})\,\mathbf{v}^{\mathbf{g}}(\mathbf{y})\,\rmd\mathbf{y}\,\rmd\mathbf{x}  \notag\\
&\quad+\int_{D_m}\mathbf{W}_m(\mathbf{x})\!\int_{D_m}\!\int_{0}^{1}
\nabla_{\mathbf{y}}\mathcal{R}\!\left(\mathbf{x},\mathbf{z}_m+t(\mathbf{y}-\mathbf{z}_m)\right)\cdot(\mathbf{y}-\mathbf{z}_m)\,\rmd t\,\mathbf{v}^{\mathbf{g}}(\mathbf{y})\,\rmd\mathbf{y}\,\rmd\mathbf{x}.
\end{align}

\noindent
The coefficient $\boldsymbol{\beta}_m\in\mathbb{C}$ is defined by
\begin{align}\label{eq:beta=1-int}
\boldsymbol{\beta}_m:=1-\int_{D_m}\mathbf{W}_m(\mathbf{x})\,\mathcal{R}(\mathbf{x},\mathbf{z}_m)\,\rmd\mathbf{x}.
\end{align}
Moreover,
\begin{align}\label{eq:Wm R int Dm}
\left|\int_{D_m}\mathbf{W}_m(\mathbf{x})\,\mathcal{R}(\mathbf{x},\mathbf{z}_m)\,\rmd\mathbf{x}\right|
&\le \|\mathbf{W}_m\|_{\mathbb{L}^{2}(D_m)}\,\|\mathcal{R}(\cdot,\mathbf{z}_m)\|_{\mathbb{L}^{2}(D_m)}  \notag \\
&\overset{\eqref{eq:Rxy lesssim dist}}{\underset{\eqref{eq:Wm L2 norm}}{\lesssim}}\frac{a^{1-h}}{\operatorname{dist}(D_m,\partial\Omega)}
\overset{\eqref{eq:dist D partial Omega}}{=}\mathcal{O}\!\left(a^{\frac{2(1-h)}{3}}\right),
\end{align}
and hence
\begin{align}\label{eq:betam=}
\boldsymbol{\beta}_m=1+\mathcal{O}\!\left(a^{\frac{2(1-h)}{3}}\right),\qquad m=1,\ldots,M.
\end{align}

\noindent
To derive a closed algebraic system, we expand $\Gamma(\cdot,\cdot)$ and $\mathbf{S}(\cdot)$ near the centers and apply \eqref{eq:vmg-LSE-M}, obtaining
\begin{align}\label{eq:betam vmg int}
\boldsymbol{\beta}_m\int_{D_m}\mathbf{v}^{\mathbf{g}}(\mathbf{x})\,\rmd\mathbf{x}
-\boldsymbol{\alpha}_m\sum_{\substack{j=1\\ j\neq m}}^{M}\Gamma(\mathbf{z}_m,\mathbf{z}_j)\int_{D_j}\mathbf{v}^{\mathbf{g}}(\mathbf{x})\,\rmd\mathbf{x}
=\frac{1}{\omega^{2}\rho_{1}}\;\boldsymbol{\alpha}_m\,\mathbf{S}(\mathbf{z}_m)+\mathrm{Error}_m ,
\end{align}
where the scattering weight is
\begin{align}\label{eq:alpha def}
\boldsymbol{\alpha}_m:=\int_{D_m}\mathbf{W}_m(\mathbf{x})\,\rmd\mathbf{x},
\end{align}
and the remainder term equals
\begin{align}\label{eq:Errorm def}
\mathrm{Error}_m
&=\sum_{\substack{j=1\\ j\neq m}}^{M}\left(\int_{D_m}\mathbf{W}_m(\mathbf{x})\!\int_{0}^{1}
\nabla_{\mathbf{x}}\Gamma\!\left(\mathbf{z}_m+t(\mathbf{x}-\mathbf{z}_m),\mathbf{z}_j\right)\cdot(\mathbf{x}-\mathbf{z}_m)\,\rmd t\,\rmd\mathbf{x}\right)
\left(\int_{D_j}\mathbf{v}^{\mathbf{g}}(\mathbf{y})\,\rmd\mathbf{y}\right) \notag\\
&\quad+\sum_{\substack{j=1\\ j\neq m}}^{M}\int_{D_m}\mathbf{W}_m(\mathbf{x})\!\int_{D_j}\!\int_{0}^{1}
\nabla_{\mathbf{y}}\Gamma\!\left(\mathbf{x},\mathbf{z}_j+t(\mathbf{y}-\mathbf{z}_j)\right)\cdot(\mathbf{y}-\mathbf{z}_j)\,\rmd t\,\mathbf{v}^{\mathbf{g}}(\mathbf{y})\,\rmd\mathbf{y}\,\rmd\mathbf{x} \notag\\
&\quad+\frac{1}{\omega^{2}\rho_{1}}\int_{D_m}\mathbf{W}_m(\mathbf{x})\!\int_{0}^{1}
\nabla \mathbf{S}_m\!\left(\mathbf{z}_m+t(\mathbf{x}-\mathbf{z}_m)\right)\cdot(\mathbf{x}-\mathbf{z}_m)\,\rmd t\,\rmd\mathbf{x} \notag\\
&\quad+\int_{D_m}\mathbf{W}_m(\mathbf{x})\!\int_{D_m}\!\int_{0}^{1}
\nabla_{\mathbf{y}}\mathcal{R}\!\left(\mathbf{x},\mathbf{z}_m+t(\mathbf{y}-\mathbf{z}_m)\right)\cdot(\mathbf{y}-\mathbf{z}_m)\,\rmd t\,\mathbf{v}^{\mathbf{g}}(\mathbf{y})\,\rmd\mathbf{y}\,\rmd\mathbf{x} \notag\\
&\quad-\frac{1}{\rho_{1}}\int_{D_m}\mathbf{W}_m(\mathbf{x})\!\int_{D}\Gamma(\mathbf{x},\mathbf{y})\,\rho(\mathbf{y})\,\mathbf{v}^{\mathbf{g}}(\mathbf{y})\,\rmd\mathbf{y}\,\rmd\mathbf{x}.
\end{align}

\noindent
Since each $D_j=\mathbf{z}_j+aB$ and $\rho_j=\rho_1=\tilde{\rho}_1 a^{-2}$ for $j=1,\ldots,M$, it follows that $\boldsymbol{\alpha}_m=\boldsymbol{\alpha}$ for all $m$. Multiplying \eqref{eq:betam vmg int} by $\frac{\omega^{2}\rho_{1}}{\boldsymbol{\alpha}}$ and introducing
\begin{align}\label{eq:Ym def}
\mathbf{Y}_m:=\frac{\boldsymbol{\beta}_m\,\omega^{2}\rho_{1}}{\boldsymbol{\alpha}}\int_{D_m}\mathbf{v}^{\mathbf{g}}(\mathbf{x})\,\rmd\mathbf{x},
\qquad\text{with}\quad \boldsymbol{\alpha}:=-\,\mathcal{P}^{2}\,a^{\,1-h},
\end{align}
we obtain the algebraic system
\begin{align}\label{eq:The algebraic system}
\mathbf{Y}_m+\sum_{\substack{j=1\\ j\neq m}}^{M}\Gamma(\mathbf{z}_m,\mathbf{z}_j)\,\mathcal{P}^{2}\,a^{\,1-h}\,\frac{1}{\boldsymbol{\beta}_j}\,\mathbf{Y}_j
= \mathbf{S}(\mathbf{z}_m)+\frac{\omega^{2}\rho_{1}}{\boldsymbol{\alpha}}\,\mathrm{Error}_m .
\end{align}

\begin{lem}\label{lem:The algebraic system is invertible}
Viewed as an operator on $\ell_2$, the system \eqref{eq:The algebraic system} is invertible. Moreover,
\begin{align}\label{eq:discrete system of Ym}
\left(\sum_{m=1}^{M}\|\mathbf{Y}_m\|^{2}\right)^{\frac12}
\lesssim
\left(\sum_{m=1}^{M}\|\mathbf{S}(\mathbf{z}_m)\|^{2}\right)^{\frac12}
+ a^{\,h-3}\left(\sum_{m=1}^{M}\|\mathrm{Error}_m\|^{2}\right)^{\frac12}.
\end{align}
In particular,
\begin{align}\label{eq:sum Ym2 lesssim}
\left(\sum_{m=1}^{M}\|\mathbf{Y}_m\|^{2}\right)^{\frac12}
\lesssim a^{\,\frac{2(h-1)}{3}}\;\|\mathbf{g}\|_{\mathbb{H}^{-1/2}(\partial\Omega)} .
\end{align}
\end{lem}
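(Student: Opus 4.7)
The plan is to reduce the invertibility of the algebraic system \eqref{eq:The algebraic system} on $\ell^2$ to the invertibility of its continuous Lippmann--Schwinger analogue, which corresponds to the Neumann problem for $\mathcal{L}_{\lambda,\mu}+\omega^2\rho-\mathcal{P}^2$ on $\Omega$. Since $a^{1-h}=|\Omega_j|$ by Assumption \ref{assumption1} and $\boldsymbol{\beta}_j=1+\mathcal{O}(a^{2(1-h)/3})$ by \eqref{eq:betam=}, the interaction sum on the left of \eqref{eq:The algebraic system} is a Riemann sum for $\mathcal{P}^2\int_\Omega \Gamma(\mathbf{z}_m,\mathbf{y})\,\mathbf{Y}(\mathbf{y})\,\mathrm{d}\mathbf{y}$, where $\mathbf{Y}$ is the piecewise-constant interpolant built from $\{\mathbf{Y}_j/\boldsymbol{\beta}_j\}$. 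The continuous analogue of \eqref{eq:The algebraic system} is therefore
\[
\mathbf{Y}(\mathbf{z})+\mathcal{P}^2\int_\Omega \Gamma(\mathbf{z},\mathbf{y})\,\mathbf{Y}(\mathbf{y})\,\mathrm{d}\mathbf{y}=\mathbf{S}(\mathbf{z}),\qquad \mathbf{z}\in\Omega.
\]
Applying $\mathcal{L}_{\lambda,\mu}+\omega^2\rho$ to both sides and using \eqref{eq:Gamma def} together with \eqref{eq:S(x) satisfies equation} shows that this is equivalent to the Neumann boundary-value problem \eqref{eq:system qg} with datum $\mathbf{g}$. Under our standing spectral hypothesis on $\omega$ this problem is well posed, and since the associated integral operator $\mathbf{T}:\bbL^2(\Omega)\to\bbL^2(\Omega)$, $(\mathbf{T}\mathbf{Y})(\mathbf{z}):=\int_\Omega\Gamma(\mathbf{z},\mathbf{y})\mathbf{Y}(\mathbf{y})\,\mathrm{d}\mathbf{y}$, is compact in view of the decomposition \eqref{eq:Gamma-decomposition} and the local $\bbL^{3-\epsilon}$-integrability of $\Gamma^0$, the Fredholm alternative yields that $\mathcal{I}+\mathcal{P}^2\mathbf{T}$ is boundedly invertible on $\bbL^2(\Omega)$.

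I then transfer this invertibility to the discrete level. Identifying $\{\mathbf{Y}_m\}_{m=1}^M$ with the piecewise-constant function $\widetilde{\mathbf{Y}}\in\bbL^2(\Omega)$ defined by $\widetilde{\mathbf{Y}}:=\sum_m\mathbf{Y}_m\chi_{\Omega_m}/\boldsymbol{\beta}_m$, the norm equivalence $(\sum_m\|\mathbf{Y}_m\|^2)^{1/2}\simeq a^{(h-1)/2}\|\widetilde{\mathbf{Y}}\|_{\bbL^2(\Omega)}$ lets me compare the discrete operator on the left of \eqref{eq:The algebraic system} with the sampling of $\mathcal{I}+\mathcal{P}^2\mathbf{T}$ on the grid $\{\mathbf{z}_m\}$. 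The off-diagonal quadrature error is handled by a first-order Taylor bound $|\Gamma(\mathbf{z}_m,\mathbf{y})-\Gamma(\mathbf{z}_m,\mathbf{z}_j)|\lesssim|\mathbf{y}-\mathbf{z}_j|\,|\nabla\Gamma|$ together with the minimal spacing $d\sim a^{(1-h)/3}$ from \eqref{eq:dmin}; the diagonal piece, where $\Gamma^0$ is singular, is controlled through its local $\bbL^{3-\epsilon}$-integrability. Summed over $m$ and weighted by $|\Omega_m|=a^{1-h}$, these contributions produce a relatively bounded perturbation of vanishing operator norm as $a\to 0$ uniformly in $M$, so a Neumann series argument yields invertibility of \eqref{eq:The algebraic system} together with the quantitative estimate \eqref{eq:discrete system of Ym}.

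Finally, to derive \eqref{eq:sum Ym2 lesssim} from \eqref{eq:discrete system of Ym}, I bound the two terms on the right separately. For the data part, interior elliptic regularity applied to \eqref{eq:S(x) satisfies equation}, together with $\operatorname{dist}(\mathbf{z}_m,\partial\Omega)\gtrsim a^{(1-h)/3}$ and $M\sim a^{h-1}$, gives $\sum_{m=1}^M\|\mathbf{S}(\mathbf{z}_m)\|^2\lesssim a^{h-1}\|\mathbf{g}\|_{\bbH^{-1/2}(\partial\Omega)}^2$, whose square root $a^{(h-1)/2}\|\mathbf{g}\|_{\bbH^{-1/2}(\partial\Omega)}$ is dominated by the target $a^{2(h-1)/3}\|\mathbf{g}\|_{\bbH^{-1/2}(\partial\Omega)}$. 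For the second term $a^{h-3}(\sum_m\|\mathrm{Error}_m\|^2)^{1/2}$, each summand of $\mathrm{Error}_m$ in \eqref{eq:Errorm def} is estimated individually: the Taylor remainders use the gradient bound $|\nabla\Gamma(\mathbf{x},\mathbf{y})|\lesssim|\mathbf{x}-\mathbf{y}|^{-2}$ on $D_m\times D_j$ combined with \eqref{eq:dmin}, the $\mathcal{R}$-term uses \eqref{eq:Rxy lesssim dist} together with \eqref{eq:Wm R int Dm}, and the final coupling integral uses $|D|\sim a^{h+2}$ together with Lemma \ref{lem:vg estimate}. Tracking the resulting powers of $a$ and combining with the prefactor $a^{h-3}$ produces the exponent $2(h-1)/3$ in \eqref{eq:sum Ym2 lesssim}. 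The main obstacle in the whole argument is the near-diagonal discrete-to-continuous comparison: because $\Gamma^0$ is genuinely singular at the sampling node $\mathbf{y}=\mathbf{z}_m$, the quadrature error there cannot be estimated by a gradient bound but only through the local $\bbL^{3-\epsilon}$-integrability of $\Gamma^0$, which is precisely the origin of the parameter $\epsilon$ surviving in the final exponent in \eqref{eq:Lambda D-Lambda P in thm}.
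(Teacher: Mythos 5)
Your big-picture plan (reduce the discrete system to a continuous Lippmann--Schwinger equation, then transfer invertibility back via a Riemann-sum/quadrature argument absorbed à la Neumann series) mirrors the paper, and the norm equivalences and quadrature estimates you sketch are of the right kind. The genuine gap is in the central step: you assert that the continuous problem $\mathbf{Y}+\mathcal{P}^2\int_\Omega\Gamma(\cdot,\by)\mathbf{Y}\,\rmd\by=\mathbf{S}$ is well posed ``under our standing spectral hypothesis on $\omega$'' and then finish with compactness of $\mathbf{T}$ and the Fredholm alternative. But Theorem~\ref{thm:N--D} makes no such spectral hypothesis: the stated assumptions are only $\Omega\in C^2$, $\rho\in\mathbb{W}^{1,\infty}$, the resonance relation \eqref{eq:the angular frequency}, and $h\in(1/3,1)$. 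Assuming that the Neumann problem for $\mathcal{L}_{\lambda,\mu}+\omega^2\rho-\mathcal{P}^2$ is well posed is essentially assuming the conclusion you need, so your proof has a circularity. The paper instead \emph{proves} the invertibility constructively: it rewrites the discretized system as $(\mathcal{I}+\mathcal{P}^2\mathcal{N})(\mathbb{Y})=\mathbb{S}+\boldsymbol{r}$ for the piecewise-constant interpolant $\mathbb{Y}$, observes that $\mathbb{Y}\equiv\mathbf{0}$ near $\partial\Omega$ by Assumption~\ref{assumption1}, hence extends by zero to $\mathbb{R}^3$, applies $\mathcal{L}_{\lambda,\mu}+\omega^2\rho$ distributionally to produce a whole-space Lippmann--Schwinger equation, and then runs Lax--Milgram on the associated sesquilinear form, whose coercivity comes from $\mathcal{P}^2-\omega^2\rho>0$ for $\mathcal{P}$ large together with the positivity of the free-space Newtonian operator $\mathcal{N}_{\mathbb{R}^3}$. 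This positivity is exactly what the sign convention on $c_{n_0}$ (forcing $\mathcal{P}^2>0$) is for, and it is the real content of the lemma; your Fredholm argument bypasses it.

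Two secondary points. First, your bound $\sum_m\|\mathbf{S}(\bz_m)\|^2\lesssim a^{h-1}\|\bg\|^2_{\bbH^{-1/2}(\partial\Omega)}$ is stated as ``interior elliptic regularity,'' but $\mathbf{S}$ solves the variable-coefficient system $(\mathcal{L}_{\lambda,\mu}+\omega^2\rho)\mathbf{S}=\mathbf{0}$ with only Lipschitz $\rho$, so one needs a genuine sampling/mean-value inequality on balls of radius $\sim d\sim a^{(1-h)/3}$, not a one-line appeal; the paper's proof sidesteps this by the cruder single-layer representation which gives $\sum_m\|\mathbf{S}(\bz_m)\|^2\lesssim d^{-4}\|\bg\|^2=a^{-4(1-h)/3}\|\bg\|^2$, and that is what propagates into \eqref{eq:sum Ym2 lesssim}. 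Second, your closing claim that the near-diagonal quadrature singularity is ``precisely the origin of $\epsilon$ in \eqref{eq:Lambda D-Lambda P in thm}'' is a misattribution: in this paper $\epsilon$ enters through the $\bbL^{3-\epsilon}$-integrability of $\Gamma$ over the thin boundary layer $\Omega_r$ in the estimate \eqref{eq:Gamma Y int in Omegar} of Subsection~\ref{subsec:4.3}, not from the proof of Lemma~\ref{lem:The algebraic system is invertible}, which is $\epsilon$-free.
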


\begin{proof}
    See Subsection \ref{subsec:The invertibility of system}. 
\end{proof}

\subsection{On the LSE Satisfied by $\bq^{\bf g}(\cdot)$} \label{subsec:4.3}

Let $(\widetilde{\mathbf{Y}}_1,\ldots,\widetilde{\mathbf{Y}}_M)$ denote the unique solution of the unperturbed algebraic system associated with \eqref{eq:The algebraic system}. In explicit form,
\begin{align}\label{eq:Ym LSE}
\widetilde{\mathbf{Y}}_m
+ \sum_{\substack{j=1\\ j\neq m}}^{M} \Gamma(\mathbf{z}_m,\mathbf{z}_j)\, \mathcal{P}^{2}a^{\,1-h}\,\frac{1}{\boldsymbol{\beta}_j}\,\widetilde{\mathbf{Y}}_j
= \mathbf{S}(\mathbf{z}_m),  
\end{align}
for $m=1,\dots,M$.

We next consider the LSE
\begin{align}\label{eq:Yz LSE def}
\mathbf{Y}(\mathbf{z})+\mathcal{P}^{2}\int_{\Omega} \Gamma(\mathbf{z},\mathbf{y})\,\mathbf{Y}(\mathbf{y})\,\rmd\mathbf{y}
= \mathbf{S}(\mathbf{z}),\qquad \mathbf{z}\in\Omega. 
\end{align}
Here $\Gamma(\cdot,\cdot)$ is the Neumann Green tensor solving \eqref{eq:Gamma def} and $\mathbf{S}(\cdot)$ is the solution of \eqref{eq:S(x) satisfies equation}. The following lemma will be used. 

\begin{lem}\label{lem:Y H1 norm}
    The LSE \eqref{eq:Yz LSE def} admits a unique solution $\mathbf{Y}(\cdot)$, and one has the estimate
\begin{align}\label{eq:Y P2 SH1 norm}
\|\mathbf{Y}\|_{\mathbb{H}^{1}(\Omega)}\lesssim \mathcal{P}^{2}\,\|\mathbf{S}\|_{\mathbb{H}^{1}(\Omega)}. 
\end{align}
\end{lem}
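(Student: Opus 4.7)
The plan is to reformulate the Lippmann--Schwinger equation \eqref{eq:Yz LSE def} as a coercive Neumann boundary value problem for a shifted Lam\'e operator, and then transfer the resulting energy bounds back to $\mathbf{Y}$ via the identity $\mathbf{Y} = \mathbf{S} - \mathcal{P}^{2}\mathbf{Z}$, where $\mathbf{Z}(\mathbf{z}) := \int_{\Omega} \Gamma(\mathbf{z},\mathbf{y})\,\mathbf{Y}(\mathbf{y})\,\rmd\mathbf{y}$. Applying the defining identity \eqref{eq:Gamma def} for the Neumann Green tensor directly to $\mathbf{Z}$ yields $(\mathcal{L}_{\lambda,\mu} + \omega^{2}\rho(\mathbf{x}))\,\mathbf{Z} = -\mathbf{Y}$ in $\Omega$ with $\partial_{\nu}\mathbf{Z} = \mathbf{0}$ on $\partial\Omega$; eliminating $\mathbf{Y}$ from this identity via $\mathbf{Y} = \mathbf{S} - \mathcal{P}^{2}\mathbf{Z}$ produces the equivalent shifted problem $\mathcal{L}_{\lambda,\mu}\mathbf{Z} + (\omega^{2}\rho(\mathbf{x}) - \mathcal{P}^{2})\mathbf{Z} = -\mathbf{S}$ in $\Omega$ with $\partial_{\nu}\mathbf{Z} = \mathbf{0}$ on $\partial\Omega$.

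For existence and uniqueness, note that by \eqref{eq:the angular frequency} one has $\omega^{2} \to \omega_{0}^{2}$ as $a \to 0$, so $\omega^{2}\|\rho\|_{\mathbb{L}^{\infty}(\Omega)}$ is uniformly bounded, whereas $\mathcal{P}^{2}$ in \eqref{eq:P2 def} can be taken arbitrarily large via the free parameter $c_{n_{0}}$. For $\mathcal{P}^{2} > 2\omega^{2}\|\rho\|_{\mathbb{L}^{\infty}(\Omega)}$, the Hermitian sesquilinear form $a(\mathbf{Z},\mathbf{V}) := \int_{\Omega} \sigma(\mathbf{Z}):\nabla^{s}\overline{\mathbf{V}}\,\rmd\mathbf{x} + \int_{\Omega}(\mathcal{P}^{2} - \omega^{2}\rho)\,\mathbf{Z}\cdot\overline{\mathbf{V}}\,\rmd\mathbf{x}$ is coercive on $\mathbb{H}^{1}(\Omega)^{3}$ by Korn's second inequality. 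Lax--Milgram then supplies a unique weak solution $\mathbf{Z}\in\mathbb{H}^{1}(\Omega)^{3}$, and $\mathbf{Y} := \mathbf{S} - \mathcal{P}^{2}\mathbf{Z}$ is the unique solution of \eqref{eq:Yz LSE def} in $\mathbb{L}^{2}(\Omega)^{3}$; the equivalence between the LSE and the BVP is justified by elliptic regularity, which gives $\mathbf{Z}\in\mathbb{H}^{2}(\Omega)^{3}$.

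For the quantitative bound, I would test the weak formulation against $\mathbf{Z}$ itself and apply Korn's inequality to reach $\|\nabla\mathbf{Z}\|_{\mathbb{L}^{2}(\Omega)}^{2} + \mathcal{P}^{2}\|\mathbf{Z}\|_{\mathbb{L}^{2}(\Omega)}^{2} \lesssim \|\mathbf{S}\|_{\mathbb{L}^{2}(\Omega)}\|\mathbf{Z}\|_{\mathbb{L}^{2}(\Omega)}$. Absorbing the right-hand side into the mass term yields $\|\mathbf{Z}\|_{\mathbb{L}^{2}(\Omega)} \lesssim \mathcal{P}^{-2}\|\mathbf{S}\|_{\mathbb{L}^{2}(\Omega)}$, and substituting this back produces $\|\nabla\mathbf{Z}\|_{\mathbb{L}^{2}(\Omega)} \lesssim \mathcal{P}^{-1}\|\mathbf{S}\|_{\mathbb{L}^{2}(\Omega)}$. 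Inserting these into $\|\mathbf{Y}\|_{\mathbb{H}^{1}(\Omega)} \le \|\mathbf{S}\|_{\mathbb{H}^{1}(\Omega)} + \mathcal{P}^{2}\|\mathbf{Z}\|_{\mathbb{H}^{1}(\Omega)}$ gives $\|\mathbf{Y}\|_{\mathbb{H}^{1}(\Omega)} \lesssim \|\mathbf{S}\|_{\mathbb{H}^{1}(\Omega)} + \mathcal{P}\|\mathbf{S}\|_{\mathbb{L}^{2}(\Omega)} \lesssim \mathcal{P}\|\mathbf{S}\|_{\mathbb{H}^{1}(\Omega)}$, which is in fact sharper than the claimed bound \eqref{eq:Y P2 SH1 norm} by a factor of $\mathcal{P}$; the looser power $\mathcal{P}^{2}$ in the statement is convenient for the subsequent estimates and is immediate from the above.

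The main obstacle is ensuring that the coercivity constants are uniform as $a \to 0$. This reduces to three facts, all available under the standing hypotheses: (i) the Korn constant of $\Omega$ depends only on the fixed $C^{2}$ domain and not on $a$; (ii) $\rho \in \mathbb{W}^{1,\infty}(\Omega)$ gives a uniform $\mathbb{L}^{\infty}$ bound independent of $a$; and (iii) $\omega^{2}$ is uniformly bounded above by \eqref{eq:the angular frequency}. Once the uniformity of these constants is confirmed, the whole argument goes through and the implicit constant in \eqref{eq:Y P2 SH1 norm} does not deteriorate in the small-$a$ limit.
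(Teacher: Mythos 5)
Your proposal is correct, and it takes a genuinely different route from the paper. The paper's proof is a two-line bootstrap: it asserts \emph{without proof} that $I+\calP^{2}\mathcal{N}$ is invertible on $\bbL^{2}(\Omega)$, obtains $\|\mathbf{Y}\|_{\bbL^{2}}\lesssim\|\mathbf{S}\|_{\bbL^{2}}$ from this assertion, and then feeds it back into the LSE using the $\calP$-independent smoothing property $\mathcal{N}:\bbL^{2}\to\bbH^{1}$, losing a factor $\calP^{2}$. You instead invert the Newtonian operator to pass to the equivalent Neumann BVP $(\mathcal{L}_{\lambda,\mu}+\omega^{2}\rho-\calP^{2})\mathbf{Z}=-\mathbf{S}$ for $\mathbf{Z}:=\mathcal{N}(\mathbf{Y})$, prove well-posedness via coercivity (Korn plus the sign of $\calP^{2}-\omega^{2}\rho$ for $\calP$ large), and then transfer back through $\mathbf{Y}=\mathbf{S}-\calP^{2}\mathbf{Z}$. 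This is a more self-contained argument — your Lax--Milgram step actually supplies the invertibility that the paper leaves as a black box, and it is essentially the same variational mechanism the paper deploys later in the proof of Lemma \ref{lem:The algebraic system is invertible} — and, as you observe, it yields the sharper bound $\|\mathbf{Y}\|_{\bbH^{1}(\Omega)}\lesssim\calP\,\|\mathbf{S}\|_{\bbH^{1}(\Omega)}$, which dominates the claimed $\calP^{2}$ bound in the regime $\calP\gg1$. One minor point of care, which you flag correctly: the coercivity constant involves $\min\{2\mu,\ \calP^{2}-\omega^{2}\|\rho\|_{\infty}\}$ after Korn, so the implicit constants are uniform precisely because $\omega^{2}$ stays bounded via \eqref{eq:the angular frequency} and $\calP\gg1$; and $\mathbf{Z}\in\bbH^{2}(\Omega)$ by elliptic regularity is indeed what makes the LSE~$\leftrightarrow$~BVP equivalence airtight.
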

 
\begin{proof}
The operator in \eqref{eq:Yz LSE def} is invertible as a map $\mathbb{L}^{2}(\Omega)\to \mathbb{L}^{2}(\Omega)$; consequently,
\begin{align}\label{eq:Y lesssim S}
\|\mathbf{Y}\|_{\mathbb{L}^{2}(\Omega)}\lesssim \|\mathbf{S}\|_{\mathbb{L}^{2}(\Omega)}\le \|\mathbf{S}\|_{\mathbb{H}^{1}(\Omega)}.  
\end{align}
Taking the $\mathbb{H}^{1}(\Omega)$-norm on both sides of \eqref{eq:Yz LSE def} yields
\begin{align}
\|\mathbf{Y}\|_{\mathbb{H}^{1}(\Omega)}
\lesssim \|\mathbf{S}\|_{\mathbb{H}^{1}(\Omega)}+\mathcal{P}^{2}\,\|\mathcal N(\mathbf{Y})\|_{\mathbb{H}^{1}(\Omega)},
\end{align}
where $\mathcal N$ denotes the Newton potential,
\begin{align}\label{eq:Newton potential in Omega}
\mathcal N({\bf f})(\mathbf{x}):=\int_{\Omega} \Gamma(\mathbf{x},\mathbf{y})\,{\bf f}(\mathbf{y})\,\rmd\mathbf{y},\qquad \mathbf{x}\in\Omega.  
\end{align}
Using the continuity $\mathcal N:\mathbb{L}^{2}(\Omega)\to \mathbb{H}^{1}(\Omega)$ we infer
\begin{align}
\|\mathbf{Y}\|_{\mathbb{H}^{1}(\Omega)}
\lesssim \|\mathbf{S}\|_{\mathbb{H}^{1}(\Omega)}+\mathcal{P}^{2}\,\|\mathbf{Y}\|_{\mathbb{L}^{2}(\Omega)}
\overset{\eqref{eq:Y lesssim S}}{\lesssim} \mathcal{P}^{2}\,\|\mathbf{S}\|_{\mathbb{H}^{1}(\Omega)}.
\end{align}
This completes the proof of Lemma \ref{lem:Y H1 norm}. 
\end{proof}

\begin{rem}
    The function $\mathbf{S}(\cdot)$—the solution of \eqref{eq:S(x) satisfies equation}—admits a single–layer potential representation with density $\mathbf{g}(\cdot)$, namely
\begin{align}\label{eq:S-single-layer}
\mathbf{S}(\mathbf{x})=\mathbf{S} (\mathbf{g})(\mathbf{x}):=\int_{\partial\Omega} \Gamma(\mathbf{x},\mathbf{y})\,\mathbf{g}(\mathbf{y})\,\rmd\sigma(\mathbf{y}),\qquad \mathbf{x}\in\Omega,  
\end{align}
where $\Gamma(\cdot,\cdot)$ is the Neumann Green tensor from \eqref{eq:Gamma def}. In view of \eqref{eq:Y P2 SH1 norm} we obtain
\begin{align}
\|\mathbf{Y}\|_{\mathbb{H}^{1}(\Omega)}\lesssim \mathcal{P}^{2}\,\|\mathbf{S} (\mathbf{g})\|_{\mathbb{H}^{1}(\Omega)}.
\end{align}
Since the single–layer operator $\mathbf{S} :\mathbb{H}^{-1/2}(\partial\Omega)\to \mathbb{H}^{1}(\Omega)$ is bounded, we finally arrive at
\begin{align}\label{eq:Y H1norm lesssim g}
\|\mathbf{Y}\|_{\mathbb{H}^{1}(\Omega)}\lesssim \mathcal{P}^{2}\,\|\mathbf{g}\|_{\mathbb{H}^{-1/2}(\partial\Omega)}.
\end{align}
\end{rem}

Using the notation introduced in Assumption \ref{assumption1} (notably \eqref{Decoupage-Omega-new}) and the size relation
$|\Omega_j| \sim a^{\,1-h}$ for $1\le j\le M$ and $0\le h<1$, the discrete relation \eqref{eq:Ym LSE} can be recast in the form
\begin{align}\label{eq:Yzm+p2=Szm-P2Jzm}
\mathbf{Y}(\mathbf{z}_m) + \mathcal{P}^{2} \sum_{\substack{j=1\\ j\neq m}}^{M}
\Gamma(\mathbf{z}_m,\mathbf{z}_j)\, a^{1-h} \frac{1}{\boldsymbol{\beta}_j}\,\mathbf{Y}(\mathbf{z}_j)
= \mathbf{S}(\mathbf{z}_m) - \mathcal{P}^{2} \,\mathcal{J}(\mathbf{z}_m).
\end{align}

Here the remainder $\mathcal{J}(\mathbf{z}_m)$ is defined by removing the leading cell-wise contribution from the domain integral, namely
\begin{align}\label{eq:Jzm def}
\mathcal{J}(\mathbf{z}_m)
= \int_{\Omega} \Gamma(\mathbf{z}_m,\mathbf{y})\,\mathbf{Y}(\mathbf{y})\,\rmd\mathbf{y}
- \sum_{\substack{j=1\\ j\neq m}}^{M} \Gamma(\mathbf{z}_m,\mathbf{z}_j)\,\mathbf{Y}(\mathbf{z}_j)\,\frac{1}{\boldsymbol{\beta}_j}\,|\Omega_j|.
\end{align}

To estimate $\mathcal{J}(\mathbf{z}_m)$, we decompose the integral over $\Omega$ into a sum over the cubic subdomains and the thin boundary remainder region:
\begin{align}\label{eq:Gamma decompose Omega}
\int_{\Omega} \Gamma(\mathbf{z}_m,\mathbf{y})\,\mathbf{Y}(\mathbf{y})\,\rmd\mathbf{y}
= \int_{\Omega_{\text{cube}}} \Gamma(\mathbf{z}_m,\mathbf{y})\,\mathbf{Y}(\mathbf{y})\,\rmd\mathbf{y}
+ \int_{\Omega_r} \Gamma(\mathbf{z}_m,\mathbf{y})\,\mathbf{Y}(\mathbf{y})\,\rmd\mathbf{y}.
\end{align}

Consider the family of subdomains $\{\Omega_j^*\}_{j=1}^{\aleph}$, which are not assumed to be cubic, and which are chosen so that each of them intersects the boundary $\partial\Omega$; that is,
\[
\partial\Omega_n \cap \partial\Omega \neq \emptyset, \qquad 1 \leq n \leq \aleph.
\]
By construction, every $\Omega_j$ has volume $|\Omega_j| = a^{1-h}$, so a typical length scale of $\Omega_j$ (for instance, its maximal radius) is of order $a^{(1-h)/3}$. Consequently, the portion of $\Omega_j$ that meets $\partial\Omega$ has surface measure of order $a^{\frac{2}{3}(1-h)}$. Since the surface measure of $\partial\Omega$ itself is of order one, the total number of such subdomains intersecting $\partial\Omega$ is bounded above by a constant multiple of $a^{-\frac{2}{3}(1-h)}$. Hence, if we denote by $\Omega_r$ the union of all these boundary-touching subdomains, its volume satisfies
\begin{align}\label{eq:Omega_r=}
|\Omega_r|=\mathcal{O} \big(a^{(1-h)/3}\big).
\end{align}

Concerning the second term on the right-hand side of \eqref{eq:Gamma decompose Omega}, and noting that $\Gamma(\bz_m,\cdot) \in \mathbb{L}^{3-\epsilon}(\Omega)$ for fixed $\bz_m$, where $\epsilon>0$ is arbitrarily small (cf.\ Lemma \ref{lem:R(x,y)}), it follows that
\begin{align}
\left| \int_{\Omega_r} \Gamma(\bz_m,\by)\, \mathbf{Y}(\by) \, \rmd \by \right|
&\leq \| \mathbf{Y} \|_{\mathbb{L}^2(\Omega_r)} \,\| \Gamma(\bz_m,\cdot) \|_{\mathbb{L}^2(\Omega_r)} \notag \\
&\leq |\Omega_r|^{1/3} \| \mathbf{Y} \|_{\mathbb{L}^6(\Omega_r)} \,\| \Gamma(\bz_m,\cdot) \|_{\mathbb{L}^2(\Omega_r)}  \notag\\
&\leq |\Omega_r|^{1/3} \| \mathbf{Y} \|_{\mathbb{L}^6(\Omega)} \,\| \Gamma(\bz_m,\cdot) \|_{\mathbb{L}^2(\Omega_r)}. \notag
\end{align}

By virtue of Lemma \ref{lem:Y H1 norm}, we have \( \mathbf{Y}(\cdot) \in \mathbb{H}^1(\Omega) \subset \mathbb{L}^6(\Omega) \). Subsequently,
\begin{align}
\left| \int_{\Omega_r} \Gamma(\bz_m,\by)\, \mathbf{Y}(\by) \, \rmd \by \right|
&\lesssim |\Omega_r|^{1/3} \| \mathbf{Y} \|_{\mathbb{H}^1(\Omega)} \,\| \Gamma(\bz_m,\cdot) \|_{\mathbb{L}^2(\Omega_r)} \notag \\
&\stackrel{\eqref{eq:Y P2 SH1 norm}}{\lesssim} \mathcal{P}^2\,|\Omega_r|^{1/3} \| \mathbf{S} \|_{\mathbb{H}^1(\Omega)} \,\| \Gamma(\bz_m,\cdot) \|_{\mathbb{L}^2(\Omega_r)},  \notag
\end{align}
which, through the application of \eqref{eq:S-single-layer} and the continuity of the single-layer operator mapping \( \mathbb{H}^{-1/2}(\partial\Omega) \) to \( \mathbb{H}^1(\Omega) \), may be simplified to
\begin{align}
\left| \int_{\Omega_r} \Gamma(\bz_m,\by)\, \mathbf{Y}(\by) \, \rmd \by \right| 
&\lesssim  \mathcal{P}^2\, |\Omega_r|^{1/3} \| \bg \|_{\mathbb{H}^{-1/2}(\partial\Omega)} \,\| \Gamma(\bz_m,\cdot) \|_{\mathbb{L}^2(\Omega_r)}  \notag \\
&\lesssim \mathcal{P}^2\, |\Omega_r|^{\frac{9-5\epsilon}{6(3-\epsilon)}}   \| \bg \|_{\mathbb{H}^{-1/2}(\partial\Omega)} \,\| \Gamma(\bz_m,\cdot) \|_{\mathbb{L}^{3-\epsilon}(\Omega_r)}.  \notag
\end{align}

Subsequently, by leveraging the fact that \( \Gamma(\bz_m,\cdot) \in \mathbb{L}^{3-\epsilon}(\Omega_r) \), whence \( \| \Gamma(\bz_m,\cdot) \|_{\mathbb{L}^{3-\epsilon}(\Omega_r)} \sim 1 \), we simplify the preceding estimate to
\begin{align}\label{eq:Gamma Y int in Omegar}
\left| \int_{\Omega_r} \Gamma(\bz_m,\by)\, \mathbf{Y}(\by) \, \rmd \by \right| 
\lesssim \mathcal{P}^2\, |\Omega_r|^{\frac{9-5\epsilon}{6(3-\epsilon)}} \| \bg \|_{\mathbb{H}^{-\frac{1}{2}}(\partial\Omega)} 
\stackrel{\eqref{eq:Omega_r=}}{=} \mathcal{O}\!\left( a^{\frac{(1-h)(9-5\epsilon)}{18(3-\epsilon)}} \mathcal{P}^2 \| \bg \|_{\mathbb{H}^{-\frac{1}{2}}(\partial\Omega)} \right).   
\end{align}

Collecting \eqref{eq:Jzm def}–\eqref{eq:Gamma Y int in Omegar} leads to the separation
\begin{align}\label{eq:Jzm=widetilde Jzm}
\mathcal{J}(\mathbf{z}_m)
= \widetilde{\mathcal{J}}(\mathbf{z}_m)
+ \mathcal{O}\Big(\mathcal{P}^{2}\,\|\bg\|_{\mathbb{H}^{-\frac{1}{2}}(\partial\Omega)}\,
  a^{\frac{(1-h)(9-5\epsilon)}{18(3-\epsilon)}}\Big),
\qquad a\ll 1,
\end{align}
with
\begin{align}\label{eq:widetilde J_zm}
\widetilde{\mathcal{J}}(\mathbf{z}_m)
:= \sum_{\ell=1}^{M}\int_{\Omega_\ell} \Gamma(\mathbf{z}_m,\mathbf{y})\,\mathbf{Y}(\mathbf{y})\,\rmd\mathbf{y}
   - \sum_{\substack{j=1\\ j\neq m}}^{M}
     \Gamma(\mathbf{z}_m,\mathbf{z}_j)\,\mathbf{Y}(\mathbf{z}_j)\,\frac{1}{\boldsymbol{\beta}_j}\,|\Omega_j|.
\end{align}

Our next step involves approximating $\widetilde{\mathcal{J}}(\mathbf{z}_m)$. For this purpose, we rewrite it as
\begin{align}
\widetilde{\mathcal{J}}(\mathbf{z}_m) 
&= \sum_{\substack{\ell=1 \\ \ell \neq m}}^M \int_{\Omega_\ell} \left[ \Gamma(\bz_m,\by)\,\mathbf{Y}(\by) - \Gamma(\bz_m,\bz_\ell)\, \mathbf{Y}(\bz_\ell) \frac{1}{\boldsymbol{\beta}_\ell} \right] \rmd \by 
+ \int_{\Omega_m} \Gamma(\bz_m,\by)\,\mathbf{Y}(\by)\,\rmd \by \notag\\
&\stackrel{\eqref{eq:betam=}}{=} \sum_{\substack{\ell=1 \\ \ell \neq m}}^M \int_{\Omega_\ell} \left[ \Gamma(\bz_m,\by)\, \mathbf{Y}(\by) - \Gamma(\bz_m,\bz_\ell)\, \mathbf{Y}(\bz_\ell) \right] \rmd \by 
+ \int_{\Omega_m} \Gamma(\bz_m,\by)\,\mathbf{Y}(\by)\,\rmd \by  \notag\\
&\quad - \sum_{\substack{\ell=1 \\ \ell \neq m}}^M \Gamma(\bz_m,\bz_\ell)\, \mathbf{Y}(\bz_\ell)\,|\Omega_\ell|\,
\frac{\displaystyle \int_{D_\ell} \mathbf{W}_\ell(\bx)\,\mathcal{R}(\bx, \bz_\ell)\,\rmd \bx }{\displaystyle 1 - \int_{D_\ell} \mathbf{W}_\ell(\bx)\,\mathcal{R}(\bx, \bz_\ell)\, \rmd \bx}.
\end{align}

Through the application of a Taylor expansion of $\Gamma(\bz_m,\cdot)\,\mathbf{Y}(\cdot)$ around the point $\bz_\ell$, we derive
\begin{align}
\widetilde{\mathcal{J}}(\mathbf{z}_m) 
&= \sum_{\substack{\ell=1 \\ \ell \neq m}}^M \int_{\Omega_\ell} \int_0^1 \Gamma\big(\bz_m,\bz_\ell + t(\by - \bz_\ell)\big)\,\nabla \mathbf{Y}\big(\bz_\ell + t(\by - \bz_\ell)\big)\cdot (\by - \bz_\ell)\, t  \,\rmd t \,\rmd \by \notag \\
&\quad + \sum_{\substack{\ell=1 \\ \ell \neq m}}^M \int_{\Omega_\ell} \int_0^1 \mathbf{Y}\big(\bz_\ell + t(\by - \bz_\ell)\big)\,\nabla_{\mathbf{y}} \Gamma\big(\bz_m,\bz_\ell + t(\by - \bz_\ell)\big)\cdot (\by - \bz_\ell)\, t \,\rmd t \,\rmd \by  \notag \\
&\quad + \int_{\Omega_m} \Gamma(\bz_m,\by)\,\mathbf{Y}(\by)\,\rmd \by 
- \sum_{\substack{\ell=1 \\ \ell \neq m}}^M \Gamma(\bz_m,\bz_\ell)\, \mathbf{Y}(\bz_\ell)\,|\Omega_\ell|\,
\frac{\displaystyle \int_{D_\ell} \mathbf{W}_\ell(\bx)\,\mathcal{R}(\bx, \bz_\ell)\,\rmd \bx}{\displaystyle 1 - \int_{D_\ell} \mathbf{W}_\ell(\bx)\,\mathcal{R}(\bx, \bz_\ell) \,\rmd \bx}. 
\end{align}

Using the decomposition $\Gamma(\mathbf{x},\mathbf{y})=\Gamma^0(\mathbf{x},\mathbf{y})+\mathcal R(\mathbf{x},\mathbf{y})$ valid for $\mathbf{x}\neq \mathbf{y}$ from Lemma~\ref{lem:R(x,y)} and retaining the dominant $\Gamma^0(\mathbf{x},\mathbf{y})$ part, we rely on the standard kernel bounds
\begin{align}\label{eq:Gamma nabla Gamma}
|\Gamma(\mathbf{x},\mathbf{y})|\lesssim \frac{1}{|\mathbf{x}-\mathbf{y}|},
\qquad
|\nabla_{\mathbf{y}} \Gamma(\mathbf{x},\mathbf{y})|\lesssim \frac{1}{|\mathbf{x}-\mathbf{y}|^{2}},
\qquad \mathbf{x}\neq \mathbf{y}.
\end{align}

After the Taylor expansion of $\Gamma(\mathbf{z}_m,\cdot)\,\mathbf{Y}(\cdot)$ about $\mathbf{z}_\ell$ and the estimate \eqref{eq:Gamma nabla Gamma}, we obtain
\begin{align}\label{eq:Jzm lesssim Lzm}
|\widetilde{\mathcal{J}}(\mathbf{z}_m)|
&\lesssim
a^{\frac{1-h}{3}}\sum_{\substack{\ell=1\\ \ell\neq m}}^{M}
\int_{\Omega_\ell} \int_0^1 \frac{\big|\nabla \mathbf{Y} \big(\mathbf{z}_\ell+t(\mathbf{y}-\mathbf{z}_\ell)\big)\big|}{\big|\mathbf{z}_m-\mathbf{z}_\ell-t(\mathbf{y}-\mathbf{z}_\ell)\big|}\,\rmd t\,\rmd\mathbf{y} \notag\\
&\quad
+ a^{\frac{1-h}{3}}\sum_{\substack{\ell=1\\ \ell\neq m}}^{M}
\int_{\Omega_\ell} \int_0^1
\frac{\big|\mathbf{Y} \big(\mathbf{z}_\ell+t(\mathbf{y}-\mathbf{z}_\ell)\big)-\mathbf{Y}(\mathbf{z}_\ell)\big|}{\big|\mathbf{z}_m-\mathbf{z}_\ell-t(\mathbf{y}-\mathbf{z}_\ell)\big|^{2}}\,\rmd t \,\rmd\mathbf{y} \notag\\
&\quad
+ \int_{\Omega_m}|\Gamma(\mathbf{z}_m,\mathbf{y})|\,|\mathbf{Y}(\mathbf{y})|\,\rmd\mathbf{y} + |\mathbb{L}(\mathbf{z}_m)|,
\end{align}
where the term $\mathbb{L}(\mathbf{z}_m)$ is
\begin{align}\label{eq:Lzm def}
\mathbb{L}(\mathbf{z}_m)
:= \sum_{\substack{\ell=1\\ \ell\neq m}}^{M}
\Gamma(\mathbf{z}_m,\mathbf{z}_\ell)\,\mathbf{Y}(\mathbf{z}_\ell)\,|\Omega_\ell|\,
\frac{\displaystyle \int_{D_\ell} \mathbf{W}_\ell(\mathbf{x})\,\mathcal R(\mathbf{x},\mathbf{z}_\ell)\,\rmd\mathbf{x}}
     {\displaystyle 1-\int_{D_\ell} \mathbf{W}_\ell(\mathbf{x})\,\mathcal R(\mathbf{x},\mathbf{z}_\ell)\,\rmd\mathbf{x}}.
\end{align}

By taking absolute values in \eqref{eq:Lzm def}, exploiting $|\Omega_\ell|=a^{\,1-h}$, \eqref{eq:Wm R int Dm}, and \eqref{eq:Gamma nabla Gamma}, we derive
\begin{align}
|\mathbb{L}(\mathbf{z}_m)|
\;\lesssim\; a^{\frac{5(1-h)}{3}}
\Bigg(\sum_{\substack{\ell=1\\ \ell\neq m}}^{M}\frac{1}{|\mathbf{z}_m-\mathbf{z}_\ell|^{2}}\Bigg)^{\!1/2}
\Bigg(\sum_{\ell=1}^{M} |\mathbf{Y}(\mathbf{z}_\ell)|^{2}\Bigg)^{\!1/2}.
\end{align}

A refinement via Cauchy–Schwarz with the auxiliary vector
$(\widetilde{\mathbf{Y}}_1,\dots,\widetilde{\mathbf{Y}}_M)$ solving \eqref{eq:Ym LSE} and \eqref{eq:sum Ym2 lesssim} yields
\begin{align}\label{eq:Lzm lesssim g Hnorm}
|\mathbb{L}(\mathbf{z}_m)|
&\;\lesssim\;
\Bigg(\sum_{\substack{\ell=1\\ \ell\neq m}}^{M}\frac{1}{|\mathbf{z}_m-\mathbf{z}_\ell|^{2}}\Bigg)^{\!1/2}
\Bigg[
a^{\frac{5(1-h)}{3}}\Bigg(\sum_{\ell=1}^{M}\big|\mathbf{Y}(\mathbf{z}_\ell)-\widetilde{\mathbf{Y}}_\ell\big|^{2}\Bigg)^{\!1/2}
+ a^{ 1-h  }\,\|\bg\|_{\mathbb{H}^{-1/2}(\partial\Omega)}
\Bigg].
\end{align}

Returning to \eqref{eq:Lzm lesssim g Hnorm} and applying Cauchy–Schwarz together with \eqref{eq:Jzm lesssim Lzm}, we arrive at
\begin{align}
|\widetilde{\mathcal{J}}(\mathbf{z}_m)|
&\;\lesssim\;
a^{\frac{5(1-h)}{6}}\,\|\nabla \mathbf{Y}\|_{\mathbb{L}^2(\Omega)}
\Bigg(\sum_{\substack{\ell=1\\ \ell\neq m}}^{M}\frac{1}{|\mathbf{z}_m-\mathbf{z}_\ell|^{2}}\Bigg)^{\frac{1}{2}}
+ a^{\frac{5(1-h)}{6}}\,\|\mathbf{Y}\|_{\mathbb{L}^2(\Omega)}
\Bigg(\sum_{\substack{\ell=1\\ \ell\neq m}}^{M}\frac{1}{|\mathbf{z}_m-\mathbf{z}_\ell|^{4}}\Bigg)^{\frac{1}{2}} \notag\\
&\quad+ \|\mathbf{Y}\|_{\mathbb{L}^2(\Omega)}
\Bigg(\int_{\Omega_m}\frac{1}{|\mathbf{z}_m-\mathbf{y}|^{2}}\,\rmd\mathbf{y}\Bigg)^{\frac{1}{2}}
+ a^{\frac{5(1-h)}{3}}
\Bigg(\sum_{\substack{\ell=1\\ \ell\neq m}}^{M}\frac{1}{|\mathbf{z}_m-\mathbf{z}_\ell|^{2}}\Bigg)^{\frac{1}{2}}
\Bigg(\sum_{\ell=1}^{M}\big|\mathbf{Y}(\mathbf{z}_\ell)-\widetilde{\mathbf{Y}}_\ell\big|^{2}\Bigg)^{\frac{1}{2}}
\notag\\
&\quad+ a^{1-h}
\Bigg(\sum_{\substack{\ell=1\\ \ell\neq m}}^{M}\frac{1}{|\mathbf{z}_m-\mathbf{z}_\ell|^{2}}\Bigg)^{\frac{1}{2}}
\|\bg\|_{\mathbb{H}^{-\frac{1}{2}}(\partial\Omega)} .
\end{align}

The following lemma provides estimates for two types of discrete sums: sums of negative powers of the distances \(|\mathbf{z}_m - \mathbf{z}_j|\) between the centers of the hard inclusions, and sums of negative powers of the distances \(\mathrm{dist}(D_j, \partial\Omega)\) from the inclusions to the boundary \(\partial\Omega\).

\begin{lem}\cite[Lemma 4.6]{GS25}\label{lem:4.5}
Consider a collection of sets $\{D_m\}_{m=1}^M$ with $D_m=\mathbf{z}_m+aB$ contained in a domain $\Omega$. Then the following bounds hold.

\begin{enumerate}
  \item \emph{Reciprocal powers of center-to-center separations among hard inclusions.}
  \begin{align}\label{eq:sum zm-zj}
       \sum_{\substack{j=1\\ j\neq m}}^{M} |\mathbf{z}_m-\mathbf{z}_j|^{-k}
    =
    \begin{cases}
      \mathcal{O}\!\left(d^{-3}\right), & k<3,\\[2mm]
      \mathcal{O}\!\left(d^{-k}\right), & k>3,
    \end{cases}
  \end{align}

  \item \emph{Reciprocal powers of the distances from hard inclusions to the boundary.}
  \begin{align}\label{eq:dist Dj and partial Omega}
      \sum_{j=1}^{M} \dfrac{1}{\operatorname{dist}^{\,k}\!\left(D_j,\partial\Omega\right)}
    =
    \begin{cases}
      \mathcal{O}\!\left(d^{-3}\right), & k<3,\\[2mm]
      \mathcal{O}\!\left(d^{-k}\right), & k>3.
    \end{cases}
  \end{align} 
\end{enumerate}   
\end{lem}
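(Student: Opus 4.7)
The plan is a dyadic shell decomposition combined with the volumetric packing constraint imposed by Assumption~\ref{assumption1}. The key geometric fact is that each center $\mathbf{z}_j$ lies in its own cell $\Omega_j$ of volume $|\Omega_j| = a^{1-h} \sim d^{3}$ and these cells are pairwise disjoint. Consequently, any region of volume $V$ can contain at most $\mathcal{O}(V/d^{3})$ of the centers $\mathbf{z}_j$, and this is the only ingredient I would use beyond elementary counting.

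For part~(1), I would fix $\mathbf{z}_m$ and partition the other indices into annular shells
\[
A_\ell := \bigl\{ j \ne m :\; \ell d \le |\mathbf{z}_m - \mathbf{z}_j| < (\ell+1) d \bigr\},
\qquad \ell = 1, \dots, \lfloor C/d \rfloor,
\]
the upper cutoff being forced by $\diam(\Omega) \sim 1$. The $\ell$-th shell has volume $\sim (\ell d)^{2}\, d = \ell^{2} d^{3}$, so the packing principle yields $|A_\ell| \lesssim \ell^{2}$ uniformly in $\ell$ and in $m$. Summing $k$-th powers,
\[
\sum_{j \ne m} |\mathbf{z}_m - \mathbf{z}_j|^{-k}
\;\lesssim\; d^{-k} \sum_{\ell=1}^{\lfloor C/d\rfloor} \ell^{\,2-k}.
\]
For $k<3$ the partial sum is dominated by its upper end and scales as $(1/d)^{3-k}$, yielding $\mathcal{O}(d^{-3})$; for $k>3$ the series converges to $\mathcal{O}(1)$, yielding $\mathcal{O}(d^{-k})$.

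For part~(2), I would apply the same strategy with radial annuli replaced by boundary layers
\[
B_\ell := \bigl\{ 1 \le j \le M :\; \ell\,\kappa(a) \le \dist(\mathbf{z}_j, \partial\Omega) < (\ell+1)\,\kappa(a) \bigr\}.
\]
The inner cutoff $\kappa(a) \sim d$ is supplied by \eqref{eq:dist D partial Omega}, so no center can lie closer than $\kappa(a)$ to $\partial\Omega$. Because $\Omega$ is $C^{2}$, each $B_\ell$ is contained in a tubular neighbourhood of $\partial\Omega$ of controlled volume, and volumetric packing again bounds $|B_\ell|$. Carrying out the dyadic sum with $\ell d$ replaced by $\ell\,\kappa(a)$ then yields the two stated regimes.

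The principal technical hurdle is securing the packing bounds $|A_\ell| \lesssim \ell^{2}$ and their boundary-layer analogue uniformly, which requires handling the ``residual'' collection of non-cubic cells $\Omega_j^{\star}$ forming $\Omega_r$ in Assumption~\ref{assumption1}. Since $|\Omega_r|$ scales as $a^{(1-h)/3}$, its contribution is of lower order and can be absorbed into the implicit constant without affecting the scaling. As the statement is a standard periodic-packing lemma, the full proof can alternatively be imported verbatim from \cite[Lemma~4.6]{GS25}, as the authors do.
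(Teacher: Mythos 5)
The paper does not supply its own proof of this lemma: it is cited verbatim from \cite[Lemma~4.6]{GS25}, so there is no internal argument against which to compare. Your part~(1) is the standard dyadic-shell-plus-packing argument and is correct: the $\ell$-th spherical annulus has volume $\sim \ell^{2}d^{3}$, hence $|A_\ell|\lesssim\ell^{2}$, and the sum $d^{-k}\sum_{\ell\le 1/d}\ell^{2-k}$ yields $\mathcal{O}(d^{-3})$ for $k<3$ and $\mathcal{O}(d^{-k})$ for $k>3$.

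Part~(2), however, has a genuine gap. Your phrase ``apply the same strategy'' conceals a change in geometry that changes the answer. A spherical annulus of radius $\ell d$ and thickness $d$ has volume $\sim\ell^{2}d^{3}$, growing quadratically in $\ell$; by contrast, the boundary layer $B_\ell=\{\by:\dist(\by,\partial\Omega)\in[\ell\kappa(a),(\ell+1)\kappa(a))\}$ is a tubular shell of a surface with $\mathcal{O}(1)$ area, so it has volume $\sim\kappa(a)\sim d$ \emph{independently} of $\ell$. Volumetric packing then gives $|B_\ell|\lesssim d/d^{3}=d^{-2}$ for every $\ell$, not $\ell^{2}$. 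Carrying out the dyadic sum you in fact obtain
\[
\sum_{j=1}^{M}\frac{1}{\dist^{k}(D_j,\partial\Omega)}
\;\lesssim\;
\sum_{\ell=1}^{\lfloor 1/d\rfloor} d^{-2}(\ell d)^{-k}
\;=\;
d^{-(k+2)}\sum_{\ell=1}^{\lfloor 1/d\rfloor}\ell^{-k},
\]
which is $\mathcal{O}(d^{-3})$ only for $k<1$, picks up a logarithm at $k=1$, and is $\mathcal{O}(d^{-(k+2)})$ for $k>1$. This does not produce the two regimes asserted in \eqref{eq:dist Dj and partial Omega}: for $1<k<3$ it gives $d^{-(k+2)}\gg d^{-3}$, and for $k>3$ it gives $d^{-(k+2)}\gg d^{-k}$. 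So the sentence ``carrying out the dyadic sum with $\ell d$ replaced by $\ell\kappa(a)$ then yields the two stated regimes'' is false as written; the boundary-layer version of the argument lands a full factor $d^{-2}$ short, and part~(2) cannot be derived from part~(1)'s template. If the cited bound is indeed correct under Assumption~\ref{assumption1}, a different mechanism (e.g.\ a count of $\mathcal{O}(1)$ inclusions in the nearest layer, or a different geometric hypothesis in \cite{GS25}) must be doing the work, and that ingredient is missing from your proposal.
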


Invoking the spacing and boundary-sum estimates (cf.\ \eqref{eq:sum zm-zj}) transforms the discrete sums into powers of the minimal separation $d$ defined in \eqref{eq:dmin} and boundary distance, giving a key relation for subsequent analysis,
\begin{align}\label{eq:widetilde Jzm}
|\widetilde{\mathcal{J}}(\mathbf{z}_m)|
&\;\lesssim\;
a^{\frac{1-h}{3}}\,\|\nabla \mathbf{Y}\|_{\mathbb{L}^2(\Omega)}
+ a^{\frac{1-h}{6}}\,\|\mathbf{Y}\|_{\mathbb{L}^2(\Omega)}
+ \|\mathbf{Y}\|_{\mathbb{L}^2(\Omega)}
  \Bigg(\int_{\Omega_m}\frac{1}{|\mathbf{z}_m-\mathbf{y}|^{2}}\,\rmd\mathbf{y}\Bigg)^{\frac{1}{2}}\notag\\
&\quad
+ a^{\frac{7(1-h)}{6}}
\Bigg(\sum_{\ell=1}^{M}\big|\mathbf{Y}(\mathbf{z}_\ell)-\widetilde{\mathbf{Y}}_\ell\big|^{2}\Bigg)^{\frac{1}{2}}
+ a^{\frac{1-h}{2}}\,\|\bg\|_{\mathbb{H}^{-\frac{1}{2}}(\partial\Omega)} .
\end{align}

To obtain a more explicit estimate of $\widetilde{\mathcal{J}}(\mathbf{z}_m)$, we define and bound the third term on the right-hand side of \eqref{eq:widetilde Jzm} as
\begin{align}
\widetilde{\mathcal{J}}_3(\mathbf{z}_m)
&:= \int_{B(\bz_m; r)} \frac{1}{|\bz_m - \by|^2} \,\rmd \by
   + \int_{\Omega_m \setminus B(\bz_m; r)} \frac{1}{|\bz_m - \by|^2} \,\rmd \by,
\end{align}
where $B(\bz_m; r)$ denotes the ball with center $\bz_m$ and radius $r$, and $r$ belongs to $ \left[0, \tfrac{\sqrt{3}}{2} a^{\frac{1-h}{3}}\right]$. Subsequently,
\begin{align}
\widetilde{\mathcal{J}}_3(\mathbf{z}_m)
&\lesssim \int_0^r \int_{\partial B(\bz_m; s)} \frac{1}{|\bz_m - \by|^2} \,\rmd \sigma(\by)\,\rmd s
      + \frac{1}{r^2} \big|\Omega_m \setminus B(\bz_m; r)\big|\notag \\
&= \int_0^r \frac{1}{s^2} |\partial B(\bz_m; s)| \,\rmd s
   + \frac{1}{r^2} \big|\Omega_m \setminus B(\bz_m; r)\big| \notag \\
&= \frac{8\pi r}{3} + \frac{1}{r^2} a^{1-h}
 \;\leq\; \max_{\,r \in \left[0, \frac{\sqrt{3}}{2} a^{\frac{1-h}{3}}\right]} \operatorname{dist}(r,a),
\end{align}
where
\begin{align}
\operatorname{dist}(r,a) := \frac{8\pi}{3}\, r + \frac{1}{r^2}\, a^{1-h}.
\end{align}
Combining the admissible range for $r$, it is straightforward to check that 
\begin{align}
\max_{\,r \in \left[0, \frac{\sqrt{3}}{2} a^{\frac{1-h}{3}}\right]} \operatorname{dist}(r,a)
= (48\pi^2)^{1/3} a^{(1-h)/3}.
\end{align}
Thus,
\begin{align}\label{eq:J3zm is O}
\widetilde{\mathcal{J}}_3(\mathbf{z}_m) = \mathcal{O}\!\left(a^{\frac{1-h}{3}}\right).  
\end{align}
Ultimately, by combining \eqref{eq:widetilde Jzm} and \eqref{eq:J3zm is O} and leveraging the relation $d \sim a^{\frac{1-h}{3}}$ defined in \eqref{eq:dmin}, we arrive at
\begin{align}
|\widetilde{\mathcal{J}}(\mathbf{z}_m)|
&\lesssim a^{\frac{1-h}{3}} \|\nabla \mathbf{Y}\|_{\mathbb{L}^2(\Omega)} + a^{\frac{1-h}{6}} \|\mathbf{Y}\|_{\mathbb{L}^2(\Omega)} \notag\\
&\quad+ a^{\frac{7(1-h)}{6}} \left( \sum_{\ell=1}^M \big|\mathbf{Y}(\bz_\ell) - \widetilde{\mathbf{Y}}_\ell\big|^2 \right)^{\frac{1}{2}}
     + a^{\frac{1-h}{2}} \|\bg\|_{\mathbb{H}^{-\frac{1}{2}}(\partial\Omega)} \notag \\
&\stackrel{\eqref{eq:Y H1norm lesssim g}}{\lesssim} a^{\frac{1-h}{6}} \mathcal{P}^{2}\,\|\bg\|_{\mathbb{H}^{-\frac{1}{2}}(\partial\Omega)}
  + a^{\frac{7(1-h)}{6}} \left( \sum_{\ell=1}^M \big|\mathbf{Y}(\bz_\ell) - \widetilde{\mathbf{Y}}_\ell\big|^2 \right)^{\frac{1}{2}}.
\end{align}

By substituting the preceding estimate into \eqref{eq:Jzm=widetilde Jzm}, we obtain
\begin{align}\label{eq:Jzm=O P2 g Hnorm}
\mathcal{J}(\mathbf{z}_m)
&= \mathcal{O}\!\left(
\mathcal{P}^{2}\, a^{\frac{(9-5\epsilon)(1-h)}{18(3-\epsilon)}} \,
\|\bg\|_{\mathbb{H}^{-1/2}(\partial\Omega)}
+ a^{\frac{7(1-h)}{6}}
\left( \sum_{\ell=1}^{M} \big|\mathbf{Y}(\mathbf{z}_\ell)-\widetilde{\mathbf{Y}}_\ell\big|^{2} \right)^{\!\frac{1}{2}}
\right).
\end{align}

Returning to \eqref{eq:Yzm+p2=Szm-P2Jzm} and using \eqref{eq:Jzm=O P2 g Hnorm}, we infer
\begin{align}\label{eq:Yzm+P2=Szm+O}
\mathbf{Y}(\mathbf{z}_m)
&+ \mathcal{P}^{2}\!\sum_{\substack{j=1\\ j\neq m}}^{M}
\Gamma(\mathbf{z}_m,\mathbf{z}_j)\, a^{\,1-h}\,\frac{1}{\boldsymbol{\beta}_j}\,\mathbf{Y}(\mathbf{z}_j)
= \mathbf{S}(\mathbf{z}_m)
+ \mathcal{O}\!\left(
\mathcal{P}^{4}\, a^{\frac{(9-5\epsilon)(1-h)}{18(3-\epsilon)}} \,
\|\bg\|_{\mathbb{H}^{-1/2}(\partial\Omega)}
\right) \notag\\
&\quad+ \mathcal{O}\!\left(
a^{\frac{7(1-h)}{6}} \mathcal{P}^{2}
\left( \sum_{\ell=1}^{M} \big|\mathbf{Y}(\mathbf{z}_\ell)-\widetilde{\mathbf{Y}}_\ell\big|^{2} \right)^{\!\frac{1}{2}}
\right).
\end{align}

Subtracting \eqref{eq:Ym LSE} from \eqref{eq:Yzm+P2=Szm+O}  yields the difference system
\begin{align}
\big(\widetilde{\mathbf{Y}}_m-\mathbf{Y}(\mathbf{z}_m)\big)
&+ \sum_{\substack{j=1\\ j\neq m}}^{M}
\Gamma(\mathbf{z}_m,\mathbf{z}_j)\, \mathcal{P}^{2} a^{\,1-h}\,\frac{1}{\boldsymbol{\beta}_j}\,
\big(\widetilde{\mathbf{Y}}_j-\mathbf{Y}(\mathbf{z}_j)\big)
= \mathcal{O}\!\left(
\mathcal{P}^{4}\, a^{\frac{(9-5\epsilon)(1-h)}{18(3-\epsilon)}} \,
\|\bg\|_{\mathbb{H}^{-1/2}(\partial\Omega)}
\right) \notag\\
&\quad 
+ \mathcal{O}\!\left(
a^{\frac{7(1-h)}{6}} \mathcal{P}^{2}
\left( \sum_{\ell=1}^{M} \big|\mathbf{Y}(\mathbf{z}_\ell)-\widetilde{\mathbf{Y}}_\ell\big|^{2} \right)^{\!\frac{1}{2}}
\right).
\end{align}

Applying Lemma \ref{lem:The algebraic system is invertible} gives
\begin{align}
\left( \sum_{m=1}^{M} \big|\widetilde{\mathbf{Y}}_m-\mathbf{Y}(\mathbf{z}_m)\big|^{2} \right)^{\!\frac{1}{2}}
&\lesssim \mathcal{P}^{4}\,
a^{-\frac{(1-h)(9-2\epsilon)}{9(3-\epsilon)}}\,
\|\bg\|_{\mathbb{H}^{-1/2}(\partial\Omega)}
+ a^{\frac{2(1-h)}{3}} \mathcal{P}^{2}
\left( \sum_{\ell=1}^{M} \big|\mathbf{Y}(\mathbf{z}_\ell)-\widetilde{\mathbf{Y}}_\ell\big|^{2} \right)^{\!\frac{1}{2}}.
\end{align}

Since $h<1$, the preceding inequality simplifies to
\begin{align}\label{eq:sum Ym-Yzm}
\left( \sum_{m=1}^{M} \big|\widetilde{\mathbf{Y}}_m-\mathbf{Y}(\mathbf{z}_m)\big|^{2} \right)^{\!\frac{1}{2}}
= \mathcal{O}\!\left(
\mathcal{P}^{4}\,
a^{-\frac{(1-h)(9-2\epsilon)}{9(3-\epsilon)}}\,
\|\bg\|_{\mathbb{H}^{-1/2}(\partial\Omega)}
\right).
\end{align}

The above estimate verifies the convergence of the discrete algebraic system to the continuous LSE.

\subsection{Completion of the Proof of Theorem \ref{thm:N--D}}\label{subsec:4.4}

We define the quantity
\begin{align}\label{eq:J-def}
\mathsf{J}
  &:= \omega^2 \rho_1 \,\langle \bv^{\bg}; \bu^{\bff}\rangle_{\mathbb{L}^2(D)}
   \;+\; \mathcal{P}^2 \,\langle \mathbf{q}^{\bg}; \bu^{\bff}\rangle_{\mathbb{L}^2(\Omega)}
   \;-\; \omega^2 \,\langle \rho(\bx) \bv^{\bg}; \bu^{\bff}\rangle_{\mathbb{L}^2(D)} \notag\\
  &= \sum_{j=1}^M \Big( \omega^2 \rho_1 \,\langle \bv^{\bg}_j; \bu^{\bff}\rangle_{\mathbb{L}^2(D_j)}
      + \mathcal{P}^2 \,\langle \mathbf{q}^{\bg}_j; \bu^{\bff}\rangle_{\mathbb{L}^2(\Omega_j)} \Big)
      \;-\; \omega^2 \,\langle \rho(\bx) \bv^{\bg}; \bu^{\bff}\rangle_{\mathbb{L}^2(D)} . 
\end{align}

Using the Taylor expansion of $\bu^{\bff}$ around the cell centers $\bz_j$,
\[
\bu^{\bff}(\bx)=\bu^{\bff}(\bz_j)+\int_0^1 \nabla \bu^{\bff}\!\big(\bz_j+t(\bx-\bz_j)\big)\!\cdot(\bx-\bz_j)\,\rmd t ,
\]
we get
\begin{align}\label{eq:J-expansion}
\mathsf{J}
  &= \sum_{j=1}^M \bu^{\bff}(\bz_j)
     \Bigg( \omega^2 \rho_1 \int_{D_j} \bv^{\bg}_j(\bx)\,\rmd \bx
             \;+\; \mathcal{P}^2 \int_{\Omega_j} \mathbf{q}^{\bg}_j(\bx)\,\rmd \bx \Bigg)
     \;+\; \mathsf{Error} , 
\end{align}
where
\begin{align}\label{eq:Err-def}
\mathsf{Error}
  &:= \omega^2 \rho_1 \sum_{j=1}^M
      \int_{D_j} \bv^{\bg}_j(\bx)
      \int_{0}^{1} \nabla \bu^{\bff}\!\big(\bz_j + t(\bx - \bz_j)\big)\!\cdot(\bx - \bz_j)\,\rmd t\,\rmd \bx \notag\\
  &\quad\; + \mathcal{P}^2 \sum_{j=1}^M
      \int_{\Omega_j} \mathbf{q}^{\bg}_j(\bx)
      \int_{0}^{1} \nabla \bu^{\bff}\!\big(\bz_j + t(\bx - \bz_j)\big)\!\cdot(\bx - \bz_j)\,\rmd t\,\rmd \bx \notag\\
  &\quad\;-\; \omega^2 \,\langle \rho(\bx) \bv^{\bg}; \bu^{\bff}\rangle_{\mathbb{L}^2(D)} .
\end{align}

Using $\rho_1 \sim a^{-2}$, $|\Omega_j| \sim  a^{1-h}$, and Cauchy--Schwarz in each term, we get
\begin{align}\label{eq:Error estimate}
|\mathsf{Error}|
 &\le  a^{-1} \sum_{j=1}^M  \|\bv^{\bg}_j\|_{\mathbb{L}^2(D_j)}\big\|\nabla \bu^{\bff}\big\|_{\mathbb{L}^2(D_j)}
 + a^{\frac{1-h}{3}} \mathcal{P}^2 \sum_{j=1}^M \|\mathbf{q}^{\bg}_j\|_{\mathbb{L}^2(\Omega_j)} \big\|\nabla \bu^{\bff} \big\|_{\mathbb{L}^2(\Omega_j)} \notag \\
 & \quad +  
 \|\bv^{\bg}\|_{\mathbb{L}^2(D)}\|\bu^{\bff}\|_{\mathbb{L}^2(D)}  \notag \\
 &\lesssim
   a^{-1}\|\bv^{\bg}\|_{\mathbb{L}^2(D)}\|\nabla \bu^{\bff}\|_{\mathbb{L}^2(D)}
   + a^{\frac{1-h}{3}}\mathcal{P}^2 \|\mathbf{q}^{\bg}\|_{\mathbb{L}^2(\Omega)}\|\nabla \bu^{\bff}\|_{\mathbb{L}^2(\Omega)}
   +\|\bv^{\bg}\|_{\mathbb{L}^2(D)}\|\bu^{\bff}\|_{\mathbb{L}^2(D)} \notag \\
 & \overset{\eqref{eq:uf L2 norm finally}}{\underset{\eqref{eq:nabla uf L2 norm}}{\lesssim}} a^{\frac{3h-2}{2}} \|\bv^\bg\|_{\mathbb{L}^2(D)} \| \bff \|_{\mathbb{H}^{-1/2}(\partial\Omega)}
 + a^{\frac{1-h}{3}} \mathcal{P}^2 \| \mathbf{q}^\bg \|_{\mathbb{L}^2(\Omega)} \|\nabla \bu^\bff \|_{\mathbb{L}^2(\Omega)}.
\end{align}

By the mapping property of the single-layer operator $\mathbf{S}: \bbH^{-1/2}(\partial\Omega) \to \bbH^1(D)$ applied to $\bu^{\bff}=\bS(\bff)$,
\begin{align}\label{eq:grad-bound}
\|\nabla \bu^{\bff}\|_{\mathbb{L}^2(\Omega)} =\mathcal{O}\left( \|\bff\|_{\bbH^{-1/2}(\partial\Omega)} \right).
\end{align}
Then, substituting \eqref{eq:vg L2 norm lesssim g} and \eqref{eq:grad-bound} into \eqref{eq:Error estimate} gives:
\begin{align}\label{eq:thm h from equation}
|\mathsf{Error}| &\lesssim \| \bff \|_{\mathbb{H}^{-1/2}(\partial\Omega)} \| \bg \|_{\mathbb{H}^{-1/2}(\partial\Omega)} \left( a^{(7h-1)/6} + a^{(1-h)/3} \mathcal{P}^2 \| \mathbf{q}^\bg \|_{\mathbb{L}^2(\Omega)} \right).  
\end{align}

Our next step is to find a bound for \( \mathbf{q}^\bg(\cdot) \) in terms of \( \bg(\cdot) \). As \( \mathbf{q}^\bg(\cdot) \) solves \eqref{eq:system qg}, it meets the integral relation
\begin{align}\label{eq:qg satisfies equation}
\mathbf{q}^\bg(\bx) + \mathcal{P}^2 \mathcal{N}(\mathbf{q}^\bg)(\bx) = \mathbf{S}(\bg)(\bx),  \qquad \text{for } \bx \in \Omega,
\end{align}
where \( \mathcal{N}(\cdot) \) represents the Newtonian operator as defined in \eqref{eq:Newton potential in Omega}. By combining \eqref{eq:Yz LSE def}, \eqref{eq:Y H1norm lesssim g}, and \eqref{eq:qg satisfies equation}, we can derive that
\begin{align}\label{eq:ug L2norm lesssim gH}
\| \mathbf{q}^\bg \|_{\mathbb{L}^2(\Omega)} \lesssim \|\bg \|_{\mathbb{H}^{-1/2}(\partial\Omega)}. 
\end{align}
Finally, by inserting  \eqref{eq:ug L2norm lesssim gH} into \eqref{eq:Error estimate} and taking advantage of the condition \( h > \frac{1}{3} \), we can establish that
\begin{align}\label{eq:Err-final}
|\mathsf{Error}|
 \;\lesssim\; a^{\frac{1-h}{3}}\mathcal{P}^2\,\|\bff\|_{\bbH^{-\frac{1}{2}}(\partial\Omega)}\|\bg\|_{\bbH^{-\frac{1}{2}}(\partial\Omega)}. 
\end{align}

Returning to \eqref{eq:J-expansion}, we obtain
\begin{align}
\mathsf{J}
&= \sum_{j=1}^{M} \bu^{\bff}(\bz_j)\left[ \omega^2 \rho_1 \int_{D_j} \bv^{\bg}_j(\bx)\,\rmd\bx
      + \mathcal{P}^2 \int_{\Omega_j} \mathbf{q}^{\bg}_j(\bx)\,\rmd\bx \right] \notag\\
&\quad + \mathcal{O}\!\left( a^{\frac{1-h}{3}}\,\mathcal{P}^2\,\|\bff\|_{\bbH^{-\frac{1}{2}}(\partial\Omega)}\,\|\bg\|_{\bbH^{-\frac{1}{2}}(\partial\Omega)} \right).
\end{align}

Note that \( \mathbf{q}^\bg(\cdot) \) (the solution of \eqref{eq:system qg}) coincides with the LSE solution given in \eqref{eq:Yz LSE def}; that is,
\( \mathbf{q}^\bg(\cdot) = \mathbf{Y}(\cdot) \) in \( \Omega \). Hence
\begin{align}
\int_{\Omega_j} \mathbf{q}^{\bg}_j(\bx)\,\rmd\bx
= \int_{\Omega_j} \mathbf{Y}(\bx)\,\rmd\bx
= \mathbf{Y}(\bz_j)\,|\Omega_j|
  + \int_{\Omega_j}\!\int_{0}^{1}  \nabla \mathbf{Y} \big(\bz_j+t(\bx-\bz_j)\big) \cdot (\bx-\bz_j) \,\rmd t\,\rmd\bx .
\end{align}

Therefore,
\begin{align}\label{eq:J=sum ufzj}
\mathsf{J}
&= \sum_{j=1}^{M} \bu^{\bff}(\bz_j)\left[ \omega^2 \rho_1 \int_{D_j} \bv^{\bg}_j(\bx)\,\rmd\bx
      + \mathcal{P}^2\,\mathbf{Y}(\bz_j)\,|\Omega_j| \right] \notag\\
&\quad + \mathcal{P}^2 \sum_{j=1}^{M} \bu^{\bff}(\bz_j)
      \int_{\Omega_j}\!\int_{0}^{1} \nabla \mathbf{Y}\!\big(\bz_j+t(\bx-\bz_j)\big)  \cdot (\bx-\bz_j) \,\rmd t\,\rmd\bx \notag\\
&\quad + \mathcal{O}\!\left( a^{\frac{1-h}{3}}\,\mathcal{P}^2\,\|\bff\|_{\bbH^{-\frac{1}{2}}(\partial\Omega)}\,\|\bg\|_{\bbH^{-\frac{1}{2}}(\partial\Omega)} \right). 
\end{align}

We estimate the second term on the right-hand side by setting
\begin{align}\label{eq:mathsf J2 lesssim}
\mathsf{J}_2
&:= \mathcal{P}^2 \sum_{j=1}^{M} \bu^{\bff}(\bz_j)
      \int_{\Omega_j}\!\int_{0}^{1}  \nabla \mathbf{Y}\!\big(\bz_j+t(\bx-\bz_j)\big)  \cdot (\bx-\bz_j) \,\rmd t\,\rmd\bx, \notag\\
|\mathsf{J}_2|
&\lesssim \mathcal{P}^2 \sum_{j=1}^{M} \big|\bu^{\bff}(\bz_j)\big|\,
          \left| \int_{\Omega_j}\!\int_{0}^{1}  \nabla \mathbf{Y}\!\big(\bz_j+t(\bx-\bz_j)\big)  \cdot (\bx-\bz_j)  \rmd t\,\rmd\bx \right| \notag\\
&\le \mathcal{P}^2 \left(\sum_{j=1}^{M} \big|\bu^{\bff}(\bz_j)\big|^2\right)^{\frac{1}{2}}
                 \left(\sum_{j=1}^{M} \left| \int_{\Omega_j}\!\int_{0}^{1}
                       \nabla \mathbf{Y}\!\big(\bz_j+t(\bx-\bz_j)\big)  \cdot (\bx-\bz_j) \,\rmd t\,\rmd\bx \right|^2\right)^{\frac{1}{2}} \notag\\
&= \mathcal{O}\!\left( \mathcal{P}^2
          \left(\sum_{j=1}^{M} \big|\bu^{\bff}(\bz_j)\big|^2\right)^{\frac{1}{2}}
          a^{\frac{5(1-h)}{6}}\,\|\nabla \mathbf{Y}\|_{\mathbb{L}^2(\Omega)} \right).
\end{align}

We next estimate \( \sum_{j=1}^{M} \big|\bu^{\bff}(\bz_j)\big|^2 \). Recall that \( \bu^{\bff}(\cdot) \) solves \eqref{eq:system}. Introduce
\( \bu^{\bff}_0(\cdot) \) as the solution of
\begin{align}\label{eq:u0f satisfies system}
\begin{cases}
(\mathcal{L}_{\lambda,\mu}+\omega^2) \bu^{\bff}_0 = \mathbf{0} & \text{in } \Omega,\\[2pt]
\partial_\nu \bu^{\bff}_0 = \bff & \text{on } \partial\Omega,
\end{cases} 
\end{align}
and subtract \eqref{eq:system} from \eqref{eq:u0f satisfies system} to obtain
\begin{align}
\begin{cases}
(\mathcal{L}_{\lambda,\mu}+\omega^2 \rho(\bx))\big(\bu^{\bff}-\bu^{\bff}_0\big)
   = \omega^2\,(1-\rho(\bx))\,\bu^{\bff}_0 & \text{in } \Omega,\\[2pt]
\partial_\nu\big(\bu^{\bff}-\bu^{\bff}_0\big) = \mathbf{0} & \text{on } \partial\Omega.
\end{cases}
\end{align}
Hence, for \( \bz\in\Omega \),
\begin{align}
\big(\bu^{\bff}-\bu^{\bff}_0\big)(\bz)
= \omega^2 \int_{\Omega} \Gamma(\bz,\by)\,(1-\rho(\by))\,\bu^{\bff}_0(\by)\,\rmd\by,
\end{align}
where \( \Gamma(\cdot,\cdot) \) is the Neumann Green tensor of \eqref{eq:Gamma def}. Taking the modulus yields
\begin{align}\label{eq:uf-u0f lesssim u0f}
\big|\big(\bu^{\bff}-\bu^{\bff}_0\big)(\bz)\big|
&\le \|\omega^2(1-\rho)\|_{L^\infty(\Omega)}\,\|\Gamma(\bz,\cdot)\|_{\mathbb{L}^2(\Omega)}\,
      \|\bu^{\bff}_0\|_{\mathbb{L}^2(\Omega)} \notag\\
&\lesssim \|\bu^{\bff}_0\|_{\mathbb{L}^2(\Omega)}.  
\end{align}
The last inequality follows from the \( \mathbb{L}^2(\Omega) \)-integrability of \( \Gamma(\bz,\cdot) \), uniformly in \(\bz\in\Omega\), and the boundedness of \( \|\rho\|_{L^\infty(\Omega)} \). From the well-posedness of \eqref{eq:u0f satisfies system},
\begin{align}\label{eq:u0f lesssim fH norm}
\|\bu^{\bff}_0\|_{\mathbb{L}^2(\Omega)} \lesssim \|\bff\|_{\bbH^{-\frac{1}{2}}(\partial\Omega)}.  
\end{align}
Combining \eqref{eq:uf-u0f lesssim u0f} and \eqref{eq:u0f lesssim fH norm} gives
\begin{align}\label{eq:uf-u0f}
\big|\big(\bu^{\bff}-\bu^{\bff}_0\big)(\bz)\big|
\lesssim \|\bff\|_{\bbH^{-\frac{1}{2}}(\partial\Omega)}.  
\end{align}

\begin{lem}\label{lem:elastic-pointwise-L2}
Let $\bu_0^{\bff}$ be the solution to \eqref{eq:u0f satisfies system}. For the ball $B_r(\bz_j)$ centered at $\bz_j$ with radius $r$, contained in the cube $\Omega_j$, we have
\begin{align}\label{eq:elastic-pointwise-L2}
\big|\bu_0^{\bff}(\bz_j)\big|
\lesssim |B_r(\bz_j)|^{-\frac{1}{2}} \,\|\bu_0^{\bff}\|_{\mathbb{L}^2(B_r(\bz_j))}.
\end{align}
\end{lem}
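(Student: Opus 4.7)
The plan is to prove this interior pointwise-to-$\mathbb{L}^2$ estimate by a scaling argument that reduces the claim to the standard interior estimate for the constant-coefficient strongly elliptic system $(\mathcal{L}_{\lambda,\mu}+\omega^{2})\bu=\mathbf{0}$ on the unit ball. Since $\bu_0^{\bff}$ solves such a system on $\Omega$, classical regularity theory (Douglis--Nirenberg, or equivalently the Helmholtz decomposition $\bu_0^{\bff}=\nabla\phi+\curl\bpsi$ into scalar Helmholtz equations $(\Delta+k_p^{2})\phi=0$ and $(\Delta+k_s^{2})\bpsi=\mathbf{0}$) shows that $\bu_0^{\bff}$ is real-analytic in the interior of $\Omega$, and in particular smooth on a neighbourhood of any ball $B_r(\bz_j)\Subset\Omega_j$.

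First I would set $\tilde\bu(\by):=\bu_0^{\bff}(\bz_j+r\by)$ for $\by\in \tilde B_1:=B_1(\mathbf{0})$. A direct chain-rule computation shows that $\tilde\bu$ satisfies the rescaled system
$$\mathcal{L}_{\lambda,\mu}\tilde\bu(\by)+\omega^{2}r^{2}\,\tilde\bu(\by)=\mathbf{0},\qquad \by\in \tilde B_1,$$
whose zeroth-order coefficient $\omega^{2}r^{2}$ is uniformly bounded (in fact small, since $r\lesssim a^{(1-h)/3}\ll 1$ under Assumption~\ref{assumption1}). Next I would invoke the classical interior $L^\infty$--$L^{2}$ estimate for strongly elliptic systems with constant coefficients on the unit ball, which gives a constant $C$ independent of $r$ such that
$$\|\tilde\bu\|_{\bbL^{\infty}(\tilde B_{1/2})}\;\le\; C\,\|\tilde\bu\|_{\bbL^{2}(\tilde B_{1})}.$$
This scale-invariant estimate can be derived either by applying the mean-value inequality component-wise to the Helmholtz potentials $\tilde\phi,\tilde\bpsi$ (each a solution of a constant-coefficient scalar Helmholtz equation on $\tilde B_{1}$), or by representing $\tilde\bu$ via the Kupradze Green tensor and a smooth cutoff localized to $\tilde B_{1}$; both approaches are standard.

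Finally I would undo the rescaling. Evaluating the inequality at $\by=\mathbf{0}$ and changing variables yield
$$|\bu_0^{\bff}(\bz_j)|\;\le\; C\,r^{-3/2}\,\|\bu_0^{\bff}\|_{\bbL^{2}(B_{r}(\bz_j))},$$
which is exactly the stated bound, since $|B_r(\bz_j)|\sim r^{3}$ in dimension three. The main delicate point is precisely that the constant in the pointwise bound must be independent of $r$: a direct application of Caccioppoli iteration and Sobolev embedding $\bbH^{2}(B_{r/2}(\bz_j))\hookrightarrow \bbL^{\infty}(B_{r/2}(\bz_j))$ on the original ball produces constants that scale unfavourably with $r$. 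The rescaling above eliminates this difficulty by normalising the geometry to the unit ball, where the lower-order term $\omega^{2}r^{2}$ stays bounded uniformly in $r$, and the constant-coefficient interior estimate applies with a universal constant depending only on $\lambda,\mu$ and an upper bound for $\omega$.
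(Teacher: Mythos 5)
Your proof is correct, and it reaches the right conclusion by a route that differs in framing from the paper's. The paper proves the estimate \emph{directly} on $B_r(\bz_j)$: it decomposes $\bu_0^{\bff}=\bu_p^{\bff}+\bu_s^{\bff}$ into longitudinal/transverse parts, each a solution of $(\Delta+k_\alpha^2)\bu_\alpha=\mathbf{0}$, and then invokes the exact spherical mean-value identity for Helmholtz solutions, integrating in the radial variable to obtain
\[
\bu_\alpha^{\bff}(\bz_j)=\frac{k_\alpha^{3}}{4\pi\big(\sin(k_\alpha r)-k_\alpha r\cos(k_\alpha r)\big)}\int_{B_r(\bz_j)}\bu_\alpha^{\bff}(\bx)\,\rmd\bx,
\]
after which Cauchy--Schwarz and the small-$r$ expansion $4\pi(\sin(k_\alpha r)-k_\alpha r\cos(k_\alpha r))=k_\alpha^3|B_r|+\mathcal{O}(r^5)$ give the explicit constant. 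You instead rescale to the unit ball so that the lower-order coefficient becomes $\omega^2 r^2$ (uniformly bounded), apply a scale-free interior $\bbL^\infty$--$\bbL^2$ estimate for the constant-coefficient Lam\'e system there, and undo the rescaling; the factor $r^{-3/2}=|B_r|^{-1/2}$ then emerges from the change of variables in the $\bbL^2$ norm. The two arguments are substantively related --- your first suggested derivation of the unit-ball interior estimate (mean-value inequality for the Helmholtz components) is essentially the paper's computation, relocated to the reference ball --- but they emphasize different things: the paper's direct computation delivers an explicit constant via spherical Bessel functions, while your rescaling makes the $r$-uniformity of the constant conceptually transparent and robust under perturbations of the operator, at the cost of not exhibiting a closed-form constant. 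Your closing remark correctly identifies why the rescaling is needed: a naive $\bbH^2\hookrightarrow\bbL^\infty$ Sobolev argument on $B_r$ itself produces an $r$-dependent constant and would not yield the sharp $|B_r|^{-1/2}$ scaling.
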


\begin{proof}
Consider the longitudinal/transverse decomposition
\begin{align}
\bu_0^{\bff}=\bu_p^{\bff}+\bu_s^{\bff}
\end{align}
with
\begin{align}
\bu_p^{\bff}=\frac{1}{k_p^2}\nabla(\nabla\cdot\bu_0^{\bff}),\qquad
\bu_s^{\bff}=\bu_0^{\bff}-\bu_p^{\bff}. \nonumber
\end{align}
Then one verifies that
\begin{align}\label{eq:ps-systems}
(\Delta + k_{p}^{2}) \, &\mathbf{u}_{p} = \mathbf{0}, \quad \nabla \times \mathbf{u}_{p} = \mathbf{0}, \notag \\
(\Delta + k_{s}^{2}) \, &\mathbf{u}_{s} = \mathbf{0}, \quad \nabla \cdot \mathbf{u}_{s} = 0.
\end{align}
See \cite[formula (36), p.~288]{CH62}. We have the following mean-value integral formulas:
\begin{align}
\frac{1}{|\partial B_{r}(\bz_j)|} \int_{\partial B_{r}(\bz_j)} \mathbf{u}_{p}^{\mathbf{f}}(\bx)\, \mathrm{d}\sigma(\bx)
&= j_{0}(k_{p} r)\, \mathbf{u}_{p}^{\mathbf{f}}(\bz_j), \notag \\
\frac{1}{|\partial B_{r}(\bz_j)|} \int_{\partial B_{r}(\bz_j)} \mathbf{u}_{s}^{\mathbf{f}}(\bx)\, \mathrm{d}\sigma(\bx)
&= j_{0}(k_{s} r)\, \mathbf{u}_{s}^{\mathbf{f}}(\bz_j),
\end{align}
where $j_0(t)=\dfrac{\sin t}{t}$ denotes the spherical Bessel function of order zero. Hence,
\begin{equation}\label{eq:elastic-surface-mean}
\frac{1}{|\partial B_{r}(\bz_j)|} \int_{\partial B_{r}(\bz_j)} \mathbf{u}^{\mathbf{f}}_{0}(\bx)\, \mathrm{d}\sigma(\bx)
= j_{0}(k_{p} r)\, \mathbf{u}_{p}^{\mathbf{f}}(\bz_j) + j_{0}(k_{s} r)\, \mathbf{u}_{s}^{\mathbf{f}}(\bz_j).
\end{equation}
Rearranging gives, for each $\alpha\in\{p,s\}$,
\[
\bu_{\alpha}^{\bff}(\bz_j)
= \frac{k_\alpha^{3}}{4\pi\big(\sin(k_\alpha r)-k_\alpha r\cos(k_\alpha r)\big)}
\int_{B_r(\bz_j)} \bu_{\alpha}^{\bff}(\bx)\,\mathrm{d}\bx.
\]
Summing the two components $\bu_0^{\bff}=\bu_p^{\bff}+\bu_s^{\bff}$ yields
\begin{align}
\bu_0^{\bff}(\bz_j)
= \sum_{\alpha\in\{p,s\}}
\frac{k_\alpha^{3}}{4\pi\big(\sin(k_\alpha r)-k_\alpha r\cos(k_\alpha r)\big)}
\int_{B_r(\bz_j)} \bu_\alpha^{\bff}(\bx)\,\mathrm{d}\bx. \nonumber
\end{align}
Applying the Cauchy--Schwarz inequality yields
\begin{align}\label{eq:step1-CS}
\big|\bu_0^{\bff}(\bz_j)\big|
\le \sum_{\alpha\in\{p,s\}}
\frac{k_\alpha^{3}}{4\pi\big|\sin(k_\alpha r)-k_\alpha r\cos(k_\alpha r)\big|}
\,|B_r(\bz_j)|^{1/2}\, \|\bu_\alpha^{\bff}\|_{\mathbb{L}^2(B_r(\bz_j))}.
\end{align}
For $\alpha \in \{p,s\}$, as $r \to 0^{+}$,
\begin{equation}\label{eq:elastic-small-r}
4\pi\big(\sin(k_{\alpha} r) - k_{\alpha} r \cos(k_{\alpha} r)\big)
= k_{\alpha}^{3}|B_{r}(\bz_j)| - \frac{2\pi}{15} k_{\alpha}^{5} r^{5} + \mathcal{O}(r^{7}).
\end{equation}
Plugging \eqref{eq:elastic-small-r} into \eqref{eq:step1-CS} gives
\begin{align}\label{eq:step2-CS}
\big|\bu_0^{\bff}(\bz_j)\big|
&\le \sum_{\alpha\in\{p,s\}}
\frac{1}{|B_{r}(\bz_j)|+\mathcal{O}(r^5)}
\,|B_r(\bz_j)|^{1/2}\, \|\bu_\alpha^{\bff}\|_{\mathbb{L}^2(B_r(\bz_j))}  \notag \\
&\lesssim |B_r(\bz_j)|^{-\frac{1}{2}} \,\|\bu_0^{\bff}\|_{\mathbb{L}^2(B_r(\bz_j))}.
\end{align}
This proves \eqref{eq:elastic-pointwise-L2}. Here and throughout, $|B_r(\bz_j)|$ and $|\partial B_r(\bz_j)|$ denote the volume and surface measure, respectively, of the ball and its boundary in $\mathbb{R}^3$.
\end{proof}

Therefore,
\begin{align}
\sum_{j=1}^{M} \big|\bu^{\bff}(\bz_j)\big|^2
&= \sum_{j=1}^{M} \big|\bu^{\bff}_0(\bz_j)
               + \big(\bu^{\bff}(\bz_j)-\bu^{\bff}_0(\bz_j)\big)\big|^2
\lesssim \sum_{j=1}^{M} \big|\bu^{\bff}_0(\bz_j)\big|^2
      + \sum_{j=1}^{M} \big|\bu^{\bff}(\bz_j)-\bu^{\bff}_0(\bz_j)\big|^2 .
\end{align}
Using \eqref{eq:uf-u0f} and \eqref{eq:elastic-pointwise-L2}, we obtain
\begin{align}
\sum_{j=1}^{M} \big|\bu^{\bff}(\bz_j)\big|^2
&\lesssim \sum_{j=1}^{M} |B_r(\bz_j)|^{-1}\,\|\bu^{\bff}_0\|_{\mathbb{L}^2(B_r(\bz_j))}^2
       + M\,\|\bff\|_{\bbH^{-\frac{1}{2}}(\partial\Omega)}^2 \notag\\
&\lesssim |B_r(\bz_{j_0})|^{-1}\,\|\bu^{\bff}_0\|_{\mathbb{L}^2(\cup_{j=1}^{M}B_r(\bz_j))}^2
       + M\,\|\bff\|_{\bbH^{-\frac{1}{2}}(\partial\Omega)}^2 .
\end{align}
Since \( |B_r(\bz_j)|^{-1}\sim M \) and \( \cup_{j=1}^{M}B_r(\bz_j)\subset\Omega \),
\begin{align}\label{eq:sum ufzj}
\sum_{j=1}^{M} \big|\bu^{\bff}(\bz_j)\big|^2
&\overset{\eqref{eq:u0f lesssim fH norm}}{\lesssim}M\left(\|\bu^{\bff}_0\|_{\mathbb{L}^2(\Omega)}^2
                 + \|\bff\|_{\bbH^{-\frac{1}{2}}(\partial\Omega)}^2\right)
\lesssim M\,\|\bff\|_{\bbH^{-\frac{1}{2}}(\partial\Omega)}^2.  
\end{align}

Continuing from \eqref{eq:mathsf J2 lesssim} and using \eqref{eq:sum ufzj},
\begin{align}
|\mathsf{J}_2\|
&\lesssim \mathcal{P}^2\,M^{\frac{1}{2}}\,\|\bff\|_{\bbH^{-\frac{1}{2}}(\partial\Omega)}\,
          a^{\frac{5(1-h)}{6}}\,\|\nabla \mathbf{Y}\|_{\mathbb{L}^2(\Omega)} \notag\\
&\underset{\eqref{eq:Y H1norm lesssim g}}{\overset{\eqref{eq:M def}}{=}}  \mathcal{O}\!\left(
   \mathcal{P}^4\,a^{\frac{1-h}{3}}\,\|\bff\|_{\bbH^{-\frac{1}{2}}(\partial\Omega)}\,\|\bg\|_{\bbH^{-\frac{1}{2}}(\partial\Omega)} \right).
\end{align}
Hence, the estimate \eqref{eq:J=sum ufzj} becomes
\begin{align}\label{eq:J=sum ufzj+O}
\mathsf{J}
&= \sum_{j=1}^{M} \bu^{\bff}(\bz_j)\left[ \omega^2 \rho_1 \int_{D_j}\bv^{\bg}_j(\bx)\,\rmd\bx
     + \mathcal{P}^2\, \mathbf{Y}(\bz_j)\,|\Omega_j| \right] 
 + \mathcal{O}\!\left(
   a^{\frac{1-h}{3}}\,\mathcal{P}^2\,\|\bff\|_{\bbH^{-\frac{1}{2}}(\partial\Omega)}\,\|\bg\|_{\bbH^{-\frac{1}{2}}(\partial\Omega)} \right).
\end{align}

To relate \( \mathcal{P}^2 \) to the scattering coefficient \(\boldsymbol{\alpha}\), we invoke Lemma \ref{lem:alpha=-P2} and \eqref{eq:alpha=-P2a1-h in lemma}.

\begin{lem}\label{lem:alpha=-P2} 
The scattering coefficient $\boldsymbol{\alpha}$, introduced in \eqref{eq:alpha def}, obeys the asymptotic relation
\begin{align}\label{eq:alpha=-P2a1-h in lemma}
\boldsymbol{\alpha} = - \mathcal{P}^{2} a^{\,1-h} + \mathcal{O}(a), \qquad 0<h<1, 
\end{align}
where 
\begin{align}
    \mathcal{P}^{2} &:= -\frac{\left(\langle \mathcal{I}; \tilde{\mathbf{e}}_{n_{0}}\rangle_{\mathbb{L}^{2}(B)}\right)^{2}}
              {\lambda^{B}_{n_{0}}\,c_{n_{0}}}  .
\end{align}
Moreover, one has
\begin{align}\label{eq:Wm L2 norm}
\|\mathbf{W}_{m}\|_{\mathbb{L}^{2}(D_{m})} &\lesssim a^{-(2+h)}\,\|1\|_{\mathbb{L}^{2}(D_{m})}
= \mathcal{O} \left(a^{-\left(\tfrac{1}{2}+h\right)}\right).  
\end{align}
Here $\mathbf{W}_{m}(\cdot)$ denotes the solution to \eqref{eq:Wx satisfies equation}.
\end{lem}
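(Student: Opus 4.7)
The argument naturally splits into a rescaling to the reference cell $B$, a spectral decomposition that isolates the resonant mode, and an integration that produces both the leading asymptotics of $\boldsymbol{\alpha}$ and the $\mathbb{L}^2$-bound on $\mathbf{W}_m$. For the rescaling step, set $\boldsymbol{\xi}:=(\mathbf{x}-\mathbf{z}_m)/a$ and $\widetilde{\mathbf{W}}(\boldsymbol{\xi}):=\mathbf{W}_m(\mathbf{z}_m+a\boldsymbol{\xi})$. The degree-$(-1)$ homogeneity of the Kelvin matrix $\Gamma^0$ in \eqref{eq:Gamma0 def}, combined with the change of variables $\mathbf{y}=\mathbf{z}_m+a\boldsymbol{\eta}$, yields the scaling identity $N_{D_m}[\mathbf{W}_m](\mathbf{x})=a^{2}N_B[\widetilde{\mathbf{W}}](\boldsymbol{\xi})$. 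Inserting this into \eqref{eq:Wx satisfies equation}, together with \eqref{eq:rho1 def}, \eqref{eq:lambda Dj=a2 lambda B}, and the frequency prescription \eqref{eq:the angular frequency}, produces the rescaled resolvent equation
\begin{align}
(\nu\mathcal{I}-N_B)\,\widehat{\mathbf{W}}=\mathcal{I},\qquad \widehat{\mathbf{W}}:=a^{2}\widetilde{\mathbf{W}},\qquad \nu:=\frac{1}{a^{2}\omega^{2}\rho_{1}}=\frac{\lambda_{n_{0}}^{B}}{1-c_{n_{0}}a^{h}},
\end{align}
so that $\nu-\lambda_{n_{0}}^{B}=\lambda_{n_{0}}^{B} c_{n_{0}} a^{h}+\mathcal{O}(a^{2h})$ puts us in a genuinely near-resonant regime.

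For the spectral step, $N_B$ being self-adjoint, compact, and positive (\cite{CGS24,CGS25}), I fix an orthonormal eigenbasis $\{\tilde{\mathbf{e}}_n\}_{n\ge 1}$ with eigenvalues $\{\lambda_n^B\}$ and let $P_{n_0}$ denote the projector onto the $\lambda_{n_0}^B$-eigenspace. Solving the resolvent equation column by column and separating the resonant mode yields
\begin{align}
\widehat{\mathbf{W}}(\boldsymbol{\xi})=\frac{P_{n_{0}}[\mathcal{I}](\boldsymbol{\xi})}{\nu-\lambda_{n_{0}}^{B}}+\sum_{n\neq n_{0}}\frac{\langle\mathcal{I},\tilde{\mathbf{e}}_{n}\rangle_{\mathbb{L}^{2}(B)}\,\tilde{\mathbf{e}}_{n}(\boldsymbol{\xi})}{\nu-\lambda_{n}^{B}}.
\end{align}
Since $\lambda_{n_0}^B$ is isolated in $\mathrm{spec}(N_B)$, there is a gap $\delta>0$ independent of $a$ with $|\nu-\lambda_n^B|\ge\delta$ whenever $n\neq n_0$; Parseval then controls the tail uniformly in $a$, while the first term has size $a^{-h}$. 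Unscaling gives $\|\mathbf{W}_m\|_{\mathbb{L}^{2}(D_m)}=a^{3/2}\|\widetilde{\mathbf{W}}\|_{\mathbb{L}^{2}(B)}=a^{-1/2}\|\widehat{\mathbf{W}}\|_{\mathbb{L}^{2}(B)}=\mathcal{O}(a^{-(1/2+h)})$, which is exactly \eqref{eq:Wm L2 norm}.

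For the integration step, a second change of variables produces
\begin{align}
\boldsymbol{\alpha}=\int_{D_{m}}\mathbf{W}_{m}(\mathbf{x})\,\rmd\mathbf{x}=a\int_{B}\widehat{\mathbf{W}}(\boldsymbol{\xi})\,\rmd\boldsymbol{\xi}=\frac{a\,\big(\langle\mathcal{I},\tilde{\mathbf{e}}_{n_{0}}\rangle_{\mathbb{L}^{2}(B)}\big)^{2}}{\nu-\lambda_{n_{0}}^{B}}+\mathcal{O}(a),
\end{align}
the tail contributing $\mathcal{O}(a)$ via Cauchy--Schwarz and the uniform resolvent bound from the previous step. Substituting $\nu-\lambda_{n_{0}}^{B}=\lambda_{n_{0}}^{B}c_{n_{0}}a^{h}\bigl(1+\mathcal{O}(a^{h})\bigr)$ and recalling \eqref{eq:P2 def} produces $\boldsymbol{\alpha}=-\mathcal{P}^{2}a^{\,1-h}+\mathcal{O}(a)$, as claimed.

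The main obstacle I anticipate is the matrix-valued bookkeeping: the Kelvin operator acts naturally on vector-valued fields, so the resolvent must be applied columnwise to $\mathcal{I}$, and the scalar factor $(\langle\mathcal{I},\tilde{\mathbf{e}}_{n_{0}}\rangle_{\mathbb{L}^{2}(B)})^{2}$ appearing in $\mathcal{P}^{2}$ must be interpreted as the squared norm of $P_{n_{0}}[\mathcal{I}]$, or equivalently as the sum over an orthonormal basis of the $n_{0}$-eigenspace when $\lambda_{n_{0}}^{B}$ is degenerate. A secondary, milder point is ensuring the uniform spectral gap $\delta>0$, which follows automatically from the compactness of $N_B$ provided $\lambda_{n_{0}}^{B}$ is isolated in the point spectrum; this is inherent since $N_B$ has discrete spectrum accumulating only at $0$ and $\lambda_{n_{0}}^{B}>0$.
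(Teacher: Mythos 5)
Your argument is correct and mirrors the paper's own proof: both rely on the spectral decomposition of the self-adjoint Newtonian operator, extraction of the near-resonant mode $n_0$ whose denominator is $\sim a^{h}$ by the frequency tuning \eqref{eq:the angular frequency}, control of the non-resonant tail by a uniform spectral gap (giving the $\mathcal{O}(a)$ error and the factor $a^{-h}$ in the $\mathbb L^2$-bound), and the scaling identities $\lambda_n^{D_m}=a^2\lambda_n^B$, $\langle\mathcal I,\mathbf e_{n_0}^{D_m}\rangle_{\mathbb L^2(D_m)}=a^{3/2}\langle\mathcal I,\tilde{\mathbf e}_{n_0}\rangle_{\mathbb L^2(B)}$. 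The only organizational difference is that you rescale the resolvent equation to the reference cell $B$ first and then decompose in the eigenbasis of $N_B$, whereas the paper decomposes directly in the eigenbasis of $N_{D_m}$ and applies the dilation relations at the end; these are equivalent.
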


\begin{proof}
    See Subsection \ref{subsec:alpha=proof}.
\end{proof}

Since \( |\Omega_j|=a^{1-h} \), \eqref{eq:alpha=-P2a1-h in lemma} gives \( \mathcal{P}^2 \mathbf{Y}(\bz_j)|\Omega_j| = -\boldsymbol{\alpha}\,\mathbf{Y}(\bz_j) \). Moreover, from \eqref{eq:Ym def},
\[
\omega^2\rho_1\boldsymbol{\beta}_j\int_{D_j}\bv^{\bg}_j(\bx)\,\rmd\bx = \boldsymbol{\alpha}\,\mathbf{Y}_j .
\]
Thus, \eqref{eq:J=sum ufzj+O}  becomes
\begin{align}\label{eq:J=sum ufzj+O-2}
\mathsf{J}
&= \sum_{j=1}^{M} \bu^{\bff}(\bz_j)\,\boldsymbol{\alpha} \left[\frac{1}{\boldsymbol{\beta}_j} \mathbf{Y}_j - \mathbf{Y}(\bz_j)\right]
   + \mathcal{O}\!\left(
     a^{\frac{1-h}{3}}\,\mathcal{P}^2\,\|\bff\|_{\bbH^{-\frac{1}{2}}(\partial\Omega)}\,\|\bg\|_{\bbH^{-\frac{1}{2}}(\partial\Omega)} \right) \notag\\
&= \sum_{j=1}^{M} \bu^{\bff}(\bz_j)\,\boldsymbol{\alpha}\,\big[\widetilde{\mathbf{Y}}_j - \mathbf{Y}(\bz_j)\big]
 + \sum_{j=1}^{M} \bu^{\bff}(\bz_j)\,\boldsymbol{\alpha}\left[\frac{1}{\boldsymbol{\beta}_j} \mathbf{Y}_j - \widetilde{\mathbf{Y}}_j\right] \notag\\
&\quad
 + \mathcal{O}\!\left(
     a^{\frac{1-h}{3}}\,\mathcal{P}^2\,\|\bff\|_{\bbH^{-\frac{1}{2}}(\partial\Omega)}\,\|\bg\|_{\bbH^{-\frac{1}{2}}(\partial\Omega)} \right). 
\end{align}

Define and estimate the second term on the right-hand side:
\begin{align}
\mathsf{Q}_2
&:= \sum_{j=1}^{M} \bu^{\bff}(\bz_j)\,\boldsymbol{\alpha}\left[\frac{1}{\boldsymbol{\beta}_j} \mathbf{Y}_j - \widetilde{\mathbf{Y}}_j\right] \notag \\
&\stackrel{\eqref{eq:beta=1-int}}{=} \sum_{j=1}^{M} \bu^{\bff}(\bz_j)\,\boldsymbol{\alpha}\,(\mathbf{Y}_j-\widetilde{\mathbf{Y}}_j)
 + \sum_{j=1}^{M} \bu^{\bff}(\bz_j)\,\boldsymbol{\alpha}\,
   \frac{\int_{D_j} \mathbf{W}_j(\bx)\,\mathcal{R}(\bx,\bz_j)\,\rmd\bx}{\boldsymbol{\beta}_j}\, \mathbf{Y}_j. \notag
\end{align}
Using \eqref{eq:Wm R int Dm}, \eqref{eq:betam=}, and \eqref{eq:alpha=-P2a1-h in lemma}, we deduce
\begin{align}\label{eq:mathsfQ2 eatimate}
|\mathsf{Q}_2|
&\lesssim \mathcal{P}^2 a^{1-h}\left(\sum_{j=1}^{M}\big|\bu^{\bff}(\bz_j)\big|^2\right)^{\frac{1}{2}}
\left[
  \left(\sum_{j=1}^{M}|\mathbf{Y}_j-\widetilde{\mathbf{Y}}_j|^2\right)^{\frac{1}{2}}
 + a^{\frac{2(1-h)}{3}}\left(\sum_{j=1}^{M}|\mathbf{Y}_j|^2\right)^{\frac{1}{2}}
\right] \notag\\
&\stackrel{\eqref{eq:sum ufzj}}{\lesssim} \mathcal{P}^2 a^{1-h} M^{\frac{1}{2}}\,\|\bff\|_{\bbH^{-\frac{1}{2}}(\partial\Omega)}
\left[
  \left(\sum_{j=1}^{M}|\mathbf{Y}_j-\widetilde{\mathbf{Y}}_j|^2\right)^{\frac{1}{2}}
 + a^{\frac{2(1-h)}{3}}\left(\sum_{j=1}^{M}|\mathbf{Y}_j|^2\right)^{\frac{1}{2}}
\right] \notag\\
&\stackrel{\eqref{eq:sum Ym2 lesssim}}{\lesssim} \mathcal{P}^2 a^{1-h} M^{\frac{1}{2}}\,\|\bff\|_{\bbH^{-\frac{1}{2}}(\partial\Omega)}
\left[ \left(\sum_{j=1}^{M}|\mathbf{Y}_j-\widetilde{\mathbf{Y}}_j|^2\right)^{\frac{1}{2}}
 + a^{\frac{4(1-h)}{3}}\,\|\bg\|_{\bbH^{-\frac{1}{2}}(\partial\Omega)}
\right]. 
\end{align}

Subtracting \eqref{eq:The algebraic system} from \eqref{eq:Ym LSE} gives the algebraic system
\begin{align}
(\mathbf{Y}_m-\widetilde{\mathbf{Y}}_m)
+ \sum_{\substack{j=1\\ j\ne m}}^{M} \Gamma(\bz_m,\bz_j)\,\mathcal{P}^2 a^{1-h}\,\frac{1}{\boldsymbol{\beta}_j}\,(\mathbf{Y}_j-\widetilde{\mathbf{Y}}_j)
= \omega^2\rho_1\,\frac{\mathrm{Error}_m}{\boldsymbol{\alpha}}.
\end{align}
By Lemma \ref{lem:The algebraic system is invertible} and the fact \( \boldsymbol{\alpha}\sim a^{1-h} \) and $\rho_1\sim a^{-2}$, we obtain
\begin{align}
\left(\sum_{j=1}^{M}|\mathbf{Y}_j-\widetilde{\mathbf{Y}}_j|^2\right)^{\frac{1}{2}}
&\lesssim a^{h-3}\left(\sum_{j=1}^{M}|\mathrm{Error}_j|^2\right)^{\frac{1}{2}}
\stackrel{ \eqref{eq:sum Errorm}}{\lesssim} a^{3+h}\,\|\bg\|_{\bbH^{-\frac{1}{2}}(\partial\Omega)}.
\end{align}
Substituting into \eqref{eq:mathsfQ2 eatimate}  and using \( M\sim a^{h-1} \), we get
\begin{align}
|\mathsf{Q}_2|
\lesssim \mathcal{P}^2\,a^{\frac{11(1-h)}{6}}\,\|\bff\|_{\bbH^{-\frac{1}{2}}(\partial\Omega)}\,\|\bg\|_{\bbH^{-\frac{1}{2}}(\partial\Omega)}.
\end{align}

Taking the modulus in  \eqref{eq:J=sum ufzj+O-2} and applying the above estimate, we get
\begin{align}
|\mathsf{J}|
&\lesssim |\boldsymbol{\alpha}|\left(\sum_{j=1}^{M}|\widetilde{\mathbf{Y}}_j-\mathbf{Y}(\bz_j)|^2\right)^{\frac{1}{2}}  \left(\sum_{j=1}^{M}\big|\bu^{\bff}(\bz_j)\big|^2\right)^{\frac{1}{2}}
 + a^{\frac{1-h}{3}}\,\mathcal{P}^2\,\|\bff\|_{\bbH^{-\frac{1}{2}}(\partial\Omega)}\,\|\bg\|_{\bbH^{-\frac{1}{2}}(\partial\Omega)} \notag\\
&\stackrel{\eqref{eq:sum ufzj}}{\lesssim} |\boldsymbol{\alpha}|\,M^{\frac{1}{2}}
   \left(\sum_{j=1}^{M}|\widetilde{\mathbf{Y}}_j-\mathbf{Y}(\bz_j)|^2\right)^{\frac{1}{2}}\,\|\bff\|_{\bbH^{-\frac{1}{2}}(\partial\Omega)}
 + a^{\frac{1-h}{3}}\,\mathcal{P}^2\,\|\bff\|_{\bbH^{-\frac{1}{2}}(\partial\Omega)}\,\|\bg\|_{\bbH^{-\frac{1}{2}}(\partial\Omega)}.
\end{align}

Since \( M\sim a^{h-1} \) and \( \boldsymbol{\alpha}\sim \mathcal{P}^2 a^{1-h} \) (see \eqref{eq:alpha=-P2a1-h in lemma}), applying \eqref{eq:sum Ym-Yzm} gives
\begin{align}\label{eq:mathsf J lesssim O}
|\mathsf{J}|
&\lesssim a^{\frac{(1-h)(9-5\epsilon)}{18(3-\epsilon)}}\,\mathcal{P}^6\,
          \|\bff\|_{\bbH^{-\frac{1}{2}}(\partial\Omega)}\,\|\bg\|_{\bbH^{-\frac{1}{2}}(\partial\Omega)}
 + a^{\frac{1-h}{3}}\,\mathcal{P}^2\,
          \|\bff\|_{\bbH^{-\frac{1}{2}}(\partial\Omega)}\,\|\bg\|_{\bbH^{-\frac{1}{2}}(\partial\Omega)} \notag\\
&= \mathcal{O}\!\left( a^{\frac{(1-h)(9-5\epsilon)}{18(3-\epsilon)}}\,\mathcal{P}^6\,
          \|\bff\|_{\bbH^{-\frac{1}{2}}(\partial\Omega)}\,\|\bg\|_{\bbH^{-\frac{1}{2}}(\partial\Omega)} \right).  
\end{align}

Finally, combining \eqref{eq:LambdaD-LambdaP}, \eqref{eq:J-def}, and \eqref{eq:mathsf J lesssim O}, we obtain
\begin{align}
\big| \langle (\Lambda_D - \Lambda_{\mathcal{P}})(\bff)\,;\,\bg \rangle_{\bbH^{\frac{1}{2}}(\partial\Omega)\times\bbH^{-\frac{1}{2}}(\partial\Omega)} \big|
\lesssim a^{\frac{(1-h)(9-5\epsilon)}{18(3-\epsilon)}}\,\mathcal{P}^6\,
          \|\bff\|_{\bbH^{-\frac{1}{2}}(\partial\Omega)}\,\|\bg\|_{\bbH^{-\frac{1}{2}}(\partial\Omega)}.
\end{align}
Hence
\begin{align}
\|\Lambda_D - \Lambda_{\mathcal{P}}\|_{\mathcal{L}\big(\bbH^{-\frac{1}{2}}(\partial\Omega);\bbH^{\frac{1}{2}}(\partial\Omega)\big)}
\lesssim a^{\frac{(1-h)(9-5\epsilon)}{18(3-\epsilon)}}\,\mathcal{P}^6.
\end{align}
This proves \eqref{eq:Lambda D-Lambda P in thm} and concludes the proof of Theorem \ref{thm:N--D}.

\section{ Proofs of the auxiliary results}\label{sec:appendix}


To keep Section~\ref{sec:Theorem 1} concise, we collect here the proofs of several auxiliary results used in the preceding analysis.

{ 

\subsection{Properties of the Newtonian potential $\mathcal{N}^{\calP}$ defined in \eqref{eq:Newtonian potential NP in section2}}

 \begin{lem}\label{lem:NP-spectral}
For each $\calP$, let $\Gamma_\calP(\bx,\by)$ be the Neumann Green tensor of
$\mathcal{L}_{\lambda,\mu}-\calP^2$:
\[
\begin{cases}
(\mathcal{L}_{\lambda,\mu}-\calP^{2})\,\Gamma_{\calP}(\bx,\by)
= -\delta_{\by}(\bx)\,\mathcal{I} & \text{in }\Omega,\\[1mm]
\partial_{\nu_{\bx}} \Gamma_{\calP}(\bx,\by)=\mathbf{0} & \text{on }\partial\Omega,
\end{cases}
\]
and $\mathcal{L}_{\lambda,\mu}$ and $\partial_{\nu_{\bx}}$ are defined in
\eqref{eq:mathcal L def} and \eqref{eq:partial nu def}, respectively. Define the Newtonian potential
\[
    \mathcal{N}^{\calP}:\mathbb{L}^{2}(\Omega)^3\to \mathbb{H}^{2}(\Omega)^3,\qquad
    \mathcal{N}^{\calP}(\bg)(\bx)
    :=\int_{\Omega}\Gamma_{\calP}(\bx,\by)\,\bg(\by)\,{\rm d}\by.
\]
Then, for each $\calP$, the operator $\mathcal{N}^{\calP}$ is bounded, self-adjoint, positive, and compact.
\end{lem}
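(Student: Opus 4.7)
The plan is to derive all four properties of $\mathcal{N}^{\calP}$ by identifying it with the solution operator of the Neumann boundary value problem for $-\mathcal{L}_{\lambda,\mu}+\calP^{2}$. For the mapping and boundedness claim, I would view $\bu:=\mathcal{N}^{\calP}(\bg)$, for given $\bg\in\mathbb{L}^{2}(\Omega)^{3}$, as the unique weak solution of
\[
\int_{\Omega}\!\big(\lambda(\nabla\!\cdot\!\bu)(\nabla\!\cdot\!\bv)+2\mu\,\nabla^{s}\bu:\nabla^{s}\bv+\calP^{2}\,\bu\!\cdot\!\bv\big)\,\rmd\bx=\int_{\Omega}\bg\!\cdot\!\bv\,\rmd\bx,\qquad \bv\in\mathbb{H}^{1}(\Omega)^{3},
\]
with natural boundary condition $\partial_{\nu}\bu=\mathbf{0}$. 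Because $\calP^{2}>0$, Korn's inequality renders the bilinear form coercive on $\mathbb{H}^{1}(\Omega)^{3}$, so Lax--Milgram gives existence, uniqueness, and the estimate $\|\bu\|_{\mathbb{H}^{1}(\Omega)}\lesssim \|\bg\|_{\mathbb{L}^{2}(\Omega)}$. The standard $\mathbb{H}^{2}$-regularity result for the Lam\'e system with Neumann data on the $C^{2}$-smooth domain $\Omega$ then upgrades this to $\|\bu\|_{\mathbb{H}^{2}(\Omega)}\lesssim \|\bg\|_{\mathbb{L}^{2}(\Omega)}$, proving boundedness of $\mathcal{N}^{\calP}:\mathbb{L}^{2}(\Omega)^{3}\to \mathbb{H}^{2}(\Omega)^{3}$.

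Self-adjointness will reduce to the tensorial symmetry $\Gamma_{\calP}(\bx,\by)=\Gamma_{\calP}(\by,\bx)^{\top}$. I would establish this identity by applying Betti's second identity to the columns of $\Gamma_{\calP}(\cdot,\bx_{1})$ and $\Gamma_{\calP}(\cdot,\bx_{2})$ on the punctured region $\Omega\setminus\big(B_{\varepsilon}(\bx_{1})\cup B_{\varepsilon}(\bx_{2})\big)$: the boundary contributions on $\partial\Omega$ vanish thanks to the homogeneous Neumann condition satisfied by each argument, while as $\varepsilon\to 0$ the two small internal boundary terms collapse, via the delta sources, into point evaluations whose equality yields the symmetry. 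Once this is in hand, Fubini applied to the double integral gives $\langle\mathcal{N}^{\calP}\bg,\bh\rangle_{\mathbb{L}^{2}(\Omega)}=\langle\bg,\mathcal{N}^{\calP}\bh\rangle_{\mathbb{L}^{2}(\Omega)}$ for all $\bg,\bh\in\mathbb{L}^{2}(\Omega)^{3}$.

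For strict positivity, I would test the weak formulation against $\bv=\bu=\mathcal{N}^{\calP}(\bg)$ itself, obtaining
\[
\langle\mathcal{N}^{\calP}(\bg),\bg\rangle_{\mathbb{L}^{2}(\Omega)}=\int_{\Omega}\!\big(\lambda(\nabla\!\cdot\!\bu)^{2}+2\mu\,|\nabla^{s}\bu|^{2}+\calP^{2}|\bu|^{2}\big)\,\rmd\bx\ge 0,
\]
with equality forcing $\bu\equiv\mathbf{0}$ and, via the governing PDE, $\bg\equiv\mathbf{0}$. Finally, compactness of $\mathcal{N}^{\calP}$ viewed as an endomorphism of $\mathbb{L}^{2}(\Omega)^{3}$ is immediate by factoring through the bounded map into $\mathbb{H}^{2}(\Omega)^{3}$ and composing with the compact Rellich embedding $\mathbb{H}^{2}(\Omega)^{3}\hookrightarrow \mathbb{L}^{2}(\Omega)^{3}$.

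The only nonroutine step I anticipate is the Green--Betti symmetry argument for $\Gamma_{\calP}$, which requires careful bookkeeping of the singular contributions near the two sources as the excision radius $\varepsilon$ tends to zero; the remaining ingredients---Korn coercivity, elliptic regularity up to the boundary, and Rellich---are standard once the Neumann well-posedness of $-\mathcal{L}_{\lambda,\mu}+\calP^{2}$ has been set up.
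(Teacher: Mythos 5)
Your proposal is correct and uses the same main skeleton as the paper for boundedness (Korn coercivity plus Lax--Milgram plus elliptic $\mathbb{H}^{2}$-regularity), for positivity (testing the weak form against $\bu$ itself to get the energy identity), and for compactness (factoring through the compact embedding $\mathbb{H}^{2}\hookrightarrow\mathbb{L}^{2}$). Where you differ is the self-adjointness step: you propose to establish the pointwise kernel symmetry $\Gamma_{\calP}(\bx,\by)=\Gamma_{\calP}(\by,\bx)^{\top}$ via Betti's second identity on a doubly punctured domain, carefully sending the excision radius to zero, and then invoke Fubini on the iterated integral. The paper instead bypasses the kernel altogether: it writes $\bu=\mathcal{N}^{\calP}\bg$, $\bv=\mathcal{N}^{\calP}\bh$, and reads off $(\bg,\bv)_{\mathbb{L}^2}=a_{\calP}(\bu,\bv)=\overline{a_{\calP}(\bv,\bu)}=(\bu,\bh)_{\mathbb{L}^2}$ directly from the symmetry of the Lam\'e bilinear form, which avoids any singularity bookkeeping. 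Your route is more classical and does give the explicit kernel symmetry as a byproduct (a fact the paper actually quotes later, e.g.\ in the duality identity preceding Lemma~\ref{lem:Qf L2 bound elastic}), but it is technically heavier for the purposes of this lemma; the variational argument buys self-adjointness essentially for free once well-posedness is established. One small remark: you prove \emph{strict} positivity (definiteness), which is stronger than what the paper records (non-negativity), and both are compatible with the lemma statement. Also, if one works over $\mathbb{C}$ the energy integrand should read $\lambda|\nabla\!\cdot\!\bu|^{2}+2\mu\,|\nabla^{s}\bu|^{2}+\calP^{2}|\bu|^{2}$, but this is cosmetic.
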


\begin{proof}
\textbf{Step 1: Representation via a non-negative self-adjoint operator; boundedness and compactness.}
Assume first that $\calP\neq 0$. (The case $\calP=0$ can be handled by the standard normalization modulo rigid motions.)

Set
\[
    \mathcal{A} := -\mathcal{L}_{\lambda,\mu}.
\]
Then $\mathcal{A}$ is a non-negative, self-adjoint, second-order elliptic operator \cite{McL00}
(with Neumann boundary condition) on $\mathbb{L}^2(\Omega)^3$. The Green tensor identity
\[
    (\mathcal{L}_{\lambda,\mu}-\calP^2)\Gamma_\calP(\cdot,\by)
    = -\delta_{\by}\mathcal{I}
\]
is equivalent to
\[
    ( \mathcal{A}+\calP^2)\Gamma_\calP(\cdot,\by)=\delta_{\by}\mathcal{I}.
\]
Hence, for any $\bg\in \mathbb{L}^2(\Omega)^3$ the Newtonian potential
$\bu:=\mathcal{N}^{\calP}\bg$ solves
\begin{equation}\label{eq:AP-u-g}
\begin{cases}
( \mathcal{A}+\calP^2)\bu = \bg & \text{in }\Omega,\\[1mm]
\partial_{\nu_{\bx}} \bu=\mathbf{0} & \text{on }\partial\Omega,
\end{cases}
\end{equation}
in the weak sense.

Consider the sesquilinear form
$a_\calP:\mathbb{H}^1(\Omega)^3\times \mathbb{H}^1(\Omega)^3\to\mathbb{C}$
\[
    a_\calP(\bu,\bv)
    := \int_\Omega \sigma(\bu):\nabla^s\overline{\bv}\,{\rm d}\bx
       + \calP^2\int_\Omega \bu\cdot\overline{\bv}\,{\rm d}\bx.
\]
By strong convexity of $(\lambda,\mu)$ and Korn's inequality there exists
$c_1>0$ such that
\begin{equation}\label{eq:aP-coercive-2}
    a_\calP(\bu,\bu)
    \ge c_1\bigl(\|\nabla\bu\|_{\mathbb{L}^2(\Omega)}^2+\|\bu\|_{\mathbb{L}^2(\Omega)}^2\bigr)
    \qquad\forall\,\bu\in \mathbb{H}^1(\Omega)^3,
\end{equation}
with $c_1$ independent of $\calP$ on compact subsets of $\mathbb{R}\setminus\{0\}$. Moreover
\[
    |a_\calP(\bu,\bv)|
    \lesssim \|\bu\|_{\mathbb{H}^1(\Omega)}\|\bv\|_{\mathbb{H}^1(\Omega)}.
\]

Given $\bg\in \mathbb{L}^2(\Omega)^3$, the weak formulation of
\eqref{eq:AP-u-g} reads
\begin{equation}\label{eq:variational-AP}
    a_\calP(\bu,\bv)
    = \int_\Omega \bg\cdot\overline{\bv}\,{\rm d}\bx
    \qquad\forall\, \bv\in \mathbb{H}^1(\Omega)^3.
\end{equation}
By Lax--Milgram and \eqref{eq:aP-coercive-2}, there exists a unique
$\bu\in \mathbb{H}^1(\Omega)^3$ solving \eqref{eq:variational-AP} and
\[
    \|\bu\|_{\mathbb{H}^1(\Omega)}\lesssim \|\bg\|_{\mathbb{L}^2(\Omega)},
\]
with a constant uniform for $\calP$ in compact subsets of $\mathbb{R}\setminus\{0\}$. Standard elliptic
regularity for the strongly elliptic system $ \mathcal{A}+\calP^2$ with Neumann boundary
condition implies
\[
    \bu\in \mathbb{H}^2(\Omega)^3,\qquad
    \|\bu\|_{\mathbb{H}^2(\Omega)}\lesssim \|\bg\|_{\mathbb{L}^2(\Omega)}.
\]
Therefore $\mathcal{N}^{\calP}:\mathbb{L}^2(\Omega)^3\to \mathbb{H}^2(\Omega)^3$ is bounded. Since the
embedding $\mathbb{H}^2(\Omega)^3\hookrightarrow \mathbb{L}^2(\Omega)^3$ is compact, the
operator $\mathcal{N}^{\calP}$ (viewed on $\mathbb{L}^2(\Omega)^3$) is bounded and compact.

\medskip\noindent
\textbf{Step 2: Self-adjointness and positivity.}
Let $\bg,\bh\in \mathbb{L}^2(\Omega)^3$ and set
\[
    \bu:=\mathcal{N}^{\calP}\bg,\qquad \bv:=\mathcal{N}^{\calP}\bh.
\]
Then, by definition,
\[
\begin{cases}
(\mathcal{A}+\calP^2)\bu=\bg,\\
(\mathcal{A}+\calP^2)\bv=\bh,
\end{cases}
\qquad
\partial_{\nu_{\bx}}\bu
=\partial_{\nu_{\bx}}\bv=\mathbf{0}\quad\text{on }\partial\Omega.
\]
Using \eqref{eq:variational-AP} with test function $\bv$ and $\bu$, respectively,
we obtain
\[
    (\bg,\bv)_{\mathbb{L}^2}
    = a_\calP(\bu,\bv)
    = \overline{a_\calP(\bv,\bu)}
    = \overline{(\bh,\bu)_{\mathbb{L}^2}}
    = (\bu,\bh)_{\mathbb{L}^2}.
\]
Since $\bu=\mathcal{N}^{\calP}\bg$ and $\bv=\mathcal{N}^{\calP}\bh$, this identity is equivalent to
\[
    (\mathcal{N}^{\calP}\bg,\bh)_{\mathbb{L}^2}
    = (\bg,\mathcal{N}^{\calP}\bh)_{\mathbb{L}^2}
    \qquad\forall\,\bg,\bh\in \mathbb{L}^2(\Omega)^3,
\]
so $\mathcal{N}^{\calP}$ is self-adjoint on $\mathbb{L}^2(\Omega)^3$.

Taking $\bh=\bg$ and $\bv=\bu$ yields
\[
    (\mathcal{N}^{\calP}\bg,\bg)_{\mathbb{L}^2}
    = a_\calP(\bu,\bu)
    = \int_\Omega \sigma(\bu):\nabla^s\overline{\bu}\,{\rm d}\bx
      + \calP^2\int_\Omega |\bu|^2\,{\rm d}\bx.
\]
By \eqref{eq:aP-coercive-2},
\[
    (\mathcal{N}^{\calP}\bg,\bg)_{\mathbb{L}^2}\gtrsim \|\bu\|_{\mathbb{H}^1(\Omega)}^2\ge 0.
\]
Thus $\mathcal{N}^{\calP}$ is positive:
$(\mathcal{N}^{\calP}\bg,\bg)_{\mathbb{L}^2}\ge0$ for all $\bg\in \mathbb{L}^2(\Omega)^3$ and
$\calP$.

This completes the proof of the lemma.
\end{proof}

}

\subsection{Proof of Lemma \ref{lem:NP norm lem}} \label{subsec:NP norm proof}

From standard spectral considerations, the operator norm of the Neumann resolvent at the point \(\calP^2>0\) satisfies
\begin{align}
\big\| \calN^{\calP} \big\|_{\mathcal{L}(\mathbb{L}^2(\Omega);\mathbb{L}^2(\Omega))}
&= \big\| \mathcal{R}(\calP^2;\mathcal{L}_{\lambda,\mu}) \big\|_{\mathcal{L}(\mathbb{L}^2(\Omega);\mathbb{L}^2(\Omega))}
\le \frac{1}{\mathrm{dist}\!\big(\calP^2;\sigma(\mathcal{L}_{\lambda,\mu})\big)}.
\end{align}
For the Neumann Lam\'e operator \(\mathcal{L}_{\lambda,\mu}\) on \(\mathbb{L}^2(\Omega)\), the spectrum is real, discrete, and contained in \((-\infty,0]\); in particular, \(0\in\sigma(\mathcal{L}_{\lambda,\mu})\) (rigid motions), while the remaining eigenvalues accumulate at \(-\infty\).
Hence
\begin{align}
\mathrm{dist}\!\big(\calP^2;\sigma(\mathcal{L}_{\lambda,\mu})\big)=\calP^2,
\end{align}
and therefore
\begin{align}
\big\| \calN^{\calP} \big\|_{\mathcal{L}(\mathbb{L}^2(\Omega);\mathbb{L}^2(\Omega))}
\le \frac{1}{\calP^2}.
\end{align}
This yields the bound corresponding to \eqref{eq:NP norm}. To establish the estimate analogous to \eqref{eq:gamma NP norm}, fix an arbitrary \(\bff\in\mathbb{L}^2(\Omega)\) and set \(\widetilde{\bu}:=\calN^{\calP}(\bff)\).
By definition of the resolvent, \(\widetilde{\bu}\) solves
\begin{align}
(\mathcal{L}_{\lambda,\mu} - \calP^2 \mathcal{I}) \,\widetilde{\bu}= -\,\bff \quad \text{in }\Omega,
\qquad
\partial_\nu \widetilde{\bu}=\mathbf{0} \quad \text{on }\partial\Omega,
\end{align}
where \(\partial_\nu \widetilde{\bu}\), defined in \eqref{eq:partial nu def}, denotes the traction operator. Multiplying the equation by \(\widetilde{\bu}\) and applying Green's identity, we obtain the following energy identity 
\begin{align}
a(\widetilde{\bu},\widetilde{\bu})+\calP^2\|\widetilde{\bu}\|_{\mathbb{L}^2(\Omega)}^2
= (\bff,\widetilde{\bu})_{\mathbb{L}^2(\Omega)}
\le \|\bff\|_{\mathbb{L}^2(\Omega)}\,\|\widetilde{\bu}\|_{\mathbb{L}^2(\Omega)},
\end{align}
where
\begin{align}
    a(\bu,\bv):=\int_\Omega \big(2\mu\,\varepsilon(\bu):\varepsilon(\bv)+\lambda\,(\nabla\!\cdot\!\bu)(\nabla\!\cdot\!\bv)\big)\, \rmd \bx,
\end{align}
is the Lam\'e bilinear form. By strong ellipticity and Korn’s inequality there exists \(c_0>0\) (depending only on \(\lambda,\mu,\Omega\)) such that
\begin{align}
a(\widetilde{\bu},\widetilde{\bu}) \;\ge\; c_0\,\|\nabla \widetilde{\bu}\|_{\mathbb{L}^2(\Omega)}^2 .
\end{align}
Consequently,
\begin{align}
c_0\,\|\nabla \widetilde{\bu}\|_{\mathbb{L}^2(\Omega)}^2 + \calP^2 \|\widetilde{\bu}\|_{\mathbb{L}^2(\Omega)}^2
\;\le\; \|\bff\|_{\mathbb{L}^2(\Omega)}\,\|\widetilde{\bu}\|_{\mathbb{L}^2(\Omega)} .
\end{align}
Using the \(\bbL^2 \to \bbL^2\) resolvent bound just derived, we have
\begin{align}
    \|\widetilde{\bu}\|_{\mathbb{L}^2(\Omega)} \le \|\calN^{\calP}\|\,\|\bff\|_{\mathbb{L}^2(\Omega)} \le \calP^{-2}\|\bff\|_{\mathbb{L}^2(\Omega)}.
\end{align}
Then, we infer
\begin{align}
\|\nabla \widetilde{\bu}\|_{\mathbb{L}^2(\Omega)}^2
\;\le\; \frac{1}{c_0\,\calP^2}\,\|\bff\|_{\mathbb{L}^2(\Omega)}^2 .
\end{align}
Taking the supremum over \(\|\bff\|_{\mathbb{L}^2(\Omega)}=1\) gives
\begin{align}
\big\|\nabla \calN^{\calP}\big\|_{\mathcal{L}(\mathbb{L}^2(\Omega);\mathbb{L}^2(\Omega))}
\;\le\; \frac{1}{\sqrt{c_0} \calP}
= \mathcal{O}\!\Big(\frac{1}{\calP}\Big).
\end{align}
Consequently,
\begin{align}
\big\|\calN^{\calP}\big\|_{\mathcal{L}(\mathbb{L}^2(\Omega);\mathbb{H}^1(\Omega))}
&:= \Big(
\big\|\calN^{\calP}\big\|_{\mathcal{L}(\mathbb{L}^2(\Omega);\mathbb{L}^2(\Omega))}^2
+
\big\|\nabla \calN^{\calP}\big\|_{\mathcal{L}(\mathbb{L}^2(\Omega);\mathbb{L}^2(\Omega))}^2
\Big)^{1/2}
\;\le\; C\,\frac{1}{\calP}
= \mathcal{O}\!\Big(\frac{1}{\calP}\Big),
\end{align}
with \(C\) depending only on \(\lambda,\mu,\Omega\).
Finally, by continuity of the trace map \(\gamma:\mathbb{H}^1(\Omega)\to \mathbb{H}^{1/2}(\partial\Omega)\),
\begin{align}
\big\|\gamma \calN^{\calP}\big\|_{\mathcal{L}(\mathbb{L}^2(\Omega);\mathbb{H}^{1/2}(\partial\Omega))}
= \mathcal{O}\!\Big(\frac{1}{\calP}\Big).
\end{align}
These bounds provide the desired analogue of \eqref{eq:gamma NP norm} and complete the proof of Lemma \ref{lem:NP norm lem}.

\subsection{Proof of (\ref{eq:partial SL=f})} \label{subsec:partial SL=f}

We recall, from \eqref{eq:SLP def elastic}, the definition of the elastic single-layer operator with parameter $\calP$:  
\begin{align}\label{eq:A elastic SLP def}
\mathbf{SL}^{\mathcal{P}}(\mathbf{f})(\mathbf{x})
:= \int_{\partial\Omega} \Gamma_{\mathcal{P}}(\mathbf{x},\mathbf{y}) \, \mathbf{f}(\mathbf{y}) \, \mathrm{d}\sigma(\mathbf{y}),
\qquad \mathbf{x} \in \Omega,
\end{align}
where $\Gamma_{\mathcal{P}}(\cdot,\cdot)$ solves \eqref{eq:Gamma P def}, and $\mathbf{f}\in \mathbb{H}^{-1/2}(\partial\Omega)^{3}$ is arbitrary. Our goal is to prove that
\begin{align}\label{eq:A goal elastic}
\partial_{\nu} \mathbf{SL}^{\mathcal{P}}(\mathbf{f}) = \mathbf{f}
\qquad \text{on } \partial\Omega.
\end{align}

\noindent
Let $\Phi_{i\mathcal{P}}(\cdot,\cdot)$ be the full-space fundamental solution of the shifted elastic operator, that is,
\begin{align*}
\bigl(\mathcal{L}_{\lambda,\mu} - \mathcal{P}^{2}\bigr)\, \Phi_{i\mathcal{P}}(\mathbf{x},\mathbf{y})
= - \delta_{\mathbf{y}}(\mathbf{x})
\qquad \text{in } \mathbb{R}^{3}.
\end{align*}
For fixed $\mathbf{y}\in\Omega$, the two functions
\begin{align*}
\mathbf{x} \longmapsto \Gamma_{\mathcal{P}}(\mathbf{x},\mathbf{y})
\quad\text{and}\quad
\mathbf{x} \longmapsto \Phi_{i\mathcal{P}}(\mathbf{x},\mathbf{y})
\end{align*}
solve the same differential equation in $\Omega$, but $\Gamma_{\mathcal{P}}$ satisfies the homogeneous Neumann boundary condition in \eqref{eq:Gamma P def}, while $\Phi_{i\mathcal{P}}$ does not. Let $\mathcal{D}^{i\mathcal{P}}$ be the elastic double-layer operator built from $\Phi_{i\mathcal{P}}$. By multiplying the equation for $\Gamma_{\mathcal{P}}(\cdot,\mathbf{y})$ by $\Phi_{i\mathcal{P}}(\cdot,\mathbf{y})$ and integrating over $\Omega$, we obtain the interior relation
\begin{align}\label{eq:A interior relation}
\Gamma_{\mathcal{P}}(\cdot,\mathbf{y})
+ \mathcal{D}^{i\mathcal{P}}\bigl(\Gamma_{\mathcal{P}}(\cdot,\mathbf{y})\bigr)
= \Phi_{i\mathcal{P}}(\cdot,\mathbf{y})
\qquad \text{in } \Omega.
\end{align}

\noindent
Let $\mathcal{K}^{i\mathcal{P}}$ denote the Neumann–Poincar\'e boundary operator associated with $\Phi_{i\mathcal{P}}$. Taking the trace of \eqref{eq:A interior relation} on $\partial\Omega$ and using the jump properties of the elastic double-layer operator, we deduce
\begin{align}\label{eq:A Gamma boundary}
\Gamma_{\mathcal{P}}(\cdot,\mathbf{y})
= \Bigl(\tfrac{1}{2}\mathcal{I} + \mathcal{K}^{i\mathcal{P}}\Bigr)^{-1} \bigl(\Phi_{i\mathcal{P}}(\cdot,\mathbf{y})\bigr)
\qquad \text{on } \partial\Omega,
\end{align}
where $\mathcal{I}$ is the identity on $\mathbb{L}^{2}(\partial\Omega)^{3}$. Define the single-layer operator with kernel $\Phi_{i\mathcal{P}}$ by
\begin{align}\label{eq:A S iP}
\mathcal{S}^{i\mathcal{P}}(\mathbf{g})(\mathbf{x})
:= \int_{\partial\Omega} \Phi_{i\mathcal{P}}(\mathbf{x},\mathbf{y}) \, \mathbf{g}(\mathbf{y}) \, \mathrm{d}\sigma(\mathbf{y}),
\qquad \mathbf{x}\in\Omega.
\end{align}
Inserting \eqref{eq:A Gamma boundary} into \eqref{eq:A elastic SLP def}, we obtain for $\mathbf{x}\in\Omega$,
\begin{align}
\mathbf{SL}^{\mathcal{P}}(\mathbf{f})(\mathbf{x})
&= \int_{\partial\Omega} \Gamma_{\mathcal{P}}(\mathbf{x},\mathbf{y}) \, \mathbf{f}(\mathbf{y}) \, \mathrm{d}\sigma(\mathbf{y}) \notag\\
&= \int_{\partial\Omega} \Phi_{i\mathcal{P}}(\mathbf{x},\mathbf{y}) \,
\Bigl[\Bigl(\tfrac{1}{2}\mathcal{I} + \mathcal{K}^{i\mathcal{P}}\Bigr)^{-1}\Bigr]^{\!*}(\mathbf{f})(\mathbf{y})
\, \mathrm{d}\sigma(\mathbf{y}) \label{eq:A SL as S}\\
&= \mathcal{S}^{i\mathcal{P}}\Bigl( \Bigl(\tfrac{1}{2}\mathcal{I} + \mathcal{K}^{i\mathcal{P}}\Bigr)^{-1} \Bigr)^{\!*}(\mathbf{f})(\mathbf{x}). \notag
\end{align}
Since $\mathbf{f}$ is arbitrary, \eqref{eq:A SL as S} is equivalent to
\begin{align}\label{eq:A operator identity}
\mathbf{SL}^{\mathcal{P}}
= \mathcal{S}^{i\mathcal{P}} \circ \Bigl( \Bigl(\tfrac{1}{2}\mathcal{I} + \mathcal{K}^{i\mathcal{P}}\Bigr)^{-1} \Bigr)^{\!*}
\qquad \text{in } \Omega.
\end{align}

We take the normal derivative of both sides of \eqref{eq:A operator identity}. The single-layer potential $\mathcal{S}^{i\mathcal{P}}$ satisfies the jump formula
\begin{align}\label{eq:A jump SiP}
\partial_{\nu} \mathcal{S}^{i\mathcal{P}}(\mathbf{g})
= \Bigl(\tfrac{1}{2}\mathcal{I} + \mathcal{K}^{i\mathcal{P}}\Bigr)^*\mathbf{g}
\qquad \text{on } \partial\Omega
\end{align}
for every $\mathbf{g} \in \mathbb{H}^{-1/2}(\partial\Omega)^{3}$. We get
\begin{align}\label{eq:A product}
\partial_{\nu} \mathbf{SL}^{\mathcal{P}}(\mathbf{f})
&= \partial_{\nu} \, \mathcal{S}^{i\mathcal{P}}\Bigl(\Bigl(\tfrac{1}{2}\mathcal{I} + \mathcal{K}^{i\mathcal{P}}\Bigr)^{-1}\Bigr)^{\!*}(\mathbf{f}) \notag\\
&= \Bigl(\tfrac{1}{2}\mathcal{I} + \mathcal{K}^{i\mathcal{P}}\Bigr)^*
    \Bigl(\Bigl(\tfrac{1}{2}\mathcal{I} + \mathcal{K}^{i\mathcal{P}}\Bigr)^{-1}\Bigr)^{\!*}(\mathbf{f}) .
\end{align}
Using the symmetry of the kernel we have
\[
\Bigl(\tfrac{1}{2}\mathcal{I} + \mathcal{K}^{i\mathcal{P}}\Bigr)^{*}
= \tfrac{1}{2}\mathcal{I} + \mathcal{K}^{-i\mathcal{P}},
\]
so \eqref{eq:A product} becomes
\begin{align}
\partial_{\nu} \mathbf{SL}^{\mathcal{P}}(\mathbf{f})
&= \Bigl(\tfrac{1}{2}\mathcal{I} + \mathcal{K}^{-i\mathcal{P}}\Bigr)
   \Bigl(\tfrac{1}{2}\mathcal{I} + \mathcal{K}^{-i\mathcal{P}}\Bigr)^{-1}(\mathbf{f})
    \qquad \text{on } \partial\Omega \notag\\
&= \mathbf{f} \qquad \text{on } \partial\Omega,  \label{eq:A final}
\end{align}
which is precisely \eqref{eq:partial SL=f}.

\subsection{Proof of Lemma \ref{lem:The algebraic system is invertible} }\label{subsec:The invertibility of system}

We show that the algebraic system \eqref{eq:The algebraic system} is invertible. The argument proceeds by linking \eqref{eq:The algebraic system} to a continuous integral formulation and then invoking a variational framework to establish invertibility. From \eqref{eq:The algebraic system} we read
\begin{align}
\mathbf{Y}_m + \mathcal{P}^{2} \sum_{\substack{j=1\\ j\neq m}}^{M} \Gamma \left({\bf z}_m;{\bf z}_j\right)a^{1-h}\,\frac{1}{\beta_j}\,\mathbf{Y}_j
=\mathbf{S}  \left({\bf z}_m\right)+\frac{\omega^{2}\rho_{1}}{\boldsymbol{\alpha}}\,\mathrm{Error}_m.
\end{align}

\noindent
Here $\mathbf{Y}_m$ is as in \eqref{eq:Ym def} and $\mathrm{Error}_m$ is defined in \eqref{eq:Errorm def}. Using $\lvert \Omega_j\rvert=a^{1-h}$ for $1\le j\le M$ we rewrite
\begin{align}
\mathbf{Y}_m + \mathcal{P}^{2} \sum_{\substack{j=1\\ j\neq m}}^{M} \Gamma\left({\bf z}_m;{\bf z}_j\right)\lvert \Omega_j\rvert \,\frac{1}{\beta_j}\,\mathbf{Y}_j
= \mathbf{S} \left({\bf z}_m\right)+\frac{\omega^{2}\rho_{1}}{\boldsymbol{\alpha} } \mathrm{Error}_m,
\end{align}
equivalently,
\begin{align}
{\bf Y}_m + \mathcal{P}^{2}  \sum_{\substack{j=1\\ j\neq m}}^{M}  \int_{\Omega}\Gamma \left({\bf  z}_m;{\bf z}_j\right) \chi_{\Omega_j}\!\left(x\right)\,\frac{1}{\beta_j}\,{\bf Y}_j \mathrm{d} {\bx }= {\bf  S} \left({\bf z}_m\right)+\frac{\omega^{2}\rho_{1}}{ \boldsymbol{\alpha}} \mathrm{Error}_m.
\end{align}

\noindent
Multiplying both sides by $\chi_{\Omega_m}\!\left(\cdot\right)$ and summing in $m$ we obtain
\begin{align}
\sum_{m=1}^{M}\chi_{\Omega_m}\!\left(\cdot\right){\bf Y}_m
&+\mathcal{P}^{2}\sum_{m=1}^{M}\chi_{\Omega_m} \left(\cdot\right)  \sum_{\substack{j=1\\ j\neq m}}^{M} 
\int_{\Omega}\Gamma\!\left({\bf z}_m;{\bf z}_j\right)\,\chi_{\Omega_j}\!\left(\bx \right)\,\frac{1}{\beta_j}\,{\bf Y}_j\,\mathrm{d} \bx \notag \\
&= \sum_{m=1}^{M}\chi_{\Omega_m}\!\left(\cdot\right){\bf S}\left({\bf z}_m\right)
+\frac{\omega^{2}\rho_{1}}{ \boldsymbol{\alpha}} \sum_{m=1}^{M} \chi_{\Omega_m}  \left(\cdot\right) \mathrm{Error}_m.
\end{align}

\noindent
Introduce the shorthands
\begin{align}\label{eq:Y S R def}
    \mathbb{Y}  \left(\cdot\right):=\sum_{m=1}^{M}\chi_{\Omega_m}\!\left(\cdot\right){\bf Y}_m,
    \qquad
\mathbb{S}\!\left(\cdot\right):=\sum_{m=1}^{M}\chi_{\Omega_m}\!\left(\cdot\right){\bf S} \left({\bf z}_m\right),\qquad
\mathbb{R}\!\left(\cdot\right):=\sum_{m=1}^{M}\chi_{\Omega_m}\!\left(\cdot\right) \mathrm{Error}_m.  
\end{align}
so that
\begin{align}\label{eq:Y+p2=S+R}
\mathbb{Y}  \left(\cdot\right)
+\mathcal{P}^{2}\sum_{m=1}^{M}\chi_{\Omega_m}\!\left(\cdot\right)  \sum_{\substack{j=1\\ j\neq m}}^{M}  \int_{\Omega}\Gamma\!\left({\bf z}_m;{\bf z}_j\right)\,\chi_{\Omega_j}\!\left(\bx\right)\,\frac{1}{\beta_j}\,{\bf Y}_j\,\mathrm{d} \bx 
= \mathbb{S}\!\left(\cdot\right)+\frac{\omega^{2}\rho_{1}}{\boldsymbol{\alpha}} \mathbb{R} \left(\cdot\right).
\end{align}
The next auxiliary statement shows that the second term on the left-hand side converges in $\mathbb{L}^{2}\!\left(\Omega\right)$ to a function in the range of the Newtonian operator $\mathcal{N}$; see \eqref{eq:Newton potential in Omega} for its definition.

\begin{lem}\label{lem:T1 def}
    Define
\begin{align}\label{eq:T1 def in lemma}
\mathbb{T}_{1} \left(\cdot\right):=
\mathcal{N}  \left(\mathbb{Y}\right) \left(\cdot\right)
-\sum_{m=1}^{M}\chi_{\Omega_m}\!\left(\cdot\right)  \sum_{\substack{j=1\\ j\neq m}}^{M} 
\int_{\Omega}\Gamma \left({\bf z}_m;{\bf z}_j\right)\,\chi_{\Omega_j}\!\left(\bx\right)\,\frac{1}{\beta_j}\,{\bf Y}_j\,\mathrm{d} \bx   \quad\text{in } \ \Omega .
\end{align}
With $\mathcal{N}\!\left(\cdot\right)$ as in \eqref{eq:Newton potential in Omega}, one has the bound
\begin{align}
\lVert \mathbb{T}_{1}\rVert_{\mathbb{L}^{2}  \left(\Omega\right)} \lesssim\; a^{\left(1-h\right)/6}\,\lVert \mathbb{Y}\rVert_{\mathbb{L}^{2}\!\left(\Omega\right)} .
\end{align}
\end{lem}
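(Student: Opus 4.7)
The plan is to evaluate $\mathbb{T}_1(\bx)$ cell by cell and then split it into three contributions whose norms I can estimate separately. For $\bx\in\Omega_m$, I observe that $\chi_{\Omega_{m'}}(\bx)=0$ for $m'\neq m$, while $\mathcal{N}(\mathbb{Y})(\bx)=\sum_{j=1}^{M}\int_{\Omega_j}\Gamma(\bx,\by)\bY_j\,\rmd\by$. Separating the self-cell contribution $j=m$, and writing $\tfrac{1}{\beta_j}=1+(\tfrac{1}{\beta_j}-1)$ for the cross-cell terms, I will rewrite
\begin{align*}
\mathbb{T}_1(\bx)
&=\underbrace{\int_{\Omega_m}\Gamma(\bx,\by)\bY_m\,\rmd\by}_{=:\mathbb{T}_{1,a}(\bx)}
\;+\;\underbrace{\sum_{j\neq m}\Bigl(\int_{\Omega_j}\bigl[\Gamma(\bx,\by)-\Gamma(\bz_m,\bz_j)\bigr]\rmd\by\Bigr)\bY_j}_{=:\mathbb{T}_{1,b}(\bx)}\\
&\quad-\underbrace{\sum_{j\neq m}\Gamma(\bz_m,\bz_j)\,|\Omega_j|\,\Bigl(\tfrac{1}{\beta_j}-1\Bigr)\bY_j}_{=:\mathbb{T}_{1,c}(\bx)}
\end{align*}
on each $\Omega_m$, and then estimate $\|\mathbb{T}_{1,a}\|_{\bbL^2}$, $\|\mathbb{T}_{1,b}\|_{\bbL^2}$, $\|\mathbb{T}_{1,c}\|_{\bbL^2}$ in turn.

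For $\mathbb{T}_{1,a}$, I will exploit the bound $|\Gamma(\bx,\by)|\lesssim |\bx-\by|^{-1}$ from \eqref{eq:Gamma nabla Gamma} together with $\diam(\Omega_m)\sim a^{(1-h)/3}$, which yields $\int_{\Omega_m}|\bx-\by|^{-1}\rmd\by\lesssim a^{2(1-h)/3}$ uniformly in $\bx\in\Omega_m$; squaring and summing gives $\|\mathbb{T}_{1,a}\|_{\bbL^2(\Omega)}\lesssim a^{2(1-h)/3}\|\mathbb{Y}\|_{\bbL^2(\Omega)}$, which is strictly better than the target. For $\mathbb{T}_{1,b}$, a standard Taylor expansion of $\Gamma$ about $(\bz_m,\bz_j)$ combined with $|\nabla\Gamma|\lesssim |\bx-\by|^{-2}$ produces the pointwise bound $|\Gamma(\bx,\by)-\Gamma(\bz_m,\bz_j)|\lesssim a^{(1-h)/3}|\bz_m-\bz_j|^{-2}$, and therefore $|\mathbb{T}_{1,b}(\bx)|\lesssim a^{4(1-h)/3}\sum_{j\neq m}|\bz_m-\bz_j|^{-2}|\bY_j|$. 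Cauchy--Schwarz together with the discrete sum bound $\sum_{j\neq m}|\bz_m-\bz_j|^{-4}=\mathcal{O}(d^{-4})=\mathcal{O}(a^{-4(1-h)/3})$ from Lemma \ref{lem:4.5} gives $|\mathbb{T}_{1,b}(\bx)|^2\lesssim a^{4(1-h)/3}\sum_j|\bY_j|^2$; integrating over $\Omega_m$ (factor $a^{1-h}$), summing over $m$ (factor $M\sim a^{h-1}$), and using $\sum_j|\bY_j|^2=a^{h-1}\|\mathbb{Y}\|_{\bbL^2(\Omega)}^2$, the exponents combine to give exactly $\|\mathbb{T}_{1,b}\|_{\bbL^2(\Omega)}\lesssim a^{(1-h)/6}\|\mathbb{Y}\|_{\bbL^2(\Omega)}$. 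For $\mathbb{T}_{1,c}$, the bound $|\tfrac{1}{\beta_j}-1|=\mathcal{O}(a^{2(1-h)/3})$ from \eqref{eq:betam=}, together with $|\Gamma(\bz_m,\bz_j)|\lesssim |\bz_m-\bz_j|^{-1}$, Cauchy--Schwarz, and $\sum_{j\neq m}|\bz_m-\bz_j|^{-2}=\mathcal{O}(d^{-3})=\mathcal{O}(a^{-(1-h)})$ from Lemma \ref{lem:4.5}, produces the same order of magnitude (in fact a slightly better one).

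The delicate step will be the accounting for $\mathbb{T}_{1,b}$: the gain in the Taylor expansion is only $a^{(1-h)/3}$, the geometric sum $\sum |\bz_m-\bz_j|^{-4}$ is actually singular (of size $a^{-4(1-h)/3}$), and the cell volume conversion $\sum_j|\bY_j|^2=a^{h-1}\|\mathbb{Y}\|_{\bbL^2}^2$ loses another $a^{-(1-h)}$; a naive Cauchy--Schwarz on the double sum therefore risks a net divergent exponent. The obstacle is to choose the pairing so that the factor $a^{-4(1-h)/3}$ from the geometric sum gets combined with the volume factor $a^{1-h}$ from integration over $\Omega_m$ and the $M\sim a^{h-1}$ from summation over $m$ in precisely the right way. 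The computation shows the exponents assemble to $a^{(1-h)/3}$ at the level of $\|\mathbb{T}_{1,b}\|_{\bbL^2(\Omega)}^2$, yielding $a^{(1-h)/6}$ after taking the square root; this is exactly the rate claimed. The remaining estimates for $\mathbb{T}_{1,a}$ and $\mathbb{T}_{1,c}$ are not tight and I will simply check that they are absorbed in $a^{(1-h)/6}\|\mathbb{Y}\|_{\bbL^2(\Omega)}$.
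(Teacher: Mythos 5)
Your decomposition is exactly the one the paper uses (self-cell piece, Taylor cross-cell piece with $\beta_j$ replaced by $1$, and a $\beta_j$-correction piece controlled by \eqref{eq:betam=}), and your exponent bookkeeping matches the paper's, $\|\mathbb T_1\|^2_{\bbL^2}\lesssim a^{4(1-h)/3}\sum_m|\mathbf Y_m|^2=a^{(1-h)/3}\|\mathbb Y\|^2_{\bbL^2}$; the only organizational difference is that you estimate pointwise on each $\Omega_m$ and then integrate, whereas the paper works directly with $\|\mathbb T_1\|^2_{\bbL^2}$ and applies Cauchy--Schwarz inside the double integrals, which is a stylistic rather than substantive distinction. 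One small point you should tighten when writing this up: the claimed pointwise bound $|\Gamma(\bx,\by)-\Gamma(\bz_m,\bz_j)|\lesssim a^{(1-h)/3}|\bz_m-\bz_j|^{-2}$ is not uniform for nearest-neighbour cells, where $\Omega_m$ and $\Omega_j$ may touch and $|\bx-\by|$ can be much smaller than $d$, so the Taylor remainder should be kept in integrated form (as the paper does in \eqref{eq:Taylor-Gamma}--\eqref{eq:T1-mid}, always pinning one argument at a cell centre so the relevant distance stays $\gtrsim d$); the integral $\int_{\Omega_j}|\Gamma(\bx,\cdot)|\,\rmd\by=O(a^{2(1-h)/3})$ still gives the same order, so the final estimate is unaffected.
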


\begin{proof}
     See Subsection \ref{subsec:T1 proof}.
\end{proof}

With Lemma \ref{lem:T1 def} at hand, \eqref{eq:Y+p2=S+R} becomes
\begin{align}\label{eq:I+p2N}
\left(I+\mathcal{P}^{2}\mathcal{N}\right)\!\left(\mathbb{Y}\right)\!\left(\cdot\right)=\mathbb{S}\!\left(\cdot\right)+ \boldsymbol{r}   \left(\cdot\right)\quad  \text{in }\Omega ,
\end{align}
where $\mathbb{S} \left(\cdot\right)$ is given by \eqref{eq:Y S R def} and
\begin{align}
\boldsymbol{r}   \left(\cdot\right):=\frac{\omega^{2}\rho_{1}}{\boldsymbol{\alpha}}\,\mathbb{R}  \left(\cdot\right)+\mathcal{P}^{2} \mathbb{T}_{1}\!\left(\cdot\right).
\end{align}

\noindent
In the sense of distributions, \eqref{eq:I+p2N} yields
\begin{align}
\left(\mathcal{L}_{\lambda,\mu}+\omega^{2}\rho-\mathcal{P}^{2}\right)\!\left(\mathbb{Y}\right)
=\left(\mathcal{L}_{\lambda,\mu}+\omega^{2}\rho\right)\!\left(\mathbb{S}+\boldsymbol{r}\right)=\boldsymbol{f}\quad  \text{in }\Omega .
\end{align}
By construction (see \eqref{eq:Y S R def}), $\mathbb{Y}\!\left(\cdot\right)=\mathbf{0}$ in a neighborhood of $\partial\Omega$. Extending $\mathbb{Y}$ and $\boldsymbol{f}$ by zero to \(\mathbb{R}^{3}\setminus\Omega\) gives
\begin{align}
\mathcal{L}_{\lambda,\mu} \left(\mathbb{Y}\right)=\left(-\omega^{2}\rho+\mathcal{P}^{2}\right)\!\left(\mathbb{Y}\right)+\boldsymbol{f}\quad\text{in }\mathbb{R}^{3},
\end{align}
with $\boldsymbol{f}\in H^{-2}_{\mathrm{comp}}\!\left(\mathbb{R}^{3}\right)$. Hence
\begin{align}\label{eq:Y+N(P2-omega rho)}
\mathbb{Y}+\mathcal{N}_{\mathbb{R}^{3}}\!\left(\left(\mathcal{P}^{2}-\omega^{2} \rho  \right)\left(\mathbb{Y}\right)\right)
=-\,\mathcal{N}_{\mathbb{R}^{3}}\!\left(\boldsymbol{f}\right)\quad\text{in }\mathbb{L}^2\!\left(\mathbb{R}^{3}\right),
\end{align}
where $\mathcal{N}_{\mathbb{R}^{3}}$ is the Newtonian operator associated with the whole space (cf.~\eqref{eq:Newton potential in Omega}). To address well-posedness of \eqref{eq:Y+N(P2-omega rho)}, multiply by $\left(\mathcal{P}^{2}-\omega^{2}\rho\right)>0$ (for $\mathcal{P}\gg 1$) to get
\begin{align*}
\left(\mathcal{P}^{2}-\omega^{2} \rho\right)\mathbb{Y}
+\left(\mathcal{P}^{2}-\omega^{2} \rho \right)\mathcal{N}_{\mathbb{R}^{3}}\!\left(\left(\mathcal{P}^{2}-\omega^{2} \rho \right)\!\left(\mathbb{Y}\right)\right)
=-\left(\mathcal{P}^{2}-\omega^{2} \rho \right)\mathcal{N}_{\mathbb{R}^{3}}\!\left(\boldsymbol{f}\right),
\end{align*}
in $\mathbb{L}^2\!\left(\mathbb{R}^{3}\right)$. Taking $\mathbb{L}^2\!\left(\mathbb{R}^{3}\right)$ inner products yields
\begin{align*}
\mathscr{J}_{1}\!\left(\mathbb{Y};\mathbb{Z}\right)=\mathscr{J}_{2}\!\left(\mathbb{Z}\right),
\end{align*}
with
\begin{align}
\mathscr{J}_{1}\!\left(\mathbb{Y};\mathbb{Z}\right)
&=\left\langle \sqrt{\mathcal{P}^{2}-\omega^{2}\rho}\,\mathbb{Y}\,;\,\sqrt{\mathcal{P}^{2}-\omega^{2}\rho}\,\mathbb{Z}\right\rangle_{\mathbb{L}^{2}\!\left(\mathbb{R}^{3}\right)} \notag \\
&\quad +\left\langle \mathcal{N}_{\mathbb{R}^{3}}\!\left(\left(\mathcal{P}^{2}-\omega^{2}\rho\right)\mathbb{Y}\right)\,;\,\left(\mathcal{P}^{2}-\omega^{2}\rho\right)\mathbb{Z}\right\rangle_{\mathbb{L}^{2}\!\left(\mathbb{R}^{3}\right)}, \notag
\end{align}
and
\begin{align*}
\mathscr{J}_{2}\!\left(\mathbb{Z}\right):=-\left\langle \mathcal{N}_{\mathbb{R}^{3}}\!\left(\boldsymbol{f}\right)\,;\,\left(\mathcal{P}^{2}-\omega^{2}\rho\right)\mathbb{Z}\right\rangle_{\mathbb{L}^{2}\!\left(\mathbb{R}^{3}\right)} .
\end{align*}
Then $\mathscr{J}_{1}$ is continuous and satisfies
\begin{align*}
\lvert \mathscr{J}_{1}\!\left(\mathbb{Y},\mathbb{Z}\right)\rvert
&\le \lVert \mathbb{Y}\rVert_{\mathbb{L}^{2}\!\left(\mathbb{R}^{3}\right)}\lVert \mathbb{Z}\rVert_{\mathbb{L}^{2}\!\left(\mathbb{R}^{3}\right)}
\lVert \mathcal{P}^{2}-\omega^{2}\rho\rVert_{L^{\infty}\!\left(\mathbb{R}^{3}\right)} \\
&\quad +\lVert \mathbb{Y}\rVert_{\mathbb{L}^{2}\!\left(\mathbb{R}^{3}\right)}\lVert \mathbb{Z}\rVert_{\mathbb{L}^{2}\!\left(\mathbb{R}^{3}\right)}
\lVert \mathcal{P}^{2}-\omega^{2}\rho\rVert_{L^{\infty}\!\left(\mathbb{R}^{3}\right)}\lVert \mathcal{N}_{\mathbb{R}^{3}}\rVert_{\mathcal{L}}.
\end{align*}
It is also coercive:
\begin{align}\label{eq:coercive}
\mathscr{J}_{1}\!\left(\mathbb{Y},\mathbb{Y}\right)
&=\left\lVert \sqrt{\mathcal{P}^{2}-\omega^{2}\rho}\,\mathbb{Y}\right\rVert_{\mathbb{L}^{2}\!\left(\mathbb{R}^{3}\right)}^{2}
+\left\langle \mathcal{N}_{\mathbb{R}^{3}}\!\left(\left(\mathcal{P}^{2}-\omega^{2}\rho\right)\mathbb{Y}\right)\,;\,\left(\mathcal{P}^{2}-\omega^{2}\rho\right)\mathbb{Y}\right\rangle_{\mathbb{L}^{2}\!\left(\mathbb{R}^{3}\right)} \notag \\
&\ge \left\lVert \sqrt{\mathcal{P}^{2}-\omega^{2}\rho}\,\mathbb{Y}\right\rVert_{\mathbb{L}^{2}\!\left(\mathbb{R}^{3}\right)}^{2}
\ge \inf_{\mathbb{R}^{3}}\!\left(\mathcal{P}^{2}-\omega^{2}\rho\right)\,\lVert \mathbb{Y}\rVert_{\mathbb{L}^{2}\!\left(\mathbb{R}^{3}\right)}^{2},
\end{align}
by positivity of the Newtonian operator. Moreover,
\begin{align}\label{eq:bounded}
\lvert \mathscr{J}_{2}\!\left(\mathbb{Z}\right)\rvert
\le \lVert \mathcal{N}_{\mathbb{R}^{3}}\!\left(\boldsymbol{f}\right)\rVert_{\mathbb{L}^{2}\!\left(\mathbb{R}^{3}\right)}
\lVert \mathcal{P}^{2}-\omega^{2}\rho\rVert_{L^{\infty}\!\left(\mathbb{R}^{3}\right)}
\lVert \mathbb{Z}\rVert_{\mathbb{L}^{2}\!\left(\mathbb{R}^{3}\right)}.
\end{align}
By the Lax–Milgram lemma, \eqref{eq:Y+N(P2-omega rho)} admits a unique solution. Combining \eqref{eq:Y+N(P2-omega rho)}, \eqref{eq:coercive}, and \eqref{eq:bounded} yields
\begin{align}\label{eq:inf P2-omega rho}
\inf_{\mathbb{R}^{3}}\!\left(\mathcal{P}^{2}-\omega^{2}\rho\right)\,\lVert \mathbb{Y}\rVert_{\mathbb{L}^{2}\!\left(\mathbb{R}^{3}\right)}
\le \lVert \mathcal{N}_{\mathbb{R}^{3}}\!\left(\boldsymbol{f}\right)\rVert_{\mathbb{L}^{2}\!\left(\mathbb{R}^{3}\right)}
\lVert \mathcal{P}^{2}-\omega^{2}\rho\rVert_{\mathbb{L}^{\infty}\!\left(\mathbb{R}^{3}\right)}.
\end{align}
For $\mathcal{P}^{2}\gg 1$,
\[
\inf_{\mathbb{R}^{3}}\!\left(\mathcal{P}^{2}-\omega^{2}\rho\right)\sim \mathcal{P}^{2},\qquad
\lVert \mathcal{P}^{2}-\omega^{2}\rho\rVert_{\mathbb{L}^{\infty}\!\left(\mathbb{R}^{3}\right)}\sim \mathcal{P}^{2},
\]
and so from \eqref{eq:inf P2-omega rho},
\[
\lVert \mathbb{Y}\rVert_{\mathbb{L}^{2}\!\left(\mathbb{R}^{3}\right)}
\le \lVert \mathcal{N}_{\mathbb{R}^{3}}\!\left(\boldsymbol{f}\right)\rVert_{\mathbb{L}^{2}\!\left(\mathbb{R}^{3}\right)}
\lesssim \lVert \boldsymbol{f}\rVert_{\mathbb{H}^{-2}\!\left(\mathbb{R}^{3}\right)} .
\]
Since $\mathbb{Y}=0$ on $\mathbb{R}^{3}\setminus\overline{\Omega}$ and $\boldsymbol{f}=0$ there as well, we infer
\begin{align}
\lVert \mathbb{Y}\rVert_{\mathbb{L}^{2}\!\left(\Omega\right)}
\lesssim \lVert \boldsymbol{f}\rVert_{\mathbb{H}^{-2}\!\left(\Omega\right)}
:= \big\lVert\left(\mathcal{L}_{\lambda,\mu}+\omega^{2}\rho\right)\!\left(\mathbb{S}+\boldsymbol{r}\right)\big\rVert_{\mathbb{H}^{-2}\!\left(\Omega\right)} .
\end{align}
Retaining the dominant part on the right-hand side gives
\begin{align}
\lVert \mathbb{Y}\rVert_{\mathbb{L}^{2}\!\left(\Omega\right)}
\lesssim \lVert \mathcal{L}_{\lambda,\mu}\rVert_{\mathcal{L}\!\left(\mathbb{L}^{2}\!\left(\Omega\right);\mathbb{H}^{-2}\!\left(\Omega\right)\right)}\,
\lVert \mathbb{S}+\boldsymbol{r}\rVert_{\mathbb{L}^{2}\!\left(\Omega\right)} .
\end{align}
Consequently,
\begin{align}
\lVert \mathbb{Y}\rVert_{\mathbb{L}^{2}\!\left(\Omega\right)}
&\lesssim \lVert \mathbb{S}\rVert_{\mathbb{L}^{2}\!\left(\Omega\right)}+\lVert \boldsymbol{r}\rVert_{\mathbb{L}^{2}\!\left(\Omega\right)} \notag \\
&\lesssim \lVert \mathbb{S}\rVert_{\mathbb{L}^{2}\!\left(\Omega\right)}+\frac{\rho_{1}}{\lvert\boldsymbol{\alpha}\rvert}\,\lVert \mathbb{R}\rVert_{\mathbb{L}^{2}\!\left(\Omega\right)}+\mathcal{P}^{2}\lVert \mathbb{T}_{1}\rVert_{\mathbb{L}^{2}\!\left(\Omega\right)}  \notag \\
&\lesssim \lVert \mathbb{S}\rVert_{\mathbb{L}^{2}\!\left(\Omega\right)}+\frac{\rho_{1}}{\lvert \boldsymbol{\alpha}\rvert}\,\lVert \mathbb{R}\rVert_{\mathbb{L}^{2}\!\left(\Omega\right)}
+\mathcal{P}^{2}a^{\left(1-h\right)/6}\lVert \mathbb{Y}\rVert_{\mathbb{L}^{2}\!\left(\Omega\right)}. \notag
\end{align}
Due to \eqref{eq:Lambda D-Lambda P in thm}, we have $\mathcal{P}^{2}a^{\left(1-h\right)/6}\ll 1$ when $a\ll 1$. This simplifies to
\begin{align}
\lVert \mathbb{Y}\rVert_{\mathbb{L}^{2}\!\left(\Omega\right)}
\lesssim \lVert \mathbb{S}\rVert_{\mathbb{L}^{2}\!\left(\Omega\right)}+\frac{\rho_{1}}{\lvert \boldsymbol{\alpha}\rvert}\,\lVert \mathbb{R}\rVert_{\mathbb{L}^{2}\!\left(\Omega\right)} .
\end{align}

\noindent
Since $\rho_{1}\sim a^{-2}$ (see \eqref{eq:rho1 def}) and $\boldsymbol{\alpha}\sim a^{1-h}$ (see \eqref{eq:alpha=-P2a1-h in lemma}),
\begin{align}
\lVert \mathbb{Y}\rVert_{\mathbb{L}^{2}\!\left(\Omega\right)}
\lesssim \lVert \mathbb{S}\rVert_{\mathbb{L}^{2}\!\left(\Omega\right)}+a^{h-3}\lVert \mathbb{R}\rVert_{\mathbb{L}^{2}\!\left(\Omega\right)}
\overset{\eqref{eq:R L2 norm=a 7-h/2}}{\lesssim}  \lVert \mathbb{S}\rVert_{\mathbb{L}^{2}\!\left(\Omega\right)}+a^{\left(1+h\right)/2}\lVert \mathbf{g}\rVert_{\mathbb{H}^{-1/2}\!\left(\partial\Omega\right)} ,
\end{align}
whence, using \eqref{eq:S-single-layer} and \eqref{eq:Y S R def},
\begin{align} \label{eq:Y L2norm lesssim Sg}
\lVert \mathbb{Y}\rVert_{\mathbb{L}^{2}\!\left(\Omega\right)}
\lesssim a^{\left(1-h\right)/2}
\left(\sum_{m=1}^{M}\lvert \mathbf{S}\!\left(\mathbf{g}\right)\!\left({\bf z}_m\right)\rvert^{2}\right)^{1/2}
+a^{\left(1+h\right)/2}\lVert \mathbf{g}\rVert_{\mathbb{H}^{-1/2}\!\left(\partial\Omega\right)} .
\end{align}

\noindent
To bound $\sum_{m=1}^{M}\lvert \mathbf{S}\!\left(\mathbf{g}\right)\!\left({\bf z}_m\right)\rvert^{2}$ we note
\begin{align}
\sum_{m=1}^{M}\lvert \mathbf{S}\!\left(\mathbf{g}\right)\!\left({\bf z}_m\right)\rvert^{2}
&\overset{\eqref{eq:Gamma H1/2 H1}}{\le} \lVert \mathbf{g}\rVert_{\mathbb{H}^{-1/2}\!\left(\partial\Omega\right)}^{2}
\sum_{m=1}^{M}\lVert \Gamma\!\left({\bf z}_m,\cdot\right)\rVert_{\mathbb{H}^{1/2}\!\left(\partial\Omega\right)}^{2} \notag \\
&\le  \lVert \mathbf{g}\rVert_{\mathbb{H}^{-1/2}\!\left(\partial\Omega\right)}^{2}
\sum_{m=1}^{M}\lVert \Gamma\!\left({\bf z}_m,\cdot\right)\rVert_{\mathbb{H}^{1}\!\left(\Omega^{\circ}\right)}^{2} \notag  \\
&\lesssim  \lVert \mathbf{g}\rVert_{\mathbb{H}^{-1/2}\!\left(\partial\Omega\right)}^{2}
\sum_{m=1}^{M}\frac{1}{\operatorname{dist}^{4}\!\left(D_m;\partial\Omega\right)} \notag \\
&\overset{\eqref{eq:dist Dj and partial Omega}}{\lesssim}  \lVert \mathbf{g}\rVert_{\mathbb{H}^{-1/2}\!\left(\partial\Omega\right)}^{2}\,d^{-4} \notag  \\
&\overset{\eqref{eq:dmin}}{=}  \mathcal{O}\!\left(\lVert \mathbf{g}\rVert_{\mathbb{H}^{-1/2}\!\left(\partial\Omega\right)}^{2}\,a^{-4\left(1-h\right)/3}\right). \notag 
\end{align}
Plugging into \eqref{eq:Y L2norm lesssim Sg} gives
\begin{align}
\lVert \mathbb{Y}\rVert_{\mathbb{L}^{2}\!\left(\Omega\right)}\lesssim a^{\left(h-1\right)/6}\,\lVert \mathbf{g}\rVert_{\mathbb{H}^{-1/2}\!\left(\partial\Omega\right)} .
\end{align}

\noindent
Moreover, by construction,
\begin{align}\label{eq:Y L2 norm 2}
\lVert \mathbb{Y}\rVert_{\mathbb{L}^{2}\!\left(\Omega\right)}^{2}
=\sum_{m=1}^{M}\lvert \mathbf{Y}_m\rvert^{2}\lvert \Omega_m\rvert
=\lvert \Omega_{m_{0}}\rvert \sum_{m=1}^{M}\lvert \mathbf{Y}_m\rvert^{2},
\end{align}
and hence
\begin{align}\label{eq:sum Ym lesssim g H-1/2}
\left(\sum_{m=1}^{M}\lvert \mathbf{Y}_m\rvert^{2}\right)^{1/2}
\lesssim a^{2\left(h-1\right)/3}\,\lVert \mathbf{g}\rVert_{\mathbb{H}^{-1/2}\!\left(\partial\Omega\right)} .
\end{align}
This establishes injectivity of \eqref{eq:I+p2N}. Since a linear map between finite-dimensional spaces of equal dimension is bijective if and only if it is injective, \eqref{eq:I+p2N} is an isomorphism. Therefore, the algebraic system \eqref{eq:discrete system of Ym} is invertible, which completes the proof of Lemma  \ref{lem:The algebraic system is invertible}.

\subsection{Estimate of $\lVert\mathbb{R} \rVert_{\mathbb{L}^2(\Omega)}$}

Recalling the decomposition (cf.~\eqref{eq:Y S R def}),
\begin{align}
\mathbb{R}(\cdot):=\sum_{m=1}^{M}\chi_{\Omega_m}(\cdot)\,\mathrm{Error}_m,
\end{align}
where the subdomains \(\{\Omega_m\}_{m=1}^M\) are pairwise disjoint and
\(|\Omega_m|=|\Omega_0|=a^{\,1-h}\). 
Consequently, combining this with \eqref{eq:Errorm def}, we have
\begin{align}\label{eq:R L2 norm=}
\|\mathbb{R}\|_{\mathbb{L}^2(\Omega)}^{2}
&=\sum_{m=1}^{M}\int_{\Omega_m}\!|\mathrm{Error}_m|^{2}\, \rmd \mathbf{x}
=\sum_{m=1}^{M}|\Omega_m|\,|\mathrm{Error}_m|^{2} \notag \\
&=|\Omega_0|\sum_{m=1}^{M}|\mathrm{Error}_m|^{2}
=a^{\,1-h}\sum_{m=1}^{M}|\mathrm{Error}_m|^{2}. 
\end{align}

\noindent
By the triangle inequality and taking absolute values term-by-term in \eqref{eq:Errorm def}, we obtain
\begin{align}\label{eq:Error absolute}
|\mathrm{Error}_m|
\;\lesssim\;&
\sum_{\substack{j=1\\ j\neq m}}^{M}
\Bigg|
\int_{D_m} \mathbf{W}_m(\mathbf{x})\!\int_0^{1}
\nabla_{\mathbf{x}}\Gamma\!\left(\mathbf{z}_m+t(\mathbf{x}-\mathbf{z}_m),\,\mathbf{z}_j\right)\!\cdot(\mathbf{x}-\mathbf{z}_m)\,\rmd t\, \rmd \mathbf{x}
\Bigg|\,
\Bigg|\int_{D_j} \mathbf{v}_j^{\mathbf{g}}(\mathbf{y})\,\rmd \mathbf{y}\Bigg| \notag \\
&+\sum_{\substack{j=1\\ j\neq m}}^{M}
\Bigg|
\int_{D_m} \mathbf{W}_m(\mathbf{x})\!\int_{D_j}\!\int_{0}^{1}
\nabla_{\mathbf{y}}\Gamma\!\left(\mathbf{x};\,\mathbf{z}_j+t(\mathbf{y}-\mathbf{z}_j)\right)\!\cdot(\mathbf{y}-\mathbf{z}_j)\,\rmd t\,
\mathbf{v}_j^{\mathbf{g}}(\mathbf{y})\, \rmd \mathbf{y}\, \rmd  \mathbf{x}
\Bigg| \notag \\
&+\frac{1}{\omega^{2}\rho_{1}}\Bigg|
\int_{D_m} \mathbf{W}_m(\mathbf{x})\!\int_{0}^{1}
\nabla \mathbf{S}_m\!\left(\mathbf{z}_m+t(\mathbf{x}-\mathbf{z}_m)\right)\!\cdot(\mathbf{x}-\mathbf{z}_m)\,\rmd  t\, \rmd  \mathbf{x} \Bigg|\notag \\
&+\Bigg|
\int_{D_m} \mathbf{W}_m(\mathbf{x})\!\int_{D_m}\!\int_{0}^{1}
\nabla_{\mathbf{y}}\mathcal{R}\!\left(\mathbf{x},\,\mathbf{z}_m+t(\mathbf{y}-\mathbf{z}_m)\right)\!\cdot(\mathbf{y}-\mathbf{z}_m)\,\rmd t\,
\mathbf{v}_m^{\mathbf{g}}(\mathbf{y})\, \rmd  \mathbf{y}\, \rmd  \mathbf{x}
\Bigg| \notag \\
&+\frac{1}{\rho_{1}}\Bigg|
\int_{D_m} \mathbf{W}_m(\mathbf{x})\!\int_{D} \Gamma(\mathbf{x},\mathbf{y})\,\rho(\mathbf{y})\,\mathbf{v}^{\mathbf{g}}(\mathbf{y})\, \rmd  \mathbf{y}\,\rmd  \mathbf{x}
\Bigg|.
\end{align}

\noindent
Denote the third term on the right-hand side of \eqref{eq:Error absolute} by $\varrho_3$. By Cauchy--Schwarz and the change of variables $\mathbf{y}=\mathbf{z}_m+t(\mathbf{x}-\mathbf{z}_m)$, and using the fact that $\rho_1 \sim a^{-2}$ defined in \eqref{eq:rho1 def}, we obtain
\begin{align}\label{eq:varrho3 def}
\varrho_3
&\le  a^2 \|\mathbf{W}_m\|_{\mathbb{L}^2(D_m)}\,
\Bigg\| \int_{0}^{1}\nabla \mathbf{S}_m\!\left(\mathbf{z}_m+t(\cdot-\mathbf{z}_m)\right)\!\cdot(\cdot-\mathbf{z}_m)\,\rmd  t
\Bigg\|_{\mathbb{L}^2(D_m)} \notag \\
&\le a^2 \|\mathbf{W}_m\|_{\mathbb{L}^2(D_m)}\,
\Bigg[ \int_{0}^{1}\frac{1}{t}\!\int_{B(\mathbf{z}_m,ta)}
|\nabla \mathbf{S}_m(\mathbf{y})|^{2}\,|\mathbf{y}-\mathbf{z}_m|^{2}\,\rmd  \mathbf{y}\,\rmd  t
\Bigg]^{1/2} \notag \\
&\le a^{3} \|\mathbf{W}_m\|_{\mathbb{L}^2(D_m)}\,
\Bigg[ \int_{0}^{1}\int_{D_m}|\nabla \mathbf{S}_m(\mathbf{y})|^{2}\, \rmd  \mathbf{y}\,\rmd  t \Bigg]^{1/2}\, \notag \\
&\lesssim a^{3}\,\|\mathbf{W}_m\|_{\mathbb{L}^2(D_m)}\,\|\nabla \mathbf{S}_m\|_{\mathbb{L}^2(D_m)}.
\end{align}

\noindent
Combining \eqref{eq:S-single-layer}, \eqref{eq:Gamma nabla Gamma}, and \eqref{eq:varrho3 def}, we deduce from \eqref{eq:Error absolute} that
\begin{align}
|\mathrm{Error}_m|
\lesssim\;&
\|\mathbf{W}_m\|_{\mathbb{L}^2(D_m)}
\Bigg[
a^{4}\sum_{\substack{j=1\\ j\neq m}}^{M}\frac{1}{|\mathbf{z}_m-\mathbf{z}_j|^{2}}\,
\|\mathbf{v}_j^{\mathbf{g}}\|_{\mathbb{L}^2(D_j)}
\;+\;
a^{3}\,\|\nabla \mathbf{S}(\mathbf{g})\|_{\mathbb{L}^2(D_m)}
\Bigg]\\
& +\|\mathbf{W}_m\|_{\mathbb{L}^2(D_m)}
\Bigg[
a\Bigg(\int_{D_m}\!\int_{D_m}|\nabla\mathcal{R}(\mathbf{x},\mathbf{y})|^{2}\,\rmd\mathbf{y}\,\rmd\mathbf{x}\Bigg)^{\!1/2}
+a^{4}
\Bigg]\,
\|\mathbf{v}_m^{\mathbf{g}}\|_{\mathbb{L}^2(D_m)}\\
& + a^{\,\frac{9}{2}-h}
\sum_{\substack{j=1\\ j\neq m}}^{M}
\frac{1}{|\mathbf{z}_m-\mathbf{z}_j|}\,\|\mathbf{v}_j^{\mathbf{g}}\|_{\mathbb{L}^2(D_j)}.
\end{align}

\noindent
Using Cauchy--Schwarz for the pairwise sums and \eqref{eq:sum zm-zj}, we further deduce
\begin{align}\label{eq:Errorm lesssim}
|\mathrm{Error}_m| & \lesssim 
\|\mathbf{W}_m\|_{\mathbb{L}^2(D_m)}
\Big[a^{\frac{10+2h}{3}}\|\mathbf{v}^{\mathbf{g}}\|_{\mathbb{L}^2(D)}+a^{3}\|\nabla \mathbf{S}(\mathbf{g})\|_{\mathbb{L}^2(D_m)}\Big] \notag \\
&\quad +\|\mathbf{W}_m\|_{\mathbb{L}^2(D_m)}
\Bigg[
a\Bigg(\int_{D_m}\!\int_{D_m}|\nabla\mathcal{R}(\mathbf{x},\mathbf{y})|^{2}\,\rmd\mathbf{y}\,\rmd\mathbf{x}\Bigg)^{\!1/2}
+a^{4}
\Bigg]\,
\|\mathbf{v}_m^{\mathbf{g}}\|_{\mathbb{L}^2(D_m)} \notag \\
&\quad +a^{\frac{8-h}{2}}\|\mathbf{v}^{\mathbf{g}}\|_{\mathbb{L}^2(D)}.
\end{align}

\noindent
By standard mapping properties of layer potentials, \eqref{eq:dist D partial Omega}, and \eqref{eq:Gamma H1/2 H1},
\begin{align}\label{eq:nabla S}
\|\nabla \mathbf{S}(\mathbf{g})\|_{\mathbb{L}^2(D_m)}
 & \lesssim  \|\mathbf{g}\|_{\mathbb{H}^{-1/2}(\partial\Omega)}\Bigg[\int_{D_m} \big\|\nabla \Gamma(\mathbf{x},\cdot)\big\|_{\mathbb{H}^{1/2}(\partial\Omega)}^{2}\, \rmd  \mathbf{x} \Bigg]^{1/2} \notag \\
  & \lesssim  \|\mathbf{g}\|_{\mathbb{H}^{-1/2}(\partial\Omega)}\Bigg[\int_{D_m} \big\|\nabla \Gamma(\mathbf{x},\cdot)\big\|_{\mathbb{H}^{1}(\Omega^{\diamond})}^{2}\, \rmd  \mathbf{x} \Bigg]^{1/2} \notag \\
   &  \lesssim  \|\mathbf{g}\|_{\mathbb{H}^{-1/2}(\partial\Omega)}\Bigg[\int_{D_m} \frac{1}{\operatorname{dist}^6(\bx,\partial \Omega)} \rmd  \mathbf{x} \Bigg]^{1/2} \notag \\
  &\lesssim  \|\mathbf{g}\|_{\mathbb{H}^{-1/2}(\partial\Omega)}\,
 \frac{a^{3/2}}{\operatorname{dist}^{3}(D_m;\partial\Omega)}.   
\end{align}

\noindent
Arguing similarly as for \eqref{eq:Rxy lesssim dist}, we obtain
\begin{align*}
    |\nabla_{\mathbf{y}}\mathcal{R}(\mathbf{x},\mathbf{y})|
\lesssim \operatorname{dist}(\mathbf{x},\partial\Omega)^{-2/3}\,
\operatorname{dist}(\mathbf{y},\partial\Omega)^{-4/3},  \qquad \text{for } \mathbf{x}\neq\mathbf{y}.
\end{align*}
Hence
\begin{align}\label{eq:nabla Rxy}
\int_{D_m}\!\int_{D_m}|\nabla\mathcal{R}(\mathbf{x},\mathbf{y})|^{2}\, \rmd  \mathbf{y}\, \rmd  \mathbf{x}
\;\lesssim\;
\frac{|D_m|^{2}}{\operatorname{dist}^{4}(D_m,\partial\Omega)}.
\end{align}

\noindent
Substituting \eqref{eq:nabla S} and \eqref{eq:nabla Rxy} into \eqref{eq:Errorm lesssim}, we obtain
\begin{align}
|\mathrm{Error}_m|
\lesssim &\|\mathbf{W}_m\|_{\mathbb{L}^2(D_m)}
\Bigg[ a^{\frac{10+2h}{3}}\|\mathbf{v}^{\mathbf{g}}\|_{\mathbb{L}^2(D)}
+\|\mathbf{g}\|_{\mathbb{H}^{-1/2}(\partial\Omega)}\,
\frac{a^{9/2}}{\operatorname{dist}^{3}(D_m;\partial\Omega)}
\Bigg]  \notag \\
&\quad+\|\mathbf{W}_m\|_{\mathbb{L}^2(D_m)}
 \frac{a^{4}}{\operatorname{dist}^{2}(D_m;\partial\Omega)}\,\|\mathbf{v}_m^{\mathbf{g}}\|_{\mathbb{L}^2(D_m)}
+a^{\frac{8-h}{2}}\|\mathbf{v}^{\mathbf{g}}\|_{\mathbb{L}^2(D)} .
\end{align}

\noindent
Using \eqref{eq:Wm L2 norm}, we obtain
\begin{align}\label{eq:Errorm lessim vg L2 norm}
|\mathrm{Error}_m|
\lesssim &
a^{\frac{17-2h}{6}}\|\mathbf{v}^{\mathbf{g}}\|_{\mathbb{L}^2(D)}
+\|\mathbf{g}\|_{\mathbb{H}^{-1/2}(\partial\Omega)}\,
\frac{a^{4-h}}{\operatorname{dist}^{3}(D_m;\partial\Omega)} \notag \\
&\quad +  \frac{a^{\frac{7-2h}{2}}}{\operatorname{dist}^{2}(D_m;\partial\Omega)}\,
\|\mathbf{v}_m^{\mathbf{g}}\|_{\mathbb{L}^2(D_m)}
+ a^{\frac{8-h}{2}}\|\mathbf{v}^{\mathbf{g}}\|_{\mathbb{L}^2(D)}.
\end{align}

\noindent
Summing  \eqref{eq:Errorm lessim vg L2 norm} in \(m\) and using Cauchy--Schwarz together with \eqref{eq:vg L2 norm lesssim g} and \eqref{eq:dist Dj and partial Omega}, we infer
\begin{align}\label{eq:sum Errorm}
\sum_{m=1}^{M}|\mathrm{Error}_m|^{2}
\;\lesssim\;
a^{\frac{14+h}{3}}\|\mathbf{v}^{\mathbf{g}}\|_{\mathbb{L}^2(D)}^{2}
+a^{6}\|\mathbf{g}\|_{\mathbb{H}^{-1/2}(\partial\Omega)}^{2}
\;\lesssim\;
a^{6}\|\mathbf{g}\|_{\mathbb{H}^{-1/2}(\partial\Omega)}^{2}.
\end{align}

\noindent
Consequently, recalling \eqref{eq:R L2 norm=}, we conclude
\begin{align}\label{eq:R L2 norm=a 7-h/2}
\|\mathbb{R}\|_{\mathbb{L}^2(\Omega)}
=\mathcal{O}\!\Big(a^{\frac{7-h}{2}}\,\|\mathbf{g}\|_{\mathbb{H}^{-1/2}(\partial\Omega)}\Big).
\end{align}

\subsection{Proof of Lemma \ref{lem:vg estimate}}\label{subsec:vg estimate}

We first recall that, by \eqref{eq:vg LSE in D}, $\mathbf{v}^{\bf g}(\cdot)$ is the solution of  
\begin{equation}\label{eq:vg satisfies equation in Appendix}
\mathbf{v}^{\bf g}(\bx)
 -\omega^2 \int_{D} \Gamma(\bx,\by)\,\left(\rho_1-\rho(\by)\right)\,\mathbf{v}^{\bf g}(\by)\,\rmd\by
 = \mathbf{S}(\bx), \qquad \bx\in D.
\end{equation}
The proof unfolds in two steps. As a first step, we analyze the single-inclusion case.

\subsubsection{The case of one hard inclusion}

We work inside the single inclusion $D$ and start from the Lippmann--Schwinger relation in $D$ (cf.\ \eqref{eq:vg LSE in D}), combined with the splitting $\Gamma=\Gamma^0+\mathcal{R}$ from \eqref{eq:Gamma-decomposition}. This gives the vector identity
\begin{align}\label{eq:vg LSE in Appendix}
\mathbf{v}^{\bg}(\bx)
 &-\omega^{2}\rho_{1}\!\int_{D}\Gamma^{0}(\bx,\by)\,\mathbf{v}^{\bg}(\by)\,\rmd\by \notag\\
&=\mathbf{S}(\bx)
 + \omega^{2}\rho_{1}\!\int_{D}\mathcal{R}(\bx,\by)\,\mathbf{v}^{\bg}(\by)\,\rmd\by
 - \omega^{2}\!\int_{D}\Gamma(\bx,\by)\,\rho(\by)\,\mathbf{v}^{\bg}(\by)\,\rmd\by,
\qquad \bx\in D.
\end{align}

\noindent
Let $(\lambda_n^D,\mathbf{e}_n)_{n\in\mathbb{N}}$ be an $\mathbb{L}^2(D)^3$-orthonormal eigen-basis of $N_D$ defined in \eqref{eq:Newtonian def}, i.e.,
\begin{align}\label{eq:ND-eigensystem}
N_D[\mathbf{e}_n]=\lambda_n\,\mathbf{e}_n,
\qquad
\langle \mathbf{e}_n,\mathbf{e}_m\rangle_{\mathbb{L}^2(D)}=\delta_{nm}.
\end{align}

\noindent
Taking the inner product of \eqref{eq:vg LSE in Appendix} with $\mathbf{e}_n$, we arrive at
\begin{align}\label{eq:coeff-eq}
\langle \mathbf{v}^{\bg},\mathbf{e}_n\rangle_{\mathbb{L}^2(D)}
&-\omega^{2}\rho_{1}\,\langle N_D[\mathbf{v}^{\bg}],\mathbf{e}_n\rangle_{\mathbb{L}^2(D)}
=\langle \mathbf{S},\mathbf{e}_n\rangle_{\mathbb{L}^2(D)} \\
&\quad+ \omega^{2}\rho_{1}\Big\langle \int_{D}\mathcal{R}(\cdot,\by)\,\mathbf{v}^{\bg}(\by)\,\rmd\by,\mathbf{e}_n\Big\rangle_{\mathbb{L}^2(D)}
-\omega^{2}\Big\langle \int_{D}\Gamma(\cdot,\by)\,\rho(\by)\,\mathbf{v}^{\bg}(\by)\,\rmd\by,\mathbf{e}_n\Big\rangle_{\mathbb{L}^2(D)}. \notag
\end{align}

\noindent
By \eqref{eq:ND-eigensystem} and the self-adjointness of $N_D$, it follows that
\begin{align}\label{eq:use-eigen}
\langle N_D[\mathbf{v}^{\bg}],\mathbf{e}_n\rangle_{\mathbb{L}^2(D)}
=\langle \mathbf{v}^{\bg},N_D[\mathbf{e}_n]\rangle_{\mathbb{L}^2(D)}
=\lambda_n\,\langle \mathbf{v}^{\bg},\mathbf{e}_n\rangle_{\mathbb{L}^2(D)}.
\end{align}
Substituting \eqref{eq:use-eigen} into \eqref{eq:coeff-eq}, we obtain the coefficient identity
\begin{align}\label{eq:coeff-identity}
\left(1-\omega^{2}\rho_{1}\lambda_n\right)\,\langle \mathbf{v}^{\bg},\mathbf{e}_n\rangle_{\mathbb{L}^2(D)}
&=\langle \mathbf{S},\mathbf{e}_n\rangle_{\mathbb{L}^2(D)}
+\omega^{2}\rho_{1}\Big\langle \int_{D}\mathcal{R}(\cdot,\by)\,\mathbf{v}^{\bg}(\by)\,\rmd\by,\mathbf{e}_n\Big\rangle_{\mathbb{L}^2(D)}
\notag\\[-1mm]
&\quad-\omega^{2}\Big\langle \int_{D}\Gamma(\cdot,\by)\,\rho(\by)\,\mathbf{v}^{\bg}(\by)\,\rmd\by,\mathbf{e}_n\Big\rangle_{\mathbb{L}^2(D)}.
\end{align}

\noindent
Assuming a uniform spectral gap of the form \eqref{eq:1-omega2}, we find
\begin{align}\label{eq:coeff-sq}
\big|\langle \mathbf{v}^{\bg},\mathbf{e}_n\rangle_{\mathbb{L}^2(D)}\big|^{2}
&\lesssim a^{-2h}\left(
\big|\langle \mathbf{S},\mathbf{e}_n\rangle_{\mathbb{L}^2(D)}\big|^{2}
+ \omega^{4}|\rho_{1}|^{2}\,\Big|\Big\langle \textstyle\int_{D}\mathcal{R}(\cdot,\by)\,\mathbf{v}^{\bg}(\by)\,\rmd\by,\mathbf{e}_n\Big\rangle_{\mathbb{L}^2(D)}\Big|^{2} \right.
\notag\\
&\quad+ \left. \omega^{4}\,\Big|\Big\langle \textstyle\int_{D}\Gamma(\cdot,\by)\,\rho(\by)\,\mathbf{v}^{\bg}(\by)\,\rmd\by,\mathbf{e}_n\Big\rangle_{\mathbb{L}^2(D)}\Big|^{2} \right).
\end{align}

\noindent
Summing \eqref{eq:coeff-sq} over $n$ and using $\rho_{1}=\tilde{\rho}_{1}\,a^{-2}$ (cf.\ \eqref{eq:rho1 def}), together with Parseval’s identity, we obtain
\begin{align}\label{eq:master-ineq}
\|\mathbf{v}^{\bg}\|_{\mathbb{L}^2(D)}^{2}
&\lesssim a^{-2h}\left(
\|\mathbf{S}\|_{\mathbb{L}^2(D)}^{2}
+ a^{-4}\,\Big\|\int_{D}\mathcal{R}(\cdot,\by)\,\mathbf{v}^{\bg}(\by)\,\rmd\by\Big\|_{\mathbb{L}^2(D)}^{2} \right. \notag\\
&\quad + \left.\Big\|\int_{D}\Gamma(\cdot,\by)\,\rho(\by)\,\mathbf{v}^{\bg}(\by)\,\rmd\by\Big\|_{\mathbb{L}^2(D)}^{2}
\right).
\end{align}

\noindent
We now bound the last two terms in \eqref{eq:master-ineq}. First,
\begin{align}\label{eq:R-term-bound-1}
\Big\|\int_{D}\mathcal{R}(\cdot,\by)\,\mathbf{v}^{\bg}(\by)\,\rmd\by\Big\|_{\mathbb{L}^2(D)}^{2}
&\le \left(\int_{D}\!\int_{D}\big|\mathcal{R}(\bx,\by)\big|^{2}\,\rmd\by\,\rmd\bx\right)
\|\mathbf{v}^{\bg}\|_{\mathbb{L}^2(D)}^{2}.
\end{align}

\noindent
By \eqref{eq:Rxy lesssim dist} in Lemma~\ref{lem:R(x,y)} and the conditions $\dist(\bx,\partial\Omega)\ge\kappa(a)$ and $\dist(\by,\partial\Omega)\ge\kappa(a)$ (cf.\ \eqref{eq:dist D partial Omega}), we infer
\begin{align}\label{eq:R-kernel-L2}
\int_{D}\!\int_{D}\big|\mathcal{R}(\bx,\by)\big|^{2}\,\rmd\by\,\rmd\bx
&\lesssim \left(\frac{|D|}{\kappa(a)}\right)^{2}
= \mathcal{O}\!\left(a^{\frac{2(8+h)}{3}}\right).
\end{align}
Here we used $|D|\sim a^{3}$ for a single inclusion and $\kappa(a)\sim a^{\frac{1-h}{3}}$. Combining \eqref{eq:R-term-bound-1} with \eqref{eq:R-kernel-L2} yields
\begin{align}\label{eq:R-term-final}
\Big\|\int_{D}\mathcal{R}(\cdot,\by)\,\mathbf{v}^{\bg}(\by)\,\rmd\by\Big\|_{\mathbb{L}^2(D)}^{2}
&\lesssim a^{\frac{2(8+h)}{3}}\,\|\mathbf{v}^{\bg}\|_{\mathbb{L}^2(D)}^{2}.
\end{align}

\noindent
For the term involving $\Gamma(\cdot,\cdot)$, we use $\Gamma=\Gamma^{0}+\mathcal{R}$ and the operator-norm bound $\|N_D\|_{\mathcal{L}(\mathbb{L}^2(D)^3;\mathbb{L}^2(D)^3)}=\mathcal{O}(a^{2})$, together with \eqref{eq:R-kernel-L2}, to obtain
\begin{align}\label{eq:Gamma-term-split}
\Big\|\int_{D}\Gamma(\cdot,\by)\,\rho(\by)&\,\mathbf{v}^{\bg} (\by)\,\rmd\by\Big\|_{\mathbb{L}^2(D)}
\le \|N_D[\rho\,\mathbf{v}^{\bg}]\|_{\mathbb{L}^2(D)} 
+ \Big\|\int_{D}\mathcal{R}(\cdot,\by)\,\rho(\by)\,\mathbf{v}^{\bg}(\by)\,\rmd\by\Big\|_{\mathbb{L}^2(D)} \notag\\
&\lesssim a^{2}\|\rho\|_{L^\infty(D)}\,\|\mathbf{v}^{\bg}\|_{\mathbb{L}^2(D)}
+ \|\rho\|_{L^\infty(D)}\left(\int_{D}\!\int_{D}|\mathcal{R}(\bx,\by)|^{2}\,\rmd\by\,\rmd\bx\right)^{\!1/2}\|\mathbf{v}^{\bg}\|_{\mathbb{L}^2(D)}\notag\\
&\lesssim a^{4}\,\|\mathbf{v}^{\bg}\|_{\mathbb{L}^2(D)}^{2}.  
\end{align}

\noindent
Since $0<h<1$ and $a\ll 1$, inserting \eqref{eq:R-term-final} and \eqref{eq:Gamma-term-split} into \eqref{eq:master-ineq} gives
\begin{align}\label{eq:pre-absorb}
\|\mathbf{v}^{\bg}\|_{\mathbb{L}^2(D)}^{2}
&\lesssim a^{-2h}\,\|\mathbf{S}\|_{\mathbb{L}^2(D)}^{2}
+ a^{\frac{4(1-h)}{3}} \|\mathbf{v}^{\bg}\|_{\mathbb{L}^2(D)}^{2}
\lesssim a^{-2h}\,\|\mathbf{S}\|_{\mathbb{L}^2(D)}^{2}.
\end{align}

\noindent
It remains to bound $\|\mathbf{S}\|_{\mathbb{L}^2(D)}$ in terms of $\|\bg\|_{\mathbb{H}^{-1/2}(\partial\Omega)}$. By the single-layer representation associated with \eqref{eq:S(x) satisfies equation} in \eqref{eq:S-single-layer}, using the continuity $\mathcal{S}:\mathbb{H}^{-1/2}(\partial\Omega)\to\mathbb{H}^{1}(\Omega)$ of the single-layer operator, the embedding $\mathbb{H}^{1}(\Omega)\hookrightarrow \mathbb{L}^{6}(\Omega)$~\cite[Corollary 9.14]{B11}, and H\"older’s inequality on $D$, we infer
\begin{align}\label{eq:S-L6}
\|\mathbf{S}\|_{\mathbb{L}^{2}(D)}^{2}
&\le |D|^{\frac{2}{3}}\|\mathbf{S}\|_{\mathbb{L}^{6}(D)}^{2}
\lesssim
|D|^{\frac{2}{3}}\|\bg\|_{\mathbb{H}^{-1/2}(\partial\Omega)}^{2}
\left(\int_{D}\|\Gamma(\bx,\cdot)\|_{\mathbb{H}^{1/2}(\partial\Omega)}^{6}\,\rmd\bx\right)^{\frac{1}{3}}  .
\end{align}

{
{\noindent
Arguing as in \eqref{eq:Gamma H1/2 H1}–\eqref{eq:uf L2 norm finally} (and using $|\bx-\by|\ge \kappa(a)$ for $\bx\in D$ and $\by\in\Omega^\diamond=\Omega\setminus\overline{D}$ ), and noting that $|D|\sim a^{3}$ (because at this point, we consider the case of injecting a single hard inclusion.) while $\kappa(a)\sim a^{\frac{1-h}{3}}$ form \eqref{eq:dist D partial Omega}, we obtain
\begin{align}\label{eq:S-final-scale}
&\left(\int_{D}\|\Gamma(\bx,\cdot)\|_{\mathbb{H}^{1/2}(\partial\Omega)}^{6}\,\rmd\bx\right)^{\frac{1}{3}}  
\overset{\eqref{eq:Gamma H1/2 H1}}{\le}  \left( \int_D \|\Gamma(\mathbf{x},\cdot)\|_{\mathbb{H}^{1}(\Omega^\diamond)}^6\,\rmd\mathbf{x} \right)^{\frac{1}{3}}  
\overset{\eqref{eq:Gamma H1 norm}}{\lesssim}
\left( \int_D \int_{\Omega^\diamond} \frac{1}{|\mathbf{x}-\mathbf{y}|^{12}}\,\rmd\mathbf{y}\,\rmd\mathbf{x} \right)^{\frac{1}{3}}   \notag\\
&\qquad \qquad \qquad \qquad \qquad \stackrel{ \eqref{eq:footnote}}{\le}  \left( \int_D \int_{\Omega^\diamond} \frac{1}{\kappa(a)^{12}}\,\rmd\mathbf{y}\,\rmd\mathbf{x}\right)^{\frac{1}{3}}  
\overset{\eqref{eq:Gamma H1 norm}}{\lesssim}  \kappa(a)^{-4}\,|D|^{\frac{1}{3}} =\mathcal{O}\!\left(a^{\frac{4h-1}{3}}\right).
\end{align}}
}

\noindent
Combining \eqref{eq:S-L6}–\eqref{eq:S-final-scale} then yields
\begin{align}\label{eq:S-L2-scale}
\|\mathbf{S}\|_{\mathbb{L}^{2}(D)}^{2}
&\lesssim |D|^{\frac{2}{3}}\,a^{\frac{4h-1}{3}}\,\|\bg\|_{\mathbb{H}^{-1/2}(\partial\Omega)}^{2}
=\mathcal{O}\!\left(a^{\frac{4h+5}{3}}\|\bg\|_{\mathbb{H}^{-1/2}(\partial\Omega)}^{2}\right).
\end{align}

\noindent
Finally, inserting \eqref{eq:S-L2-scale} into \eqref{eq:pre-absorb} gives
\begin{align}\label{eq:vg-final-sqrt in single nanopartical}
\|\mathbf{v}^{\bg}\|_{\mathbb{L}^2(D)} \lesssim a^{\frac{5-2h}{6}}\|\bg\|_{\mathbb{H}^{-1/2}(\partial\Omega)}.
\end{align}

\subsubsection{The case of multiple hard inclusions}

Starting from \eqref{eq:vg satisfies equation in Appendix} and evaluating at any $\mathbf{x}\in D_m$, we arrive at
\begin{align}
\left(\mathcal{I}-\omega^{2}\rho_{1}\,N_{D_m}\right)&\big[\mathbf{v}_m^{\mathbf{g}}\big](\mathbf{x})
= \mathbf{S}_m(\mathbf{x})
+ \omega^{2}\rho_{1}\sum_{\substack{j=1\\ j\neq m}}^{M}\int_{D_j}\Gamma(\mathbf{x},\mathbf{y})\,\mathbf{v}_j^{\mathbf{g}}(\mathbf{y})\,\rmd\mathbf{y}
+ \omega^{2}\rho_{1}\int_{D_m}\mathcal{R}(\mathbf{x},\mathbf{y})\,\mathbf{v}_m^{\mathbf{g}}(\mathbf{y})\,\rmd\mathbf{y} \notag\\
&\quad -\omega^{2}\int_{D_m}\Gamma(\mathbf{x},\mathbf{y})\,\rho(\mathbf{y})\,\mathbf{v}_m^{\mathbf{g}}(\mathbf{y})\,\rmd\mathbf{y}
- \omega^{2}\sum_{\substack{j=1\\ j\neq m}}^{M}\int_{D_j}\Gamma(\mathbf{x},\mathbf{y})\,\rho(\mathbf{y})\,\mathbf{v}_j^{\mathbf{g}}(\mathbf{y})\,\rmd\mathbf{y}. 
\end{align}

\noindent
Here $N_{D_m}(\cdot)$ denotes the Newtonian operator in \eqref{eq:Newtonian def}. Expanding $\Gamma(\mathbf{x},\cdot)$ and $\Gamma(\mathbf{x},\cdot)\,\rho(\cdot)$ by Taylor’s formula and then applying the inverse of $\left(\mathcal{I}-\omega^{2}\rho_{1}\,N_{D_m}\right)$, we obtain
\begin{align}
\mathbf{v}_m^{\mathbf{g}}
&=\left(\mathcal{I}-\omega^{2}\rho_{1}\,N_{D_m}\right)^{-1}\!
\Bigg[\mathbf{S}_m
+ \omega^{2}\rho_{1}\sum_{\substack{j=1\\ j\neq m}}^{M}\Gamma(\cdot,\mathbf{z}_j)\int_{D_j}\mathbf{v}_j^{\mathbf{g}}(\mathbf{y})\,\rmd\mathbf{y} \notag\\
&\qquad
+\omega^{2}\rho_{1}\sum_{\substack{j=1\\ j\neq m}}^{M}
\int_{D_j}\!\!\int_{0}^{1}\nabla_{\mathbf{y}}\Gamma\!\left(\cdot,\mathbf{z}_j+t(\mathbf{y}-\mathbf{z}_j)\right)\,(\mathbf{y}-\mathbf{z}_j)\,\rmd t\,\mathbf{v}_j^{\mathbf{g}}(\mathbf{y})\,\rmd\mathbf{y} \notag\\
&\qquad
+\omega^{2}\rho_{1}\int_{D_m}\mathcal{R}(\cdot,\mathbf{y})\,\mathbf{v}_m^{\mathbf{g}}(\mathbf{y})\,\rmd\mathbf{y}
- \omega^{2}\int_{D_m}\Gamma(\cdot,\mathbf{y})\,\rho(\mathbf{y})\,\mathbf{v}_m^{\mathbf{g}}(\mathbf{y})\,\rmd\mathbf{y} \notag\\
&\qquad
- \omega^{2}\sum_{\substack{j=1\\ j\neq m}}^{M}\Gamma(\cdot,\mathbf{z}_j)\,\rho(\mathbf{z}_j)\int_{D_j}\mathbf{v}_j^{\mathbf{g}}(\mathbf{y})\,\rmd\mathbf{y} \notag\\
&\qquad
- \omega^{2}\sum_{\substack{j=1\\ j\neq m}}^{M}
\int_{D_j}\!\!\int_{0}^{1}\nabla\!\left(\Gamma(\cdot,\cdot)\,\rho(\cdot)\right)\!\left(\mathbf{z}_j+t(\mathbf{y}-\mathbf{z}_j)\right)\,(\mathbf{y}-\mathbf{z}_j)\,\rmd t\,\mathbf{v}_j^{\mathbf{g}}(\mathbf{y})\,\rmd\mathbf{y}
\Bigg].
\end{align}

\noindent
Within $D_m$, and upon introducing the notation from \eqref{eq:Ym def}, the preceding identity can be recast as
\begin{align}
\mathbf{v}_m^{\mathbf{g}}
&=\left(\mathcal{I}-\omega^{2}\rho_{1}\,N_{D_m}\right)^{-1}\!
\Bigg[
\mathbf{S}_m + \boldsymbol{\alpha}\sum_{\substack{j=1\\ j\neq m}}^{M}\Gamma(\cdot,\mathbf{z}_j)\,\mathbf{Y}_j \notag\\
&\quad  +\omega^{2}\rho_{1}\sum_{\substack{j=1\\ j\neq m}}^{M}
\int_{D_j}\!\!\int_{0}^{1}\nabla_{\mathbf{y}}\Gamma\left(\cdot,\mathbf{z}_j+t(\mathbf{y}-\mathbf{z}_j)\right)\,(\mathbf{y}-\mathbf{z}_j)\,\rmd t\,\mathbf{v}_j^{\mathbf{g}}(\mathbf{y})\,\rmd\mathbf{y} \notag\\
&\quad +\omega^{2}\rho_{1}\int_{D_m}\mathcal{R}(\cdot,\mathbf{y})\,\mathbf{v}_m^{\mathbf{g}}(\mathbf{y})\,\rmd\mathbf{y}
- \omega^{2}\int_{D_m}\Gamma(\cdot,\mathbf{z}_m)\,\rho(\mathbf{y})\,\mathbf{v}_m^{\mathbf{g}}(\mathbf{y})\,\rmd\mathbf{y} 
- \frac{\boldsymbol{\alpha}}{\rho_{1}}\sum_{\substack{j=1\\ j\neq m}}^{M}\Gamma(\cdot,\mathbf{z}_j)\,\rho(\mathbf{z}_j)\,\mathbf{Y}_j \notag\\
&\quad - \omega^{2}\sum_{\substack{j=1\\ j\neq m}}^{M}
\int_{D_j}\!\!\int_{0}^{1}\nabla\!\left(\Gamma(\cdot,\cdot)\,\rho(\cdot)\right)\!\left(\mathbf{z}_j+t(\mathbf{y}-\mathbf{z}_j)\right)\,(\mathbf{y}-\mathbf{z}_j)\,\rmd t\,\mathbf{v}_j^{\mathbf{g}}(\mathbf{y})\,\rmd\mathbf{y}
\Bigg].  
\end{align}

\noindent
Taking the $\mathbb{L}^{2}(D_m)$ norm, using \eqref{eq:R-term-final} together with $\rho_{1}\sim a^{-2}$, and
\[
\left\|\left(\mathcal{I}-\omega^{2}\rho_{1}\,N_{D_m}\right)^{-1}\right\|_{\mathcal{L}(\mathbb{L}^{2}(D_m);\mathbb{L}^{2}(D_m))}\lesssim a^{-h}\quad\text{(cf.\ \eqref{eq:pre-absorb})},
\]
one obtains
\begin{align}
\|\mathbf{v}_m^{\mathbf{g}}\|_{\mathbb{L}^{2}(D_m)}
&\lesssim a^{-h}\,\|\mathbf{S}_m\|_{\mathbb{L}^{2}(D_m)}
+ a^{-h}\,|{\boldsymbol{\alpha}}|\sum_{\substack{j=1\\ j\neq m}}^{M}\|\Gamma(\cdot,\mathbf{z}_j)\|_{\mathbb{L}^{2}(D_m)}\,|\mathbf{Y}_j| \notag\\
&\quad + a^{-2-h}\sum_{\substack{j=1\\ j\neq m}}^{M}
\Big\|\!\int_{D_j}\!\!\int_{0}^{1}\nabla_{\mathbf{y}}\Gamma\left(\cdot,\mathbf{z}_j+t(\mathbf{y}-\mathbf{z}_j)\right)\,(\mathbf{y}-\mathbf{z}_j)\,\rmd t\,\mathbf{v}_j^{\mathbf{g}}(\mathbf{y})\,\rmd \mathbf{y}\Big\|_{\mathbb{L}^{2}(D_m)} \notag\\
&\quad + a^{\frac{2(1-h)}{3}}\,\|\mathbf{v}_m^{\mathbf{g}}\|_{\mathbb{L}^{2}(D_m)}
+ a^{-h}\Big\|\!\int_{D_m}\Gamma(\cdot,\mathbf{y})\,\rho(\mathbf{y})\,\mathbf{v}_m^{\mathbf{g}}(\mathbf{y})\,\rmd \mathbf{y}\Big\|_{\mathbb{L}^{2}(D_m)} \notag\\
&\quad + a^{2-h}\,|{\boldsymbol{\alpha}}|\sum_{\substack{j=1\\ j\neq m}}^{M}\|\Gamma(\cdot,\mathbf{z}_j)\|_{\mathbb{L}^{2}(D_m)}\,|\mathbf{Y}_j| \notag\\
&\quad + a^{-h}\sum_{\substack{j=1\\ j\neq m}}^{M}
\Big\|\!\int_{D_j}\!\!\int_{0}^{1}\nabla\!\left(\Gamma(\cdot,\cdot)\,\rho(\cdot)\right)\!\left(\mathbf{z}_j+t(\mathbf{y}-\mathbf{z}_j)\right)\,(\mathbf{y}-\mathbf{z}_j)\,\rmd t\,\mathbf{v}_j^{\mathbf{g}}(\mathbf{y})\,\rmd \mathbf{y}\Big\|_{\mathbb{L}^{2}(D_m)} .
\end{align}

\noindent   
Estimating the contributions involving $\Gamma(\cdot,\cdot)$ and using $h<1$ further yields
\begin{align}
\|\mathbf{v}_m^{\mathbf{g}}\|_{\mathbb{L}^{2}(D_m)}
&\overset{\eqref{eq:Gamma nabla Gamma}}{\lesssim} a^{-h}\,\|\mathbf{S}_m\|_{\mathbb{L}^{2}(D_m)}
+ a^{\frac{3}{2}-h}\,|{\boldsymbol{\alpha}}|\,
\left(\sum_{\substack{j=1\\ j\neq m}}^{M}\frac{1}{|\mathbf{z}_m-\mathbf{z}_j|^{2}}\right)^{\!1/2}
\left(\sum_{\substack{j=1\\ j\neq m}}^{M}\|\mathbf{Y}_j\|^{2}\right)^{\!1/2} \notag\\
&\quad
+ a^{2-h}\left(\sum_{\substack{j=1\\ j\neq m}}^{M}\frac{1}{|\mathbf{z}_m-\mathbf{z}_j|^{4}}\right)^{\!1/2}
\left(\sum_{\substack{j=1\\ j\neq m}}^{M}\|\mathbf{v}_j^{\mathbf{g}}\|_{\mathbb{L}^{2}(D_j)}^{2}\right)^{\!1/2}
+ a^{2-h}\,\|\mathbf{v}_m^{\mathbf{g}}\|_{\mathbb{L}^{2}(D_m)} \notag\\
&\quad
+ a^{\frac{7}{2}-h}\,|{\boldsymbol{\alpha}}|\,
\left(\sum_{\substack{j=1\\ j\neq m}}^{M}\frac{1}{|\mathbf{z}_m-\mathbf{z}_j|^{2}}\right)^{\!1/2}
\left(\sum_{\substack{j=1\\ j\neq m}}^{M}\|\mathbf{Y}_j\|^{2}\right)^{\!1/2} \notag\\
&\quad
+ a^{4-h}\left(\sum_{\substack{j=1\\ j\neq m}}^{M}\frac{1}{|\mathbf{z}_m-\mathbf{z}_j|^{4}}\right)^{\!1/2}
\left(\sum_{\substack{j=1\\ j\neq m}}^{M}\|\mathbf{v}_j^{\mathbf{g}}\|_{\mathbb{L}^{2}(D_j)}^{2}\right)^{\!1/2}.
\end{align}

\noindent
Invoking \eqref{eq:sum zm-zj}, we first simplify to
\begin{align*}
\|\mathbf{v}_m^{\mathbf{g}}\|_{\mathbb{L}^{2}(D_m)}
&\lesssim a^{-h}\,\|\mathbf{S}_m\|_{\mathbb{L}^{2}(D_m)}
+ a^{\frac32-h}\,|{\boldsymbol{\alpha}}|\,d^{-\frac{3}{2}}
\left(\sum_{j=1}^{M}\|\mathbf{Y}_j\|^{2}\right)^{\!1/2}
+ a^{2-h}\,d^{-2}\left(\sum_{\substack{j=1\\ j\neq m}}^{M}\|\mathbf{v}_j^{\mathbf{g}}\|_{\mathbb{L}^{2}(D_j)}^{2}\right)^{\!1/2}.
\end{align*}
Squaring both sides, using $d\sim a^{\frac{1-h}{3}}$ from \eqref{eq:dmin} and $\boldsymbol{\alpha}\sim a^{\,1-h}$ from \eqref{eq:Ym def}, we infer
\begin{align}
\|\mathbf{v}_m^{\mathbf{g}}\|_{\mathbb{L}^{2}(D_m)}^{2}
\lesssim a^{-2h}\,\|\mathbf{S}_m\|_{\mathbb{L}^{2}(D_m)}^{2}
+ a^{4-3h}\sum_{j=1}^{M}\|\mathbf{Y}_j\|^{2}
+ a^{\frac{2(4-h)}{3}}\sum_{j=1}^{M}\|\mathbf{v}_j^{\mathbf{g}}\|_{\mathbb{L}^{2}(D_j)}^{2}.
\end{align}

\noindent
Summing in $m$ then gives
\begin{align*}
\sum_{m=1}^{M}\|\mathbf{v}_m^{\mathbf{g}}\|_{\mathbb{L}^{2}(D_m)}^{2}
&\lesssim a^{-2h}\sum_{m=1}^{M}\|\mathbf{S}_m\|_{\mathbb{L}^{2}(D_m)}^{2}
+ a^{4-3h}\,M\sum_{j=1}^{M}\|\mathbf{Y}_j\|^{2}
+ a^{\frac{2(4-h)}{3}}\,M\sum_{j=1}^{M}\|\mathbf{v}_j^{\mathbf{g}}\|_{\mathbb{L}^{2}(D_j)}^{2}.
\end{align*}
Since $M\sim a^{\,h-1}$ (see \eqref{eq:M def}), we conclude
\begin{align}
\sum_{m=1}^{M}\|\mathbf{v}_m^{\mathbf{g}}\|_{\mathbb{L}^{2}(D_m)}^{2}
&\lesssim a^{-2h}\sum_{m=1}^{M}\|\mathbf{S}_m\|_{\mathbb{L}^{2}(D_m)}^{2}
+ a^{3-2h}\sum_{j=1}^{M}\|\mathbf{Y}_j\|^{2}
+ a^{\frac{5+h}{3}}\sum_{j=1}^{M}\|\mathbf{v}_j^{\mathbf{g}}\|_{\mathbb{L}^{2}(D_j)}^{2},
\end{align}
and, using $h<1$,
\begin{align}
\sum_{m=1}^{M}\|\mathbf{v}_m^{\mathbf{g}}\|_{\mathbb{L}^{2}(D_m)}^{2}
\lesssim a^{-2h}\sum_{m=1}^{M}\|\mathbf{S}_m\|_{\mathbb{L}^{2}(D_m)}^{2}
+ a^{3-2h}\sum_{j=1}^{M}\|\mathbf{Y}_j\|^{2}.
\end{align}

\noindent
Combining the preceding estimate with \eqref{eq:S-single-layer} and \eqref{eq:sum Ym lesssim g H-1/2}, we further have
\begin{align}\label{eq:vg L2 norm of single potential}
\|\mathbf{v}^{\mathbf{g}}\|_{\mathbb{L}^{2}(D)}^{2}
\lesssim a^{-2h}\,\|\mathbf{S}(\mathbf{g})\|_{\mathbb{L}^{2}(D)}^{2}
+ a^{\frac{5-2h}{3}}\,\|\mathbf{g}\|_{\mathbb{H}^{-1/2}(\partial\Omega)}^{2}.
\end{align}

\noindent
It remains to estimate $\|\mathbf{S}(\mathbf{g})\|_{\mathbb{L}^{2}(D)}^{2}$. Arguing as in the single hard inclusion case (see \eqref{eq:pre-absorb}–\eqref{eq:vg-final-sqrt in single nanopartical}) and applying H\"older’s inequality, we obtain
\begin{align}
\|\mathbf{S}(\mathbf{g})\|_{\mathbb{L}^{2}(D)}^{2}
&\le \sum_{m=1}^{M}\|\mathbf{S}(\mathbf{g})\|_{\mathbb{L}^{6}(D_m)}^{2}\,\|1\|_{\mathbb{L}^{3}(D_m)}^{2} \notag\\
&\lesssim a^{2}\sum_{m=1}^{M}\Bigg[\int_{D_m}\left(\int_{\partial\Omega}\big|\Gamma(\mathbf{x},\mathbf{y})\,\mathbf{g}(\mathbf{y})\big|\,\rmd \sigma(\mathbf{y})\right)^{\!6}\,\rmd \mathbf{x}\Bigg]^{\!1/3} \notag\\
&\le a^{2}\,\|\mathbf{g}\|_{\mathbb{H}^{-1/2}(\partial\Omega)}^{2}\sum_{m=1}^{M}\Bigg[\int_{D_m}\|\Gamma(\mathbf{x},\cdot)\|_{\mathbb{H}^{1/2}(\partial\Omega)}^{6}\,\rmd \mathbf{x}\Bigg]^{\!1/3} \notag\\
&\overset{\eqref{eq:S-L6}}{\underset{\eqref{eq:S-final-scale}}{\lesssim}} a^{2}\,\|\mathbf{g}\|_{\mathbb{H}^{-1/2}(\partial\Omega)}^{2}
\sum_{m=1}^{M}\Bigg(\int_{D_m}\int_{\Omega^\diamond}\frac{1}{|\mathbf{x}-\mathbf{y}|^{12}}\,\rmd \mathbf{y}\,\rmd \mathbf{x}\Bigg)^{\!1/3} \notag\\
&\lesssim a^{3}\,\|\mathbf{g}\|_{\mathbb{H}^{-1/2}(\partial\Omega)}^{2}\sum_{m=1}^{M}\frac{1}{\operatorname{dist}^{4}\!\left(D_m;\partial\Omega\right)}
\overset{\eqref{eq:dist Dj and partial Omega}}{\lesssim} a^{3}\,\|\mathbf{g}\|_{\mathbb{H}^{-1/2}(\partial\Omega)}^{2}\,d^{-4} \notag\\
&= a^{\frac{5+4h}{3}}\,\|\mathbf{g}\|_{\mathbb{H}^{-1/2}(\partial\Omega)}^{2}.
\end{align}

\noindent
Substituting the above bound into \eqref{eq:vg L2 norm of single potential} leads to the global estimate
\begin{align}
\|\mathbf{v}^{\mathbf{g}}\|_{\mathbb{L}^{2}(D)} \;\lesssim\; a^{\frac{5-2h}{6}}\,\|\mathbf{g}\|_{\mathbb{H}^{-1/2}(\partial\Omega)}.
\end{align}
This completes the proof of Lemma \ref{lem:vg estimate}.

\subsection{Proof of Lemma \ref{lem:alpha=-P2} }  \label{subsec:alpha=proof}
 
Fix $m$. We write
\begin{align}
\boldsymbol{\alpha} := \int_{D_{m}} \mathbf{W}_{m}(\bx)\,\rmd \bx
       = \int_{D_{m}} \left( \left(\omega^{2}\rho_{1}\right)^{-1} \mathcal{I} - N_{D_{m}} \right)^{-1} \left(\mathcal{I}\right)(\bx)  \rmd \bx.
\end{align}
Expanding $\mathcal{I}$ in the eigen basis $\{\mathbf{e}_n^{D_m}\}$ of the Newtonian operator $N_{D_{m}}(\cdot)$ yields
\begin{align}
\boldsymbol{\alpha}
&= \sum_{n}\langle \mathcal{I}; \mathbf{e}_{n}^{D_m}\rangle_{\mathbb{L}^{2}(D_{m})}
     \int_{D_{m}}  \left( \left(\omega^{2}\rho_{1}\right)^{-1} \mathcal{I} - N_{D_{m}} \right)^{-1}  (\mathbf{e}_{n}^{D_m})(\bx)  \rmd \bx \\
&= \sum_{n}\left(\langle \mathcal{I}; \mathbf{e}_{n}^{D_m} \rangle_{\mathbb{L}^{2}(D_{m})}\right)^{2}
     \frac{\omega^{2}\rho_{1}}{1-\omega^{2}\rho_{1}\lambda^{D_{m}}_{n}\,}. \notag
\end{align}
Choose $\omega$ so that the dispersion relation
\begin{align*}
1 - \omega^{2}\rho_{1}\lambda^{D_{m}}_{n_{0}} &= c_{n_{0}}\,a^{h}, \qquad c_{n_{0}}\in\mathbb{R},
\end{align*}
holds. Solving gives
\begin{align*}
\omega^{2} &= \frac{1-c_{n_{0}}\,a^{h}}{\rho_{1}\lambda^{D_{m}}_{n_{0}}}.
\end{align*}
Consequently,
\begin{align}\label{eq:1-omega2}
\left|1-\omega^{2}\rho_{1}\lambda^{D_{m}}_{n}\right|
&=
\begin{cases}
a^{h}, & n=n_{0},\\
\mathcal{O}(1),   & \text{otherwise},
\end{cases}  
\end{align}
and therefore
\begin{align}
\boldsymbol{\alpha} 
&= \left(\langle \mathcal{I}; \mathbf{e}_{n_{0}}\rangle_{\mathbb{L}^{2}(D_{m})}\right)^{2}
   \frac{\omega^{2}\rho_{1}}{1-\omega^{2}\rho_{1}\lambda^{D_{m}}_{n_{0}}}
 + \sum_{n\neq n_{0}}\left(\langle \mathcal{I}; \mathbf{e}_{n}\rangle_{\mathbb{L}^{2}(D_{m})}\right)^{2}
   \frac{\omega^{2}\rho_{1}}{1-\omega^{2}\rho_{1}\lambda^{D_{m}}_{n}} .
\end{align}

\noindent
Using \eqref{eq:1-omega2}, the remaining sum is estimated by
\begin{align}
\sum_{n\neq n_{0}}
   \frac{\big|\langle \mathcal{I}; \mathbf{e}_{n}\rangle_{\mathbb{L}^{2}(D_{m})}\big|^{2}|\omega^2\rho_1|}
        {\big|1-\omega^{2}\rho_{1}\lambda^{D_{m}}_{n}\big|}
&\lesssim a^{-2}\,\|1\|^{2}_{\mathbb{L}^{2}(D_{m})}
 = \mathcal{O}(a),
\end{align}
so that
\begin{align}
\boldsymbol{\alpha}
&= \left(\langle \mathcal{I}; \mathbf{e}_{n_{0}}\rangle_{\mathbb{L}^{2}(D_{m})}\right)^{2}
   \frac{\omega^{2}\rho_{1}}{1-\omega^{2}\rho_{1}\lambda^{D_{m}}_{n_{0}}}
 + \mathcal{O}(a).
\end{align}

\noindent
Using the scalings
\begin{align*}
\langle \mathcal{I}; \mathbf{e}_{n_{0}}\rangle_{\mathbb{L}^{2}(D_{m})}
= a^{3/2}\langle \mathcal{I}; \tilde{\mathbf{e}}_{n_{0}}\rangle_{\mathbb{L}^{2}(B)}, \qquad
\rho_{1} =\tilde{\rho}_1 a^{-2} , \qquad
\lambda^{D_{m}}_{n_{0}} = a^{2}\lambda^{B}_{n_{0}},
\end{align*}
we arrive at
\begin{align}\label{eq:alpha=-P2a1-h}
\boldsymbol{\alpha}= \left(\frac{1-c_{n_{0}}a^{h}}{\lambda^{B}_{n_{0}}\,c_{n_{0}}}\right)
   \left(\langle \mathcal{I}; \tilde{\mathbf{e}}_{n_{0}}\rangle_{\mathbb{L}^{2}(B)}\right)^{2} a^{1-h}
 + \mathcal{O}(a) 
= \frac{1}{\lambda^{B}_{n_{0}}\,c_{n_{0}}}
   \left(\langle \mathcal{I}; \tilde{\mathbf{e}}_{n_{0}}\rangle_{\mathbb{L}^{2}(B)}\right)^{2} a^{1-h}
 + \mathcal{O}(a).  
\end{align}

\noindent
Define the rescaled dominant part
\begin{align*}
\mathcal{P}^{2} &:= -\frac{\left(\langle \mathcal{I}; \tilde{\mathbf{e}}_{n_{0}}\rangle_{\mathbb{L}^{2}(B)}\right)^{2}}
              {\lambda^{B}_{n_{0}}\,c_{n_{0}}}.
\end{align*}
Then \eqref{eq:alpha=-P2a1-h} implies
\begin{align*}
\alpha &= -\mathcal{P}^{2}\,a^{1-h}+ \mathcal{O}(a).
\end{align*}

\noindent
Finally, by repeating the same argument,
\begin{align*}
\|\mathbf{W}_{m}\|_{\mathbb{L}^{2}(D_{m})}^{2}
&= \sum_{n}\frac{|\omega^{2}\rho_{1}|^{2}}
                    {|1-\omega^{2}\rho_{1}\lambda^{D_{m}}_{n}|^{2}}
            \,|\langle \mathcal{I}; \mathbf{e}_{n}\rangle_{\mathbb{L}^{2}(D_{m})}|^{2}
 \lesssim a^{-2(2+h)}\,\|1\|^{2}_{\mathbb{L}^{2}(D_{m})},
\end{align*}
and hence
\begin{align*}
\|\mathbf{W}_{m}\|_{\mathbb{L}^{2}(D_{m})}
&\lesssim a^{-(2+h)}\,\|1\|_{\mathbb{L}^{2}(D_{m})}
 = \mathcal{O} \left(a^{-\left(\frac{1}{2}+h\right)}\right).
\end{align*}

\subsection{Proof of Lemma \ref{lem:R(x,y)}}\label{subsec:lemR(x,y) proof}

\noindent
Multiply \eqref{eq:R(x,y) satisfiy} by \(\Gamma^0(\cdot,\cdot)\)—the solution of \eqref{eq:Gamma0 satisfy}—and integrate by parts over \(\Omega\). Using the boundary condition
\begin{align*}
\partial_{\nu_{\mathbf{x}}}\mathcal{R}(\mathbf{x},\mathbf{y}) = -\,\partial_{\nu_{\mathbf{x}}}\Gamma^0(\mathbf{x},\mathbf{y}) \quad \text{on } \partial\Omega,
\end{align*}
we arrive at the boundary–volume representation
\begin{align}
\mathcal{R}(\mathbf{x},\mathbf{y})
&= - \int_{\partial\Omega} \Gamma^0(\mathbf{x},\mathbf{z})\,\partial_{\nu_{\mathbf{z}}}\Gamma^0(\mathbf{z},\mathbf{y})\,\rmd S(\mathbf{z})
- \int_{\partial\Omega} \partial_{\nu_{\mathbf{z}}}\Gamma^0(\mathbf{x},\mathbf{z})\,\mathcal{R}(\mathbf{z},\mathbf{y})\,\rmd S(\mathbf{z}) \notag\\
&\quad + \omega^2 \int_{\Omega} \Gamma^0(\mathbf{x},\mathbf{z})\,\rho(\mathbf{z})\,\mathcal{R}(\mathbf{z},\mathbf{y})\,\rmd \mathbf{z}
+ \omega^2 \int_{\Omega} \Gamma^0(\mathbf{x},\mathbf{z})\,\rho(\mathbf{z})\,\Gamma^0(\mathbf{z},\mathbf{y})\,\rmd \mathbf{z}.
\end{align}
Equivalently,
\begin{align}\label{eq:Rxy def}
\mathcal{R}(\mathbf{x},\mathbf{y})
&= -\,SL_{\partial\Omega}\!\big[\partial_{\nu_{\mathbf{z}}}\Gamma^0(\mathbf{z},\mathbf{y})\big](\mathbf{x})
   -\, DL_{\partial\Omega}\!\big[\mathcal{R}(\cdot,\mathbf{y})\big](\mathbf{x}) \notag\\
  &\quad + \omega^2\, N_{\Omega}\!\big[\rho(\cdot)\,\mathcal{R}(\cdot,\mathbf{y})\big](\mathbf{x})
   + \omega^2\, N_{\Omega}\!\big[\rho(\cdot)\,\Gamma^0(\cdot,\mathbf{y})\big](\mathbf{x}),
\end{align}
where $SL_{\partial\Omega}$, $DL_{\partial\Omega}$ and $N_{\Omega}$ denote, respectively, the single-layer, double-layer and Newtonian potential operators on $\partial\Omega$ and in $\Omega$.

Since $\mathcal{R}(\mathbf{x},\mathbf{y})$ is a $3\times 3$ matrix-valued kernel, we measure its size with any fixed matrix norm $|\cdot|$ (e.g.\ Frobenius); all such norms are equivalent in finite dimension. By \cite[Lemma~5.2]{CGS24}, the Green tensor exhibits the leading-order point singularity
\begin{align*}
\big|\Gamma^0(\mathbf{x},\mathbf{y})\big|\simeq\frac{1}{|\mathbf{x}-\mathbf{y}|}, 
\qquad \mathbf{x}\neq \mathbf{y},
\end{align*}
and therefore $\big|\nabla_{\mathbf{y}}\Gamma^0(\mathbf{z},\mathbf{y})\big|\simeq |\mathbf{z}-\mathbf{y}|^{-2}$; the normal derivative $\partial_{\nu_{\mathbf{z}}}\Gamma^0(\mathbf{z},\mathbf{y})$ inherits the same scaling. Inserting these local behaviors into the representation \eqref{eq:Rxy def} and isolating the most singular boundary contribution, we obtain, up to terms that are strictly less singular in $(\mathbf{x},\mathbf{y})$,
\begin{align}\label{eq:Rxy-leading}
\mathcal{R}(\mathbf{x},\mathbf{y})
\simeq -\, SL_{\partial\Omega}\!\big[\partial_{\nu}\Gamma^0(\cdot,\mathbf{y})\big](\mathbf{x})
\qquad (\mathbf{x}\in\Omega).
\end{align}

\noindent
To estimate the right-hand side of \eqref{eq:Rxy-leading}, write
\begin{align*}
\big|\mathcal{R}(\mathbf{x},\mathbf{y})\big|
\lesssim
\int_{\partial\Omega} 
\frac{1}{|\mathbf{t}-\mathbf{x}|}\,\frac{1}{|\mathbf{t}-\mathbf{y}|^{2}}\,\rmd \sigma(\mathbf{t}),
\qquad \mathbf{x}\in\Omega,
\end{align*}
where we used $\big|\Gamma^0(\mathbf{x},\mathbf{t})\big|\lesssim |\mathbf{t}-\mathbf{x}|^{-1}$ and 
$\big|\partial_{\nu_{\mathbf{t}}}\Gamma^0(\mathbf{t},\mathbf{y})\big|\lesssim |\mathbf{t}-\mathbf{y}|^{-2}$. Applying H\"older’s inequality and the surface integral estimate (cf.\ \eqref{eq:dist D partial Omega}), we obtain
\begin{align*}
\big|\mathcal{R}(\mathbf{x},\mathbf{y})\big|
&\lesssim 
\left(\int_{\partial\Omega}\frac{1}{|\mathbf{t}-\mathbf{x}|^{3}}\,\rmd \sigma(\mathbf{t})\right)^{\!\frac13}
\left(\int_{\partial\Omega}\frac{1}{|\mathbf{t}-\mathbf{y}|^{3}}\,\rmd \sigma(\mathbf{t})\right)^{\!\frac23} \\
&\lesssim 
\operatorname{dist}(\mathbf{x},\partial\Omega)^{-\frac13}\,
\operatorname{dist}(\mathbf{y},\partial\Omega)^{-\frac23}
\simeq \kappa(a)^{-1}
= \mathcal{O}\!\left(a^{\frac{h-1}{3}}\right),
\end{align*}
valid in the boundary-layer regime where 
$\operatorname{dist}(\mathbf{x},\partial\Omega)\simeq \kappa(a)$ and 
$\operatorname{dist}(\mathbf{y},\partial\Omega)\simeq \kappa(a)$. This completes the singularity estimate for $\mathcal{R}$ and thus finishes the proof of Lemma~\ref{lem:R(x,y)}.

\subsection{Proof of Lemma \ref{lem:T1 def}}\label{subsec:T1 proof}

\noindent
We estimate the $\mathbb{L}^{2}(\Omega)$–norm of $\mathbb{T}_{1}(\cdot)$ (defined in \eqref{eq:T1 def in lemma}). Using the shorthand $\mathbb{Y}$ from \eqref{eq:Y S R def}, we start with
\begin{align}
\|\mathbb{T}_{1}\|_{\mathbb{L}^{2}(\Omega)}^{2}
:= \int_{\Omega}\!\left|
\int_{\Omega}\Gamma(\by,\bx)\,\mathbb{Y}(\bx)\,\mathrm{d}\bx
-\sum_{m=1}^{M}\chi_{\Omega_{m}}(\by)\!\sum_{\substack{j=1\\ j\neq m}}^{M}
\int_{\Omega}\Gamma(\bz_{m};\bz_{j})\,\chi_{\Omega_{j}}(\bx)\,\frac{1}{\beta_{j}}\,\mathbf{Y}_{j}\,\mathrm{d}\bx
\right|^{2}\mathrm{d}\by .
\end{align}

\noindent
Unlike Section \ref{subsec:4.3}, where $\Omega$ was partitioned into $\bigcup_{j=1}^M \Omega_j$ and $\bigcup_{k=1}^{\aleph} \Omega_k^{\star}$ to pinpoint the precise leading contribution associated with $\int_{\Omega_j} u^g(x)\,\rmd x$, we now only require bounds for quantities defined on the whole domain $\Omega$. Consequently, both families of subdomains are involved, but no extraction of a dominant term is needed. Since, for every $1 \le j \le M$ and $1 \le k \le \aleph$, the pieces have comparable size, $|\Omega_j| \sim a^{\,1-h} \sim |\Omega_k^{\star}|$, it is immaterial for the computations below whether we refer to $\{\Omega_j\}$ or $\{\Omega_k^{\star}\}$. For brevity, we therefore reuse the symbol $\Omega_j$ to also denote the starred domains. Thus
\begin{align}
\|\mathbb{T}_{1}\|_{\mathbb{L}^{2}(\Omega)}^{2}
= \sum_{m=1}^{M}\int_{\Omega_{m}}\!\left|
\int_{\Omega}\Gamma(\by,\bx)\,\mathbb{Y}(\bx)\,\mathrm{d}\bx
-\sum_{\substack{j=1\\ j\neq m}}^{M}
\int_{\Omega}\Gamma(\bz_{m};\bz_{j})\,\chi_{\Omega_{j}}(\bx)\,\frac{1}{\beta_{j}}\,\mathbf{Y}_{j}\,\mathrm{d}\bx
\right|^{2}\mathrm{d}\by .
\end{align}

\noindent
Invoking the definition of $\mathbb{Y}(\cdot)$ in \eqref{eq:Y S R def} and applying the triangle inequality yield
\begin{align}\label{eq:T1-split}
\|\mathbb{T}_{1}\|_{\mathbb{L}^{2}(\Omega)}^{2}
\lesssim\;&
\sum_{m=1}^{M}|\mathbf{Y}_{m}|^{2}|\Omega_{m}|
\iint_{\Omega_{m}\times\Omega_{m}}\! \left|\Gamma(\by,\bx)\right|^{2}\,\mathrm{d}\bx\,\mathrm{d}\by \\
&\;+\sum_{m=1}^{M}\sum_{\substack{j=1\\ j\neq m}}^{M}
|\mathbf{Y}_{j}|^{2} \sum_{\substack{j=1\\ j\neq m}}^{M}   |\Omega_{j}|
\iint_{\Omega_{m}\times\Omega_{j}}
\left|\Gamma(\by,\bx)-\Gamma(\bz_{m};\bz_{j})\frac{1}{\beta_{j}}\right|^{2} 
\mathrm{d}\bx\,\mathrm{d}\by . \notag
\end{align}

\noindent
Using \eqref{eq:betam=} we have
\begin{align}\label{eq:beta-bound}
\left|\Gamma(\by,\bx)-\Gamma(\bz_{m};\bz_{j})\frac{1}{\beta_{j} }\right|^{2}
\;\lesssim\;
\left|\Gamma(\by,\bx)-\Gamma(\bz_{m};\bz_{j})\right|^{2}
+a^{\frac{4}{3}(1-h)}\,\left|\Gamma(\bz_{m};\bz_{j})\right|^{2}.
\end{align}

\noindent
Substituting \eqref{eq:beta-bound} into \eqref{eq:T1-split} gives
\begin{align}
\|\mathbb{T}_{1}\|_{\mathbb{L}^{2}(\Omega)}^{2}
\lesssim\;&
\sum_{m=1}^{M}|\mathbf{Y}_{m}|^{2}
\left[
\max_{1\le m\le M}\!\left(
|\Omega_{m}|\iint_{\Omega_{m}\times\Omega_{m}}\! \left|\Gamma(\by,\bx)\right|^{2}\,\mathrm{d}\bx\,\mathrm{d}\by
\right)  \right. \notag \\
&\left.
+\sum_{m=1}^{M}\sum_{\substack{j=1\\ j\neq m}}^{M}
|\Omega_{j}|\iint_{\Omega_{m}\times\Omega_{j}}
\left|\Gamma(\by,\bx)-\Gamma(\bz_{m};\bz_{j})\right|^{2}\,\mathrm{d}\bx\,\mathrm{d}\by
\right] \notag \\
&\quad +a^{\frac{10}{3}(1-h)}
\sum_{j=1}^{M} |\mathbf{Y}_{j}|^{2}  \sum_{\substack{j=1\\ j\neq m}}^{M} \left|\Gamma(\bz_{m};\bz_{j})\right|^{2}.\notag
\end{align}

\noindent
We expand $\Gamma(\cdot;\cdot)$ around the cell centers via a first-order Taylor formula:
\begin{align}\label{eq:Taylor-Gamma}
\Gamma(\by,\bx)-\Gamma(\bz_{m};\bz_{j})
&=\int_{0}^{1}\!\nabla_{\bx}\Gamma\!\left(\bz_{m};\,\bz_{j}+t(\bx-\bz_{j})\right)\!\cdot(\bx-\bz_{j})\,\mathrm{d}t \\
&\quad+\int_{0}^{1}\!\nabla_{\by}\Gamma\!\left(\bz_{m}+t(\by-\bz_{m});\,\bx\right)\!\cdot(\by-\bz_{m})\,\mathrm{d}t .
\nonumber
\end{align}
Using \eqref{eq:Taylor-Gamma} together with the pointwise bounds from \eqref{eq:Gamma nabla Gamma}, we deduce
\begin{align}
\|\mathbb{T}_{1}\|_{\mathbb{L}^{2}(\Omega)}^{2}
\lesssim &
\sum_{m=1}^{M}|\mathbf{Y}_{m}|^{2}
\left[
\max_{1\le m\le M}
\left(|\Omega_{m}|\iint_{\Omega_{m}\times\Omega_{m}}\frac{1}{|\by-\bx|^{2}}\,
\mathrm{d}\bx\,\mathrm{d}\by\right) \right.  \notag \\
&\left.+
\sum_{m=1}^{M}\sum_{\substack{j=1\\ j\neq m}}^{M}
|\Omega_{j}|\iint_{\Omega_{m}\times\Omega_{j}}
\frac{|\bx-\bz_{j}|^{2}}{|\by-\bz_{j}|^{4}}\,\mathrm{d}\bx\,\mathrm{d}\by
\right]
+a^{\frac{10}{3}(1-h)}
\sum_{j=1}^{M}
|\mathbf{Y}_{j}|^{2}  \sum_{\substack{j=1\\ j\neq m}}^{M}  \frac{1}{|\bz_{m}-\bz_{j}|^{2}}. \notag
\end{align}

\noindent
Since $|\Omega_{m}|=a^{1-h}$ for all $m$, we infer
\[
\max_{1\le m\le M}
\left(|\Omega_{m}|\iint_{\Omega_{m}\times\Omega_{m}}
\frac{1}{|\by-\bx|^{2}}\,\mathrm{d}\bx\,\mathrm{d}\by\right)
\;\lesssim\; a^{\frac{7}{3}(1-h)}.
\]

\noindent
Consequently,
\begin{align}\label{eq:T1-mid}
\|\mathbb{T}_{1}\|_{\mathbb{L}^{2}(\Omega)}^{2}
\lesssim &
\sum_{m=1}^{M}|\mathbf{Y}_{m}|^{2}
\left[
a^{\frac{7}{3}(1-h)}
+\max_{1\le j\le M}\!\left(
|\Omega_{j}|\sum_{m=1}^{M}\sum_{\substack{j=1\\ j\neq m}}^{M}
\int_{\Omega_{m}}\!\frac{1}{|\by-\bz_{j}|^{4}}\,\mathrm{d}\by
\int_{\Omega_{j}}\!|\bx-\bz_{j}|^{2}\,\mathrm{d}\bx
\right)
\right]  \notag  \\
&\quad +a^{\frac{10}{3}(1-h)}
\sum_{j=1}^{M}  |\mathbf{Y}_{j}|^{2}    \sum_{\substack{j=1\\ j\neq m}}^{M}  \frac{1}{|\bz_{m}-\bz_{j}|^{2}}.
\end{align}

\noindent
Applying Taylor’s formula once more,
\begin{align}\label{eq:Omega-m-1over4}
\int_{\Omega_{m}}\!\frac{1}{|\by-\bz_{j}|^{4}}\,\mathrm{d}\by
= \frac{|\Omega_{m}|}{|\bz_{m}-\bz_{j}|^{4}}
+\int_{\Omega_{m}}\!\int_{0}^{1}
\nabla\!\left(|\cdot-\bz_{j}|^{-4}\right)\!\left(\bz_{m}+t(\by-\bz_{m})\right)\!\cdot(\by-\bz_{m})\,\mathrm{d}t\,\mathrm{d}\by .
\end{align}
Using \eqref{eq:Omega-m-1over4} in \eqref{eq:T1-mid} yields
\begin{align}\label{eq:T1-pre-final}
\|\mathbb{T}_{1}\|_{\mathbb{L}^{2}(\Omega)}^{2}
\lesssim &
\sum_{m=1}^{M}|\mathbf{Y}_{m}|^{2}
\left[
a^{\frac{7}{3}(1-h)}+
a^{1-h}\,
\max_{1\le j\le M}\!\left(\int_{\Omega_{j}}|\bx-\bz_{j}|^{2}\,\mathrm{d}\bx\right)
\left(\sum_{m=1}^{M}|\Omega_{m}|\sum_{\substack{j=1\\ j\neq m}}^{M}\frac{1}{|\bz_{m}-\bz_{j}|^{4}}\right)
\right]  \notag \\
&
\quad +a^{\frac{10}{3}(1-h)}
\sum_{j=1}^{M} |\mathbf{Y}_{j}|^{2}  \sum_{\substack{j=1\\ j\neq m}}^{M}  \frac{1}{|\bz_{m}-\bz_{j}|^{2}}.
\end{align}

\noindent
Using \eqref{eq:sum zm-zj}, $d\sim a^{\frac{1}{3}(1-h)}$, and $|\Omega_{m_{0}}|=a^{1-h}$, we conclude
\[
\max_{1\le j\le M}\int_{\Omega_{j}}|\bx-\bz_{j}|^{2}\,\mathrm{d}\bx
=\mathcal{O}\left(a^{\frac{5}{3}(1-h)}\right), \qquad  \sum_{\substack{j=1\\ j\neq m}}^{M}\frac{1}{|\bz_{m}-\bz_{j}|^{2}}
=\mathcal{O} \left(d^{-3}\right) =\mathcal{O} \left( a^{h-1} \right),
\]
and
\[
\sum_{m=1}^{M}|\Omega_{m}|\sum_{\substack{j=1\\ j\neq m}}^{M}\frac{1}{|\bz_{m}-\bz_{j}|^{4}}
\lesssim d^{-4}\sum_{m=1}^{M}|\Omega_{m}|=\mathcal{O} \left(d^{-4}\right)=\mathcal{O}\left( a^{\frac{4(h-1)}{3}}\right).
\]

\noindent
Thus \eqref{eq:T1-pre-final} simplifies to
\[
\|\mathbb{T}_{1}\|_{\mathbb{L}^{2}(\Omega)}^{2}
\lesssim
\sum_{m=1}^{M}|\mathbf{Y}_{m}|^{2}\,
\left[a^{\frac{7}{3}(1-h)}+a^{\frac{4}{3}(1-h)}\right]
= \mathcal{O} \left(a^{\frac{4}{3}(1-h)}\sum_{m=1}^{M}|\mathbf{Y}_{m}|^{2}\right)
\overset{\eqref{eq:Y L2 norm 2}}{=} \mathcal{O}  \left(a^{\frac{1}{3}(1-h)}\|\mathbb{Y}\|^{2}_{\mathbb{L}^{2}(\Omega)}\right).
\]

\noindent
Therefore,
\[
\|\mathbb{T}_{1}\|_{\mathbb{L}^{2}(\Omega)}
=\mathcal{O} \left(a^{\frac{1}{6}(1-h)}\|\mathbb{Y}\|_{\mathbb{L}^{2}(\Omega)}\right),
\]
which completes the proof of Lemma \ref{lem:T1 def}.

\bigskip

\noindent\textbf{Acknowledgment.}
The work of H. Diao is supported by the National Natural Science Foundation of China  (No. 12371422) and the Fundamental Research Funds for the Central Universities, JLU. The work of M. Sini is partially supported by the Austrian Science Fund (FWF): P36942.

	\end{document}